\title
{On quasi-log structures for complex analytic spaces} 
\author{Osamu Fujino}
\date{2025/2/1, version 0.22}
\subjclass[2020]{Primary 14E30; Secondary 32C15}
\keywords{quasi-log structures, semi-log canonical pairs, 
basepoint-freeness, cone and contraction theorem, 
minimal model program, vanishing theorems, strict support 
condition, effective 
freeness, basepoint-freeness of 
Reid--Fukuda type, extremal rational curves}
\address{Department of 
Mathematics, Graduate School of Science, 
Kyoto University, Kyoto 606-8502, Japan}
\email{fujino@math.kyoto-u.ac.jp}
\DeclareMathOperator{\Ass}{Ass}
\DeclareMathOperator{\Nqklt}{Nqklt}
\DeclareMathOperator{\Nklt}{Nklt}
\DeclareMathOperator{\Supp}{Supp}
\DeclareMathOperator{\NE}{\overline{NE}}
\DeclareMathOperator{\codim}{codim}
\DeclareMathOperator{\WDiv}{WDiv}
\DeclareMathOperator{\Exc}{Exc}
\DeclareMathOperator{\ncp}{ncp}
\DeclareMathOperator{\nc}{nc}
\DeclareMathOperator{\Hom}{Hom}
\DeclareMathOperator{\snc}{snc}
\DeclareMathOperator{\snct}{snc2}
\DeclareMathOperator{\Pic}{Pic}
\DeclareMathOperator{\Nqlc}{Nqlc}
\DeclareMathOperator{\Bs}{Bs}
\DeclareMathOperator{\Nlc}{Nlc}
\DeclareMathOperator{\NLC}{NLC}
\DeclareMathOperator{\Sing}{Sing}
\newtheorem{thm}{Theorem}[section]
\newtheorem{lem}[thm]{Lemma}
\newtheorem{cor}[thm]{Corollary}
\newtheorem{cla}{Claim}
\newtheorem*{claim}{Claim}
\theoremstyle{definition}
\newtheorem{step}{Step}
\newtheorem{defn}[thm]{Definition}
\newtheorem{rem}[thm]{Remark}
\newtheorem{ex}[thm]{Example}
\newtheorem*{ack}{Acknowledgments}  
\newtheorem{say}[thm]{}
\begin{document}

\begin{abstract} 
We introduce the notion of quasi-log complex analytic spaces 
and establish various fundamental properties. 
Moreover, we prove that a semi-log canonical pair 
naturally has a quasi-log complex analytic space structure. 
This paper is part of the author's 
project to establish a minimal model theory for projective 
morphisms between complex analytic spaces.  
\end{abstract}

\maketitle 

\tableofcontents

\section{Introduction}\label{a-sec1}

The main purpose of this paper is to introduce the notion 
of {\em{quasi-log complex analytic spaces}} and 
establish various fundamental results. 
They will play a crucial role in the study of highly singular 
complex analytic spaces. 
Let us see the definition of quasi-log complex analytic spaces, 
which may look artificial. 

\begin{defn}[Quasi-log complex analytic spaces, see 
Definition \ref{a-def4.1}]\label{a-def1.1}
A {\em{quasi-log complex analytic space}} 
\begin{equation*}
\bigl(X, \omega, f\colon (Y, B_Y)\to X\bigr)
\end{equation*}
is a complex analytic space $X$ endowed with an 
$\mathbb R$-line bundle (or a globally $\mathbb R$-Cartier divisor) 
$\omega$ on $X$, a closed analytic subspace 
$X_{-\infty}\subsetneq X$, and a finite collection $\{C\}$ of 
reduced 
and irreducible closed analytic subspaces of $X$ such that there 
exists a 
projective morphism $f\colon (Y, B_Y)\to X$ from an 
analytic globally 
embedded simple 
normal crossing pair $(Y, B_Y)$ 
satisfying the following properties: 
\begin{itemize}
\item[(1)] $f^*\omega\sim_{\mathbb R}K_Y+B_Y$. 
\item[(2)] The natural map 
$\mathcal O_X
\to f_*\mathcal O_Y(\lceil -(B_Y^{<1})\rceil)$ 
induces an isomorphism 
\begin{equation*}
\mathcal I_{X_{-\infty}}\overset{\simeq}
{\longrightarrow} f_*\mathcal O_Y(\lceil 
-(B_Y^{<1})\rceil-\lfloor B_Y^{>1}\rfloor),  
\end{equation*}
where $\mathcal I_{X_{-\infty}}$ is the defining ideal sheaf of 
$X_{-\infty}$ on $X$. 
\item[(3)] The collection of 
closed analytic subvarieties $\{C\}$ coincides with the $f$-images 
of $(Y, B_Y)$-strata that are not included in $X_{-\infty}$. 
\end{itemize}
\end{defn}

Since we treat $\mathbb R$-line bundles and 
globally $\mathbb R$-Cartier divisors on (not necessarily 
compact) complex analytic spaces, we need the 
following remark. 

\begin{rem}[$\mathbb R$-line bundles and 
globally $\mathbb R$-Cartier divisors]\label{a-rem1.2}
Let $X$ be a complex analytic space and let 
$\Pic(X)$ be the group of line bundles on $X$, 
that is, the {\em{Picard group}} of $X$. 
An element of $\Pic(X)\otimes _{\mathbb Z}\mathbb R$ 
(resp.~$\Pic(X)\otimes _{\mathbb Z}\mathbb Q$) is called 
an {\em{$\mathbb R$-line bundle}} 
(resp.~a {\em{$\mathbb Q$-line bundle}}) on $X$. 
In this paper, we write 
the group law of $\Pic(X)\otimes _{\mathbb Z}\mathbb R$ 
additively for 
simplicity of notation. 
A {\em{globally $\mathbb R$-Cartier}} 
(resp.~{\em{globally $\mathbb Q$-Cartier}}) {\em{divisor}} is a finite 
$\mathbb R$-linear (resp.~$\mathbb Q$-linear) 
combination of Cartier divisors. 
If $\omega$ is a globally $\mathbb R$-Cartier (resp.~$\mathbb Q$-Cartier) 
divisor in Definition \ref{a-def1.1}, then we can naturally see 
$\omega$ as an $\mathbb R$-line bundle (resp.~a $\mathbb Q$-line 
bundle) on $X$. 
In Definition \ref{a-def1.1}, we always assume that $B_Y$ is a globally 
$\mathbb R$-Cartier divisor on $Y$ implicitly. 
This assumption is harmless to applications 
because $Y$ is usually a relatively compact open subset 
of a given complex analytic space. 
In that case, the support of $B_Y$ has only finitely many irreducible 
components and 
then $B_Y$ automatically becomes globally $\mathbb R$-Cartier. 
Under the assumption that $B_Y$ is globally $\mathbb R$-Cartier, 
$K_Y+B_Y$ naturally defines an 
$\mathbb R$-line bundle on $Y$. 
The condition $f^*\omega\sim _{\mathbb R} K_Y+B_Y$ in Definition 
\ref{a-def1.1} (1) means that $f^*\omega=K_Y+B_Y$ holds in 
$\Pic(Y)\otimes _{\mathbb Z}\mathbb R$. 
\end{rem}

Note that the notion of quasi-log schemes was first 
introduced by Ambro in \cite{ambro}. Definition \ref{a-def1.1} is an 
analytic counterpart of the notion of quasi-log schemes. 
For the details of the theory of 
quasi-log schemes, see \cite[Chapter 6]{fujino-foundations} 
and \cite{fujino-quasi}. A gentle introduction to the 
theory of quasi-log schemes is \cite{fujino-introduction}. 
As in the algebraic case, we establish the following theorems. 

\begin{thm}[{Adjunction, see Theorem \ref{a-thm4.4}}]\label{a-thm1.3} 
Let 
\begin{equation*}
\bigl(X, \omega, f\colon (Y, B_Y)\to X\bigr)
\end{equation*} 
be a quasi-log complex analytic space 
and let $X'$ be the union of 
$X_{-\infty}$ with a union of some 
qlc strata of $[X, \omega]$. 
Then, 
after replacing $X$ with any relatively compact open subset of $X$,  
we can construct a projective morphism $f'\colon (Y', B_{Y'})\to X'$ 
from an analytic globally embedded simple normal crossing 
pair $(Y', B_{Y'})$ such that 
\begin{equation*}
\bigl(X', \omega', f'\colon (Y', B_{Y'})\to X'\bigr)
\end{equation*} 
is a quasi-log complex analytic space with $\omega'=\omega|_{X'}$ and 
$X'_{-\infty}=X_{-\infty}$. Moreover, the qlc strata of $[X', \omega']$ are 
exactly the qlc strata of $[X, \omega]$ that are included in $X'$. 
\end{thm}

\begin{thm}[{Vanishing theorem, see Theorems \ref{a-thm4.7} and 
\ref{a-thm4.8}}]\label{a-thm1.4} 
Let 
\begin{equation*}
\bigl(X, \omega, f\colon (Y, B_Y)\to X\bigr)
\end{equation*} 
be a quasi-log complex analytic space 
and let $X'$ be the union of 
$X_{-\infty}$ with a union of some 
qlc strata of $[X, \omega]$. 
Let $\pi\colon X\to S$ be a projective morphism between 
complex analytic spaces and 
let $\mathcal L$ be a line bundle on $X$ such that 
$\mathcal L-\omega$ is nef over $S$ and $(\mathcal L-\omega)|_C$ is 
big over $\pi(C)$ for every qlc stratum $C$ of $[X, \omega]$ 
which is not contained in $X'$. 
Then 
\begin{equation*}
R^i\pi_*(\mathcal I_{X'}
\otimes \mathcal L)=0
\end{equation*} 
holds for every $i>0$, where $\mathcal I_{X'}$ is the 
defining ideal sheaf of $X'$ on $X$. 
In particular, if $\mathcal L-\omega$ is ample over $S$, 
then 
\begin{equation*}
R^i\pi_*(\mathcal I_{X'}
\otimes \mathcal L)=0
\end{equation*} 
holds for every $i>0$. 
\end{thm}

Although the definition of quasi-log complex analytic spaces looks 
complicated and artificial, 
we think that the following example shows that 
it is natural. 

\begin{ex}[Normal pairs]\label{a-ex1.5}
Let $\pi\colon X\to S$ be a projective morphism of complex 
analytic spaces such that $X$ is a normal complex variety and let $\Delta$ be 
an effective $\mathbb R$-divisor on $X$ such that 
$K_X+\Delta$ is $\mathbb R$-Cartier. 
We sometimes call $(X, \Delta)$ a {\em{normal pair}}. 
We replace $S$ with any relatively compact open subset 
of $S$. Then we can construct a projective bimeromorphic 
morphism $f\colon Y\to X$ with 
\begin{equation*}
K_Y+B_Y:=f^*(K_X+\Delta)
\end{equation*} 
such that $Y$ is smooth and $\Supp B_Y$ is a simple 
normal crossing divisor on $Y$. 
Then 
\begin{equation*}
f^*(K_X+\Delta)\sim _{\mathbb R} K_Y+B_Y
\end{equation*} 
obviously holds 
and 
\begin{equation*}
\mathcal J_{\NLC}(X, \Delta):=f_*\mathcal O_Y(\lceil -(B^{<1}_Y)\rceil 
-\lfloor B^{>1}_Y\rfloor)
\end{equation*} 
is a well-defined coherent ideal sheaf on $X$ which defines the non-lc locus 
$\Nlc(X, \Delta)$ of $(X, \Delta)$. By definition, 
$C$ is a log canonical center of $(X, \Delta)$ if and only if 
$C$ is not contained in $\Nlc(X, \Delta)$ and is the $f$-image 
of some log canonical center of $(Y, B_Y)$. 
Hence, 
\begin{equation*}
\left(X, K_X+\Delta, f\colon (Y, B_Y)\to X\right)
\end{equation*} 
with $X_{-\infty}:=\Nlc(X, \Delta)$ satisfies the conditions 
in Definition \ref{a-def1.1}, 
that is, 
\begin{equation*}
\left(X, K_X+\Delta, f\colon (Y, B_Y)\to X\right)
\end{equation*}  
is a quasi-log complex analytic space. 
By construction, $X_{-\infty}=\emptyset$ if and only if 
$(X, \Delta)$ is log canonical. 
\end{ex}

In this paper, after we define quasi-log complex analytic spaces 
and prove the adjunction formula and vanishing theorems 
for them (see Section \ref{a-sec4}), we establish the basepoint-free 
theorem (see Theorem \ref{a-thm6.1}), the basepoint-freeness of 
Reid--Fukuda type (see Theorem \ref{a-thm7.1}), 
the effective freeness (see Theorems \ref{a-thm8.1} and 
\ref{a-thm8.2}), the cone and 
contraction theorem (see Theorem \ref{a-thm9.2}), 
and so on, for quasi-log complex 
analytic spaces. We can use them for the 
study of normal pairs by Example \ref{a-ex1.5}. 
We note that the cone and contraction theorem 
for normal pairs, which 
is sufficient for the minimal model 
program for log canonical pairs, 
was already proved in the complex 
analytic setting in \cite{fujino-cone-contraction}. 
We do not need the framework of quasi-log complex analytic 
spaces in \cite{fujino-cone-contraction}. 
However, it seems to be difficult to prove 
the basepoint-free theorem of Reid--Fukuda type 
for normal pairs in the 
complex analytic setting without using the framework of 
quasi-log complex analytic spaces. 
By combining some results obtained in this paper 
with Example \ref{a-ex1.5}, we have: 

\begin{thm}[Effective freeness of Reid--Fukuda type for 
log canonical pairs]\label{a-thm1.6}
Let $\pi\colon X\to S$ be a projective morphism 
of complex analytic spaces such that 
$(X, \Delta)$ is log canonical and that $\Delta$ is a $\mathbb Q$-divisor. 
Let $\mathcal L$ be a $\pi$-nef line bundle 
on $X$ such that $a\mathcal L-(K_X+\Delta)$ 
is nef and log big over $S$ with respect to 
$(X, \Delta)$ for some positive real number $a$. 
This means that $a\mathcal L-(K_X+\Delta)$ is nef and 
big over $S$ and that $(a\mathcal L-(K_X+\Delta))|_C$ is 
big over $\pi(C)$ for every log canonical center $C$ of $(X, \Delta)$. 
Then there exists a positive integer $m_0$, which 
depends only on $\dim X$ and $a$, 
such that $\mathcal L^{\otimes m}$ 
is $\pi$-generated for every $m\geq m_0$. 
Moreover, we may allow $\Delta$ to be an $\mathbb R$-divisor 
when 
$a\mathcal L-(K_X+\Delta)$ is $\pi$-ample over $S$ 
in the above statement. 
\end{thm}

Theorem \ref{a-thm1.6} is a generalization of 
\cite[Theorem 2.2.4]{fujino-effective-tohoku}. 
We note that we do not have to replace $S$ with 
a relatively compact open subset of $S$ in Theorem \ref{a-thm1.6}. 
The notion of quasi-log complex analytic spaces 
is very useful for the proof of Theorem \ref{a-thm1.6}. 
The author does not know how to prove Theorem \ref{a-thm1.6} 
in the framework of \cite{fujino-cone-contraction}. 
Precisely speaking, we first establish the basepoint-free 
theorem for quasi-log complex analytic spaces (see 
Theorem \ref{a-thm6.1}). 
Then, by using it, we prove the basepoint-free theorem 
of Reid--Fukuda type for quasi-log 
complex analytic spaces (see Theorem \ref{a-thm7.1}). 
Here, the framework of quasi-log complex analytic spaces 
plays an important role. 
Finally, we obtain the effective freeness 
for complex analytic quasi-log canonical pairs in 
Theorems \ref{a-thm8.1} and \ref{a-thm8.2}. 
By combining it with Example \ref{a-ex1.5}, 
we have Theorem \ref{a-thm1.6}. 
Moreover, we think that 
we need the theory of quasi-log complex 
analytic spaces for the study of semi-log canonical pairs 
in the complex analytic setting by the following theorem. 

\begin{thm}[Semi-log canonical pairs, 
see Theorem \ref{a-thm10.1}]\label{a-thm1.7}
Let $\pi\colon X\to S$ be a projective morphism 
of complex analytic spaces and 
let $(X, \Delta)$ be a semi-log canonical pair. 
Then, after replacing $S$ with any relatively compact 
open subset of $S$, 
$[X, K_X+\Delta]$ naturally becomes a quasi-log complex 
analytic space such that 
$\Nqlc(X, K_X+\Delta)=\emptyset$ and 
that $C$ is a qlc center of $[X, K_X+\Delta]$ if and 
only if $C$ is a semi-log canonical center of $(X, \Delta)$. 

More precisely, after replacing $S$ with any relatively compact 
open subset of $S$, we can construct a projective surjective morphism 
$f\colon (Y, B_Y)\to X$ from an analytic globally embedded 
simple normal crossing pair $(Y, B_Y)$ such that 
the natural map 
\begin{equation*}
\mathcal O_X\to f_*\mathcal O_Y(\lceil 
-(B^{<1}_Y)\rceil)
\end{equation*} 
is an isomorphism 
and that $C$ is 
the $f$-image of some stratum of $(Y, B_Y)$ 
if and only if 
$C$ is a semi-log canonical 
center of $(X, \Delta)$ 
or an irreducible component of $X$. 
Moreover, if every irreducible component of $X$ has 
no self-intersection in codimension one, then we can 
make $f\colon Y\to X$ bimeromorphic. 
\end{thm}

Theorem \ref{a-thm1.7} is obviously a complex analytic generalization 
of \cite[Theorem 1.2]{fujino-fundamental-slc}. 
Example \ref{a-ex1.8} may help us understand 
Theorem \ref{a-thm1.7}. 

\begin{ex}\label{a-ex1.8}
We consider 
\begin{equation*}
X:=\left(X_0X_2^2-X_1^2(X_1-1)=0\right)\subset \mathbb P^2. 
\end{equation*} 
Then $(X, 0)$ is a projective semi-log canonical curve. 
Let $\alpha\colon M\to \mathbb P^2$ be the 
blow-up at $[1:0:0]\in \mathbb P^2$. 
We put $Y:=X'+E$, 
where $X'$ is the strict transform of $X$ on $M$ and 
$E$ is the $\alpha$-exceptional curve. 
Then it is easy to see that $Y$ is a simple normal crossing 
divisor on $M$, 
\begin{equation}\label{a-eq1.1}
\alpha^*(K_{\mathbb P^2}+X)=K_M+X'+E, 
\end{equation} 
and $f_*\mathcal O_Y\simeq \mathcal O_X$, where 
$f:=\alpha|_Y$. 
By \eqref{a-eq1.1} and adjunction, 
\begin{equation*}
f^*K_X=K_Y 
\end{equation*} 
holds. Thus 
\begin{equation*}
\left(X, K_X, f\colon (Y, 0)\to X\right)
\end{equation*} 
is a quasi-log complex analytic space with $X_{-\infty}=\emptyset$. 
We note that $X$ is irreducible but $Y$ is reducible. 
In particular, $f\colon Y\to X$ is not bimeromorphic. 
\end{ex}

By Theorem \ref{a-thm1.7}, we can use the results established 
for quasi-log complex analytic spaces in this paper to study 
semi-log canonical pairs. 
Of course, by combining Theorems \ref{a-thm8.1} and 
\ref{a-thm8.2} 
with Theorem \ref{a-thm1.7}, we see that Theorem \ref{a-thm1.6} 
holds for complex analytic semi-log canonical pairs. 
More precisely, the basepoint-free theorem and 
its variants hold true for semi-log canonical 
pairs in the complex analytic setting. 
Although we do not state it explicitly here, 
the cone and contraction theorem holds in full generality 
for complex analytic semi-log canonical pairs. 
We note that this paper is not self-contained. 
We strongly recommend that 
the reader looks at \cite{fujino-cone-contraction} before reading 
this paper. Roughly speaking, this paper explains how to 
use the strict support condition and the vanishing theorems 
established in \cite{fujino-analytic-vanishing} systematically 
by introducing the framework of quasi-log complex analytic spaces.  

We briefly summarize the current state of 
the minimal model theory for projective morphisms between complex analytic spaces.
 
\begin{rem}[Minimal model program for projective 
morphisms of complex analytic spaces]\label{a-rem1.9} 
We are mainly interested in projective morphisms 
between complex analytic spaces. 
Roughly speaking, in \cite{fujino-minimal}, we 
translated \cite{bchm} and \cite{hacon-mckernan} into 
the complex analytic setting. 
After \cite{fujino-minimal}, Das, Hacon, and P\u aun gave an 
alternative approach to the minimal model program 
of kawamata log terminal pairs for projective 
morphisms between complex analytic spaces (see \cite{dhp}). 
Moreover, Lyu and Murayama established a new approach to the relative 
minimal model program in \cite{lyu-murayama}, 
which can work in larger categories of spaces. 
We note that the 
framework of the minimal model program established in 
\cite{nakayama1} and \cite{nakayama2} is almost sufficient for 
\cite{fujino-minimal}. We do not need \cite{fujino-analytic-vanishing}, 
which is an analytic generalization of \cite[Chapter 5]{fujino-foundations}, for 
\cite{fujino-minimal}. The ACC for log canonical 
thresholds in the complex analytic setting (see 
\cite{fujino-acc}) is 
an easy consequence of \cite{fujino-minimal} and 
\cite{hacon-mckernan-xu}. We can use \cite{fujino-minimal} to 
prove the inversion of adjunction of log canonicity for 
complex analytic spaces (see \cite{fujino-inversion}). 
On the other hand, 
\cite{fujino-cone-contraction} 
and this paper heavily depend on 
\cite{fujino-analytic-vanishing}. We note 
that \cite{fujino-cone-contraction} is a complex analytic generalization 
of \cite{fujino-fundamental} based on \cite{fujino-analytic-vanishing}. 
This paper explains how to generalize \cite[Chapter 6]{fujino-foundations},  
\cite{fujino-fundamental-slc}, 
\cite{fujino-reid-fukuda}, and 
\cite{fujino-effective} into the complex analytic setting. 
In \cite{enokizono-hashizume1} and \cite{enokizono-hashizume2}, 
based on \cite{fujino-minimal}, 
Enokizono and Hashizume discussed the minimal model 
program for log canonical pairs in the complex analytic setting. 
In \cite{fujino-abundance}, which is an analytic generalization of \cite{fujino-master} and 
\cite{fujino-gongyo}, we studied the abundance conjecture for projective morphisms 
of complex analytic spaces. 
Finally, Enokizono and Hashizume strengthened some results of \cite{enokizono-hashizume2} 
in \cite{enokizono-hashizume3} and \cite{hashizume}. 
By the above mentioned works, 
we see that almost all conjectures of the minimal 
model theory for projective morphisms between 
complex analytic spaces follow from the original conjectures 
for projective varieties. 
\end{rem}

We make a remark on \cite{fujino-fundamental-slc} for the 
reader's convenience. 

\begin{rem}\label{a-rem1.10}
Note that \cite[Definition A.20]{fujino-fundamental-slc} 
has some subtle troubles. For the details, 
see \cite[Definition 2.1.25, Remark 2.1.16, and 
Lemma 2.1.18]{fujino-foundations}. 
The proof of \cite[Theorem 1.12]{fujino-fundamental-slc} 
is insufficient. For the details, 
see Theorem \ref{a-thm10.4} and Remark \ref{a-rem10.5} below. 
\end{rem}

We look at the organization of this paper. 
In Section \ref{a-sec2}, we collect some basic definitions and 
results necessary for this paper. 
In Section \ref{a-sec3}, we recall the 
strict support condition and the vanishing theorems 
for analytic simple normal crossing pairs established in 
\cite{fujino-analytic-vanishing}. 
Note that we do not prove them in this paper. 
In Section \ref{a-sec4}, which is the main part of 
this paper, we introduce the notion of 
quasi-log complex analytic spaces and 
prove some basic properties. 
In Section \ref{a-sec5}, we prepare several useful lemmas. 
Although they may look complicated and artificial, 
they are very 
important.  
In Section \ref{a-sec6}, we prove the basepoint-free theorem 
for quasi-log complex analytic spaces. 
Then, in Section \ref{a-sec7}, we prove the basepoint-free 
theorem of Reid--Fukuda type for quasi-log complex 
analytic spaces. 
In Section \ref{a-sec8}, we establish the effective 
basepoint-freeness and effective very ampleness for 
quasi-log complex analytic spaces. The argument in this section 
is new and is slightly simpler than the known one. 
In Section \ref{a-sec9}, we discuss the cone and contraction 
theorem for quasi-log complex analytic spaces. 
In Subsection \ref{a-subsec9.1}, we prove that any 
extremal ray is spanned by a rational curve. 
In Section \ref{a-sec10}, we treat complex analytic semi-log 
canonical pairs. 
Roughly speaking, we show that a semi-log canonical pair 
naturally becomes a quasi-log complex analytic space. 
In Subsection \ref{a-subsec10.1}, 
we explain some vanishing theorems for the reader's convenience. 
In Subsection \ref{a-subsec10.2}, 
we briefly discuss Shokurov's polytopes 
for semi-log canonical pairs for the sake of completeness. 

\begin{ack}\label{a-ack}
The author was partially 
supported by JSPS KAKENHI Grant Numbers 
JP19H01787, JP20H00111, JP21H00974, JP21H04994, JP23K20787. 
He thanks Professor Taro Fujisawa for always 
giving him warm encouragement. 
He also thanks Yoshinori Gongyo very much. 
\end{ack}

In this paper, we assume that 
every complex analytic space is 
{\em{Hausdorff}} and {\em{second-countable}}. 
An irreducible and reduced complex analytic space 
is called a {\em{complex variety}}. 
We will freely use the basic definitions and 
results on complex analytic geometry 
in \cite{banica} and \cite{fischer}. 
Nakayama's book \cite{nakayama2} may be helpful. 
We will also freely use Serre's GAGA (see \cite{serre}) 
throughout this paper. 
We strongly recommend that the reader looks at 
\cite{fujino-cone-contraction} before reading this paper. 
This paper is a continuation of \cite{fujino-cone-contraction} and 
is also a supplement to \cite{fujino-cone-contraction}. 

\section{Preliminaries}\label{a-sec2}
In this section, we will recall some basic definitions and 
properties of complex analytic spaces necessary for 
subsequent sections. 
For the details, see \cite{fujino-minimal} and 
\cite[Sections 2.1, 4.4, and 4.5]{fujino-cone-contraction}. 

\begin{say}[Hybrids of $\mathbb R$-line bundles and 
globally $\mathbb R$-Cartier divisors]\label{a-say2.1}
As we already mentioned in Remark \ref{a-rem1.2}, 
we usually treat hybrids of $\mathbb R$-line bundles and 
globally $\mathbb R$-Cartier divisors on a complex 
analytic space $X$. 
Note that a {\em{globally $\mathbb R$-Cartier divisor}} is a 
finite $\mathbb R$-linear combination of 
Cartier divisors. 
We often write 
\begin{equation*}
\mathcal L+A\sim _{\mathbb R} \mathcal M+B, 
\end{equation*} 
where $\mathcal L, \mathcal M\in \Pic(X)\otimes _{\mathbb Z} 
\mathbb R$, and $A$ and $B$ are globally $\mathbb R$-Cartier 
divisors on $X$. 
This means that 
\begin{equation*}
\mathcal L+\mathcal A=\mathcal M+\mathcal B
\end{equation*}
holds in $\Pic(X)\otimes _{\mathbb Z}\mathbb R$, 
where $\mathcal A$ and $\mathcal B$ are $\mathbb R$-line bundles 
naturally associated to $A$ and $B$, respectively. 
We note that we usually write the group law of $\Pic(X)\otimes 
_{\mathbb Z}\mathbb R$ additively for simplicity of 
notation. 
\end{say}

\begin{say}[Divisors]\label{a-say2.2}
Let $X$ be a reduced equidimensional complex analytic space. 
A {\em{prime divisor}} on $X$ is an irreducible 
and reduced closed analytic subspace of codimension one. 
An {\em{$\mathbb R$-divisor}} $D$ on $X$ is a formal 
sum 
\begin{equation*}
D=\sum _i a_i D_i, 
\end{equation*} 
where $D_i$ is a prime divisor on $X$ with 
$D_i\ne D_j$ for $i\ne j$, 
$a_i\in \mathbb R$ for every $i$, and the {\em{support}} 
\begin{equation*}
\Supp D:=\bigcup _{a_i\ne 0}D_i
\end{equation*} 
is a closed analytic subset of $X$. 
In other words, the formal sum $\sum _i a_i 
D_i$ is locally finite. 
If $a_i\in \mathbb Z$ (resp.~$a_i\in 
\mathbb Q$) for 
every $i$, then $D$ is called 
a {\em{divisor}} (resp.~{\em{$\mathbb Q$-divisor}}) on $X$. 
Note that a divisor is sometimes called 
an {\em{integral Weil divisor}} in 
order to emphasize the condition that $a_i\in \mathbb Z$ for every $i$. 
If $0\leq a_i\leq 1$ (resp.~$a_i\leq 1$) 
holds for every $i$, then 
an $\mathbb R$-divisor $D$ is called a {\em{boundary}} 
(resp.~{\em{subboundary}}) $\mathbb R$-divisor. 

Let $D=\sum _i a_i D_i$ be an $\mathbb R$-divisor 
on $X$ such that $D_i$ is a prime divisor 
for every $i$ with $D_i\ne D_j$ for $i\ne j$. 
The {\em{round-down}} $\lfloor D\rfloor$ of $D$ is 
defined to be the divisor 
\begin{equation*}
\lfloor D\rfloor =\sum _i \lfloor a_i\rfloor D_i, 
\end{equation*} 
where $\lfloor x\rfloor$ is 
the integer defined by $x-1<\lfloor x\rfloor \leq x$ 
for every real number $x$. 
The {\em{round-up}} and the 
{\em{fractional part}} of $D$ are defined to be 
\begin{equation*}
\lceil D \rceil :=-\lfloor -D\rfloor, \quad 
\text{and} \quad \{D\}:=D-\lfloor D\rfloor, 
\end{equation*} 
respectively. We put 
\begin{equation*}
D^{=1}:=\sum _{a_i=1}D_i, \quad 
D^{<1}:=\sum _{a_i<1} a_i D_i, \quad \text{and} \quad 
D^{>1}:=\sum _{a_i>1}a_i D_i. 
\end{equation*}

Let $D$ be an $\mathbb R$-divisor on $X$ 
and let $x$ be a point of $X$. 
If $D$ is written as a finite $\mathbb R$-linear 
(resp.~$\mathbb Q$-linear) combination of Cartier 
divisors on some open 
neighborhood of $x$, 
then $D$ is said to be {\em{$\mathbb R$-Cartier at $x$}} 
(resp.~{\em{$\mathbb Q$-Cartier at $x$}}). 
If $D$ is $\mathbb R$-Cartier 
(resp.~$\mathbb Q$-Cartier) at $x$ for every $x\in X$, 
then $D$ is said to be {\em{$\mathbb R$-Cartier}} 
(resp.~{\em{$\mathbb Q$-Cartier}}). 
Note that a $\mathbb Q$-Cartier $\mathbb R$-divisor 
$D$ is automatically a $\mathbb Q$-Cartier 
$\mathbb Q$-divisor by definition. 
If $D$ is a finite $\mathbb R$-linear (resp.~$\mathbb Q$-linear) 
combination of Cartier divisors on $X$, 
then we say that $D$ 
is a {\em{globally $\mathbb R$-Cartier $\mathbb R$-divisor}} 
(resp.~{\em{globally $\mathbb Q$-Cartier $\mathbb Q$-divisor}}).  

Two $\mathbb R$-divisors $D_1$ and $D_2$ are said to 
be {\em{linearly equivalent}} if 
$D_1-D_2$ is a principal Cartier divisor. 
The linear equivalence is denoted by $D_1\sim D_2$. 
Two $\mathbb R$-divisors $D_1$ and $D_2$ are 
said to be {\em{$\mathbb R$-linearly equivalent}} 
(resp.~{\em{$\mathbb Q$-linearly equivalent}}) 
if $D_1-D_2$ is a {\em{finite}} $\mathbb R$-linear 
(resp.~$\mathbb Q$-linear) combination 
of principal Cartier divisors. 
When $D_1$ is $\mathbb R$-linearly (resp.~$\mathbb Q$-linearly) 
equivalent to 
$D_2$, we write $D_1\sim _{\mathbb R}D_2$ 
(resp.~$D_1\sim _{\mathbb Q}D_2$). 

\begin{rem}\label{a-rem2.3}
Let $D$ be an $\mathbb R$-Cartier $\mathbb R$-divisor 
on $X$ and let $U$ be any relatively compact open subset of $X$. 
Then it is easy to see that $D|_U$ is a globally 
$\mathbb R$-Cartier $\mathbb R$-divisor on $U$. 
\end{rem}
\end{say}

\begin{say}[Singularities of pairs]\label{a-say2.4}
We have already discussed the notion of singularities of 
pairs for complex analytic spaces in detail in 
\cite[Section 2.1]{fujino-cone-contraction}. 
Hence we omit the details here. 
We do not repeat the definitions of 
{\em{log canonical pairs}}, {\em{kawamata log terminal pairs}}, 
{\em{log canonical centers}}, and so on. 
Here we define {\em{semi-log canonical pairs}} 
for complex analytic spaces. 

Let $X$ be an equidimensional reduced complex analytic space 
that satisfies Serre's $S_2$ condition and is normal crossing 
in codimension one. Let $X^{\nc}$ be the largest open subset 
of $X$ consisting of smooth points and normal crossing 
points. Then we have an invertible dualizing sheaf $\omega_{X^{\nc}}$ 
on $X^{\nc}$. We put 
$\omega_X:=\iota_*\omega_{X^{\nc}}$, where $\iota\colon 
X^{\nc}\hookrightarrow X$, 
and call it the {\em{canonical sheaf}} of $X$. 
Since $\codim _X(X\setminus X^{\nc})\geq 2$ and $X$ 
satisfies Serre's $S_2$ condition, $\omega_X$ is a reflexive 
sheaf of rank one on $X$. 
Although we can not always define $K_X$ globally with 
$\mathcal O_X(K_X)\simeq 
\omega_X$, we use the symbol $K_X$ as a formal divisor class 
with an isomorphism $\mathcal O_X(K_X)\simeq \omega_X$ if 
there is no danger of confusion. 

\begin{defn}[Semi-log canonical pairs]\label{a-def2.5}
Let $X$ be an equidimensional reduced complex analytic 
space that satisfies Serre's $S_2$ condition 
and is normal crossing in codimension one. 
Let $\Delta$ be an effective $\mathbb R$-divisor 
on $X$ such that no irreducible component of $\Supp 
\Delta$ is contained in the singular locus of $X$. 
In this situation, 
the pair $(X, \Delta)$ is called a {\em{semi-log canonical 
pair}} (an {\em{slc pair}}, for short) if 
\begin{itemize}
\item[(1)] $K_X+\Delta$ is $\mathbb R$-Cartier, and 
\item[(2)] $(X^\nu, \Theta)$ is log canonical, where 
$\nu\colon X^\nu\to X$ is the normalization and 
$K_{X^\nu}+\Theta:=\nu^*(K_X+\Delta)$. 
\end{itemize}

Let $(X, \Delta)$ be a semi-log canonical pair. 
A closed analytic subvariety $C$ is called a {\em{semi-log 
canonical center}} (an {\em{slc center}}, for short) 
{\em{of $(X, \Delta)$}} if 
$C$ is the $\nu$-image of some log canonical center 
of $(X^\nu, \Theta)$. 
A closed subvariety $S$ is sometimes 
called an {\em{slc stratum}} 
if $S$ is an slc center of $(X, \Delta)$ or 
$S$ is an irreducible component of $X$. 
\end{defn}
Let $X$ be an equidimensional complex analytic space. A real vector 
space spanned by the prime divisors on $X$ is 
denoted by $\WDiv_{\mathbb R}(X)$, which has a canonical basis 
given by the prime divisors. Let $D$ be an element of 
$\WDiv_{\mathbb R}(X)$. Then the sup norm of $D$ with 
respect to this basis is denoted by $|\!| D|\!|$. 
Let $V$ be a finite-dimensional affine subspace of $\WDiv_{\mathbb R}(X)$, 
which is defined 
over the rationals. Let $L$ be a compact subset of $X$. 
We put 
\begin{equation*}
\mathcal L(V; L):=\{\Delta\in V\,| \, \text{$(X, \Delta)$ 
is semi-log canonical at $L$}\}. 
\end{equation*} 
Then we can check that $\mathcal L(V; L)$ is a rational polytope. 
For the details, see \cite[2.1.10]{fujino-cone-contraction}, 
where we treat the case where $X$ is normal. 
We will use $\mathcal L(V; L)$ in Subsection \ref{a-subsec10.2}. 
\end{say}

\begin{say}[Kleiman--Mori cones]\label{a-say2.6} 
Here we briefly discuss the basics about Kleiman--Mori cones 
in the complex analytic setting. For the details, see 
\cite[Section 4]{fujino-minimal} and 
\cite[Sections 4.4 and 4.5]{fujino-cone-contraction}. 

Let $\pi\colon X\to S$ be a projective 
morphism of complex analytic spaces and let 
$W$ be a compact subset of $S$. 
Let $Z_1(X/S; W)$ be the free abelian group 
generated by the projective integral curves $C$ on $X$ such that 
$\pi(C)$ is a point of $W$. 
Let $U$ be any open neighborhood of $W$. 
Then we can consider the following intersection pairing 
\begin{equation*} 
\cdot :
\Pic\!\left(\pi^{-1}(U)\right)\times Z_1(X/S; W)\to \mathbb Z
\end{equation*}  
given by $\mathcal L\cdot C\in \mathbb Z$ for 
$\mathcal L\in \Pic\!\left(\pi^{-1}(U)\right)$ and 
$C\in Z_1(X/S; W)$. 
We say that $\mathcal L$ is {\em{$\pi$-numerically 
trivial over $W$}} when $\mathcal L\cdot C=0$ for 
every $C\in Z_1(X/S; W)$. 
We take $\mathcal L_1, \mathcal L_2\in 
\Pic\!\left(\pi^{-1}(U)\right)$. 
If $\mathcal L_1\otimes \mathcal L_2^{-1}$ 
is $\pi$-numerically trivial over $W$, 
then we write $\mathcal L_1\equiv_W\mathcal L_2$ 
and say that $\mathcal L_1$ is {\em{numerically equivalent}} to 
$\mathcal L_2$ over $W$. 
We put 
\begin{equation*}
\widetilde A(U, W):=\Pic\!\left(\pi^{-1}(U)\right)/{\equiv_W}
\end{equation*}  
and define 
\begin{equation*}
A^1(X/S; W):=\underset{W\subset U}\varinjlim
\widetilde A(U, W), 
\end{equation*}  
where $U$ runs through all the open neighborhoods of 
$W$. 

From now on, we further assume that 
$A^1(X/S; W)$ is a finitely generated 
abelian group. Then we can define the {\em{relative Picard number}} 
$\rho(X/S; W)$ to be the rank of 
$A^1(X/S; W)$. 
We put 
\begin{equation*}
N^1(X/S; W):=A^1(X/S; W)\otimes _{\mathbb Z} \mathbb R. 
\end{equation*} 
Let $A_1(X/S; W)$ be the image of 
\begin{equation*} 
Z_1(X/S; W)\to \Hom_{\mathbb Z} \left(A^1(X/S; W), 
\mathbb Z\right)
\end{equation*} 
given by the above intersection pairing. 
Then we set 
\begin{equation*} 
N_1(X/S; W):=A_1(X/S; W)\otimes _{\mathbb Z}\mathbb R. 
\end{equation*} 
In this setting, we can define the {\em{Kleiman--Mori cone}} 
\begin{equation*} 
\NE(X/S; W)
\end{equation*} 
of $\pi\colon X\to S$ over $W$, that is, 
$\NE(X/S; W)$ is the closure of the convex cone in 
$N_1(X/S; W)$ spanned by the projective 
integral curves $C$ on $X$ such that 
$\pi(C)$ is a point of $W$. 
An element $\zeta\in N^1(X/S; W)$ is called 
{\em{$\pi$-nef over $W$}} or {\em{nef over $W$}} 
if $\zeta\geq 0$ on $\NE(X/S; W)$, equivalently, 
$\zeta|_{\pi^{-1}(w)}$ is nef in the usual sense for 
every $w\in W$. 

When $A^1(X/S; W)$ is finitely generated, equivalently, 
$\dim N^1(X/S; W)$ is finite, we can formulate 
Kleiman's ampleness criterion (see 
\cite[Theorem 4.4.5]{fujino-cone-contraction}) 
and discuss the 
cone and contraction theorem for projective morphisms 
between complex analytic spaces (see 
Section \ref{a-sec9} below). 
We note that $A^1(X/S; W)$ is not always 
finitely generated. 

\begin{rem}[Nakayama's finiteness]\label{a-rem2.7}
By Nakayama's finiteness (see 
\cite[Subsection 4.1]{fujino-minimal} 
and \cite[Section 4.5]{fujino-cone-contraction}), 
it is known that $\dim N^1(X/S; W)$ is finite 
under the assumption that 
$W\cap Z$ has only finitely many connected components 
for any analytic subset $Z$ defined over an open neighborhood 
of $W$. 
In particular, if $W$ is a Stein compact subset of $Y$ such that 
$\Gamma (W, \mathcal O_Y)$ is noetherian, then 
$\dim N^1(X/S; W)$ is finite. 
\end{rem}
\end{say}

\begin{say}[Big $\mathbb R$-line bundles]\label{a-say2.8}
Let $\pi\colon X\to S$ be a projective 
morphism of complex analytic spaces such that 
$X$ is irreducible and let $\mathcal L$ 
be an $\mathbb R$-line bundle on $X$. 
If $\mathcal L$ is a finite positive $\mathbb R$-linear 
combination of $\pi$-big line bundles on $X$, then 
$\mathcal L$ is said to be {\em{big over $S$}}. 
\end{say}

We will use the following convention throughout this paper. 

\begin{say}\label{a-say2.9}
The expression \lq ... for every $m\gg 0$\rq \ means 
that \lq there exists a positive real number $m_0$ such that ... 
for every $m\geq m_0$.\rq
\end{say}

\section{On vanishing theorems}\label{a-sec3}

In this section, we will briefly recall the 
the vanishing theorems and the strict support condition 
established in 
\cite{fujino-analytic-vanishing}, which is an analytic generalization 
of \cite[Chapter 5]{fujino-foundations}. 
The reader can find all the details in \cite{fujino-analytic-vanishing} 
(see also \cite{fujino-fujisawa}, \cite{fujino-vanishing-pja}, 
and \cite[Chapter 3]{fujino-cone-contraction}). 
For a completely different approach due to Murayama, see \cite{murayama}. 
Let us start with the definition of 
{\em{analytic simple normal crossing pairs}}. 

\begin{defn}[Analytic simple normal crossing pairs]\label{a-def3.1}
Let $X$ be a simple normal crossing divisor 
on a smooth complex analytic space $M$ and 
let $B$ be an $\mathbb R$-divisor on $M$ such that 
$\Supp (B+X)$ is a simple normal crossing divisor on $M$ and 
that $B$ and $X$ have no common irreducible components. 
Then we put $D:=B|_X$ and 
consider the pair $(X, D)$. 
We call $(X, D)$ an {\em{analytic globally embedded simple 
normal crossing pair}} and $M$ the {\em{ambient space}} 
of $(X, D)$. 

If the pair $(X, D)$ is locally isomorphic to an analytic 
globally embedded 
simple normal crossing pair at any point of $X$ and the irreducible 
components of $X$ and $D$ are all smooth, 
then $(X, D)$ is called an {\em{analytic simple normal crossing 
pair}}. 

As we explained 
in \ref{a-say2.4}, we use the symbol $K_X$ as a formal divisor 
class with an isomorphism $\mathcal O_X(K_X)\simeq \omega_X$ if 
there is no danger of confusion, where 
$\omega_X$ is the {\em{dualizing sheaf}} of $X$. 
\end{defn}

\begin{rem}\label{a-rem3.2}
Let $X$ be a smooth complex analytic space and let $D$ be 
an $\mathbb R$-divisor on $X$ such that $\Supp D$ is a simple 
normal crossing divisor on $X$. 
Then, by considering $M:=X\times \mathbb C$, 
we can see $(X, D)$ as an analytic globally embedded simple 
normal crossing pair. 
\end{rem}
The notion of {\em{strata}}, 
which is a generalization of that of 
log canonical centers, plays a crucial role. 

\begin{defn}[Strata]\label{a-def3.3}
Let $(X, D)$ be an analytic simple normal crossing pair 
such that $D$ is effective. 
Let $\nu\colon X^\nu\to X$ be the normalization. 
We put 
\begin{equation*} 
K_{X^\nu}+\Theta:=\nu^*(K_X+D). 
\end{equation*}  
This means that $\Theta$ is the union of $\nu^{-1}_*D$ and the 
inverse image of the singular locus of $X$. 
If $S$ is an irreducible component of $X$ 
or the $\nu$-image 
of some log canonical center of $(X^\nu, \Theta)$, 
then $S$ is called a {\em{stratum}} of $(X, D)$. 
By definition, $S$ is a stratum of $(X, D)$ if and 
only if $S$ is a stratum of $(X, D^{=1})$. 
\end{defn}

We recall Siu's theorem on coherent analytic sheaves, 
which is a special case of \cite[Theorem 4]{siu}. 

\begin{thm}\label{a-thm3.4} 
Let $\mathcal F$ be a coherent sheaf on a complex 
analytic space $X$. 
Then there exists a locally finite family $\{Y_i\}_{i\in I}$ 
of complex analytic subvarieties of $X$ such that 
\begin{equation*}
\Ass _{\mathcal O_{X,x}}(\mathcal F_x)=\{\mathfrak{p}_{x, 1}, 
\ldots, \mathfrak{p}_{x, r(x)}\}
\end{equation*}
holds for every point $x\in X$, where 
$\mathfrak{p}_{x, 1}, 
\ldots, \mathfrak{p}_{x, r(x)}$ are the prime ideals 
of $\mathcal O_{X, x}$ associated to the irreducible components 
of the germs $Y_{i, x}$ of $Y_i$ at $x$ with $x\in Y_i$. 
We note that each $Y_i$ is called an {\em{associated subvariety}} 
of $\mathcal F$. 
\end{thm}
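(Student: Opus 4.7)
The plan is to exploit the commutative-algebra fact that $\Ass_{\mathcal O_{X,x}}(\mathcal F_x)$ is finite, since $\mathcal O_{X,x}$ is Noetherian and $\mathcal F_x$ is finitely generated, and then patch these local finite sets of primes into a locally finite family of analytic subvarieties. First I would work in a Stein open neighborhood $U$ of a chosen point $p\in X$: at $p$, enumerate the associated primes $\mathfrak p_1,\ldots,\mathfrak p_r$ of $\mathcal F_p$ and, for each $\mathfrak p_j$, pick a germ $\xi_j\in \mathcal F_p$ with $\mathrm{Ann}(\xi_j)=\mathfrak p_j$. Lifting $\xi_j$ to an actual section over a (possibly smaller) $U$ produces a coherent annihilator ideal sheaf $\mathcal A_j\subset \mathcal O_U$; its zero set is an analytic subset of $U$ whose irreducible decomposition at $p$ contains $V(\mathfrak p_j)$ by R\"uckert's Nullstellensatz. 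Collecting the irreducible components arising this way across all $j$ gives a finite family of irreducible analytic subvarieties of $U$ that realises every associated prime of $\mathcal F_p$.

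Next I would globalise. Since $X$ is Hausdorff and second countable, hence paracompact, I would choose a locally finite cover of $X$ by relatively compact Stein open sets and perform the construction above on each piece, taking closures in $X$. This yields a locally finite family $\{Y_i\}_{i\in I}$ of analytic subvarieties of $X$ which, at each of the chosen centres, correctly recovers the associated primes of $\mathcal F$.

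The main obstacle is then to show that this family is \emph{correct at every point} $x\in X$, not only at the finitely many centres used in the construction; that is, the germs $Y_{i,x}$ of the members passing through $x$ must correspond exactly to $\Ass_{\mathcal O_{X,x}}(\mathcal F_x)$, with no missing and no spurious primes. Completeness can be obtained by arguing that every associated prime at $x$ specialises from one at a generic point of some previously constructed component. To rule out spurious subvarieties one must characterise each $Y_i$ intrinsically through $\mathcal F$, for instance via the supports of suitable $\mathrm{Ext}$-sheaves or via the codimension filtration of $\Supp\mathcal F$; this intrinsic description is the technical heart of Siu's Theorem~4 in \cite{siu}, and is the step I would invest the most effort on in a self-contained treatment.
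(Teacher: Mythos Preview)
The paper does not give its own proof of this statement: it is simply recorded as a special case of \cite[Theorem~4]{siu} and used as a black box. So there is no argument in the paper to compare your proposal against.

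Your sketch is a reasonable outline of how one might approach Siu's result, and you correctly identify the genuine difficulty: the local construction at chosen centres is easy, but proving that the resulting family is \emph{exactly} right at every point (no missing primes, no spurious ones) is the whole content of the theorem. Your suggestion to use an intrinsic characterisation via $\mathrm{Ext}$-sheaves or the codimension filtration is indeed the direction Siu takes. However, as written your proposal is not a proof but a plan that explicitly defers the hard step back to \cite{siu}; if you intend a self-contained treatment you would need to actually carry out that intrinsic characterisation, which is substantial. If instead the goal is merely to match what the paper does, then the correct move is simply to cite Siu and not attempt a proof at all.
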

Now we are ready to state the main result of \cite{fujino-analytic-vanishing}. 

\begin{thm}[{\cite[Theorem 1.1]{fujino-analytic-vanishing}}]\label{a-thm3.5}
Let $(X, \Delta)$ be an analytic simple 
normal crossing pair such that $\Delta$ is a boundary 
$\mathbb R$-divisor on $X$. 
Let $f\colon X\to Y$ be a projective morphism 
to a complex analytic space $Y$ and let $\mathcal L$ 
be a line bundle on $X$. 
Let $q$ be an arbitrary non-negative integer. 
Then we have the following properties. 
\begin{itemize}
\item[(i)] $($Strict support condition$)$.~If 
$\mathcal L-(\omega_X+\Delta)$ is $f$-semi-ample,  
then every 
associated subvariety of $R^qf_*\mathcal L$ is the $f$-image 
of some stratum of $(X, \Delta)$. 
\item[(ii)] $($Vanishing theorem$)$.~If 
$\mathcal L-(\omega_X+\Delta)\sim _{\mathbb R} f^*\mathcal H
$ holds 
for some $\pi$-ample 
$\mathbb R$-line bundle $\mathcal H$ on $Y$, where 
$\pi\colon Y\to Z$ is a 
projective morphism to a complex analytic space 
$Z$, then we have 
\begin{equation*}
R^p\pi_*R^qf_*\mathcal L=0
\end{equation*}
for every $p>0$. 
\end{itemize} 
\end{thm}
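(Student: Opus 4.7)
The plan is to run the same reduction scheme as in the algebraic setting (cf.~\cite[Chapter 5]{fujino-foundations}), systematically replacing each algebraic input by its analytic counterpart. First I would localize on the base: since $f$ is projective and the statements involve only coherent sheaves and their higher direct images, I may shrink $Y$ to a relatively compact Stein open so that all divisors become globally $\mathbb{R}$-Cartier and all relevant higher direct images are coherent. The fibers of $f$ are then projective analytic varieties, and Serre's GAGA identifies their coherent cohomology with that of the corresponding algebraic schemes, making classical Hodge theory available fiberwise.

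Both (i) and (ii) can then be reduced to a single analytic injectivity statement: if $D$ is an effective Cartier divisor on $X$ whose support contains no stratum of $(X,\Delta^{=1})$ and for which $\Supp D \cup \Supp \Delta$ is SNC, then multiplication by the canonical section of $\mathcal{O}_X(D)$ induces an injection
\begin{equation*}
R^q f_*\mathcal{L}\hookrightarrow R^q f_*\bigl(\mathcal{L}\otimes \mathcal{O}_X(D)\bigr).
\end{equation*}
Granting this, part (i) is immediate: any associated subvariety of $R^q f_*\mathcal{L}$ that is not of the form $f(S)$ for a stratum $S$ could be annihilated by a suitable choice of $D$, contradicting injectivity. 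For (ii), one writes $\mathcal{L}-(\omega_X+\Delta)\sim_{\mathbb{R}} f^*\mathcal{H}$, replaces $\mathcal{L}$ by $\mathcal{L}\otimes f^*\mathcal{H}^{\otimes m}$ for large $m$, and combines the injectivity with Serre vanishing for $R^p\pi_*\bigl((R^q f_*\mathcal{L})\otimes \mathcal{H}^{\otimes m}\bigr)$ at $m\gg 0$; a standard spectral-sequence argument then pushes the vanishing down to $m=0$.

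To prove the injectivity itself I would carry out a d\'evissage on the strata of $(X,\Delta^{=1})$ via the adjunction exact sequence, reducing to the case where $X$ is smooth and $\Delta^{=1}$ is either zero or a single smooth divisor. In that case, an analytic Kawamata-type cyclic cover $\psi\colon Z\to X$ trivializes the fractional part of $\Delta$, and after a log resolution the statement becomes an injectivity assertion for $R^q (f\circ\psi)_*\omega_Z$, which is the analytic counterpart of Koll\'ar's torsion-freeness and injectivity theorem.

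The principal obstacle is precisely this last analytic input. In the algebraic setting it rests on the $E_1$-degeneration of the Hodge-to-de Rham spectral sequence, which does not automatically hold in the analytic category. However, since $f\circ\psi$ is projective, its fibers are compact K\"ahler, so the degeneration can be verified fiberwise using classical Hodge theory (via GAGA) and then globalized via coherence of higher direct images and flat base change for projective morphisms between complex analytic spaces. Once this analytic Koll\'ar-type injectivity is established, the remainder of the argument is formal manipulation of spectral sequences and duality, and both the strict support condition and the vanishing theorem follow.
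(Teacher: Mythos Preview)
The paper does not prove this theorem; it imports it from \cite{fujino-analytic-vanishing} and explicitly records that the proof there rests on Saito's theory of mixed Hodge modules and on Takegoshi's analytic generalization of Koll\'ar's torsion-free and vanishing theorem. Your reduction scheme up to the ``principal obstacle'' (localize on the base, reduce (i) and (ii) to an injectivity statement, d\'evissage on strata, cyclic cover) is the standard one and matches what \cite{fujino-analytic-vanishing} does. The gap is in how you propose to resolve that obstacle.

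Fiberwise classical Hodge theory via GAGA together with ``coherence of higher direct images and flat base change'' does \emph{not} yield the analytic Koll\'ar injectivity. The injectivity is a statement about a morphism of sheaves $R^q f_*\omega_Z\to R^q f_*(\omega_Z\otimes\mathcal O_Z(D))$ on $Y$. Knowing that for each point $y\in Y$ the map $H^q(Z_y,\omega_{Z_y})\to H^q(Z_y,\omega_{Z_y}(D_y))$ is injective (which is what fiberwise Hodge theory on the projective fiber gives) does not imply injectivity of the sheaf map, because $R^qf_*$ need not commute with restriction to fibers; the kernel could be a sheaf supported on a proper analytic subset with zero fiber-restrictions. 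The algebraic argument is not fiberwise either: it uses $E_1$-degeneration for the \emph{relative} logarithmic de Rham complex, i.e.\ a Hodge filtration on the pushforward compatible with variation in the family. Producing that filtration in the analytic category over a possibly singular base $Y$ is precisely what forces one to invoke Saito's mixed Hodge modules, or alternatively to bypass Hodge theory via Takegoshi's $L^2$ methods. Those are the inputs the paper cites, and your proposed shortcut does not substitute for them.
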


We make a supplementary remark on Theorem \ref{a-thm3.5}. 

\begin{rem}\label{a-rem3.6} 
In Theorem \ref{a-thm3.5}, 
we always assume that $\Delta$ is {\em{globally $\mathbb R$-Cartier}}, 
that is, $\Delta$ is a finite $\mathbb R$-linear combination 
of Cartier divisors. 
Under this assumption, we can obtain an $\mathbb R$-line bundle 
$\mathcal N$ 
on $X$ naturally associated to $\mathcal L-(\omega_X+\Delta)$. 
The assumption in (i) means that $\mathcal N$ is a finite 
positive $\mathbb R$-linear combination of $\pi$-semi-ample 
line bundles on $X$. 
The assumption in (ii) says that $\mathcal N=f^*\mathcal H$ holds 
in $\Pic(X)\otimes _{\mathbb Z} \mathbb R$. 
\end{rem}

We do not prove Theorem \ref{a-thm3.5} here. 
For the details of the proof of Theorem \ref{a-thm3.5}, see 
\cite{fujino-analytic-vanishing}, which depends on 
Saito's theory of mixed Hodge modules (see 
\cite{saito1}, \cite{saito2}, \cite{saito3}, 
\cite{fujino-fujisawa-saito}, 
and \cite{saito4}) and 
Takegoshi's analytic generalization of Koll\'ar's torsion-free and 
vanishing theorem (see \cite{takegoshi}). 
In \cite{fujino-fujisawa}, the reader can find an 
alternative approach to 
Theorem \ref{a-thm3.5} without using Saito's theory of mixed 
Hodge modules (see also \cite{murayama}). 
We note that 
Theorem \ref{a-thm3.5} is one of the main ingredients of 
this paper. Or, we can see this paper as an application of 
Theorem \ref{a-thm3.5}. 
In order to explain the vanishing theorem 
of Reid--Fukuda type, we prepare the notion of 
nef and log big $\mathbb R$-line bundles. 

\begin{defn}\label{a-def3.7}
Let $f\colon X\to Y$ and $\pi\colon Y\to Z$ 
be projective morphisms between complex analytic 
spaces and let $\mathcal H$ be an 
$\mathbb R$-line bundle on $Y$. 
Let $\Delta$ be a boundary $\mathbb R$-divisor 
on $X$ such that $(X, \Delta)$ is an 
analytic simple normal crossing pair. 
We say that $\mathcal H$ is {\em{nef and 
log big over $Z$ with respect to $f\colon (X, \Delta)
\to Y$}} if $\mathcal H$ is nef over $Z$ and $\mathcal H|_{f(S)}$ 
is big over $\pi\circ f(S)$ for every 
stratum $S$ of $(X, \Delta)$. 
\end{defn}

We note that if $\mathcal H$ is $\pi$-ample then 
it is nef and log big over $Z$ with respect to $f\colon (X, \Delta)\to Y$. 
Therefore, Theorem \ref{a-thm3.8} is obviously 
a generalization of 
Theorem \ref{a-thm3.5} (ii). 

\begin{thm}[{Vanishing theorem of Reid--Fukuda type, 
see \cite[Theorem 1.2]{fujino-analytic-vanishing}}]\label{a-thm3.8}
Let $(X, \Delta)$ be an analytic simple 
normal crossing pair such that $\Delta$ is a boundary 
$\mathbb R$-divisor on $X$. 
Let $f\colon X\to Y$ and $\pi\colon Y\to Z$ be projective morphisms 
between complex analytic spaces and let $\mathcal L$ 
be a line bundle on $X$. 
If $\mathcal L-(\omega_X+\Delta)\sim _{\mathbb R} f^*\mathcal H$ 
holds such that $\mathcal H$ is an $\mathbb R$-line bundle, 
which is nef and 
log big over $Z$ with respect to $f\colon (X, \Delta)\to Y$, on $Y$, then  
\begin{equation*}
R^p\pi_*R^qf_*\mathcal L=0
\end{equation*} 
holds for every $p>0$ and every $q$. 
\end{thm}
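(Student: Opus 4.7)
The plan is to derive the nef-and-log-big case from the ample case of Theorem \ref{a-thm3.5}(ii) by a tie-breaking argument combined with an induction on strata, using Theorem \ref{a-thm3.5}(i) to keep associated subvarieties under control. First I would observe that $f^*\mathcal{H}$ is $f$-trivial, hence certainly $f$-semi-ample, so part (i) already guarantees that every associated subvariety of $\mathcal{F}^q:=R^qf_*\mathcal{L}$ is the $f$-image of some stratum of $(X,\Delta)$. Since the assertion is local on $Z$, I may shrink $Z$ to a relatively compact Stein open subset, on which the analytic Kodaira lemma is available. I then induct on $(\dim X,N)$, where $N$ is the number of strata of $(X,\Delta)$, ordered lexicographically; the base case, in which $\mathcal{H}$ is $\pi$-ample, is exactly Theorem \ref{a-thm3.5}(ii).

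For the inductive step, since $\mathcal{H}$ is in particular nef and big over $Z$ on $Y$, Kodaira's lemma produces a decomposition
\[
\mathcal{H}\sim_{\mathbb{R}}\mathcal{A}+\mathcal{E}
\]
with $\mathcal{A}$ a $\pi$-ample $\mathbb{R}$-line bundle on $Y$ and $\mathcal{E}$ an effective $\mathbb{R}$-divisor on $Y$. I take a projective bimeromorphic morphism $\mu\colon\widetilde{X}\to X$ from a smooth manifold such that $\mu^*(K_X+\Delta)=K_{\widetilde{X}}+\widetilde{\Delta}$ with $\widetilde{\Delta}$ a boundary and $\Supp(\widetilde{\Delta}+\mu^*f^*\mathcal{E})$ simple normal crossing; via Theorem \ref{a-thm3.5}(ii) for $\mu$ and the Leray spectral sequence the problem transports to $(\widetilde{X},\widetilde{\Delta})$. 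Renaming $\widetilde{X}$ as $X$ and setting $F:=f^*\mathcal{E}$, $\Delta_\epsilon:=\Delta+\epsilon F$ for small $\epsilon>0$, we have
\[
\mathcal{L}-(\omega_X+\Delta_\epsilon)\sim_{\mathbb{R}}f^*\bigl((1-\epsilon)\mathcal{H}+\epsilon\mathcal{A}\bigr),
\]
and the right-hand side is the pullback of a $\pi$-ample class.

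If no stratum of $(X,\Delta)$ is contained in $\Supp(F)$, then for $\epsilon$ small enough $(X,\Delta_\epsilon)$ remains an snc boundary pair with the same strata as $(X,\Delta)$, and Theorem \ref{a-thm3.5}(ii) concludes. Otherwise let $S'$ be the reduced divisor consisting of those components of $\Delta^{=1}$ which sit inside $\Supp(F)$, and use the short exact sequence
\[
0\to\mathcal{L}(-S')\to\mathcal{L}\to\mathcal{L}|_{S'}\to 0.
\]
For the quotient, adjunction identifies $(\omega_X+\Delta)|_{S'}$ with $\omega_{S'}+\Delta_{S'}$ for a suitable boundary $\Delta_{S'}$ on $S'$, and $\mathcal{H}|_{f(S')}$ inherits the nef-and-log-big property; the inductive hypothesis in strictly lower dimension then yields the vanishing of $R^p\pi_*R^q(f|_{S'})_*(\mathcal{L}|_{S'})$. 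For the kernel, the pair $(X,\Delta-S'+\epsilon F)$ is snc boundary with strictly fewer strata than $(X,\Delta)$, and $\mathcal{L}(-S')-(\omega_X+(\Delta-S'+\epsilon F))$ is $\mathbb{R}$-linearly the pullback of the same $\pi$-ample class as above, so Theorem \ref{a-thm3.5}(ii) applies directly. The long exact sequence of $R\pi_*Rf_*$ then glues the two vanishings.

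The main obstacle is the combinatorial bookkeeping in the final step: one must choose $S'$ precisely so that every stratum \emph{absorbed} by the perturbation is captured, so that $(X,\Delta-S'+\epsilon F)$ remains snc boundary with strictly fewer strata, and so that the restrictions and kernels appearing in the short exact sequence continue to satisfy the hypotheses of Theorem \ref{a-thm3.5}. The strict support condition (i) is what guarantees that the higher direct images of $\mathcal{L}(-S')$ and of $\mathcal{L}|_{S'}$ have associated subvarieties of the form demanded by the inductive hypothesis, keeping the induction healthy.
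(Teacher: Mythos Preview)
The paper does not prove Theorem~\ref{a-thm3.8}; it merely cites \cite{fujino-analytic-vanishing} and remarks that the argument there ``is harder than that of Theorem~\ref{a-thm3.5}~(ii)''. So there is no proof in this paper to compare against. Your outline is the standard Reid--Fukuda reduction and is on the right track, but two steps do not go through as written.

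First, ``a projective bimeromorphic morphism $\mu\colon\widetilde X\to X$ from a smooth manifold'' is not available: $(X,\Delta)$ is an analytic simple normal crossing pair, so $X$ is in general reducible and is not bimeromorphic to any smooth (hence irreducible) manifold. The correct manoeuvre is to modify the \emph{ambient space} $M$ of $(X,\Delta)$ so that the strict transform of $X$ together with $\Supp\Delta$ and the support of $f^*\mathcal E$ remains globally embedded simple normal crossing (compare the repeated use of \cite[Proposition~6.3.1]{fujino-foundations} throughout this paper). Second, invoking Kodaira's lemma on $Y$ is delicate: $Y$ is an arbitrary analytic space, and the hypothesis only says that $\mathcal H|_{f(S)}$ is big over $\pi\circ f(S)$ for each stratum $S$, not that $\mathcal H$ is big on $Y$ itself; one has to pull back to $X$, or to the $f$-images of strata, before producing a decomposition of the form $\mathcal A+\mathcal E$. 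Once these two points are repaired your induction scheme is essentially the one used in the literature, though the final ``gluing'' step also deserves a line of justification, since the sequence of $R^\bullet f_*$'s arising from $0\to\mathcal L(-S')\to\mathcal L\to\mathcal L|_{S'}\to 0$ is long rather than short.
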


The reader can find the detailed proof of 
Theorem \ref{a-thm3.8} in \cite{fujino-analytic-vanishing}, 
which is harder than that of Theorem \ref{a-thm3.5} (ii). 
As an easy application of Theorem \ref{a-thm3.8}, 
we can establish the vanishing theorem of Reid--Fukuda type 
of log canonical pairs for projective morphisms between 
complex analytic spaces. 
Theorem \ref{a-thm3.9} can be seen as a 
generalization of the Kawamata--Viehweg vanishing theorem 
for projective morphisms between complex analytic spaces. 

\begin{thm}[Vanishing theorem of Reid--Fukuda type for 
log canonical pairs]\label{a-thm3.9}
Let $(X, \Delta)$ be a log canonical pair and let $\pi\colon X\to Y$ 
be a projective morphism of complex analytic spaces. 
Let $L$ be a $\mathbb Q$-Cartier integral Weil divisor on $X$. 
Assume that $L-(K_X+\Delta)$ is nef and 
big over $Y$ and 
that 
$(L-(K_X+\Delta))|_C$ is big 
over $\pi(C)$ for 
every log canonical center $C$ of $(X, \Delta)$. 
Then 
\begin{equation*}
R^q\pi_*\mathcal O_X(L)=0
\end{equation*} 
holds for every $q>0$. 
\end{thm}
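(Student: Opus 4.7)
The plan is to reduce Theorem~\ref{a-thm3.9} to the embedded vanishing of Reid--Fukuda type (Theorem~\ref{a-thm3.8}) via a log resolution. Since the statement is local on $Y$, we may freely replace $Y$ by any relatively compact open subset. Choose a projective bimeromorphic morphism $f\colon Z\to X$ from a smooth complex analytic space $Z$ such that $\Supp(f^{-1}_*\Delta+\Exc(f))\cup\Supp(f^*L)$ is a simple normal crossing divisor on $Z$; here $f^*L$ is the well-defined $\mathbb{Q}$-divisor pullback, using that $L$ is $\mathbb{Q}$-Cartier. Writing $K_Z+\Delta_Z=f^*(K_X+\Delta)$, the log canonicity of $(X,\Delta)$ yields $\Delta_Z^{>1}=0$, so $\Delta_Z$ is a subboundary.

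Define the integer Weil divisor $M:=\lceil f^*L-\Delta_Z^{<1}\rceil$ on $Z$. Using the identity $\lceil N\rceil = N+\{-N\}$, a direct calculation gives
\[
M-(K_Z+\Delta')\sim_{\mathbb R}f^*\bigl(L-(K_X+\Delta)\bigr),\qquad \Delta':=\Delta_Z^{=1}+\{\Delta_Z^{<1}-f^*L\}.
\]
After further blowups principalizing $L$ as a $\mathbb{Q}$-Cartier Weil divisor if necessary, $\Delta'$ is a boundary $\mathbb R$-divisor with simple normal crossing support, so $(Z,\Delta')$ is an analytic simple normal crossing pair; moreover, the $f$-images of strata of $(Z,\Delta')$ are precisely $X$ together with the log canonical centers of $(X,\Delta)$. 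One checks $f_*\mathcal{O}_Z(M)=\mathcal{O}_X(L)$ using that $L$ is $\mathbb{Q}$-Cartier (so every local section $\phi$ of $\mathcal{O}_X(L)$ satisfies $v_D(\phi)\ge -e_D$ for every $f$-exceptional prime $D$, where $e_D:=\mathrm{coeff}_D(f^*L)$) combined with the log canonicity inequality $\lceil e_D-b_D\rceil\ge e_D$ for $b_D:=\mathrm{coeff}_D(\Delta_Z^{<1})<1$, which guarantees that the stricter exceptional condition imposed by $M$ is automatic.

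Set $\mathcal{H}:=L-(K_X+\Delta)$, an $\mathbb{R}$-line bundle on $X$. The hypothesis of Theorem~\ref{a-thm3.9} translates precisely into the statement that $\mathcal{H}$ is nef and log big over $Y$ with respect to $f\colon(Z,\Delta')\to X$ in the sense of Definition~\ref{a-def3.7}, since the $f$-images of strata of $(Z,\Delta')$ are exactly $X$ and the log canonical centers of $(X,\Delta)$. Applying Theorem~\ref{a-thm3.8} with $\mathcal{L}:=\mathcal{O}_Z(M)$ to the maps $Z\xrightarrow{f}X\xrightarrow{\pi}Y$ yields $R^p\pi_*R^qf_*\mathcal{O}_Z(M)=0$ for every $p>0$ and every $q\ge 0$. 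Specializing to $q=0$ and using $f_*\mathcal{O}_Z(M)=\mathcal{O}_X(L)$ gives $R^p\pi_*\mathcal{O}_X(L)=0$ for all $p>0$, as required. The main technical obstacle is ensuring that $\Delta'$ has coefficients in $[0,1]$: the only possible violation occurs on $f$-exceptional components of $\Delta_Z^{=1}$ where $f^*L$ has non-integer coefficient, and this is eliminated by an appropriate further log resolution that principalizes $L$ sufficiently.
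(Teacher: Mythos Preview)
Your overall strategy---pull back to a log resolution and invoke Theorem~\ref{a-thm3.8}---is exactly the route the paper takes (it simply cites \cite[Theorem~5.7.6]{fujino-foundations}). The computations leading to
\[
M-(K_Z+\Delta')\sim_{\mathbb R}f^*\bigl(L-(K_X+\Delta)\bigr),\qquad
\Delta'=\Delta_Z^{=1}+\{\Delta_Z^{<1}-f^*L\},
\]
and the verification of $f_*\mathcal O_Z(M)=\mathcal O_X(L)$ are correct.

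However, your proposed repair of the boundary problem is not. If $E$ is an $f$-exceptional component of $\Delta_Z^{=1}$ with $e_E:=\mathrm{coeff}_E(f^*L)\notin\mathbb Z$, then $\mathrm{coeff}_E(\Delta')=1+\{-e_E\}\in(1,2)$; no amount of further blowing up changes $e_E$ on the strict transform of $E$, so ``principalizing $L$'' cannot force $e_E$ to be an integer. (Concretely: let $X$ be a quadric cone, $\Delta$ two rulings, $L$ a third ruling; after blowing up the vertex the exceptional curve $E$ has $e_E=\tfrac12$ and lies in $\Delta_Z^{=1}$.)

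The correct fix is to alter $M$, not the resolution. For each such $E$ replace $M$ by $M-E$. Then $\Delta'$ is replaced by $\Delta'-E$, whose coefficient at $E$ is $1-\{e_E\}\in(0,1)$, so the new $\Delta'$ is a genuine boundary. The pushforward is unchanged: since $\mathrm{coeff}_E(M)=\lceil e_E\rceil$, one has $\mathrm{coeff}_E(M-E)=\lfloor e_E\rfloor$, and the condition $\mathrm{ord}_E(f^*\phi)\ge -\lfloor e_E\rfloor$ is exactly the integrality-sharpened form of $\mathrm{ord}_E(f^*\phi)\ge -e_E$ that any section $\phi$ of $\mathcal O_X(L)$ already satisfies. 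Finally, removing these $E$ from $\Delta'^{=1}$ only \emph{shrinks} the set of strata of $(Z,\Delta')$, so the nef-and-log-big hypothesis for $f^*(L-(K_X+\Delta))$ remains valid and Theorem~\ref{a-thm3.8} applies. With this adjustment your argument goes through.
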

\begin{proof}
The proof of \cite[Theorem 5.7.6]{fujino-foundations} works 
by Theorem \ref{a-thm3.8}. 
\end{proof}

Theorem \ref{a-thm3.9} will be generalized for 
semi-log canonical pairs in Theorem \ref{a-thm10.2} 
by using Theorem \ref{a-thm1.7}. 

\section{Quasi-log structures for complex analytic spaces}\label{a-sec4}

This section is the main part of this paper. 
In this section, we will discuss {\em{quasi-log structures}} on 
complex analytic spaces. 
For the details of the theory of quasi-log schemes, 
see \cite[Chapter 6]{fujino-foundations} and 
\cite{fujino-quasi}. 

Let us define {\em{quasi-log complex analytic spaces}}. 

\begin{defn}[Quasi-log complex analytic spaces]
\label{a-def4.1}
A {\em{quasi-log complex analytic space}} 
\begin{equation*}
\bigl(X, \omega, f\colon (Y, B_Y)\to X\bigr)
\end{equation*}
is a complex analytic space $X$ endowed with an 
$\mathbb R$-line bundle (or a globally $\mathbb R$-Cartier divisor) 
$\omega$ on $X$, a closed analytic subspace 
$X_{-\infty}\subsetneq X$, and a finite collection $\{C\}$ of 
reduced 
and irreducible closed analytic subspaces of $X$ such that there 
exists a 
projective morphism $f\colon (Y, B_Y)\to X$ from an 
analytic globally 
embedded simple 
normal crossing pair $(Y, B_Y)$ 
satisfying the following properties: 
\begin{itemize}
\item[(1)] $f^*\omega\sim_{\mathbb R}K_Y+B_Y$. 
\item[(2)] The natural map 
$\mathcal O_X
\to f_*\mathcal O_Y(\lceil -(B_Y^{<1})\rceil)$ 
induces an isomorphism 
\begin{equation*}
\mathcal I_{X_{-\infty}}\overset{\simeq}
{\longrightarrow} f_*\mathcal O_Y(\lceil 
-(B_Y^{<1})\rceil-\lfloor B_Y^{>1}\rfloor),  
\end{equation*}
where $\mathcal I_{X_{-\infty}}$ is the defining ideal sheaf of 
$X_{-\infty}$ on $X$. 
\item[(3)] The collection of 
closed analytic subvarieties $\{C\}$ coincides with the $f$-images 
of $(Y, B_Y)$-strata that are not included in $X_{-\infty}$. 
\end{itemize}
We often simply write $[X, \omega]$ to denote 
the above data 
\begin{equation*}
\bigl(X, \omega, f\colon (Y, B_Y)\to X\bigr)
\end{equation*}
if there is no risk of confusion. 
The closed analytic subvarieties $C$ are called 
the {\em{qlc strata}} of $[X, \omega]$. 
If a qlc stratum $C$ is not an irreducible component of $X$, 
then it is called a {\em{qlc center}} of $[X, \omega]$. 
The closed analytic subspace $X_{-\infty}$ is called 
the {\em{non-qlc locus}} of $[X, \omega]$. 
We note that we sometimes use $\Nqlc(X, \omega)$ 
or $\Nqlc\left(X, \omega, f\colon (Y, B_Y)\to X\right)$ to 
denote $X_{-\infty}$. 
We usually call $f\colon (Y, B_Y)\to X$ a 
{\em{quasi-log resolution}} of $[X, \omega]$. 

In the above definition, if $\omega$ is a $\mathbb Q$-line bundle 
(or a globally $\mathbb Q$-Cartier divisor), 
$B_Y$ is a $\mathbb Q$-divisor, 
and $f^*\omega\sim _{\mathbb Q} K_Y+B_Y$ holds, 
then we say that 
\begin{equation*}
\left(X, \omega, f\colon (Y, B_Y)\to X\right)
\end{equation*} 
has a {\em{$\mathbb Q$-structure}}. 
\end{defn}

We make an important remark. 

\begin{rem}\label{a-rem4.2} 
As in Remark \ref{a-rem1.2}, we can naturally 
see $\omega$ as 
an $\mathbb R$-line bundle on $X$ in Definition 
\ref{a-def4.1}. 
In Definition \ref{a-def4.1} (1), $f^*\omega\sim _{\mathbb R} K_Y+B_Y$ means 
that $B_Y$ is globally $\mathbb R$-Cartier, that is, 
$B_Y$ is a finite $\mathbb R$-linear combination 
of Cartier divisors, and that $f^*\omega=\omega_Y+\mathcal B_Y$ holds in 
$\Pic(Y)\otimes _{\mathbb Z}\mathbb R$, where 
$\omega_Y$ is the dualizing sheaf of $Y$ and $\mathcal B_Y$ 
is an 
$\mathbb R$-line bundle on 
$Y$ naturally associated to 
the globally $\mathbb R$-Cartier divisor $B_Y$. 
Similarly, $f^*\omega\sim _{\mathbb Q} K_Y+B_Y$ means 
that $f^*\omega=\omega_Y+\mathcal B_Y$ holds 
in $\Pic(Y)\otimes _{\mathbb Z}\mathbb Q$. 
\end{rem}

The notion of {\em{quasi-log canonical pairs}} is useful. 

\begin{defn}[Quasi-log canonical pairs]\label{a-def4.3}
In Definition \ref{a-def4.1}, if $X_{-\infty}=\emptyset$, then 
\begin{equation*}
\left(X, \omega, f\colon (Y, B_Y)\to X\right)
\end{equation*} 
is called 
a {\em{quasi-log canonical pair}}. 
We sometimes simply say that $[X, \omega]$ is a {\em{qlc pair}}. 
\end{defn}

The most important result on quasi-log complex analytic spaces 
is the following adjunction formula. 
It is an easy consequence of the strict support condition 
in Theorem \ref{a-thm3.5} (i). 

\begin{thm}[Adjunction formula for quasi-log complex 
analytic spaces]\label{a-thm4.4} 
Let 
\begin{equation*}
\bigl(X, \omega, f\colon (Y, B_Y)\to X\bigr)
\end{equation*} 
be a quasi-log complex analytic space 
and let $X'$ be the union of 
$X_{-\infty}$ with a union of some 
qlc strata of $[X, \omega]$. 
Then, 
after replacing $X$ with any relatively compact open subset of $X$,  
we can construct a projective morphism $f'\colon (Y', B_{Y'})\to X'$ 
from an analytic globally embedded simple normal crossing 
pair $(Y', B_{Y'})$ such that 
\begin{equation*}
\bigl(X', \omega', f'\colon (Y', B_{Y'})\to X'\bigr)
\end{equation*} 
is a quasi-log complex analytic space with $\omega'=\omega|_{X'}$ and 
$X'_{-\infty}=X_{-\infty}$. Moreover, the qlc strata of $[X', \omega']$ are 
exactly the qlc strata of $[X, \omega]$ that are included in $X'$. 
\end{thm}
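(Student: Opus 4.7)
The overall strategy is the analytic adaptation of the standard adjunction proof for quasi-log schemes developed in \cite[Chapter~6]{fujino-foundations} and \cite{fujino-quasi}. The plan is, after passing to a relatively compact open subset of $X$ so that $B_Y$ has only finitely many irreducible components, to modify $f\colon(Y,B_Y)\to X$ by an analytic version of Szab\'o's resolution lemma so that the preimage of $X'$ becomes a union $Y'$ of strata of $(Y,B_Y^{=1})$ realized as a globally embedded simple normal crossing subvariety of the ambient manifold of $Y$. Relative compactness ensures that this succession of blow-ups terminates in finitely many steps. Concretely, $Y'$ is taken to be the union of precisely those strata of $(Y,B_Y^{=1})$ whose $f$-image is contained in $X'$.

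Having produced $Y'$, set $f':=f|_{Y'}\colon Y'\to X'$ and define $B_{Y'}$ by adjunction so that $K_{Y'}+B_{Y'}=(K_Y+B_Y)|_{Y'}$; in the typical situation where $Y'\subseteq B_Y^{=1}$ this reads $B_{Y'}=(B_Y-Y')|_{Y'}$. Then $(Y',B_{Y'})$ is an analytic globally embedded simple normal crossing pair and $K_{Y'}+B_{Y'}\sim_{\mathbb R}(f')^*(\omega|_{X'})$, verifying condition~(1) of Definition~\ref{a-def4.1}. By the construction of $Y'$, the $(Y',B_{Y'})$-strata are exactly the strata of $(Y,B_Y)$ contained in $Y'$, and their $f'$-images are exactly the qlc strata of $[X,\omega]$ contained in $X'$, which gives condition~(3). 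For condition~(2), set $A:=\lceil -(B_Y^{<1})\rceil-\lfloor B_Y^{>1}\rfloor$; then $A|_{Y'}=\lceil -(B_{Y'}^{<1})\rceil-\lfloor B_{Y'}^{>1}\rfloor$, and the short exact sequence
\begin{equation*}
0\to \mathcal O_Y(A-Y')\to \mathcal O_Y(A)\to \mathcal O_{Y'}(A|_{Y'})\to 0
\end{equation*}
yields, after applying $f_*$, a long exact sequence into which the hypothesis $f_*\mathcal O_Y(A)=\mathcal I_{X_{-\infty}}$ feeds directly.

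The hard part, and the main obstacle, is the vanishing of the connecting homomorphism $f'_*\mathcal O_{Y'}(A|_{Y'})\to R^1f_*\mathcal O_Y(A-Y')$. A short computation yields $A-Y'\sim_{\mathbb R}K_Y+\Delta-f^*\omega$ with boundary divisor $\Delta:=\{B_Y^{<1}\}+(B_Y^{=1}-Y')+\{B_Y^{>1}\}$, so that Theorem~\ref{a-thm3.5}(i) applies to every direct image of $\mathcal O_Y(A-Y')$ and forces every associated subvariety of $R^1f_*\mathcal O_Y(A-Y')$ to be the $f$-image of a stratum of $(Y,B_Y^{=1}-Y')$. Any such stratum is also a stratum of $(Y,B_Y^{=1})$ which is not contained in $Y'$; by the very definition of $Y'$, its $f$-image then cannot lie in $X'$. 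Since the image of the connecting map is supported on $X'$, it must therefore vanish. The same strict-support argument applied at degree $q=0$ identifies $f_*\mathcal O_Y(A-Y')$ with the ideal sheaf $\mathcal I_{X'}$ of the closed analytic subspace $X'$ in $X$, and the long exact sequence then collapses to give
\begin{equation*}
f'_*\mathcal O_{Y'}(A|_{Y'})\cong \mathcal I_{X_{-\infty}}/\mathcal I_{X'}.
\end{equation*}
The right-hand side is precisely the defining ideal of $X_{-\infty}$ inside $X'$, so condition~(2) holds with $X'_{-\infty}=X_{-\infty}$, completing the argument.
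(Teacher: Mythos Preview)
Your overall strategy is the same as the paper's, but there is a genuine technical confusion in how you set up $Y'$ and the short exact sequence. You treat $Y'$ as a subdivisor of $B_Y^{=1}$ (your formula $\Delta=\{B_Y^{<1}\}+(B_Y^{=1}-Y')+\{B_Y^{>1}\}$ only makes sense under that assumption), and you apply Theorem~\ref{a-thm3.5}(i) on $Y$ itself. This cannot be arranged in general: the strata of $(Y,B_Y)$ mapping into $X'$ typically include irreducible components of $Y$, and no admissible modification of the ambient space will turn an irreducible component of $Y$ into a component of $B_Y^{=1}$. The correct modification (the analytic analogue of \cite[Proposition~6.3.1]{fujino-foundations}) arranges instead that $Y'$ is a union of \emph{irreducible components of $Y$}; one then sets $Y'':=Y-Y'$, and the short exact sequence is
\begin{equation*}
0\to \mathcal O_{Y''}(A-N-Y')\to \mathcal O_Y(A-N)\to \mathcal O_{Y'}(A-N)\to 0,
\end{equation*}
with $A=\lceil-(B_Y^{<1})\rceil$ and $N=\lfloor B_Y^{>1}\rfloor$. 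The strict support condition is applied on $Y''$, with boundary $\{B_{Y''}\}+B_{Y''}^{=1}-Y'|_{Y''}$, not on $Y$; this is what forces the connecting map to vanish.

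A second, smaller point: the identification of $f_*\mathcal O_{Y''}(A-N-Y')$ with $\mathcal I_{X'}$ is not a consequence of the strict support condition at $q=0$. In the paper this pushforward is taken as the \emph{definition} of the ideal sheaf $\mathcal I_{X'}$ (equivalently, of the analytic subspace structure on $X'$), and one then checks separately that it has the correct set-theoretic support and is independent of the choices made (the latter is Theorem~\ref{a-thm4.5}). The strict support condition at $q=0$ only tells you about associated subvarieties, which is not enough to pin down the sheaf.
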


The proof of \cite[Theorem 6.3.5 (i)]{fujino-foundations} works 
without any modifications. 

\begin{proof}[Sketch of Proof of Theorem \ref{a-thm4.4}] 
We replace $X$ with a relatively compact open subset of $X$. 
Let $M$ be the ambient space of $(Y, B_Y)$. 
By taking a suitable projective bimeromorphic 
modification of 
$M$ (see \cite[Proposition 6.3.1]{fujino-foundations}), 
we may assume that the union of all strata of $(Y, B_Y)$ mapped to $X'$, 
which is denoted by $Y'$, is a union of some irreducible components 
of $Y$. We put 
$K_{Y'}+B_{Y'}:=(K_Y+B_Y)|_{Y'}$ and $Y'':=Y-Y'$. 
We also put $A:=\lceil -(B^{<1}_Y)\rceil$ and 
$N:=\lfloor B^{>1}_Y\rfloor$, and 
consider the following short exact sequence: 
\begin{equation*}
0\to \mathcal O_{Y''}(A-N-Y')\to \mathcal O_Y(A-N)\to 
\mathcal O_{Y'}(A-N)\to 0. 
\end{equation*} 
Then we have the following long exact sequence: 
\begin{equation}
\begin{split}\label{a-eq4.1}
0&\longrightarrow f_*\mathcal O_{Y''}(A-N-Y')\longrightarrow 
f_*\mathcal O_Y(A-N)
\longrightarrow 
f_*\mathcal O_{Y'}(A-N)\\ 
& \overset{\delta}{\longrightarrow} 
R^1f_*\mathcal O_{Y''}(A-N-Y')\longrightarrow 
\cdots . 
\end{split}
\end{equation}
By the strict support condition in Theorem \ref{a-thm3.5} (i), 
every associated subvariety of 
\begin{equation*}
R^1f_*\mathcal O_{Y''}(A-N-Y')
\end{equation*} 
is the $f$-image of some stratum of 
$(Y'', \{B_{Y''}\}+B^{=1}_{Y''}-Y'|_{Y''})$, where 
$K_{Y''}+B_{Y''}:=(K_Y+B_Y)|_{Y''}$, since 
\begin{equation*}
\begin{split}
(A-N-Y')|_{Y''}-(K_{Y''}+\{B_{Y''}\}+B^{=1}_{Y''}-Y'|_{Y''}) 
&= -(K_{Y''}+B_{Y''})\\ 
&\sim _{\mathbb R} -(f^*\omega)|_{Y''}. 
\end{split} 
\end{equation*} 
On the other hand, the support of $f_*\mathcal O_{Y'}(A-N)$ is contained 
in $f(Y')$. 
Hence, the connecting homomorphism $\delta$ in 
\eqref{a-eq4.1} is zero. 
Thus we obtain the following short exact sequence 
\begin{equation}\label{a-eq4.2}
0\to f_*\mathcal O_{Y''}(A-N-Y')\to \mathcal I_{X_{-\infty}}
\to f_*\mathcal O_{Y'}(A-N)\to 0. 
\end{equation} 
We put $\mathcal I_{X'}:=f_*\mathcal O_{Y''}(A-N-Y')$ and 
define a complex analytic space structure on $X'$ by $\mathcal I_{X'}$. 
Then we can check that $f':=f|_{Y'}\colon (Y', B_{Y'})\to 
X'$ and $\omega':=\omega|_{X'}$ satisfy all the desired properties. 
We note that 
the short exact sequence \eqref{a-eq4.2} is 
\begin{equation}\label{a-eq4.3}
0\to \mathcal I_{X'}\to \mathcal I_{X_{-\infty}}\to \mathcal I_{X'_{-\infty}}\to 0. 
\end{equation}
For the details, see, for example, the proof of 
\cite[Theorem 6.3.5 (i)]{fujino-foundations}. 
\end{proof}

The following theorem is an important supplement to 
Theorem \ref{a-thm4.4}. 

\begin{thm}\label{a-thm4.5}
The complex analytic structure on $X'$ of the 
quasi-log complex analytic space 
\begin{equation*}
\bigl(X', \omega', f'\colon (Y', B_{Y'})\to X'\bigr)
\end{equation*} 
defined in Theorem \ref{a-thm4.4} 
is independent of the construction of $f'\colon (Y', B_{Y'})\to X'$. 
Therefore, the defining ideal sheaf $\mathcal I_{X'}$ of $X'$ 
is a globally well-defined coherent ideal sheaf on $X$. 
\end{thm}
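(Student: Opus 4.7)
The plan is to show that any two choices of quasi-log resolution of $[X, \omega]$ produce the same ideal sheaf $\mathcal{I}_{X'}$ on $X$. I will first reduce the comparison to the case of a single projective morphism between resolutions, then verify a sheaf-theoretic identity showing the ideal is invariant under such morphisms, and finally patch globally.

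\medskip
\noindent\emph{Step 1 (Dominating resolution).} Given two quasi-log resolutions $f_i \colon (Y_i, B_{Y_i}) \to X$ for $i=1,2$ of $[X,\omega]$, I apply an analytic analogue of \cite[Proposition 6.3.1]{fujino-foundations} for simple normal crossing pairs to produce a third quasi-log resolution $g \colon (Z, B_Z) \to X$ together with projective morphisms $p_i \colon Z \to Y_i$ satisfying $g = f_i \circ p_i$ and
\begin{equation*}
K_Z + B_Z = p_i^{*}(K_{Y_i} + B_{Y_i}) \qquad (i=1,2).
\end{equation*}
It therefore suffices to treat a single projective morphism $p \colon (Z, B_Z) \to (Y, B_Y)$ over $X$ with $K_Z+B_Z = p^{*}(K_Y+B_Y)$, and to compare the ideal sheaves of $X'$ produced by $f$ and by $g = f \circ p$.

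\medskip
\noindent\emph{Step 2 (Invariance under $p$).} Let $Y'$ (resp.~$Z'$) be the union of strata of $(Y, B_Y)$ (resp.~$(Z, B_Z)$) mapped into $X'$, arranged as a union of irreducible components after a suitable further modification (as in the proof of Theorem \ref{a-thm4.4}), and write $Y'' := Y - Y'$, $Z'' := Z - Z'$. Set $A_Y := \lceil -(B_Y^{<1}) \rceil$, $N_Y := \lfloor B_Y^{>1} \rfloor$, and similarly $A_Z$, $N_Z$. The key identities to establish are the isomorphism
\begin{equation*}
p_{*}\mathcal{O}_{Z''}(A_Z - N_Z - Z') \xrightarrow{\ \sim\ } \mathcal{O}_{Y''}(A_Y - N_Y - Y'),
\end{equation*}
together with the vanishing $R^{q} p_{*}\mathcal{O}_{Z''}(A_Z - N_Z - Z') = 0$ for every $q > 0$. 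Both follow by applying Theorem \ref{a-thm3.5} to $p|_{Z''} \colon Z'' \to Y''$, once one checks that the line bundle
\begin{equation*}
(A_Z - N_Z - Z')|_{Z''} - \bigl(K_{Z''} + \{B_{Z''}\} + B_{Z''}^{=1} - Z'|_{Z''}\bigr)
\end{equation*}
is $\mathbb{R}$-linearly equivalent to the pullback under $p|_{Z''}$ of the analogous bundle on $Y''$; this is a direct divisor computation using $K_Z + B_Z = p^{*}(K_Y + B_Y)$. In particular this pullback is $p$-semi-ample (indeed trivial on fibers), so Theorem \ref{a-thm3.5}(i) forces the associated subvarieties of the higher direct images to be $p$-images of strata of $(Z'', \{B_{Z''}\} + B_{Z''}^{=1} - Z'|_{Z''})$, and a standard chasing via the strict support condition gives both the vanishing and the identification of $p_{*}$.

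\medskip
\noindent\emph{Step 3 (Conclusion).} Combining Step 2 with the Leray spectral sequence for $g = f \circ p$ yields
\begin{equation*}
g_{*}\mathcal{O}_{Z''}(A_Z - N_Z - Z') = f_{*} p_{*} \mathcal{O}_{Z''}(A_Z - N_Z - Z') = f_{*} \mathcal{O}_{Y''}(A_Y - N_Y - Y'),
\end{equation*}
which is the equality of the ideal sheaves of $X'$ defined by the two resolutions. Because the entire construction of Theorem \ref{a-thm4.4} is local on $X$ and the resulting ideal is intrinsically characterized by the identity just established, the locally constructed ideals patch uniquely to a globally defined coherent ideal sheaf $\mathcal{I}_{X'}$ on $X$, as claimed.

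\medskip
The main obstacle will be Step 2: carrying out the divisor bookkeeping to identify the relevant sheaf as a pullback from $Y''$ and verifying that the modifications needed in the proof of Theorem \ref{a-thm4.4} to make $Y'$ and $Z'$ unions of irreducible components can be arranged compatibly with $p$. Once those are in place, the vanishing theorem and strict support condition do the work cleanly.
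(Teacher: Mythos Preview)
Your approach is essentially the same as the paper's: the paper's proof simply invokes \cite[Proposition 6.3.6]{fujino-foundations} for the independence on each relatively compact open set and then observes that the local ideals therefore patch, and that proposition is proved by exactly the dominating-resolution strategy you outline in Steps 1--3. So at the level of strategy you are on target and in fact more explicit than the paper itself.

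One small correction to Step 2: the higher vanishing $R^{q}p_{*}\mathcal O_{Z''}(A_Z-N_Z-Z')=0$ for $q>0$ is neither needed for Step 3 (only the $p_{*}$ identification is used) nor delivered by Theorem \ref{a-thm3.5} as you invoke it, since part (ii) requires the twist to be the pullback of something \emph{ample}, whereas here it is $p$-numerically trivial, and part (i) only constrains associated subvarieties. In the actual argument (as in \cite[Proposition 6.3.6]{fujino-foundations}) the identification $p_{*}\mathcal O_{Z''}(A_Z-N_Z-Z')\simeq \mathcal O_{Y''}(A_Y-N_Y-Y')$ is obtained by direct divisor comparison---writing $(A_Z-N_Z-Z')|_{Z''}$ as the pullback of $(A_Y-N_Y-Y')|_{Y''}$ plus an effective $p$-exceptional correction coming from the rounding---rather than through strict support. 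Your final paragraph already flags this bookkeeping as the main obstacle; just be aware that the strict support condition is not the tool that closes it.
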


\begin{proof}[Sketch of Proof of Theorem \ref{a-thm4.5}]
On any relatively compact open subset $U$ of $X$, 
we defined $\mathcal I_{X'}$ in the proof of 
Theorem \ref{a-thm4.4}. 
By \cite[Proposition 6.3.6]{fujino-foundations}, 
we see that it is independent of the construction. 
Hence we get a globally well-defined 
coherent defining ideal sheaf $\mathcal I_{X'}$ of $X'$. 
This is what we wanted. 
For the details, see the proof of \cite[Proposition 6.3.6]{fujino-foundations}.  
\end{proof}

For various inductive treatments, 
the notion of $\Nqklt(X, \omega)$ is very useful. 

\begin{cor}\label{a-cor4.6}
Let 
\begin{equation*}
\left(X, \omega, f\colon (Y, B_Y)\to X\right)
\end{equation*} 
be a quasi-log complex analytic space. 
The union of $X_{-\infty}$ with all qlc centers of $[X, \omega]$ is 
denoted by $\Nqklt(X, \omega)$, or, more precisely, 
\begin{equation*}
\Nqklt\left(X, \omega, f\colon (Y, B_Y)\to X\right). 
\end{equation*} 
If $\Nqklt(X, \omega)\ne X_{-\infty}$, then, after 
replacing $X$ with any relatively compact open subset of $X$, 
\begin{equation*}
\left[ \Nqklt(X, \omega), \omega|_{\Nqklt(X, \omega)}\right]
\end{equation*} 
naturally becomes a quasi-log complex analytic space by adjunction. 
\end{cor}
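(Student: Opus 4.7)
The plan is to deduce the corollary as an immediate application of the adjunction formula (Theorem \ref{a-thm4.4}), by observing that the set $\Nqklt(X,\omega)$ is, by its very definition, a permissible choice for the closed analytic subspace $X'$ appearing in that theorem.

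First, I would unpack the definition: $\Nqklt(X,\omega)$ is the union of $X_{-\infty}$ with all qlc centers of $[X,\omega]$, and each qlc center is, by Definition \ref{a-def4.1}, a qlc stratum of $[X,\omega]$ (namely, one that is not an irreducible component of $X$). Hence $\Nqklt(X,\omega)$ is precisely the union of $X_{-\infty}$ with a union of some qlc strata of $[X,\omega]$, which is exactly the type of subset $X'$ to which Theorem \ref{a-thm4.4} is designed to apply. The running hypothesis $\Nqklt(X,\omega)\neq X_{-\infty}$ ensures that the resulting non-qlc locus $X'_{-\infty}=X_{-\infty}$ remains a proper analytic subspace of $X'=\Nqklt(X,\omega)$, so condition $X'_{-\infty}\subsetneq X'$ required by Definition \ref{a-def4.1} is satisfied.

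Next, after replacing $X$ with any relatively compact open subset, I would apply Theorem \ref{a-thm4.4} directly with $X'=\Nqklt(X,\omega)$ and $\omega'=\omega|_{X'}$ to obtain a projective morphism
\begin{equation*}
f'\colon (Y', B_{Y'})\to \Nqklt(X,\omega)
\end{equation*}
from an analytic globally embedded simple normal crossing pair such that $\bigl(\Nqklt(X,\omega),\,\omega|_{\Nqklt(X,\omega)},\,f'\bigr)$ is a quasi-log complex analytic space with non-qlc locus equal to $X_{-\infty}$, and whose qlc strata are precisely the qlc strata of $[X,\omega]$ contained in $\Nqklt(X,\omega)$. Moreover, by Theorem \ref{a-thm4.5}, the induced complex analytic structure on $\Nqklt(X,\omega)$ is independent of the choice of $f'$, so the construction is canonical and the defining ideal sheaf $\mathcal I_{\Nqklt(X,\omega)}\subset \mathcal O_X$ is globally well-defined.

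Since every step is a direct citation of Theorems \ref{a-thm4.4} and \ref{a-thm4.5}, there is essentially no obstacle to overcome; the only point requiring mild care is verifying that $\Nqklt(X,\omega)$ literally fits the template $X_{-\infty}\cup(\text{some qlc strata})$, but this is immediate from the defining property that qlc centers are a subclass of qlc strata. Thus the corollary is formally a specialization of the general adjunction theorem to the distinguished subspace consisting of the non-kawamata-log-terminal locus of $[X,\omega]$.
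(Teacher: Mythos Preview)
Your proposal is correct and takes essentially the same approach as the paper, which simply observes that this is a special case of Theorem~\ref{a-thm4.4}. Your additional remarks---that qlc centers are by definition qlc strata, that the hypothesis $\Nqklt(X,\omega)\neq X_{-\infty}$ guarantees $X'_{-\infty}\subsetneq X'$, and the appeal to Theorem~\ref{a-thm4.5} for canonicity---are all valid elaborations that make the one-line reduction more explicit.
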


\begin{proof}
This is a special case of Theorem \ref{a-thm4.4}. 
\end{proof}

If we apply Corollary \ref{a-cor4.6} to Example \ref{a-ex1.5}, 
then $\Nqklt(X, K_X+\Delta)=\Nklt(X, \Delta)$ holds, 
where $\Nklt(X, \Delta)$ denotes the {\em{non-klt locus}} 
of $(X, \Delta)$. 
Moreover, we have $\mathcal I_{\Nqklt(X, K_X+\Delta)}=\mathcal J(X, \Delta)$, 
where $\mathcal J(X, \Delta)$ is the usual 
{\em{multiplier ideal sheaf}} of $(X, \Delta)$ and $\mathcal I_{\Nqklt(X, K_X+
\Delta)}$ is the defining ideal sheaf of $\Nqklt(X, K_X+\Delta)$ on $X$. 

For geometric applications, we need vanishing theorems. 
Of course, they follow from the vanishing theorems 
for analytic simple normal crossing pairs (see 
Theorem \ref{a-thm3.5} (ii) and Theorem \ref{a-thm3.8}). 

\begin{thm}[Vanishing theorem I]\label{a-thm4.7}
Let 
\begin{equation*}
\bigl(X, \omega, f\colon (Y, B_Y)\to X\bigr)
\end{equation*} 
be a quasi-log complex analytic space 
and let $\pi\colon X\to S$ be a projective morphism between 
complex analytic spaces. 
Let $\mathcal L$ be a line bundle on $X$ such that 
$\mathcal L-\omega$ is nef and log big over $S$ with 
respect to $[X, \omega]$, that is, $\mathcal L-\omega$ is nef 
over $S$ and $(\mathcal L-\omega)|_C$ is big over 
$\pi(C)$ for every qlc stratum $C$ of $[X, \omega]$. 
Then 
\begin{equation*}
R^i\pi_*(\mathcal I_{X_{-\infty}}
\otimes \mathcal L)=0
\end{equation*} 
holds for every $i>0$. 
\end{thm}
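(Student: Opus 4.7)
The plan is to reduce Theorem \ref{a-thm4.7} to Theorem \ref{a-thm3.8} by combining the explicit description of $\mathcal{I}_{X_{-\infty}}$ in Definition \ref{a-def4.1}(2) with a splitting of $Y$ according to how its strata are mapped.

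First I would put $A := \lceil -(B_Y^{<1}) \rceil$, $N := \lfloor B_Y^{>1} \rfloor$, and $\Delta := \{B_Y\} + B_Y^{=1}$, so that $(Y, \Delta)$ is an analytic simple normal crossing pair with the same strata as $(Y, B_Y)$. By Definition \ref{a-def4.1}(2) and the projection formula,
\[
\mathcal{I}_{X_{-\infty}} \otimes \mathcal{L} \simeq f_*\bigl(\mathcal{O}_Y(A - N) \otimes f^*\mathcal{L}\bigr),
\]
and rewriting the coefficients yields $(A - N + f^*\mathcal{L}) - (K_Y + \Delta) \sim_{\mathbb{R}} f^*(\mathcal{L} - \omega)$. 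A direct application of Theorem \ref{a-thm3.8} to $f\colon (Y, \Delta) \to X$ with $\mathcal{H} := \mathcal{L} - \omega$ would finish the proof, except that Definition \ref{a-def3.7} demands bigness of $(\mathcal{L}-\omega)|_{f(T)}$ over $\pi(f(T))$ for \emph{every} stratum $T$ of $(Y, \Delta)$; by Definition \ref{a-def4.1}(3) and the hypothesis, this is guaranteed only when $f(T)$ is a qlc stratum, so strata $T$ with $f(T) \subset X_{-\infty}$ must be handled separately.

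To isolate such strata, following the proof of Theorem \ref{a-thm4.4}, I would pass to a projective bimeromorphic modification of the ambient space of $(Y, B_Y)$ after which the union $Y'$ of all strata of $(Y, B_Y)$ whose $f$-image lies in $X_{-\infty}$ is a union of irreducible components of $Y$; set $Y'' := Y - Y'$. From the short exact sequence
\[
0 \to \mathcal{O}_{Y''}\bigl(A - N - Y'|_{Y''} + f^*\mathcal{L}\bigr) \to \mathcal{O}_Y\bigl(A - N + f^*\mathcal{L}\bigr) \to \mathcal{O}_{Y'}\bigl(A - N + f^*\mathcal{L}\bigr) \to 0,
\]
together with the strict support condition (Theorem \ref{a-thm3.5}(i)) applied to the leftmost term---whose associated subvarieties under $R^q(f|_{Y''})_*$ are $f$-images of strata of $(Y'', \Delta|_{Y''})$, hence qlc strata, which are disjoint from the support $f(Y') \subset X_{-\infty}$ of the rightmost term after $f_*$---the connecting homomorphism in the $f_*$ long exact sequence vanishes. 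This yields
\[
0 \to \mathcal{G} \to \mathcal{I}_{X_{-\infty}} \otimes \mathcal{L} \to \mathcal{H} \to 0,
\]
where $\mathcal{G} := f_*\mathcal{O}_{Y''}(A - N - Y'|_{Y''} + f^*\mathcal{L})$ and $\mathcal{H}$ is supported in $X_{-\infty}$. Now Theorem \ref{a-thm3.8} does apply to $f|_{Y''}\colon (Y'', \Delta|_{Y''}) \to X$ (every stratum image is a qlc stratum and therefore satisfies the log-bigness hypothesis), giving $R^p\pi_*\mathcal{G} = 0$ for all $p > 0$.

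The main obstacle is the residual vanishing $R^p\pi_*\mathcal{H} = 0$ for $p > 0$, which combined with the above finishes the proof via the long exact sequence. The key observation is that $f|_{Y'}\colon (Y', B_{Y'}) \to f(Y')$, with $B_{Y'}$ defined by adjunction $K_{Y'} + B_{Y'} = (K_Y + B_Y)|_{Y'}$, endows $f(Y') \subseteq X_{-\infty}$ with its own natural quasi-log complex analytic space structure in which $\mathcal{H}$ corresponds (via pushforward along the inclusion $f(Y') \hookrightarrow X$) to $\mathcal{I}_{(f(Y'))_{-\infty}} \otimes \mathcal{L}|_{f(Y')}$, and $(\mathcal{L}-\omega)|_{f(Y')}$ inherits both nefness over $S$ and log-bigness with respect to this inherited structure. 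I would conclude by induction on a suitable complexity measure (for instance $\dim X$ paired with the number of irreducible components of the qlc resolution), whose base case $Y' = \emptyset$---equivalently $X_{-\infty} = \emptyset$---is immediate from Theorem \ref{a-thm3.8}; this induction is the technical heart of the argument.
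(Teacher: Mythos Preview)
Your setup through the short exact sequence $0 \to \mathcal G \to \mathcal I_{X_{-\infty}}\otimes\mathcal L \to \mathcal H \to 0$ is correct and matches the paper's, but the proposed induction to kill $R^p\pi_*\mathcal H$ does not go through. You already noted, two paragraphs earlier, that the hypothesis gives bigness of $(\mathcal L-\omega)|_{f(T)}$ only when $f(T)$ is a qlc stratum of $[X,\omega]$, i.e.\ only when $f(T)\not\subset X_{-\infty}$. But the qlc strata of any quasi-log structure on $f(Y')$ built from $f|_{Y'}\colon(Y',B_{Y'})\to f(Y')$ are $f$-images of strata of $(Y,B_Y)$ contained in $Y'$, every one of which maps into $X_{-\infty}$ by construction. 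Thus log-bigness with respect to $[f(Y'),\omega|_{f(Y')}]$ is precisely what the hypothesis of Theorem~\ref{a-thm4.7} does \emph{not} supply, and the assertion in your last paragraph that it is ``inherited'' contradicts the very obstruction you had just isolated.

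The paper's route is much shorter: rather than induct, it observes that $\mathcal H=0$. In the construction from the proof of Theorem~\ref{a-thm4.4} with $X':=X_{-\infty}$, one checks directly that the inclusion
\[
f_*\mathcal O_{Y''}(A-N-Y')\hookrightarrow f_*\mathcal O_Y(A-N)=\mathcal I_{X_{-\infty}}
\]
is already an isomorphism (equivalently $f_*\mathcal O_{Y'}(A-N)=0$). Granting this, your $\mathcal G$ coincides with $\mathcal I_{X_{-\infty}}\otimes\mathcal L$, and the single application of Theorem~\ref{a-thm3.8} to $f|_{Y''}\colon(Y'',\Delta|_{Y''})\to X$ that you already carried out finishes the proof, with no recursion on a smaller quasi-log space.
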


\begin{proof}[Sketch of Proof of Theorem \ref{a-thm4.7}]
We take an arbitrary point $s\in S$. 
It is sufficient to prove 
$R^i\pi_*(\mathcal I_{X_{-\infty}}
\otimes \mathcal L)=0$ for every $i>0$ on a relatively compact 
open neighborhood $U_s$ of $s\in S$. 
We replace $X$ and $S$ with $\pi^{-1}(U_s)$ and 
$U_s$, respectively. 
From now on, we use the notation in the proof of 
Theorem \ref{a-thm4.4}. 
In the proof of Theorem \ref{a-thm4.4}, 
we put $X':=X_{-\infty}$. 
Then $Y'$ is the union of all strata of $(Y, B_Y)$ mapped to 
$X_{-\infty}$. 
In this situation, 
we can check that the following natural inclusion  
\begin{equation*} 
f_*\mathcal O_{Y''}(A-N-Y')\hookrightarrow f_*\mathcal O_Y(A-N)
\end{equation*} 
is an isomorphism. 
Since 
\begin{equation*}
\left(f^*\mathcal L+(A-N-Y')\right)|_{Y''}-(K_{Y''}+\{B_{Y''}\}+B^{=1}_{Y''}-Y'|_{Y''})
\sim _{\mathbb R} (f^*(\mathcal L-\omega))|_{Y''}, 
\end{equation*} 
we obtain 
\begin{equation*}
\begin{split}
R^i\pi_*(\mathcal L\otimes \mathcal I_{X_{-\infty}}) 
&\simeq R^i\pi_*\left(\mathcal L\otimes f_*\mathcal O_Y(A-N)\right)
\\ & =
R^i\pi_*\left(\mathcal L\otimes f_*\mathcal O_{Y''}(A-N-Y')\right)\\ 
&=R^i\pi_*\left(f_*(f^*\mathcal L\otimes \mathcal O_{Y''}(A-N-Y'))\right)=0
\end{split}
\end{equation*} 
for every $i>0$ by Theorem \ref{a-thm3.8}. 
\end{proof}

The following vanishing theorem and Theorem \ref{a-thm4.4} 
will play a crucial role in the theory of quasi-log complex analytic 
spaces. We can see it as a generalization of 
the Kawamata--Viehweg--Nadel vanishing theorem 
for projective morphisms between complex analytic spaces. 

\begin{thm}[Vanishing theorem II]\label{a-thm4.8}
Let 
\begin{equation*}
\bigl(X, \omega, f\colon (Y, B_Y)\to X\bigr)
\end{equation*} 
be a quasi-log complex analytic space 
and let $X'$ be the union of 
$X_{-\infty}$ with a union of some 
qlc strata of $[X, \omega]$. 
Let $\pi\colon X\to S$ be a projective morphism between 
complex analytic spaces and 
let $\mathcal L$ be a line bundle on $X$ such that 
$\mathcal L-\omega$ is nef over $S$ and $(\mathcal L-\omega)|_C$ is 
big over $\pi(C)$ for every qlc stratum $C$ of $[X, \omega]$ 
which is not contained in $X'$. 
Then 
\begin{equation*}
R^i\pi_*(\mathcal I_{X'}
\otimes \mathcal L)=0
\end{equation*} 
holds for every $i>0$, where $\mathcal I_{X'}$ is the 
defining ideal sheaf of $X'$ on $X$. 
In particular, if $\mathcal L-\omega$ is ample over $S$, 
then 
\begin{equation*}
R^i\pi_*(\mathcal I_{X'}
\otimes \mathcal L)=0
\end{equation*} 
holds for every $i>0$. 
\end{thm}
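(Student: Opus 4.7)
The plan is to reduce the desired vanishing to the Reid--Fukuda type vanishing theorem for analytic simple normal crossing pairs (Theorem \ref{a-thm3.8}), by means of the construction used in the proof of the adjunction formula (Theorem \ref{a-thm4.4}). Since the statement is local on $S$, I would first replace $S$ by a relatively compact open neighborhood of an arbitrary point and $X$ by its preimage, so that Theorem \ref{a-thm4.4} becomes applicable without further restriction.

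Following the proof of Theorem \ref{a-thm4.4}, after a suitable projective bimeromorphic modification of the ambient space of $(Y, B_Y)$, I may assume that $Y'$, the union of all strata of $(Y, B_Y)$ whose $f$-image lies in $X'$, is a union of irreducible components of $Y$. Put $Y'' := Y - Y'$, $A := \lceil -(B_Y^{<1}) \rceil$, $N := \lfloor B_Y^{>1} \rfloor$, and $K_{Y''} + B_{Y''} := (K_Y + B_Y)|_{Y''}$. The proof of Theorem \ref{a-thm4.4} then identifies
\begin{equation*}
\mathcal I_{X'} \simeq f_* \mathcal O_{Y''}(A - N - Y'|_{Y''}),
\end{equation*}
so that by the projection formula $\mathcal I_{X'} \otimes \mathcal L \simeq (f|_{Y''})_* \mathcal O_{Y''}\bigl(f^*\mathcal L + A - N - Y'|_{Y''}\bigr)$. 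The key numerical identity, exactly as in the proof of Theorem \ref{a-thm4.7}, is
\begin{equation*}
\bigl(f^*\mathcal L + A - N - Y'|_{Y''}\bigr) - \bigl(K_{Y''} + \{B_{Y''}\} + B_{Y''}^{=1} - Y'|_{Y''}\bigr) \sim_{\mathbb R} f^*(\mathcal L - \omega)|_{Y''},
\end{equation*}
and $(Y'', \{B_{Y''}\} + B_{Y''}^{=1} - Y'|_{Y''})$ is an analytic simple normal crossing pair whose boundary $\mathbb R$-divisor has coefficients in $[0, 1]$ once one accounts for the fact that, via adjunction, $B_{Y''}^{=1}$ already contains the double locus $Y'|_{Y''}$ with coefficient one.

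I would then apply Theorem \ref{a-thm3.8} to $f|_{Y''}\colon Y'' \to X$ and $\pi\colon X \to S$, together with the above pair and line bundle. Nefness over $S$ of $\mathcal L - \omega$ is given by hypothesis. For log bigness, one must verify that for every stratum $T$ of $(Y'', \{B_{Y''}\} + B_{Y''}^{=1} - Y'|_{Y''})$, the restriction $(\mathcal L - \omega)|_{f(T)}$ is big over $\pi(f(T))$. The essential observation is that such a stratum $T$ is precisely a stratum of $(Y, B_Y)$ contained in $Y''$; by the very definition of $Y'$, its $f$-image is therefore a qlc stratum of $[X, \omega]$ not contained in $X'$, and bigness then follows from the hypothesis. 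Theorem \ref{a-thm3.8} thus yields
\begin{equation*}
R^i\pi_*(f|_{Y''})_* \mathcal O_{Y''}(f^*\mathcal L + A - N - Y'|_{Y''}) = 0
\end{equation*}
for every $i > 0$, which is the desired vanishing.

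The principal technical obstacle is the correct identification of the strata of $(Y'', \{B_{Y''}\} + B_{Y''}^{=1} - Y'|_{Y''})$ with those strata of $(Y, B_Y)$ that happen to lie in $Y''$, which requires careful bookkeeping of how strata of $(Y, B_Y)$ distribute between $Y'$ and $Y''$ and how the adjunction contributes $Y'|_{Y''}$ to $B_{Y''}^{=1}$; the rest of the argument is a direct appeal to Theorem \ref{a-thm3.8}. The final \emph{in particular} assertion, for $\mathcal L - \omega$ ample over $S$, is immediate from the main statement because ampleness over $S$ forces bigness upon restriction to every qlc stratum.
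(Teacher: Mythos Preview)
Your proposal is correct and follows essentially the same approach as the paper's own proof: localize on $S$, invoke the construction from the proof of Theorem \ref{a-thm4.4} to identify $\mathcal I_{X'}$ with $f_*\mathcal O_{Y''}(A-N-Y')$, verify the same $\mathbb R$-linear equivalence, and apply Theorem \ref{a-thm3.8}. Your discussion of the stratum bookkeeping needed to check the log-bigness hypothesis of Theorem \ref{a-thm3.8} is more explicit than the paper's sketch, which simply asserts the vanishing without spelling out that verification, but the underlying argument is the same.
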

\begin{proof}[Sketch of Proof of Theorem \ref{a-thm4.8}]
We take an arbitrary point $s\in S$. 
It is sufficient to prove $R^i\pi_*(\mathcal I_{X'}\otimes 
\mathcal L)=0$ for every $i>0$ on a relatively compact open neighborhood 
$U_s$ of $s\in S$. 
Therefore, we replace $S$ and $X$ with 
$U_s$ and $\pi^{-1}(U_s)$, respectively. 
From now on, we use the same notation as in the proof of 
Theorem \ref{a-thm4.4}. 
We note that 
\begin{equation*}
\left(f^*\mathcal L+(A-N-Y')\right)|_{Y''}-(K_{Y''}+\{B_{Y''}\}+B^{=1}_{Y''}-Y'|_{Y''})
\sim _{\mathbb R} (f^*(\mathcal L-\omega))|_{Y''}
\end{equation*} 
holds. By Theorem \ref{a-thm3.8}, 
we obtain 
\begin{equation*}
\begin{split}
R^i\pi_*(\mathcal L\otimes \mathcal I_{X'}) 
&= R^i\pi_*\left(\mathcal L\otimes f_*\mathcal O_{Y''}(A-N-Y')\right)\\ 
&= R^i\pi_*\left(f_*(f^*\mathcal L\otimes \mathcal O_{Y''}(A-N-Y'))\right)=0
\end{split}
\end{equation*} 
for every $i>0$. 
We finish the proof. 
\end{proof}

As in the algebraic case, the following important property holds. 

\begin{lem}\label{a-lem4.9}
Let $[X, \omega]$ be a quasi-log complex analytic space with 
$X_{-\infty}=\emptyset$, that is, 
$[X, \omega]$ is a quasi-log canonical pair. 
We assume that every qlc stratum of $[X, \omega]$ 
is an irreducible component of $X$, equivalently, $\Nqklt(X, \omega)=
\emptyset$. 
Then $X$ is normal. 
\end{lem}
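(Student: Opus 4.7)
My strategy is to factor the quasi-log resolution $f\colon Y\to X$ through the normalization $\nu\colon X^\nu\to X$ of $X$, and then to use the defining isomorphism of Definition \ref{a-def4.1}(2) to force $\nu_*\mathcal O_{X^\nu}=\mathcal O_X$. Since $\nu$ is finite and bimeromorphic, this identity immediately implies that $\nu$ is an isomorphism, whence $X=X^\nu$ is normal.

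I would first establish that $X$ is reduced and that the desired factorization exists. Because $X_{-\infty}=\emptyset$, the defining property reads $\mathcal O_X\simeq f_*\mathcal O_Y(\lceil -(B_Y^{<1})\rceil-\lfloor B_Y^{>1}\rfloor)$, exhibiting $\mathcal O_X$ as a subsheaf of the pushforward of an invertible subsheaf of meromorphic functions on $Y$; this already forces $\mathcal O_X$ to have no nilpotents. Moreover, with $\Delta:=\{B_Y\}+B_Y^{=1}$ one checks
\[
\lceil -(B_Y^{<1})\rceil-\lfloor B_Y^{>1}\rfloor-(K_Y+\Delta)\sim_{\mathbb R}-(K_Y+B_Y)\sim_{\mathbb R}-f^*\omega,
\]
which is $f$-trivial and hence $f$-semi-ample, so the strict support condition (Theorem \ref{a-thm3.5}(i)) applies and shows that the associated subvarieties of $\mathcal O_X$ are $f$-images of $(Y,B_Y)$-strata, which by hypothesis are exactly the irreducible components of $X$. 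In particular, every irreducible component of $Y$ is smooth and maps surjectively onto an irreducible component of $X$; by the universal property of normalization there is therefore a unique projective morphism $g\colon Y\to X^\nu$ with $f=\nu\circ g$.

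Next, by a projective bimeromorphic modification of the ambient space $M$ of $(Y,B_Y)$ (following the procedure used in the proof of Theorem \ref{a-thm4.4}, based on Proposition 6.3.1 of \cite{fujino-foundations}, and preserving the intrinsic quasi-log structure on $X$ by Theorem \ref{a-thm4.5}), I would reduce to the case $\lfloor B_Y^{>1}\rfloor=0$, so that $E:=\lceil -(B_Y^{<1})\rceil-\lfloor B_Y^{>1}\rfloor=\lceil -(B_Y^{<1})\rceil\geq 0$. In this situation, the chain
\[
\mathcal O_X\hookrightarrow \nu_*\mathcal O_{X^\nu}\hookrightarrow \nu_*g_*\mathcal O_Y=f_*\mathcal O_Y\hookrightarrow f_*\mathcal O_Y(E)\xrightarrow{\sim}\mathcal O_X
\]
has composition equal to the identity, so every inclusion in the chain is an equality. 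In particular $\nu_*\mathcal O_{X^\nu}=\mathcal O_X$, as required.

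The main obstacle is the reduction making $\lfloor B_Y^{>1}\rfloor=0$ while preserving the global simple normal crossing structure of $(Y,B_Y)$, the $\mathbb R$-linear equivalence $f^*\omega\sim_{\mathbb R}K_Y+B_Y$, and the hypothesis $\Nqklt(X,\omega)=\emptyset$. This is a standard but delicate dlt-type blow-up procedure on the ambient space, and verifying that the modified data still defines the same quasi-log structure on $X$ relies essentially on Theorem \ref{a-thm4.5}.
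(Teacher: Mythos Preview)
Your approach is correct and is essentially the same as the paper's: both hinge on the sandwich
\[
\mathcal O_X\hookrightarrow f_*\mathcal O_Y\hookrightarrow f_*\mathcal O_Y\bigl(\lceil -(B_Y^{<1})\rceil\bigr)\simeq \mathcal O_X,
\]
from which $\mathcal O_X\simeq f_*\mathcal O_Y$ and normality follow (the paper then reduces to irreducible $X$ and cites \cite[Theorem~7.1]{fujino-cone-contraction}, whereas you factor through the normalization directly---this is the same underlying computation).

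However, your ``main obstacle'' is illusory: no dlt-type modification is needed, because $X_{-\infty}=\emptyset$ already forces $\lfloor B_Y^{>1}\rfloor=0$. Indeed, the natural map in Definition~\ref{a-def4.1}(2) is the composite $\mathcal O_X\to f_*\mathcal O_Y\hookrightarrow f_*\mathcal O_Y(A)$ with $A=\lceil -(B_Y^{<1})\rceil$, and the hypothesis says its image is exactly $f_*\mathcal O_Y(A-N)$ with $N=\lfloor B_Y^{>1}\rfloor$. The image of $1\in\mathcal O_X$ is the constant function $1\in f_*\mathcal O_Y$, so one must have $1\in\Gamma\bigl(f^{-1}(U),\mathcal O_Y(A-N)\bigr)$ for every open $U\subset X$, i.e.\ $A-N\geq 0$ on $Y$. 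Since $\Supp A$ and $\Supp N$ are disjoint (they come from $B_Y^{<1}$ and $B_Y^{>1}$ respectively) and $N\geq 0$, this forces $N=0$. Thus $E=\lceil -(B_Y^{<1})\rceil\geq 0$ from the outset and your chain of inclusions is immediate; the appeal to Theorem~\ref{a-thm4.5} and to Theorem~\ref{a-thm3.5}(i) for reducedness are then unnecessary (reducedness follows at once from $\mathcal O_X\simeq f_*\mathcal O_Y$). One small point worth making explicit in your write-up: the lifts $Y_i\to X^\nu$ on the irreducible components glue because every stratum of $(Y,B_Y)$, in particular every component of $Y_i\cap Y_j$, dominates an irreducible component of $X$, so the two lifts agree over the dense normal locus and hence everywhere.
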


\begin{proof}[Sketch of Proof of Lemma \ref{a-lem4.9}]
Let $f\colon (Y, B_Y)\to X$ be a quasi-log resolution. 
By the assumption that $X_{-\infty}$ is empty, 
we see that the natural map 
\begin{equation*}
\mathcal O_X\to f_*\mathcal O_Y(\lceil -(B^{<1}_Y)\rceil)
\end{equation*} 
is an isomorphism. 
This implies that $\mathcal O_X\simeq f_*\mathcal O_Y$ holds. 
Hence $f$ has connected fibers. 
Therefore, every connected component of $X$ is an 
irreducible component of $X$ by the assumption that 
every stratum of $[X, \omega]$ is an irreducible component of $X$. 
Thus, we may assume that $X$ is irreducible and every stratum of $Y$  
is mapped onto $X$. In this case, it is well known and 
is easy to prove that $X$ is normal. 
For the details, see, 
for example, 
the proof of \cite[Theorem 3.4.1]{fujino-cone-contraction}. 
\end{proof}

By Theorem \ref{a-thm4.4} and Lemma \ref{a-lem4.9}, 
we can prove:  

\begin{thm}[Basic properties of qlc strata]\label{a-thm4.10}
Let $[X, \omega]$ be a quasi-log complex analytic space with 
$X_{-\infty}=\emptyset$, 
that is, $[X, \omega]$ is a quasi-log canonical pair. 
Then we have the following properties. 
\begin{itemize}
\item[(i)] The intersection of two qlc strata is a union 
of some qlc strata. 
\item[(ii)] For any point $x\in X$, 
the set of all qlc strata passing through 
$x$ has a unique minimal {\em{(}}with respect to the inclusion{\em{)}} 
element $C_x$. Moreover, 
$C_x$ is normal at $x$. 
\end{itemize}
\end{thm}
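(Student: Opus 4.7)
The strategy is to prove (i) via a Mayer--Vietoris computation on a quasi-log resolution of $C_1\cup C_2$, and to deduce (ii) from (i) together with the normality criterion of Lemma \ref{a-lem4.9}. Throughout, I shrink $X$ to a relatively compact open subset so that Theorem \ref{a-thm4.4} applies.

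For (i): the case where one of $C_1,C_2$ contains the other is trivial, so assume they are incomparable. Put $W:=C_1\cup C_2$; by Theorem \ref{a-thm4.4}, $[W,\omega|_W]$ is a qlc pair with some quasi-log resolution $g\colon (Z,B_Z)\to W$. After the bimeromorphic modification of the ambient space of $(Z,B_Z)$ used in the proof of Theorem \ref{a-thm4.4}, I arrange that $Z=Z_1\cup Z_2$, where $Z_i$ is the union of those irreducible components of $Z$ whose $g$-image is $C_i$, so that $g(Z_i)=C_i$. Set $A:=\lceil -(B_Z^{<1})\rceil$, $N:=\lfloor B_Z^{>1}\rfloor$, and $Z_{12}:=Z_1\cap Z_2$, which is a union of strata of $(Z,B_Z)$. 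Adjunction for simple normal crossing divisors shows $(A-N)|_{Z_i}=A_{Z_i}-N_{Z_i}$ for the SNC pair $(Z_i,B_{Z_i})$, so Theorem \ref{a-thm4.4} applied to $[C_i,\omega|_{C_i}]$ yields $g_*\mathcal O_{Z_i}(A-N)=\mathcal O_{C_i}$. Pushing forward the Mayer--Vietoris short exact sequence
\begin{equation*}
0\to \mathcal O_Z(A-N)\to \mathcal O_{Z_1}(A-N)\oplus \mathcal O_{Z_2}(A-N)\to \mathcal O_{Z_{12}}(A-N)\to 0
\end{equation*}
by $g$ and establishing that the induced connecting homomorphism into $R^1g_*\mathcal O_Z(A-N)$ vanishes then produces a short exact sequence
\begin{equation*}
0\to \mathcal O_W\to \mathcal O_{C_1}\oplus \mathcal O_{C_2}\to g_*\mathcal O_{Z_{12}}(A-N)\to 0,
\end{equation*}
which, by comparison with the classical Mayer--Vietoris sequence of $W=C_1\cup C_2$, identifies $g_*\mathcal O_{Z_{12}}(A-N)$ with $\mathcal O_{C_1\cap C_2}$. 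In particular $g(Z_{12})=C_1\cap C_2$ set-theoretically, and the strata $S\subseteq Z_{12}$ of $(Z,B_Z)$ give qlc strata $g(S)\subseteq C_1\cap C_2$ of $[X,\omega]$ whose union is all of $C_1\cap C_2$, establishing (i).

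For (ii): fix $x\in X$. The finiteness of the collection of qlc strata, together with the fact that $x$ lies in some irreducible component of $X$ (itself a qlc stratum), lets me pick a minimal qlc stratum $C_x$ through $x$. Theorem \ref{a-thm4.4} applied to $C_x$ makes $[C_x,\omega|_{C_x}]$ a qlc pair whose qlc strata are the qlc strata of $[X,\omega]$ contained in $C_x$. Since $C_x$ is irreducible and minimal through $x$, its qlc centers (i.e., the qlc strata strictly contained in $C_x$) do not pass through $x$, so $x\notin \Nqklt(C_x,\omega|_{C_x})$. On a neighborhood of $x$ in $C_x$ disjoint from $\Nqklt(C_x,\omega|_{C_x})$, Lemma \ref{a-lem4.9} yields normality, i.e., $C_x$ is normal at $x$. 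For uniqueness, if $C$ and $C'$ are two minimal qlc strata through $x$, then by (i) the intersection $C\cap C'$ is a union of qlc strata and contains $x$, so some qlc stratum $C''\subseteq C\cap C'$ passes through $x$; minimality of both $C$ and $C'$ then forces $C=C''=C'$.

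The main obstacle will be the vanishing of the connecting homomorphism used in (i). The strict support condition of Theorem \ref{a-thm3.5}(i) tells us only that the associated subvarieties of $R^1g_*\mathcal O_Z(A-N)$ are $g$-images of strata of $(Z,B_Z)$, and such images can a priori lie inside $C_1\cap C_2$, so support considerations alone are not enough to kill the connecting map. The resolution, following the blueprint of the proof of Theorem \ref{a-thm4.4}, is to iterate the ideal-sheaf adjunction: the short exact sequence $0\to \mathcal O_{Z_2}(A-N-Z_{12})\to \mathcal O_Z(A-N)\to \mathcal O_{Z_1}(A-N)\to 0$ pushes forward to $0\to \mathcal I_{C_1,W}\to \mathcal O_W\to \mathcal O_{C_1}\to 0$ by Theorem \ref{a-thm4.4}, and combining this with the analogous restriction sequence $0\to \mathcal O_{Z_2}(A-N-Z_{12})\to \mathcal O_{Z_2}(A-N)\to \mathcal O_{Z_{12}}(A-N)\to 0$ on $Z_2$, whose pushforward connecting map is controlled by the strict support condition for the shifted boundary (in which $Z_{12}$ no longer appears in the reduced part), identifies $g_*\mathcal O_{Z_{12}}(A-N)$ with $\mathcal O_{C_2}/\mathcal I_{C_1\cap C_2,C_2}\simeq \mathcal O_{C_1\cap C_2}$ and forces the Mayer--Vietoris connecting map to vanish.
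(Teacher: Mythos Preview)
Your argument for (ii) is correct and matches the paper's. The problem is in (i).

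The decomposition $Z=Z_1\cup Z_2$ you claim cannot be arranged in general. After the modification in the proof of Theorem~\ref{a-thm4.4}, each irreducible component $V$ of $Z$ has $g(V)$ equal to \emph{some} qlc stratum of $[X,\omega]$ contained in $C_1\cup C_2$, but there is no reason this stratum must be $C_1$ or $C_2$ itself; it may be a proper substratum lying inside $C_1\cap C_2$. Such a component belongs to neither of your $Z_i$. If instead you take $Z_1$ to be the union of all components mapping \emph{into} $C_1$ (so that Theorem~\ref{a-thm4.4} gives $g_*\mathcal O_{Z_1}(A-N)=\mathcal O_{C_1}$ and the first adjunction sequence pushes forward as you say) and $Z_2:=Z-Z_1$, then $Z_2$ is \emph{not} the adjunction piece for $C_2$: the quasi-log resolution of $[C_2,\omega|_{C_2}]$ produced by Theorem~\ref{a-thm4.4} uses the union $\tilde Z_2$ of all components mapping into $C_2$, and $\tilde Z_2$ may strictly contain $Z_2$ (precisely by the components mapping into $C_1\cap C_2$). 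So the assertion $g_*\mathcal O_{Z_2}(A-N)=\mathcal O_{C_2}$, which your final paragraph relies on to identify $g_*\mathcal O_{Z_{12}}(A-N)$ with $\mathcal O_{C_1\cap C_2}$, is unjustified. The two adjunction pieces for $C_1$ and $C_2$ are not complementary in $Z$, and this is exactly the obstruction to running Mayer--Vietoris cleanly.

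The paper's proof of (i) bypasses all of this by using Lemma~\ref{a-lem4.9} \emph{on $C_1\cup C_2$ itself} rather than only on the minimal stratum: since $X':=C_1\cup C_2$ is reducible at any $P\in C_1\cap C_2$, it is not normal there, so by Lemma~\ref{a-lem4.9} there is a qlc center $C^\dag$ of $[X',\omega|_{X'}]$ through $P$; being irreducible and not an irreducible component of $X'$, one has $C^\dag\subsetneq C_1$ or $C^\dag\subsetneq C_2$. If $C^\dag\subset C_1\cap C_2$ we are done; otherwise replace the larger of $C_1,C_2$ by $C^\dag$ and repeat, terminating by dimension. This avoids any sheaf computation on $Z$.
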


\begin{proof}[Sketch of Proof of Theorem \ref{a-thm4.10}]
Let $C_1$ and $C_2$ be two qlc strata of $[X, \omega]$. 
We may assume that $C_1\ne C_2$ with $C_1\cap C_2\ne 
\emptyset$. 
We take $P\in C_1\cap C_2$. 
It is sufficient to find a qlc stratum $C$ such that $P\in 
C\subset C_1\cap C_2$ for the proof of (i). 
We put $X':=C_1\cup C_2$ and $\omega':=\omega|_{X'}$. 
Then, after shrinking $X$ around $P$ suitably, 
$[X', \omega']$ becomes a quasi-log complex analytic space 
by adjunction (see Theorem \ref{a-thm4.4}). 
Note that $X'$ is reducible at $P$. 
Therefore, by Lemma \ref{a-lem4.9} above, 
there exists a qlc center $C^\dag$ of $[X, \omega]$ 
such that $P\in C^\dag\subset X'$. 
By this fact, we can easily prove (i). 
For the details, see the proof of 
\cite[Theorem 6.3.11 (i)]{fujino-foundations}. 
The uniqueness of the minimal (with respect to 
the inclusion) qlc stratum follows from (i) and 
the normality of the minimal qlc stratum follows 
from Lemma \ref{a-lem4.9}. 
So we finish the proof of (ii). 
\end{proof}

Theorem \ref{a-thm4.10} will play a crucial role in the theory 
of quasi-log complex analytic spaces. 

\section{Some basic operations on 
quasi-log structures}\label{a-sec5}

In this section, we will discuss some basic operations on 
quasi-log structures in the complex analytic setting. 
Almost all of them are well known for quasi-log schemes 
(see \cite[Section 3]{fujino-reid-fukuda}, 
\cite[Chapter 6]{fujino-foundations}, 
\cite[Subsection 4.3]{fujino-cone}, 
and so on). 
They will play an important role in the subsequent sections. 

\begin{lem}\label{a-lem5.1}
Let 
\begin{equation*}
\left(X, \omega, f\colon (Y, B_Y)\to X\right)
\end{equation*} 
be a quasi-log complex analytic space and 
let $P\in X$ be a point. 
Let $D_1, \ldots, D_k$ be effective 
Cartier divisors on $X$ such that $P\in \Supp D_i$ for every $i$. 
We assume that no irreducible component of $Y$ is mapped 
into $\bigcup _{i=1}^k\Supp D_i$. 
Then, after replacing $X$ with any relatively compact 
open neighborhood of $P$, 
$\left[X, \omega+\sum _{i=1}^kD_i\right]$ naturally becomes a quasi-log 
complex analytic space. 
Moreover, if $\Nqlc\left(X, \omega+\sum _{i=1}^k D_i\right)
=\emptyset$, 
then $k\leq \dim _P X$ holds. 
More precisely, $k\leq \dim _P C_P$ holds, 
where $C_P$ is the minimal qlc stratum of $[X, \omega]$ 
passing through $P$. 
\end{lem}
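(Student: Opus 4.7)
The plan is to first construct a new quasi-log structure for $\omega + \sum_{i=1}^{k} D_i$ by modifying the given quasi-log resolution, verify the conditions of Definition \ref{a-def4.1}, and then deduce the dimension bound via iterated adjunction.

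After shrinking $X$ to a relatively compact open neighborhood of $P$, let $M$ denote the ambient simple normal crossing space of $(Y, B_Y)$. Because no irreducible component of $Y$ is mapped into $\bigcup_i \Supp D_i$, each $f^*D_i$ is a well-defined effective Cartier divisor on $Y$. By a projective bimeromorphic modification of $M$ in the spirit of \cite[Proposition 6.3.1]{fujino-foundations}, I may assume that $\Supp\bigl(B_Y + \sum_{i=1}^{k} f^*D_i\bigr)$ is simple normal crossing on $Y$ embedded in $M$. Setting $\widetilde{B}_Y := B_Y + \sum_{i=1}^{k} f^*D_i$ gives $f^*\!\bigl(\omega + \sum_i D_i\bigr) \sim_{\mathbb{R}} K_Y + \widetilde{B}_Y$, establishing condition (1) of Definition \ref{a-def4.1}. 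Conditions (2) and (3) require computing $\lceil -(\widetilde{B}_Y^{<1})\rceil - \lfloor \widetilde{B}_Y^{>1}\rfloor$ and showing that $f_*$ of the corresponding twisted line bundle is a coherent ideal sheaf on $X$ defining a closed analytic subspace $X_{-\infty}'$, with new qlc strata equal to the $f$-images of $(Y, \widetilde{B}_Y)$-strata not contained in $X_{-\infty}'$. The verification parallels the exact-sequence argument in the proof of Theorem \ref{a-thm4.4}, combined with Theorem \ref{a-thm3.5}(i) to control the associated subvarieties of the relevant $R^q f_*$, and the crucial input is that $\sum_i f^*D_i$ is effective so that the rounding shifts occur monotonically from those of $B_Y$.

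For the dimension bound, suppose $\Nqlc(X, \omega + \sum_{i=1}^{k} D_i) = \emptyset$. Because adding an effective Cartier pullback to $\widetilde{B}_Y$ can only increase coefficients, the vanishing of the non-qlc locus at stage $k$ forces every intermediate pair $[X, \omega + \sum_{i=1}^{j} D_i]$ with $0 \leq j \leq k$ to be quasi-log canonical; Theorem \ref{a-thm4.10} then supplies a unique minimal qlc stratum $C_P^{(j)}$ through $P$, normal at $P$, with $C_P^{(0)} = C_P$. By adjunction (Theorem \ref{a-thm4.4}), $[C_P^{(j-1)}, (\omega + \sum_{i=1}^{j-1} D_i)|_{C_P^{(j-1)}}]$ is quasi-log canonical, and a component-by-component analysis of the strata shows that if $C_P^{(j-1)}$ were contained in $\Supp D_j$ then some coefficient-$1$ component through a stratum mapping to $C_P^{(j-1)}$ would be pushed past $1$ by $f^*D_j$, contradicting $\Nqlc(X, \omega + \sum_{i=1}^{j} D_i) = \emptyset$. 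Hence $C_P^{(j)} \subsetneq C_P^{(j-1)}$ with $\dim_P C_P^{(j)} \leq \dim_P C_P^{(j-1)} - 1$, and iterating gives $0 \leq \dim_P C_P^{(k)} \leq \dim_P C_P - k$, whence $k \leq \dim_P C_P \leq \dim_P X$.

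The principal obstacle is the bookkeeping for condition (2): one must verify that rounding interacts properly with the pushforward $f_*$ and with the addition of $\sum_i f^*D_i$ to produce a coherent ideal sheaf that is globally well-defined and independent of the chosen modification, yielding the analogue of Theorem \ref{a-thm4.5} for the augmented pair. Once this foundation is in place, the dimension bound reduces to the short induction sketched above.
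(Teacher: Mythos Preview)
Your construction of the quasi-log structure on $[X,\omega+\sum_i D_i]$ is exactly the paper's: modify the ambient space so that $(Y,\sum_i f^*D_i+\Supp B_Y)$ is an analytic globally embedded simple normal crossing pair, and then use $f\colon (Y,B_Y+\sum_i f^*D_i)\to X$ as the new quasi-log resolution. The paper then defers the dimension bound entirely to \cite[Lemma~6.3.13]{fujino-foundations}; your iterated-adjunction sketch is the standard shape of that argument.

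There is, however, one step in your sketch that is not yet justified. You correctly argue $C_P^{(j-1)}\not\subset\Supp D_j$: a minimal stratum $S$ over $C_P^{(j-1)}$ contained in $\Supp f^*D_j$ would, by the simple normal crossing hypothesis, force one of the coefficient-$1$ components cutting out $S$ to coincide with a component of $f^*D_j$ and hence acquire coefficient $>1$. But the conclusion ``hence $C_P^{(j)}\subsetneq C_P^{(j-1)}$'' requires more: you must exhibit a \emph{new} stratum of $(Y,\widetilde B_Y^{(j)})$ through a point $w\in f^{-1}(P)$ whose image is strictly contained in $C_P^{(j-1)}$. Picking $w$ in a minimal stratum $S$ over $C_P^{(j-1)}$ and a component $F$ of $\Supp f^*D_j$ through $w$, one has an integer multiplicity $m_F\geq 1$ and, by the qlc bound, $b_F+m_F\leq 1$ with $b_F:=\mathrm{coeff}_F\widetilde B_Y^{(j-1)}<1$; but $b_F$ may be negative enough that $b_F+m_F<1$, in which case $F$ is not a coefficient-$1$ component of $\widetilde B_Y^{(j)}$ and no new stratum appears at $w$. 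To rule this out you need the isomorphism $\mathcal O_X\xrightarrow{\sim} f_*\mathcal O_Y(\lceil -(B_Y^{<1})\rceil)$ coming from Definition~\ref{a-def4.1}(2) over $X\setminus X_{-\infty}$, which constrains how negative $B_Y$ can be along divisors that are not $f$-exceptional (compare \cite[Claim~3.5]{fujino-relative-span}, invoked for the same purpose in the proof of Theorem~\ref{a-thm6.1}). Once this is in place your inductive drop goes through and $k\leq\dim_P C_P$ follows.
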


\begin{proof}[Sketch of Proof of Lemma \ref{a-lem5.1}]
After replacing $X$ with a relatively compact open neighborhood 
of $P$, 
we may assume that 
\begin{equation*}
\left(Y, \sum _{i=1}^k f^*D_i +\Supp B_Y\right)
\end{equation*} 
is an analytic globally embedded simple normal crossing pair 
by \cite[Proposition 6.3.1]{fujino-foundations} and 
\cite{bierstone-milman2}. 
Then we see that 
\begin{equation*}
f\colon \left(Y, B_Y+\sum _{i=1}^k f^*D_i\right)\to X
\end{equation*} 
naturally 
gives a quasi-log structure on $\left[X, \omega+\sum _{i=1}^k D_i\right]$. 
When we prove $k\leq \dim _P X$, 
we can freely replace $X$ with a relatively compact open neighborhood 
of $P$. 
Hence, we can easily see that the proof of 
\cite[Lemma 6.3.13]{fujino-foundations} works 
with only some minor modifications. 
\end{proof}

Lemma \ref{a-lem5.2} is a very 
basic result in the theory of quasi-log complex analytic spaces. 

\begin{lem}\label{a-lem5.2} 
Let $[X, \omega]$ be a quasi-log complex analytic space 
and let $D$ be an effective $\mathbb R$-Cartier 
divisor on $X$. 
This means that $D$ is a finite positive 
$\mathbb R$-linear combination of effective Cartier divisors. 
Then, after replacing $X$ with any relatively compact open 
subset of $X$, $[X, \omega+D]$ naturally 
becomes a quasi-log complex analytic space. 
\end{lem}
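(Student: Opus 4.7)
The plan is to produce a quasi-log resolution for $[X, \omega+D]$ by modifying the given one $f\colon (Y, B_Y)\to X$ for $[X,\omega]$, folding $D$ into the boundary. After restricting $X$ to a relatively compact open subset, we may assume $\omega$, $B_Y$, and $D$ are all globally $\mathbb R$-Cartier, and write $D=\sum_{i=1}^k d_i D_i$ with $d_i>0$ and $D_i$ effective Cartier. Let $M\supset Y$ denote the smooth ambient space of $(Y,B_Y)$. Using Hironaka-type resolution in the complex analytic category (as in the proof of Lemma~\ref{a-lem5.1}, via \cite[Proposition~6.3.1]{fujino-foundations} and \cite{bierstone-milman2}), I would construct a projective bimeromorphic morphism $g\colon M'\to M$ from a smooth $M'$ such that, writing $Y'$ for the strict transform of $Y$ and $g'\colon Y'\to Y$ for its restriction, $Y'$ is simple normal crossing in $M'$ with $K_{Y'}+B_{Y'}=g'^*(K_Y+B_Y)$, and on each irreducible component of $Y'$ the pulled-back ideal $(f\circ g')^*\mathcal I_{D_i}$ is locally monomial with support forming a simple normal crossing configuration together with $B_{Y'}$ and the exceptional divisors. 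This principalization makes the divisorial pullback $(f')^*D_i$ well-defined for $f':=f\circ g'$, even on components of $Y'$ whose $X$-image lies in $\Supp D_i$.

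Setting $B'_{Y'}:=B_{Y'}+\sum_{i=1}^k d_i(f')^*D_i$, property~(1) of Definition~\ref{a-def4.1} is immediate:
\begin{equation*}
f'^*(\omega+D)=g'^*f^*\omega+\textstyle\sum_i d_i(f')^*D_i\sim_{\mathbb R} g'^*(K_Y+B_Y)+\sum_i d_i(f')^*D_i=K_{Y'}+B'_{Y'}.
\end{equation*}
For property~(2), I define $\mathcal I_{X'_{-\infty}}:=f'_*\mathcal O_{Y'}\bigl(\lceil-(B'^{<1}_{Y'})\rceil-\lfloor B'^{>1}_{Y'}\rfloor\bigr)$; this is coherent on $X$ by projectivity of $f'$, and I would verify it defines a proper closed analytic subspace $X'_{-\infty}\subsetneq X$ containing $X_{-\infty}\cup\Supp D$ by combining the strict support condition of Theorem~\ref{a-thm3.5}(i) with the original isomorphism $\mathcal I_{X_{-\infty}}\cong f_*\mathcal O_Y(\lceil -(B_Y^{<1})\rceil-\lfloor B_Y^{>1}\rfloor)$ provided by the quasi-log structure on $[X,\omega]$. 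Property~(3) then follows by tracing strata of $(Y', B'_{Y'})$ through $g'$.

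The main obstacle is the careful handling of components $Y_j$ of $Y$ mapping entirely into $\Supp D_i$ for some $i$, for which $(f')^*D_i$ fails to be a divisor in the naive sense. The principalization built into $g$ addresses this: after suitable blow-ups, the pulled-back defining section of $D_i$ acquires a purely divisorial zero locus on each component of $Y'$, and the monomial factor defines $(f')^*D_i$ as a legitimate effective divisor. Components of $Y'$ lying over $\Supp D_i$ then contribute coefficients at least $d_i>0$ to $B'_{Y'}$ via the $(f')^*D_i$ term, which combined with the already-present coefficients in $B_{Y'}$ typically pushes them into $\Supp\lfloor B'^{>1}_{Y'}\rfloor$, forcing $\Supp D\subset X'_{-\infty}$ --- the expected output $\Nqlc(X,\omega+D)=\Nqlc(X,\omega)\cup\Supp D$. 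The strict containment $X'_{-\infty}\subsetneq X$ is guaranteed because qlc strata of $[X,\omega]$ not contained in $\Supp D$ survive as qlc strata of $[X,\omega+D]$.
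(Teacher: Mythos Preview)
Your approach has a genuine gap in the handling of components $Y_j$ of $Y$ mapping into $\Supp D_i$. Principalization cannot rescue this situation: since $D_i$ is Cartier, it is locally cut out by a single function $s$, and on any reduced component $Y_j$ whose image lies set-theoretically in $\{s=0\}$ the pullback $(f|_{Y_j})^*s$ is identically zero. Thus the pulled-back ideal $(f')^*\mathcal I_{D_i}\cdot\mathcal O_{Y_j}$ is the zero ideal, and no sequence of blow-ups of the ambient space $M$ changes this, because the obstruction lives in the morphism $Y_j\to X$, not in the embedding $Y_j\subset M$. There is simply no well-defined effective Cartier divisor $(f')^*D_i$ on such a component, and your proposed $B'_{Y'}$ is not a divisor on $Y'$.

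Relatedly, your expected conclusion $\Nqlc(X,\omega+D)=\Nqlc(X,\omega)\cup\Supp D$ is false in general. Take $X$ smooth, $\omega=K_X$, $B_Y=0$, and $D=\tfrac12 H$ for a smooth hypersurface $H$: then $[X,\omega+D]$ is klt and $\Nqlc(X,\omega+D)=\emptyset$, yet $\Supp D=H\ne\emptyset$. So the heuristic that ``components over $\Supp D$ get pushed into $\lfloor B'^{>1}_{Y'}\rfloor$'' is not correct.

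The paper's proof avoids both issues by a different mechanism: after a bimeromorphic modification of the ambient space, it arranges that the strata of $(Y,B_Y)$ mapped into $\Supp D\cup\Nqlc(X,\omega)$ constitute a union $Y'$ of whole irreducible components, and then \emph{discards} $Y'$ entirely, working only with $Y'':=Y-Y'$. On $Y''$ no stratum maps into $\Supp D$, so $(f'')^*D$ is a genuine effective divisor, and $f''\colon(Y'',\,B_{Y''}+(f'')^*D)\to X$ furnishes the quasi-log resolution for $[X,\omega+D]$. The verification that this yields a quasi-log structure (in particular, the correct identification of $\Nqlc(X,\omega+D)$, which need not contain all of $\Supp D$) is then carried out as in \cite[Lemma~4.20]{fujino-cone}. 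The key missing idea in your proposal is exactly this removal of the offending components rather than an attempt to define the pullback on them.
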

\begin{proof}[Sketch of Proof of Lemma \ref{a-lem5.2}]
Let $f\colon (Y, B_Y)\to X$ be a quasi-log resolution. 
By replacing $X$ with any relatively compact open subset 
of $X$ and taking a suitable projective bimeromorphic 
modification of $M$, 
the ambient space of $(Y, B_Y)$, 
we may assume that the union of all strata of $(Y, B_Y)$ 
mapped to $\Supp D\cup \Nqlc(X, \omega)$ by $f$, 
which is denoted by $Y'$, is a union of some irreducible 
components of $Y$ (see \cite[Proposition 6.3.1]{fujino-foundations} and 
\cite{bierstone-milman2}). 
We put $Y'':=Y-Y'$, $K_{Y''}+B_{Y''}:=(K_Y+B_Y)|_{Y''}$, and 
$f'':=f|_{Y''}$. 
We may further assume that $\left(Y'', B_{Y''}+(f'')^*D\right)$ is an analytic 
globally embedded simple normal crossing pair. 
Then 
\begin{equation*}
\left(X, \omega+D, f''\colon (Y'', B_{Y''})\to X\right)
\end{equation*} 
naturally becomes a quasi-log complex analytic space. 
For the details, see the proof of \cite[Lemma 4.23]{fujino-cone}. 
We note that the quasi-log structure of 
$\left(X, \omega+D, f''\colon (Y'', B_{Y''})\to X\right)$ 
constructed above coincides with that of 
$\left(X, \omega, f\colon (Y, B_Y)\to X\right)$ outside 
$\Supp D$. 
\end{proof}

The following lemma is useful since we can 
reduce various problems to the case 
where $X$ is irreducible (see 
also \cite[Lemmas 4.19 and 4.20]{fujino-cone}). 

\begin{lem}[{see \cite[Lemmas 3.12 and 
3.14]{fujino-reid-fukuda}}]\label{a-lem5.3} 
Let $[X, \omega]$ be a quasi-log complex analytic space 
and 
let $\pi\colon X\to S$ be a projective morphism of 
complex analytic spaces. 
We put $X^\dag:=\overline {X\setminus X_{-\infty}}$, 
the closure of $X\setminus X_{-\infty}$ in $X$, 
with the reduced structure. 
Then, after replacing $S$ with any relatively compact 
open subset of $S$, 
$[X^\dag, \omega^\dag:=\omega|_{X^\dag}]$ naturally becomes 
a quasi-log complex analytic space with 
the following properties. 
\begin{itemize}
\item[(1)] $C$ is a qlc stratum of $[X, \omega]$ 
if and only if $C$ is a qlc stratum of $[X^\dag, \omega^\dag]$. 
\item[(2)] $\mathcal I_{\Nqlc(X^\dag, \omega^\dag)}=
\mathcal I_{\Nqlc(X, \omega)}$ holds. 
\end{itemize}
\end{lem}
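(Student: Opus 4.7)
The plan is to obtain the quasi-log structure on $X^\dag$ by discarding those irreducible components of the source of the given quasi-log resolution $f\colon (Y, B_Y) \to X$ whose $f$-images lie inside $X_{-\infty}$. After replacing $S$ by a relatively compact open subset and applying \cite[Proposition 6.3.1]{fujino-foundations} to modify the ambient space of $(Y, B_Y)$---the same device used in the proofs of Theorem \ref{a-thm4.4} and Lemma \ref{a-lem5.2}---I may assume that the union $Y^\ast$ of all $(Y, B_Y)$-strata mapped into $X_{-\infty}$ is itself a union of irreducible components of $Y$. Put $Y^\dag := Y - Y^\ast$, $B_{Y^\dag} := B_Y|_{Y^\dag}$, and $f^\dag := f|_{Y^\dag}$. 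The strata of $(Y^\dag, B_{Y^\dag})$ are precisely the strata of $(Y, B_Y)$ whose defining components all lie in $Y^\dag$, and these are exactly the $(Y, B_Y)$-strata whose $f$-image is not contained in $X_{-\infty}$, i.e., the sources of the qlc strata of $[X, \omega]$. Since every qlc stratum is an irreducible closed subset of $X$ not contained in $X_{-\infty}$, it lies in $X^\dag$, and hence $f^\dag$ factors through $X^\dag$.

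\textbf{Verification of the axioms.} I verify the three conditions of Definition \ref{a-def4.1} for $\bigl(X^\dag, \omega^\dag, f^\dag\colon (Y^\dag, B_{Y^\dag}) \to X^\dag\bigr)$. Axiom (1) follows by restricting $f^{\ast}\omega \sim_{\mathbb R} K_Y + B_Y$ to $Y^\dag$. The stratum correspondence above immediately yields axiom (3) and property (1) of the lemma. For axiom (2), put $A := \lceil -(B^{<1}_Y)\rceil$, $N := \lfloor B^{>1}_Y \rfloor$, and $W := Y^\ast \cap Y^\dag$, and push forward by $f$ the restriction sequence
\begin{equation*}
0 \to \mathcal O_{Y^\ast}\bigl((A-N)|_{Y^\ast} - W\bigr) \to \mathcal O_Y(A - N) \to \mathcal O_{Y^\dag}\bigl((A-N)|_{Y^\dag}\bigr) \to 0.
\end{equation*}
Applying Theorem \ref{a-thm3.5}(i) on $Y^\ast$, the associated subvarieties of $R^1 f_\ast \mathcal O_{Y^\ast}\bigl((A-N)|_{Y^\ast} - W\bigr)$ are $f$-images of strata of a suitable analytic simple normal crossing pair structure on $Y^\ast$, and in particular are contained in $f(Y^\ast) \subseteq X_{-\infty}$. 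Combined with the given isomorphism $\mathcal I_{X_{-\infty}} \simeq f_\ast \mathcal O_Y(A - N)$ and the argument in the proof of Theorem \ref{a-thm4.4}, I conclude that the natural map $\mathcal O_{X^\dag} \to f^\dag_\ast \mathcal O_{Y^\dag}\bigl(A|_{Y^\dag}\bigr)$ identifies $f^\dag_\ast \mathcal O_{Y^\dag}\bigl((A-N)|_{Y^\dag}\bigr)$ with the image of $\mathcal I_{X_{-\infty}}$ in $\mathcal O_{X^\dag}$. Calling this common ideal $\mathcal I_{X^\dag_{-\infty}}$ simultaneously realizes axiom (2) for $[X^\dag, \omega^\dag]$ and yields property (2) of the lemma.

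\textbf{Main obstacle.} The delicate point is the analysis of the connecting homomorphism in the pushed-forward sequence: in contrast to the proof of Theorem \ref{a-thm4.4}, the target $R^1$ term can have associated subvarieties inside $X^\dag \cap X_{-\infty}$, so one cannot simply compare supports to conclude that the relevant piece of the connecting map vanishes. Extracting the correct vanishing uses the strict support condition in a more careful manner, mirroring the algebraic argument of \cite[Lemmas 3.12 and 3.14]{fujino-reid-fukuda}. Once this vanishing is in place, the remaining verifications, as well as properties (1) and (2), follow by routine bookkeeping.
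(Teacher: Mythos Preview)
Your decomposition differs from the paper's in a way that matters. You discard the union $Y^\ast$ of all $(Y,B_Y)$-strata mapped \emph{into} $X_{-\infty}$. The paper instead discards the union $Y''$ of all strata \emph{not mapped to} $X^\dag=\overline{X\setminus X_{-\infty}}$. These two sets agree only when no stratum maps into $X^\dag\cap X_{-\infty}$; in general $Y''\subsetneq Y^\ast$, so the paper's surviving piece $Y'=Y-Y''$ is strictly larger than your $Y^\dag$. This choice is exactly what makes the connecting-map argument go through cleanly: with the paper's $Y''$, the strata of $(Y'',\{B_{Y''}\}+B^{=1}_{Y''}-Y'|_{Y''})$ are precisely those $(Y,B_Y)$-strata not mapped to $X^\dag$, so by Theorem~\ref{a-thm3.5}(i) every associated subvariety of $R^1f_*\mathcal O_{Y''}(A-N-Y')$ lies outside $X^\dag\supset f(Y')$, forcing $\delta=0$ by a pure support comparison. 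With your $Y^\ast$ the analogous strata can land in $X^\dag\cap X_{-\infty}$, which is exactly the obstacle you flag---but the algebraic argument you invoke from \cite{fujino-reid-fukuda} is the paper's argument, built on the paper's decomposition, so ``mirroring'' it amounts to abandoning your setup rather than completing it.

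There is a second missing ingredient. The paper's proof has two genuinely separate steps. Step~1 shows that $f_*\mathcal O_{Y''}(A-N-Y')=0$, so that $f'\colon (Y',B_{Y'})\to X$ is still a quasi-log resolution of the \emph{original} $[X,\omega]$; this is where one uses that this sheaf is simultaneously contained in $\mathcal I_{X_{-\infty}}$ and supported on $f(Y'')\subset X_{-\infty}$. Step~2 then uses this vanishing to deduce $\mathcal I_{X^\dag}\cap\mathcal I_{X_{-\infty}}=0$, which is what lets the big commutative diagram identify $\mathcal I_{X^\dag_{-\infty}}$ with $\mathcal I_{X_{-\infty}}$ and hence gives property~(2) of the lemma. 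In your sketch the kernel $f_*\mathcal O_{Y^\ast}((A-N)|_{Y^\ast}-W)$ is never analyzed, and without knowing it (or rather, without knowing that $\mathcal I_{X^\dag}\cap\mathcal I_{X_{-\infty}}=0$) you cannot conclude that the image of $\mathcal I_{X_{-\infty}}$ in $\mathcal O_{X^\dag}$ is isomorphic to $\mathcal I_{X_{-\infty}}$, which is the content of property~(2). The fix is to follow the paper's decomposition: throw away only the strata not mapped to $X^\dag$, run the support argument to kill $\delta$, and then show the resulting kernel vanishes.
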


\begin{proof}[Proof of Lemma \ref{a-lem5.3}]
In Step \ref{a-5.3-step1}, we will construct a quasi-log resolution 
\begin{equation*}
f'\colon (Y', B_{Y'})\to X
\end{equation*} 
such that 
every irreducible component of $Y'$ is mapped to $X^\dag$ by $f'$. 
Then, in Step \ref{a-5.3-step2}, we will construct the desired 
quasi-log structure on $[X^\dag, \omega^\dag]$. 
\setcounter{step}{0}
\begin{step}\label{a-5.3-step1}
Let $M$ be the ambient space of $(Y, B_Y)$. 
After replacing $S$ with any relatively compact open subset of $S$, 
by taking some projective bimeromorphic modification of $M$, we 
may assume that 
the union of all strata of $(Y, B_Y)$ that are not 
mapped 
to $\overline{X\setminus X_{-\infty}}$, 
which is denoted by $Y''$, 
is a union of some irreducible components of $Y$ 
(see \cite[Proposition 6.3.1]{fujino-foundations} and 
\cite{bierstone-milman2}). 
We may further assume that the union of 
all strata of $(Y, B_Y)$ mapped to 
$\overline {X\setminus X_{-\infty}}
\cap X_{-\infty}$ is a union of some 
irreducible components of $Y$. 
We put $Y':=Y-Y''$ and $K_{Y''}+B_{Y''}:=(K_Y+B_Y)|_{Y''}$. 
We consider the short exact sequence 
\begin{equation*}
0\to \mathcal O_{Y''}(-Y')\to \mathcal O_Y\to \mathcal O_{Y'}\to 0. 
\end{equation*} 
As usual, we put $A:=\lceil -(B_Y^{<1})\rceil$ and $N:=
\lfloor B_Y^{>1}\rfloor$. By 
applying $\otimes \mathcal O_Y(A-N)$, we have 
\begin{equation*}
0\to \mathcal O_{Y''}(A-N-Y')\to \mathcal O_Y(A-N)\to \mathcal O_{Y'}
(A-N)\to 0. 
\end{equation*}
By taking $R^if_*$, we obtain 
\begin{equation*}
\begin{split}
0&\to f_* \mathcal O_{Y''}(A-N-Y')
\to f_*\mathcal O_Y(A-N)\to f_* \mathcal O_{Y'}
(A-N)\\
&\to R^1f_*\mathcal O_{Y''}(A-N-Y')\to \cdots.
\end{split}
\end{equation*}
By the strict support condition (see 
Theorem \ref{a-thm3.5} (i)), 
no associated subvariety of 
\begin{equation*}
R^1f_*\mathcal O_{Y''}(A-N-Y')
\end{equation*} 
is contained 
in $f(Y')\cap X_{-\infty}$. 
Note that 
\begin{equation*}
\begin{split}
(A-N-Y')|_{Y''}-(K_{Y''}+\{B_{Y''}\}+B_{Y''}^{=1}
-Y'|_{Y''})
&=-(K_{Y''}+B_{Y''})
\\&\sim _{\mathbb R} -(f^*\omega)|_{Y''}. 
\end{split}
\end{equation*}
Therefore, the connecting homomorphism 
\begin{equation*}
\delta:f_*\mathcal O_{Y'}(A-N)\to R^1f_*\mathcal O_{Y''}(A-N-Y')
\end{equation*} 
is zero. 
This implies that 
\begin{equation*}
0\to f_*\mathcal O_{Y''}(A-N-Y')\to 
\mathcal I_{X_{-\infty}}\to f_*\mathcal O_{Y'}
(A-N)\to 0 
\end{equation*} 
is exact. 
The ideal sheaf $\mathcal J=f_*\mathcal O_{Y''}
(A-N-Y')$ is zero 
when it is restricted to $X_{-\infty}$ because 
$\mathcal J\subset \mathcal I_{X_{-\infty}}=\mathcal I_{\Nqlc(X, \omega)}$. 
On the other hand, $\mathcal J$ is zero on 
$X\setminus X_{-\infty}$ because 
$f(Y'')\subset X_{-\infty}$. 
Therefore, we obtain $\mathcal J=0$. 
Thus we have $\mathcal I_{X_{-\infty}}=f_*\mathcal O_{Y'}(A-N)$. 
So $f'=f|_{Y'}\colon(Y', B_{Y'})\to X$, where 
$K_{Y'}+B_{Y'}:=(K_Y+B_Y)|_{Y'}$, 
gives the same quasi-log structure as one 
given by $f\colon (Y, B_Y)\to X$. 
\end{step}
\begin{step}\label{a-5.3-step2}
Let $\mathcal I_{X^\dag}$ be the defining 
ideal sheaf of $X^\dag$ on $X$. 
Let $f':(Y', B_{Y'})\to X$ be the quasi-log resolution 
constructed in Step \ref{a-5.3-step1}. 
Note that 
\begin{align*}
\mathcal I_{X_{-\infty}}&=f'_*\mathcal O_{Y'}(A-N)\\
&=f'_*\mathcal O_{Y'}(-N)
\end{align*} 
and that 
\begin{equation*}
f'(N)=X_{-\infty}\cap f'(Y')=X_{-\infty}\cap X^\dag
\end{equation*} 
set theoretically, where 
$A=\lceil -(B_Y^{<1})\rceil$ and $N=\lfloor B_Y^{>1}\rfloor$. 
We note that $A|_{Y'}=\lceil -(B_{Y'}^{<1})\rceil$ and 
$N|_{Y'}=\lfloor B_{Y'}^{>1}\rfloor$ hold by 
definition. Moreover, we obtain 
\begin{equation*} 
\mathcal I_{X^\dag}\cap \mathcal I_{X_{-\infty}}
=\mathcal I_{X^\dag}\cap f_*\mathcal O_Y(A-N)\subset 
\mathcal J=f_*\mathcal O_{Y''}(A-N-Y')=\{0\}. 
\end{equation*} 
Thus we can construct the following big commutative diagram. 
\begin{equation*}
\xymatrix{&&0\ar[d]&0\ar[d]&
\\&&\mathcal I_{X_{-\infty}}\ar@{=}[r]\ar[d]& 
\mathcal I_{X^\dag_{-\infty}}\ar[d]&\\
0\ar[r]&\mathcal I_{X^\dag}\ar[r]\ar@{=}[d]&\mathcal O_X\ar[r]\ar[d]
&\mathcal O_{X^\dag}\ar[r]\ar[d]&0\\
0\ar[r]&\mathcal I_{X^\dag}\ar[r]&\mathcal O_{X_{-\infty}}\ar[r]\ar[d]
&\mathcal O_{X^\dag_
{-\infty}}\ar[d]\ar[r]&0\\
&&0&0&
}
\end{equation*}
By construction, $f'$ factors through $X^\dag$. 
We put $f^\dag\colon (Y', B_{Y'})\to X^\dag$. 
Then it is easy to see that 
$f^\dag
\colon (Y', B_{Y'})\to X^\dag$ gives the desired quasi-log structure 
on $[X^\dag, \omega^\dag]$. 
\end{step}
We finish the proof. 
\end{proof}

Although we do not need the following lemma in this paper, 
we state it here for the sake of completeness. 

\begin{lem}[{see \cite[Lemma 4.20]{fujino-cone}}]\label{a-lem5.4} 
In Lemma \ref{a-lem5.3}, 
we consider a set of some qlc strata $\{C_i\}_{i\in I}$ of $[X, \omega]$. 
We put 
\begin{equation*} 
\left(X^\dag\right)'=\Nqlc(X^\dag, \omega^\dag)\cup 
\left(\bigcup _{i\in I} C_i\right)
\end{equation*} 
and 
\begin{equation*}
X'=\Nqlc(X, \omega)\cup 
\left(\bigcup _{i\in I} C_i\right). 
\end{equation*}  
Then, after replacing $S$ with 
any relatively compact open 
subset of $S$, $\left[(X^\dag)', \omega^\dag|_{(X^\dag)'}\right]$ and 
$\left[X', \omega|_{X'}\right]$ 
naturally become quasi-log complex analytic spaces by adjunction and 
$\mathcal I_{(X^\dag)'}=\mathcal I_{X'}$ holds, where 
$\mathcal I_{(X^\dag)'}$ and $\mathcal I_{X'}$ are the 
defining ideal sheaves of $(X^\dag)'$ and $X'$ on 
$X^\dag$ and $X$, respectively. 
In particular, $\mathcal I_{\Nqklt(X^\dag, \omega^\dag)}
=\mathcal I_{\Nqklt(X, \omega)}$ holds. 
\end{lem}

\begin{proof}[Sketch of Proof of Lemma \ref{a-lem5.4}] 
Here, we will 
use the same notation as in the proof of Lemma \ref{a-lem5.3}. We know 
that $[(X^\dag)', \omega^{\dag}|_{(X^\dag)'}]$ and 
$[X', \omega|_{X'}]$ naturally become quasi-log complex analytic spaces  
by adjunction after replacing $S$ with any relatively compact open subset of $S$. 
Thus it is sufficient to prove the equality $\mathcal I_{(X^\dag)'}=
\mathcal I_{X'}$. As usual, by \cite[Proposition 6.3.1]{fujino-foundations} 
and \cite{bierstone-milman2}, 
we may further assume that the union of all strata of 
$(Y, B_Y)$ that are mapped to $X'$, which is denoted by 
$Z$, is a union of some irreducible components of $Y$. 
We note that $Z\geq Y''$. 
We put $Z'=Y-Z$. 
Then it is obvious that $Z'\leq Y'$ holds. 
By the proof of adjunction (see the 
proof of Theorem \ref{a-thm4.4} and 
\cite[Theorem 6.3.5 (i)]{fujino-foundations}), 
we see that 
\begin{equation*} 
\mathcal I_{(X^\dag)'}=f'_*\mathcal O_{Z'}(A-N-(Z-Y'')|_{Z'})
=f'_*\mathcal O_{Z'}(A-N-Z|_{Z'})
=\mathcal I_{X'}
\end{equation*}  
holds. We finish the proof. 
\end{proof}

Lemma \ref{a-lem5.5} will play a crucial role in the proof of 
the basepoint-free theorem of Reid--Fukuda type (see 
Theorem \ref{a-thm7.1}). 

\begin{lem}[{see \cite[Lemma 3.15]{fujino-reid-fukuda}}]\label{a-lem5.5}
Let $[X, \omega]$ be a quasi-log complex analytic space 
and let $\pi\colon X\to S$ be a projective 
morphism of complex analytic spaces. 
Let $E$ be an effective $\mathbb R$-Cartier 
divisor on $X$. 
This means that $E$ is a finite positive 
$\mathbb R$-linear combination of effective Cartier divisors.  
We put 
\begin{equation*}
\widetilde{\omega}:=\omega+\varepsilon E
\end{equation*} 
with $0<\varepsilon \ll 1$. 
Then, after replacing $S$ with 
any relatively compact open subset of $S$, 
$[X, \widetilde \omega]$ naturally becomes a 
quasi-log complex analytic space with the following 
properties. 
\begin{itemize}
\item[(1)] Let $\{C_i\}_{i\in I}$ be the set of 
all qlc centers of $[X, \omega]$ contained in $\Supp E$. 
We put 
\begin{equation*}
X^{\star}:=\left(\bigcup _{i\in I}C_i\right) 
\cup \Nqlc(X, \omega). 
\end{equation*} 
Then, by adjunction, 
\begin{equation*}
[X^{\star}, \omega^\star:=\omega|_{X^\star}]
\end{equation*} 
is a quasi-log complex analytic space and 
\begin{equation*}
X^\star=\Nqlc(X, \widetilde \omega)
\end{equation*} 
holds. More precisely, 
\begin{equation*}
\mathcal I_{X^\star}=\mathcal I_{\Nqlc(X, \widetilde \omega)}
\end{equation*} 
holds, where $\mathcal I_{X^\star}$ is the defining ideal 
sheaf of $X^\star$ on $X$. 
\item[(2)] $C$ is a qlc center of $[X, \widetilde \omega]$ 
if and only if $C$ is a qlc center of $[X, \omega]$ 
with $C\not\subset \Supp E$. 
\end{itemize}
\end{lem}

\begin{proof}[Proof of Lemma 
\ref{a-lem5.5}]
Let $f\colon (Y, B_Y)\to X$ be a quasi-log 
resolution. 
We replace $S$ with any relatively compact open subset of $S$ 
and take a suitable projective 
bimeromorphic modification of $M$, where $M$ is the ambient 
space of $(Y, B_Y)$. 
Then 
we may assume that the union of  
all strata of $(Y, B_Y)$ mapped to $X^{\star}$, which is 
denoted by $Y''$, is a union of 
some irreducible components of $Y$ (see \cite[Proposition 6.3.1]
{fujino-foundations} and \cite{bierstone-milman2}). 
We put $Y':=Y-Y''$ and $K_{Y'}+B_{Y'}:=(K_Y+B_Y)|_{Y'}$. 
We may further assume that $(Y', f^*E+\Supp B_{Y'})$ is 
an analytic globally embedded simple normal crossing pair. 
We consider 
\begin{equation*}
f\colon \left(Y', B_{Y'}+\varepsilon f^*E\right)\to X
\end{equation*} 
with 
$0<\varepsilon \ll 1$. 
We put $A:=\lceil -(B_Y^{<1})\rceil$ and $N:=\lfloor B_{Y}^{>1}\rfloor$. 
Then $X^{\star}$ is defined by the ideal 
sheaf $f_*\mathcal O_{Y'}(A-N-Y'')$ by the proof of adjunction 
(see Theorem \ref{a-thm4.4}). 
We note that 
\begin{equation*}
\begin{split}
(A-N-Y'')|_{Y'}
&=-\lfloor B_{Y'}+\varepsilon f^*E\rfloor+(B_{Y'}+\varepsilon f^*E)^{=1}
\\&=\lceil -(B_{Y'}+\varepsilon f^*E)^{<1}\rceil 
-\lfloor (B_{Y'}+\varepsilon f^*E)^{>1}\rfloor. 
\end{split}
\end{equation*}
Therefore, if we define 
$\Nqlc (X, \widetilde \omega)$ by the ideal 
sheaf 
\begin{equation*}
f_*\mathcal O_{Y'}(\lceil -(B_{Y'}+\varepsilon f^*E)^{<1}\rceil 
-\lfloor (B_{Y'}+\varepsilon f^*E)^{>1}\rfloor)=f_*\mathcal O_{Y'}
(A-N-Y''), 
\end{equation*} 
then $f\colon (Y', B_{Y'}+\varepsilon f^*E)\to X$ 
gives the desired quasi-log structure on $[X, \widetilde \omega]$. 
\end{proof}

By Lemma \ref{a-lem5.1}, we can prove: 

\begin{lem}\label{a-lem5.6}
Let $\varphi\colon X\to Z$ be a projective surjective 
morphism between complex analytic spaces such that 
$[X, \omega]$ is a quasi-log complex analytic space and 
that $X$ is irreducible. 
Let $P$ be an arbitrary point of $Z$. 
Let $E$ be any positive-dimensional  
irreducible component of $\varphi^{-1}(P)$ with 
$E\not \subset \Nqlc(X, \omega)$. 
Then, after shrinking $Z$ around $P$ suitably, 
we can take an effective $\mathbb R$-Cartier 
divisor $G$ on $Z$ such that 
$[X, \omega+\varphi^*G]$ naturally becomes 
a quasi-log complex analytic space and 
$E$ is a qlc stratum of $[X, \omega+\varphi^*G]$. 
\end{lem}

We will use Lemma \ref{a-lem5.6} when we prove the existence 
of $\omega$-negative 
extremal rational curves (see Theorem \ref{a-thm9.6}). 

\begin{proof}[Sketch of Proof of Lemma \ref{a-lem5.6}]
If $E$ is a qlc stratum of $[X, \omega]$, then 
it is sufficient to put $G=0$. 
From now on, we assume that $E$ is not a qlc stratum of 
$[X, \omega]$. 
We shrink $Z$ around $P$ suitably and take general effective 
Cartier divisors $D_1,\ldots, D_{n+1}$ with 
$P\in \Supp D_i$ for every $i$, where $n=\dim X$. 
By Lemma \ref{a-lem5.2}, 
$\left[X, \omega+\sum _{i=1}^{n+1}\varphi^*D_i\right]$ naturally 
becomes a quasi-log complex analytic space. 
We take a general point $Q\in E$. 
By Lemma \ref{a-lem5.1}, 
we see that $\left[X, \omega+\sum _{i=1}^{n+1}\varphi^*D_i\right]$ 
is not quasi-log canonical at $Q$. 
Therefore, we can find $0<c<1$ such that 
$G:=c\sum _{i=1}^{n+1} D_i$ and $E$ is a qlc center 
of $[X, \omega+\varphi^*G]$. 
This is what we wanted. 
\end{proof}

Similarly, we also have: 

\begin{lem}\label{a-lem5.7}
Let $\varphi\colon X\to Z$ be a projective surjective 
morphism between complex analytic spaces with 
$\dim Z>0$ such that 
$[X, \omega]$ is a quasi-log complex analytic space, 
$X$ is irreducible, and $\Nqlc(X, \omega)=\emptyset$. 
Let $P$ be an arbitrary point of $Z$ with $\dim \varphi^{-1}(P)>0$. 
Then, after shrinking $Z$ around $P$ suitably, 
there exists an effective $\mathbb R$-Cartier 
divisor $G'$ on $Z$ such that 
$[X, \omega+\varphi^*G']$ naturally becomes 
a quasi-log complex analytic space, 
there exists a positive-dimensional 
qlc center $C$ of $[X, \omega+\varphi^*G']$ with 
$\varphi(C)=P$, 
$\dim \Nqlc(X, \omega+\varphi^*G')\leq 0$, and 
$\Nqlc(X, \omega+\varphi^*G')=\emptyset$ outside $\varphi^{-1}(P)$. 
\end{lem}

We will use Lemma \ref{a-lem5.7} in the proof of 
Theorem \ref{a-thm9.4}. 

\begin{proof}[Sketch of Proof of Lemma \ref{a-lem5.7}]
If there exists a positive-dimensional qlc center $C$ of 
$[X, \omega]$ with $\varphi(C)=P$, then it is sufficient to 
put $G'=0$. So we may assume that there are no positive-dimensional 
qlc centers in $\varphi^{-1}(P)$. From now on, 
we will use the same notation as in the proof of Lemma 
\ref{a-lem5.6}. It is not difficult to see that 
we can take $0<c'<1$ such that 
$[X, \omega+\varphi^*G']$, where 
$G':=c'\sum _{i=1}^{n+1} D_i$, 
satisfies all the desired properties. 
\end{proof}

We close this section with the following easy lemma. 

\begin{lem}\label{a-lem5.8}
Let $\left(X, \omega, f\colon 
(Y, B_Y)\to X\right)$ be a quasi-log complex analytic space. 
We assume that the support of $B_Y$ has only finitely many 
irreducible components. 
Then we obtain a $\mathbb Q$-divisor $D_i$ on $Y$, 
a $\mathbb Q$-line bundle $\omega_i$ on $X$, and a positive 
real number $r_i$ for $1\leq i\leq k$ such that 
\begin{itemize}
\item[(i)] $\sum _{i=1}^k r_i=1$, 
\item[(ii)] $\Supp D_i=\Supp B_Y$, 
$D^{=1}_i=B^{=1}_Y$, $\lfloor D^{>1}_i\rfloor=
\lfloor B^{>1}_Y\rfloor$, 
and $\lceil -(D^{<1}_i)\rceil=\lceil -(B^{<1}_Y)\rceil$ 
for every $i$, 
\item[(iii)] $\omega=\sum _{i=1}^k r_i\omega_i$ holds 
in $\Pic(X)\otimes _{\mathbb Z}\mathbb R$ 
and 
$B_Y=\sum _{i=1}^k r_i D_i$, and 
\item[(iv)] $\left(X, \omega_i, f\colon (Y, D_i)\to X\right)$ is a quasi-log 
complex analytic space 
with $K_Y+D_i\sim _{\mathbb Q} f^*\omega_i$ for every $i$. 
\end{itemize}
We note that 
\begin{equation*}
\mathcal I_{\Nqlc(X, \omega_i)}=\mathcal I_{\Nqlc(X, \omega)}
\end{equation*} 
holds for every $i$. 
In particular, 
if $\Nqlc(X, \omega)=\emptyset$, then 
$\Nqlc(X, \omega_i)=\emptyset$ for every $i$. 
We also note that $W$ is a qlc stratum of 
$[X, \omega]$ if and only if 
$W$ is a qlc stratum of $[X, \omega_i]$ for 
every $i$. 
\end{lem}

\begin{proof}[Proof of Lemma \ref{a-lem5.8}]
The proof of \cite[Lemma 4.25]{fujino-cone} works 
without any changes. 
\end{proof}

\section{Basepoint-free theorem}\label{a-sec6}
In this section, we will prove the 
following basepoint-free theorem 
for quasi-log complex analytic spaces, 
which is a generalization of 
\cite[Theorem 6.5.1]{fujino-foundations} and 
\cite[Theorem 4.2.1]{fujino-cone-contraction}. 

\begin{thm}[Basepoint-free theorem for 
quasi-log complex analytic spaces]\label{a-thm6.1}
Let 
\begin{equation*}
\left(X, \omega, f\colon (Y, B_Y)\to X\right)
\end{equation*} 
be a quasi-log complex 
analytic space and let $\pi\colon X\to S$ be 
a projective morphism between complex analytic spaces. 
Let $W$ be a compact subset of $S$ and 
let $\mathcal L$ be a line bundle 
on $X$ such that $\mathcal L$ is $\pi$-nef over $W$. 
We assume that 
\begin{itemize}
\item[(i)] $q\mathcal L-\omega$ is $\pi$-ample over 
$W$ for some real number 
$q>0$, and 
\item[(ii)] there exists some open neighborhood 
of $W$ over which $\mathcal L^{\otimes m}|_{X_{-\infty}}$ is 
$\pi|_{X_{-\infty}}$-generated for every $m\gg 0$. 
\end{itemize} 
Then there 
exists a relatively 
compact open neighborhood $U$ of 
$W$ such that 
$\mathcal L^{\otimes m}$ is $\pi$-generated over $U$ for every $m\gg0$. 
\end{thm}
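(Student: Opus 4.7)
The plan is to follow the classical Kawamata--Shokurov strategy for the basepoint-free theorem, now in the quasi-log complex analytic setting, exploiting the vanishing theorems (Theorems \ref{a-thm4.7} and \ref{a-thm4.8}), the adjunction formula (Theorem \ref{a-thm4.4}), the structure of qlc strata (Theorem \ref{a-thm4.10}), and the perturbation operations from Section \ref{a-sec5}.

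First, I would reduce to producing non-vanishing sections at points of $X\setminus X_{-\infty}$. By hypothesis (i) the line bundle $m\mathcal L-\omega$ is $\pi$-ample over $W$ for $m\gg 0$, so Theorem \ref{a-thm4.7} applied on a suitable relatively compact open neighborhood of $W$ gives $R^1\pi_*(\mathcal I_{X_{-\infty}}\otimes \mathcal L^{\otimes m})=0$, whence the restriction map $\pi_*\mathcal L^{\otimes m}\to (\pi|_{X_{-\infty}})_*(\mathcal L^{\otimes m}|_{X_{-\infty}})$ is surjective. Combining this with hypothesis (ii), it suffices to exhibit, for each point $x\in X\setminus X_{-\infty}$ lying over a neighborhood of $W$, a local section of $\mathcal L^{\otimes m}$ that does not vanish at $x$. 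Covering $W$ by finitely many such neighborhoods and using the standard coprime-exponents trick (sections of $\mathcal L^{\otimes m_1}$ and $\mathcal L^{\otimes m_2}$ with $\gcd(m_1,m_2)=1$ give sections for every sufficiently large $m$), this will yield generation of $\mathcal L^{\otimes m}$ for every $m\gg 0$ over some fixed relatively compact open neighborhood $U$ of $W$.

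Next I would fix $x\in X\setminus X_{-\infty}$ and perform induction on $d=\dim_x C_x$, where $C_x$ denotes the minimal qlc stratum of $[X,\omega]$ through $x$ (well-defined and normal at $x$ by Theorem \ref{a-thm4.10}). Shrinking $S$ to a small Stein open neighborhood of $\pi(x)$, Serre vanishing combined with (i) allows us to choose, for some $p\gg 0$, an effective $\mathbb Q$-Cartier divisor
\begin{equation*}
D\sim _{\mathbb Q} p\mathcal L
\end{equation*}
which vanishes to high order at $x$ on $C_x$ but whose support contains no qlc stratum of $[X,\omega]$ entirely. Applying Lemma \ref{a-lem5.2} with $E=D$, and then Lemma \ref{a-lem5.4} with an appropriate coefficient $\varepsilon>0$, I produce a new quasi-log structure $[X,\omega+\varepsilon D]$ whose non-qlc locus $X^\star$ contains a proper closed analytic subspace of $C_x$ passing through $x$. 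Adjunction (Theorem \ref{a-thm4.4}) endows $[X^\star,(\omega+\varepsilon D)|_{X^\star}]$ with a quasi-log structure whose qlc strata through $x$ all have strictly smaller dimension than $d$, so the inductive hypothesis furnishes a section of $\mathcal L^{\otimes m}|_{X^\star}$ non-vanishing at $x$.

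Finally, Theorem \ref{a-thm4.8} applied to the pair $[X,\omega+\varepsilon D]$ with line bundle $\mathcal L^{\otimes m}$ (for $m$ large enough that $\mathcal L^{\otimes m}-(\omega+\varepsilon D)$ is $\pi$-ample over the given relatively compact neighborhood) yields $R^1\pi_*(\mathcal I_{X^\star}\otimes \mathcal L^{\otimes m})=0$, so the section on $X^\star$ lifts to the desired section on $X$. The main obstacle will be the analytic bookkeeping: I must repeatedly shrink $S$ to relatively compact Stein opens while ensuring that the various perturbations (the choice of $p$, $D$, $\varepsilon$, and $m$) can be made uniform over a fixed neighborhood of $W$. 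The $W$-compactness, the coherence of all sheaves involved, and the finite covering argument in the first paragraph are what make this uniformity possible; without these the construction only produces pointwise generation rather than generation over the entire neighborhood $U$.
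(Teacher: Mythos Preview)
There is a genuine gap: the existence of an effective divisor $D\sim_{\mathbb Q}p\mathcal L$ with high vanishing at $x$ is never established. Since $\mathcal L$ is only $\pi$-nef (not $\pi$-ample), ``Serre vanishing combined with (i)'' does not produce any section of $\mathcal L^{\otimes p}$ at all, let alone one vanishing to high order at $x$; you are implicitly assuming a non-vanishing theorem. This is precisely the substantive step the paper isolates in its Step~\ref{a-6.1-step2}: after inducting on $\dim(X\setminus X_{-\infty})$ and reducing (via Step~\ref{a-6.1-step1}) to a connected component with $\Nqklt(X,\omega)=\emptyset$, it restricts to an analytically sufficiently general fiber and invokes the already-known \emph{algebraic} basepoint-free theorem for quasi-log schemes \cite[Theorem 6.5.1]{fujino-foundations} to obtain $\pi_*\mathcal L^{\otimes m}\ne 0$. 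The paper then runs a noetherian induction on $\Bs_\pi|\mathcal L^{\otimes p^k}|$ using \emph{general} members of $|\mathcal L^{\otimes p^k}|$, not divisors with prescribed high multiplicity at a point. Without some non-vanishing input your construction cannot start; compare Remark~\ref{a-rem6.2}.

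A second, more technical issue: your use of Lemma~\ref{a-lem5.4} is inconsistent with its statement. That lemma requires $0<\varepsilon\ll 1$, and then $X^\star=\Nqlc(X,\omega+\varepsilon D)$ consists only of those qlc centers of $[X,\omega]$ already contained in $\Supp D$, together with $X_{-\infty}$. Since you explicitly require that $\Supp D$ contain no qlc stratum, this forces $X^\star=X_{-\infty}$, which does not pass through $x$. To create a \emph{new} qlc center through $x$ one must scale $D$ up to the log canonical threshold (as in the paper's Step~\ref{a-6.1-step3}, via Lemma~\ref{a-lem5.2}), not perturb infinitesimally. Finally, Theorem~\ref{a-thm4.4} does not endow the non-qlc locus $\Nqlc(X,\omega+\varepsilon D)$ itself with a quasi-log structure; adjunction applies only to a union of $X_{-\infty}$ with qlc strata, so the sentence ``Adjunction endows $[X^\star,(\omega+\varepsilon D)|_{X^\star}]$ with a quasi-log structure'' is not justified as written.
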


\begin{proof}
By shrinking $S$ around $W$ suitably,
 we may assume that $\mathcal L^{\otimes m}|_{X_{-\infty}}$ is 
 $\pi|_{X_{-\infty}}$-generated for every $m\gg 0$ by (ii). 
 We take an arbitrary point $w\in W$. 
 Then it is sufficient to prove that $\mathcal L^{\otimes m}$ 
 is $\pi$-generated for every $m\gg 0$ over some relatively 
 compact open neighborhood of $w$ since 
 $W$ is compact. 
 Hence, we may assume that $W=\{w\}$, 
 $S$ is Stein, and $\pi$ is surjective. 
 We will sometimes shrink $S$ around $W$ suitably without mentioning 
 it explicitly throughout 
 this proof. 
 We use induction on the dimension of $X\setminus X_{-\infty}$. 
 We note that Theorem \ref{a-thm6.1} obviously holds true when 
 $\dim X\setminus X_{-\infty}=0$. 
\setcounter{step}{0}
\begin{step}\label{a-6.1-step1}
In this step, we will prove that for every $m\gg 0$ there exists 
an open neighborhood $U_m$ of $w$ such that 
$\mathcal L^{\otimes m}$ is $\pi$-generated around $\Nqklt(X, \omega)$ 
over $U_m$. 

We put $X':=\Nqklt(X, \omega)$. 
Then, 
after shrinking $S$ around $w$ suitably, 
$[X', \omega']$, where $\omega':=\omega|_{X'}$, is 
a quasi-log complex analytic space by adjunction 
when $X'\ne X_{-\infty}$ (see Theorem \ref{a-thm4.4}). 
If $X'= X_{-\infty}$, then $\mathcal L^{\otimes m}|_{X'}$ is 
$\pi$-generated for every $m \gg 0$ by assumption. 
If $X'\ne X_{-\infty}$, then $\mathcal L^{\otimes m}|_{X'}$ is 
$\pi$-generated for every $m\gg 0$ by induction on 
the dimension of $X\setminus X_{-\infty}$ after shrinking $S$ around 
$w$ suitably. We can take an open neighborhood $U_m$ of $w$ such that 
$m\mathcal L-\omega$ is $\pi$-ample over $U_m$. 
Then $R^1\pi_*\left(\mathcal I_{X'}\otimes \mathcal L^{\otimes m}\right)=0$ 
on $U_m$ 
(see Theorems \ref{a-thm4.7} and 
\ref{a-thm4.8}). Thus, the restriction map 
\begin{equation*}
\pi_*\mathcal L^{\otimes m}\to \pi_*\left(\mathcal L^{\otimes m}|_{X'}\right)
\end{equation*} 
is surjective on $U_m$. 
This implies that $\mathcal L^{\otimes m}$ is $\pi$-generated 
around $\Nqklt(X, \omega)$ over $U_m$. 
This is what we wanted. 
\end{step}
\begin{step}\label{a-6.1-step2}
In this step, we will prove that 
$\pi_*\left(\mathcal L^{\otimes m}|_{X'}\right)
\ne 0$ for 
every $m\gg 0$ when $X'\cap \Nqklt(X, \omega)$ is empty, 
where $X'$ is any connected component of $X$ with $w\in \pi(X')$. 

Throughout this step, we can freely shrink $S$ around $w$. 
Without loss of generality, we may assume that $X$ itself is connected 
(see \cite[Lemma 6.3.12]{fujino-foundations}). 
Then, by Lemma \ref{a-lem4.9}, $X$ is a normal complex variety. 
We apply \cite[Lemma 4.1.3]{fujino-cone-contraction} to 
\begin{equation*}
\left(X, \omega, f\colon (Y, B_Y)\to X\right). 
\end{equation*} 
Let $s$ be an analytically sufficiently general point of $S$. 
Then 
\begin{equation*}
\left(X_s, \omega|_{X_s}, f_s\colon (Y_s, B_{Y_s})\to X_s\right)
\end{equation*}
is a projective quasi-log canonical pair, 
where $X_s:=\pi^{-1}(s)$, $Y_s:=(\pi\circ f)^{-1}(s)$, 
$f_s:=f|_{X_s}$, and $B_{Y_s}:=B_Y|_{Y_s}$. 
We may assume that $\mathcal L|_{X_s}$ is nef 
(see \cite[Lemma 2.2.5 and Remark 2.2.7]{fujino-cone-contraction}) and 
$q\mathcal L|_{X_s}-\omega|_{X_s}$ is ample. 
By the basepoint-free theorem for quasi-log schemes 
(see \cite[Theorem 6.5.1]{fujino-foundations}), 
we obtain that $\mathcal L^{\otimes m}|_{X_s}$ is basepoint-free 
for every $m\gg 0$. 
In particular, $|\mathcal L^{\otimes m}|_{X_s}|\ne \emptyset$ 
for every $m\gg 0$. 
This implies that $\pi_*\mathcal L^{\otimes m}\ne \emptyset$ 
for every $m\gg 0$. This is what we wanted. 
\end{step}
\begin{step}\label{a-6.1-step3}
Let $p$ be a prime number and let $k$ be a large positive 
integer. 
By Steps \ref{a-6.1-step1} and \ref{a-6.1-step2}, 
after shrinking $S$ around $w$ suitably, 
we obtain that $\pi_*\mathcal L^{\otimes p^k}\ne 0$ and that 
$\mathcal L^{\otimes p^k}$ is $\pi$-generated 
around $\Nqklt(X, \omega)$. In this step, we will prove that 
if the relative base locus $\Bs_\pi |\mathcal L^{\otimes p^k}|$ 
with the reduced structure is not empty over $w$ 
then, after shrinking $S$ around $w$ suitably again, there exists a positive 
integer $l>k$ such that $\Bs_\pi |\mathcal L^{\otimes p^l}|$ is strictly smaller 
than $\Bs_\pi|\mathcal L^{\otimes p^k}|$.  

From now on, we will sometimes shrink $S$ around $w$ suitably without 
mentioning it explicitly. 
Let $f\colon (Y, B_Y)\to X$ be a quasi-log resolution. 
We take a general member $D\in |\mathcal L^{\otimes p^k}|$. 
Then we may assume that $f^*D$ intersects any strata of $(Y, \Supp B_Y)$ 
transversally over $X\setminus \Bs_\pi|\mathcal L^{\otimes p^k}|$ 
by Bertini's theorem and that $f^*D$ contains no strata of $(Y, B_Y)$. 
By taking a suitable projective bimeromorphic modification 
of $M$, 
the ambient space of $(Y, B_Y)$, 
we may assume that $(Y, f^*D+\Supp B_Y)$ is an analytic 
globally embedded simple normal crossing pair 
(see \cite{bierstone-milman2} and 
\cite[Proposition 6.3.1]{fujino-foundations}). 
After shrinking $S$ around $w$ suitably,
 we take the maximal positive real number $c$ such that 
 $B_Y+cf^*D$ is a subboundary over $X\setminus X_{-\infty}$. 
 We note that $c\leq 1$ holds. 
 Here, we used the fact that the natural map $\mathcal O_X\to 
 f_*\mathcal O_Y(\lceil -(B_Y^{<1})\rceil)$ is an isomorphism 
 over $X\setminus X_{-\infty}$ (see \cite[Claim 3.5]{fujino-relative-span}). 
 Then 
 \begin{equation*}
 f\colon (Y, B_Y+cf^*D)\to X
 \end{equation*} 
 gives a natural quasi-log 
 structure on the pair $[X, \omega':=\omega+cD]$ 
 (see Lemma \ref{a-lem5.2}, \cite[Proposition 3.4]{fujino-relative-span}, 
 and so on). 
 We note that $\Nqlc(X, \omega)=\Nqlc(X, \omega')$ holds by construction. 
 We note that we may assume that 
 $[X, \omega']$ has a qlc center $C$ that intersects 
 $\Bs_\pi|\mathcal L^{\otimes p^k}|\cap \pi^{-1}(w)$. 
 Since $(q+cp^k)\mathcal L-\omega'$ is $\pi$-ample 
 over $W$, for every $m\gg 0$, there exists some open neighborhood 
 $U'_m$ of $w$ such that 
 $\mathcal L^{\otimes m}$ is $\pi$-generated around $\Nqklt(X, \omega')$ 
 over $U'_m$ by Step \ref{a-6.1-step1}. 
 In particular, $\mathcal L^{\otimes m}$ is $\pi$-generated 
 around $C$ over $U'_m$. 
 Thus, $\Bs_\pi|\mathcal L^{\otimes p^l}|$ is strictly smaller than 
 $\Bs_\pi|\mathcal L^{\otimes p^k}|$ for some positive integer $l>k$. 
This is what we wanted. 
\end{step}

\begin{step}\label{a-6.1-step4}
In this step, we will complete the proof. 
 
By using Step \ref{a-6.1-step3} finitely many times, 
we obtain an open neighborhood $V_p$ of $w$ and a 
large positive integer $n$ such that 
$\mathcal L^{\otimes p^n}$ is $\pi$-generated over 
$V_p$. 
We take another prime number $p'$. 
By the same argument, 
we obtain an open neighborhood 
$V_{p'}$ of $w$ and a large positive integer $n'$ such that 
$\mathcal L^{\otimes p'^{n'}}$ is $\pi$-generated 
over $V_{p'}$. 
Hence, we can take an open neighborhood 
$U_w$ of $w$ and a positive integer $m_0$ such that 
$\mathcal L^{\otimes m}$ is $\pi$-generated 
over $U_w$ for every $m\geq m_0$ (see 
Lemma \ref{a-lem8.6} below). 
\end{step}
As we mentioned above, we can obtain a desired open neighborhood 
$U$ of $W$ since $W$ is compact. 
We finish the proof. 
\end{proof}

By combining Theorem \ref{a-thm6.1} with 
Example \ref{a-ex1.5}, we can recover the basepoint-free 
theorem for normal pairs (see 
\cite[Theorem 4.2.1]{fujino-cone-contraction}). 

\begin{rem}\label{a-rem6.2}
For normal pairs, we first established the non-vanishing 
theorem (see \cite[Theorem 4.1.1]{fujino-cone-contraction}) 
and then proved the basepoint-free theorem 
(see \cite[Theorem 4.2.1]{fujino-cone-contraction}). 
On the other hand, in this section, we can directly prove 
the basepoint-free theorem (see Theorem \ref{a-thm6.1}) 
because 
quasi-log structures behave well for inductive treatments. 
\end{rem}

\section{Basepoint-free theorem of Reid--Fukuda type}\label{a-sec7} 

In this section, we will prove the basepoint-free 
theorem of Reid--Fukuda type. 
If we apply this theorem to kawamata log terminal pairs, 
then we can recover the Kawamata--Shokurov basepoint-free 
theorem for projective morphisms between complex analytic 
spaces. 

\begin{thm}[Basepoint-free theorem of Reid--Fukuda 
type for quasi-log complex analytic spaces]\label{a-thm7.1}
Let $[X, \omega]$ be a quasi-log complex analytic 
space and let $\pi\colon X\to S$ be a projective morphism 
of complex analytic spaces. 
Let $\mathcal L$ be a $\pi$-nef line bundle 
on $X$ such that $q\mathcal L-\omega$ is 
nef and 
log big over $S$ with respect to $[X, \omega]$ for 
some positive real number $q$. 
This means that $q\mathcal L-\omega$ is nef over $S$ and 
that $(q\mathcal L-\omega)|_C$ is big over $\pi(C)$ 
for every qlc stratum $C$ of $[X, \omega]$. 
We assume that $\mathcal L^{\otimes m}|_{X_{-\infty}}$ 
is $\pi$-generated for every $m \gg 0$. 
Then, after replacing $S$ with 
any relatively compact open subset of 
$S$, $\mathcal L^{\otimes m}$ is $\pi$-generated 
for every $m \gg 0$. 
\end{thm}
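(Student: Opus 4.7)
The plan is to adapt the proof of Theorem \ref{a-thm6.1} (the basepoint-free theorem in the ample case), replacing the ampleness-based vanishing of Theorems \ref{a-thm4.7} and \ref{a-thm4.8} with a Reid--Fukuda type analogue built from Theorem \ref{a-thm3.8}. Induction runs on $d := \dim(X \setminus X_{-\infty})$, with $d = 0$ immediate from the hypothesis on $X_{-\infty}$. As in Theorem \ref{a-thm6.1}, covering $W$ by small open neighborhoods and invoking compactness lets me shrink $S$ and argue pointwise around some $w \in S$. Apply Lemma \ref{a-lem5.3} to reduce to the case $X = X^{\dagger}$, and Lemma \ref{a-lem5.7} to pass to a $\mathbb{Q}$-structure whenever convenient.

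First I would handle generation near $X' := \Nqklt(X, \omega)$. By adjunction (Theorem \ref{a-thm4.4}), $[X', \omega|_{X'}]$ is a quasi-log complex analytic space with $X'_{-\infty} = X_{-\infty}$, and the hypotheses restrict cleanly: $\mathcal{L}|_{X'}$ is $\pi|_{X'}$-nef, and $q\mathcal{L}|_{X'} - \omega|_{X'}$ is nef and log big with respect to $[X', \omega|_{X'}]$ (since its qlc strata are precisely the qlc strata of $[X, \omega]$ contained in $X'$, each of which satisfies the log-big assumption). The inductive hypothesis then yields $\pi|_{X'}$-generation of $\mathcal{L}^{\otimes m}|_{X'}$ for $m \gg 0$. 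To lift these sections to $X$ I would establish the Reid--Fukuda analogue of Theorem \ref{a-thm4.8},
\begin{equation*}
R^1\pi_*\bigl(\mathcal{I}_{X'} \otimes \mathcal{L}^{\otimes m}\bigr) = 0 \quad \text{for every } m \gg 0,
\end{equation*}
by repeating the spectral sequence / projection-formula argument in the proof of Theorem \ref{a-thm4.8} but invoking Theorem \ref{a-thm3.8} in place of Theorem \ref{a-thm3.5}(ii); the nef-and-log-bigness of $m\mathcal{L} - \omega$ on qlc strata of $[X, \omega]$ not contained in $X'$ (namely, the irreducible components of $X$ not swallowed by $X_{-\infty}$) is automatic once $m \geq q$. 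The associated long exact sequence produces $\pi$-generation of $\mathcal{L}^{\otimes m}$ on an open neighborhood of $X'$ for every $m \gg 0$.

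It remains to eliminate base points away from $X'$, for which I would mimic Step 3 of the proof of Theorem \ref{a-thm6.1}. Pick a prime $p$, a large integer $k$, a general $D \in |\mathcal{L}^{\otimes p^k}|$, and let $c$ be the maximal coefficient so that $B_Y + c f^*D$ remains a subboundary over $X \setminus X_{-\infty}$. After a suitable bimeromorphic modification of the ambient space of $(Y, B_Y)$ and an application of Lemma \ref{a-lem5.2}, $\bigl[X,\, \omega' := \omega + cD\bigr]$ becomes a quasi-log complex analytic space with at least one new qlc center $C^{\dagger}$ meeting $\Bs_{\pi}|\mathcal{L}^{\otimes p^k}|$ over $w$, and one has the numerical identity $(q + c p^k)\mathcal{L} - \omega' \sim_{\mathbb{R}} q\mathcal{L} - \omega$.

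The hard part will be verifying that $[X, \omega']$ still satisfies the hypotheses of Theorem \ref{a-thm7.1}: nef-and-log-bigness of $(q + c p^k)\mathcal{L} - \omega'$ with respect to the \emph{new} quasi-log structure. Nefness is automatic, and on old qlc strata not contained in $\Supp D$ log bigness is inherited from $[X, \omega]$; the delicate point is that on a new qlc stratum $C'$ appearing inside $\Supp D$ one must show $(q\mathcal{L} - \omega)|_{C'}$ is big over $\pi(C')$. For a general $D$, such a $C'$ is the $f$-image of a component of a transverse intersection of a stratum of $(Y, B_Y)$ with $f^*D$, so a Bertini-type argument on the analytic simple normal crossing resolution together with the log bigness of $q\mathcal{L} - \omega$ on the enveloping old qlc stratum (combined with the nefness of $\mathcal{L}$ and a slight enlargement of $q$ if necessary) should supply the required bigness. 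Once this is in place, applying the generation-near-$\Nqklt$ part of this proof to $[X, \omega']$ produces an integer $l > k$ with $\Bs_{\pi}|\mathcal{L}^{\otimes p^l}| \subsetneq \Bs_{\pi}|\mathcal{L}^{\otimes p^k}|$; iterating finitely many times and then combining the outcomes for two coprime primes by the elementary numerical lemma used in Step 4 of Theorem \ref{a-thm6.1} delivers $\pi$-generation of $\mathcal{L}^{\otimes m}$ for all $m \gg 0$ over a neighborhood of $w$, after which compactness of $W$ finishes the proof.
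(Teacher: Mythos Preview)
Your plan follows the skeleton of Theorem \ref{a-thm6.1} correctly up to the creation of the new qlc center $C^\dagger$ inside the base locus, but the step you flag as ``the hard part'' is a genuine gap, and your proposed Bertini argument does not close it. A new qlc center $C'$ of $[X,\omega']$ lying inside $\Bs_\pi|\mathcal L^{\otimes p^k}|$ is \emph{not} the $f$-image of a transverse intersection of a stratum of $(Y,B_Y)$ with $f^*D$: transversality holds only where $D$ is general, i.e.\ away from the base locus, whereas $C'$ arises precisely from the non-transverse behaviour of $f^*D$ over the base locus (this is exactly why the threshold $c$ is strictly less than $1$ there). There is no reason for $(q\mathcal L-\omega)|_{C'}$ to be big over $\pi(C')$; the hypothesis of Theorem \ref{a-thm7.1} controls bigness only on the \emph{original} qlc strata. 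Consequently you cannot feed $[X,\omega']$ back into your generation-near-$\Nqklt$ step, and the induction stalls.

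The paper sidesteps this entirely. Instead of trying to preserve nef-and-log-bigness under the modification $\omega\mapsto\omega'$, it first uses Kodaira's lemma on $X^\dagger$ (Lemma \ref{a-lem5.3}) to write $q\mathcal L|_{X^\dagger}-\omega^\dagger\sim_{\mathbb R}A+E$ with $A$ $\pi$-ample and $E$ effective, and perturbs to $\widetilde\omega=\omega^\dagger+\varepsilon E$ via Lemma \ref{a-lem5.4}. Only \emph{after} this perturbation are the general members $D_1,\ldots,D_{n+1}\in|p^l\mathcal L|$ chosen and a threshold $c$ taken; the resulting $\overline\omega$ then satisfies
\[
(q+c(n+1)p^l)\mathcal L|_{X^\dagger}-\overline\omega\sim_{\mathbb R}(1-\varepsilon)(q\mathcal L|_{X^\dagger}-\omega^\dagger)+\varepsilon A,
\]
which is \emph{ample} over $S$. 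Ampleness survives restriction to any subvariety, so the new center $C_0$ is handled by the already-proven Theorem \ref{a-thm6.1} rather than by a recursive appeal to Theorem \ref{a-thm7.1}. The same Kodaira-lemma perturbation also disposes of the case $\Nqklt(X,\omega)=X_{-\infty}$ (Step \ref{a-7.1-step2} in the paper), which your outline does not address separately. This trick---convert nef-and-log-big to ample before creating new centers---is the missing idea in your proposal.
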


The following proof is essentially the same as 
the one for \cite[Theorem 1.1]{fujino-reid-fukuda}. 

\begin{proof}[Proof of Theorem \ref{a-thm7.1}]
In Step \ref{a-7.1-step1}, we will reduce the problem to 
the case where $X\setminus X_{-\infty}$ is irreducible 
and the relative base locus 
of $\mathcal L^{\otimes m}$ is 
disjoint from $\Nqklt(X, \omega)$ 
for every $m \gg 0$. 
\setcounter{step}{0}
\begin{step}\label{a-7.1-step1}
We use induction on $\dim (X\setminus X_{-\infty})$. 
It is obvious that the statement holds when 
$\dim (X\setminus X_{-\infty})
=0$. 
We take an arbitrary point $P\in S$. 
It is sufficient to prove the statement over some 
open neighborhood of $P$. Hence 
we will freely shrink $S$ around $P$ throughout this proof. 
In particular, we may assume that $S$ is Stein. 
Let $C$ be any qlc stratum of $[X, \omega]$. 
We put $X':=C\cup \Nqlc(X, \omega)$. 
By adjunction (see Theorem \ref{a-thm4.4}), after replacing $S$ with 
any relatively compact open neighborhood of 
$P$, $[X', \omega':=\omega|_{X'}]$ is 
a quasi-log complex analytic space. 
By the vanishing theorem 
(see Theorem \ref{a-thm4.8}), we have $R^1\pi_*\left(\mathcal I_{X'}
\otimes \mathcal L^{\otimes m}\right)=0$ 
for every $m\geq q$. 
Therefore, the natural restriction map 
\begin{equation*}
\pi_*\mathcal L^{\otimes m}\to 
\pi_*\left(\mathcal L^{\otimes m}|_{X'}\right)
\end{equation*} 
is 
surjective for every $m\geq q$. 
This implies that we may assume that $X\setminus X_{-\infty}$ is 
irreducible by replacing $X$ with $X'$. 
If $\Nqklt(X, \omega)=\Nqlc(X, \omega)$, 
then $\mathcal L^{\otimes m}|_{\Nqklt(X, \omega)}$ is 
$\pi$-generated for every $m\gg 0$ by assumption. 
If $\Nqklt(X, \omega)\ne \Nqlc(X, \omega)$, 
then we know that 
$\mathcal L^{\otimes m}|_{\Nqklt(X, \omega)}$ is 
$\pi$-generated for every $m\gg 0$ 
by induction on $\dim (X\setminus X_{-\infty})$. 
By the vanishing theorem again 
(see Theorems \ref{a-thm4.7} and \ref{a-thm4.8}), 
we have 
$R^1\pi_*\left(\mathcal I_{\Nqklt(X, \omega)}
\otimes \mathcal L^{\otimes m}\right)=0$ for 
every $m\geq q$. 
Thus, the restriction map 
\begin{equation*}
\pi_*\mathcal L^{\otimes m}\to 
\pi_*\left(\mathcal L^{\otimes m}|_{\Nqklt(X, \omega)}\right)
\end{equation*} 
is 
surjective for every $m\geq q$. 
Hence, the relative base locus $\Bs_{\pi}|\mathcal L^{\otimes m}|$ 
of $\mathcal L^{\otimes m}$ 
is disjoint from $\Nqklt(X, \omega)$ for every $m\gg 0$. 
\end{step}
\begin{step}\label{a-7.1-step2}
In this step, we will prove the basepoint-freeness 
under the extra assumption that 
$X$ is the disjoint union of $X_{-\infty}=\Nqlc(X, \omega)$ and 
a qlc stratum $C$ of $[X, \omega]$ such that 
$C$ is the unique qlc stratum of $[X, \omega]$. 

In the above setting, we may assume that $X_{-\infty}=\emptyset$ 
(see \cite[Lemma 6.3.12]{fujino-foundations}). 
By Kodaira's lemma, after replacing $S$ with 
any relatively compact open neighborhood of 
$P$, we can write 
\begin{equation*}
q\mathcal L-\omega\sim _{\mathbb R} A+E
\end{equation*} 
on $X$ such that 
$A$ is a $\pi$-ample $\mathbb Q$-divisor on $X$ and 
$E$ is an effective $\mathbb R$-Cartier divisor on $X$. 
We put $\widetilde \omega=\omega+\varepsilon E$ with 
$0<\varepsilon \ll 1$. 
Then $[X, \widetilde\omega]$ is a quasi-log complex analytic space
with $\Nqlc(X, \widetilde \omega)=
\emptyset$ by Lemma \ref{a-lem5.5}. 
We note that 
\begin{equation*}
q\mathcal L-\widetilde \omega
\sim _{\mathbb R}(1-\varepsilon)(q\mathcal L-\omega)+\varepsilon A
\end{equation*}
is $\pi$-ample. 
Therefore, by the basepoint-free theorem (see Theorem 
\ref{a-thm6.1}), 
we obtain that 
$\mathcal L^{\otimes m}$ is $\pi$-generated for every $m\gg 0$ 
over some open neighborhood of $P$.
\end{step}
\begin{step}\label{a-7.1-step3}
By Step \ref{a-7.1-step2}, we may assume that 
$X$ is connected and $\Nqklt(X, \omega)\ne \emptyset$. 
Let $p$ be any prime number. Then, by Step \ref{a-7.1-step1}, 
the relative base locus $\Bs_{\pi}|p^l\mathcal L|$ 
of $p^l\mathcal L$ is strictly smaller than $X$ for some large 
positive integer $l$. 
In this step, we will prove the following claim. 
\begin{claim}
If the relative base locus $\Bs_{\pi}|p^l\mathcal L|$ 
with the reduced structure is not 
empty over $P$, then there is a positive integer $k$ with 
$k>l$ such that 
$\Bs_{\pi}|p^k\mathcal L|$ is strictly smaller than 
$\Bs_{\pi}|p^l\mathcal L|$ after shrinking $S$ around $P$ suitably. 
\end{claim}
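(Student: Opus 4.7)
My plan is to mirror Step \ref{a-6.1-step3} of the proof of Theorem \ref{a-thm6.1}, where the same claim was established in the $\pi$-ample case, the essential new difficulty being to propagate the nef-and-log-big hypothesis through the modification. After shrinking $S$ around $P$ and noting that $\pi_*\mathcal L^{\otimes p^l}\ne 0$ (which follows from Step \ref{a-7.1-step1} once one uses that we may assume $\Nqklt(X,\omega)\ne\emptyset$), I take a general member $D\in|\mathcal L^{\otimes p^l}|$. By \cite[Proposition 6.3.1]{fujino-foundations} and \cite{bierstone-milman2} I then further modify the ambient space $M$ of $(Y,B_Y)$ so that $(Y,f^*D+\Supp B_Y)$ is an analytic globally embedded simple normal crossing pair and $f^*D$ contains no stratum of $(Y,B_Y)$. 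I next choose the maximal $c\in(0,1]$ such that $B_Y+cf^*D$ is a subboundary over $X\setminus X_{-\infty}$; by (the proof of) Lemma \ref{a-lem5.2}, the resulting $f\colon(Y,B_Y+cf^*D)\to X$ endows $[X,\omega':=\omega+cD]$ with the structure of a quasi-log complex analytic space, and $\Nqlc(X,\omega')=\Nqlc(X,\omega)$. By the maximality of $c$ together with Step \ref{a-7.1-step1} (which places $\Bs_\pi|\mathcal L^{\otimes p^l}|$ outside $\Nqklt(X,\omega)$), at least one new qlc center $C$ of $[X,\omega']$ appears that meets $\Bs_\pi|\mathcal L^{\otimes p^l}|\cap\pi^{-1}(P)$.

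Setting $q':=q+cp^l$ and using $D\sim\mathcal L^{\otimes p^l}$, we have $q'\mathcal L-\omega'\sim_{\mathbb R}q\mathcal L-\omega$ as $\mathbb R$-line bundles, so nefness over $S$ passes to the modified structure. For log-bigness with respect to $[X,\omega']$, each old qlc stratum that survives the modification inherits the bigness of $(q\mathcal L-\omega)|_T$ over $\pi(T)$ directly from the hypothesis; for a newly created qlc stratum $T$, the snc construction forces $T\subset T_0$ for some old qlc stratum $T_0$ of $[X,\omega]$, and I would combine a relative Kodaira lemma for $(q\mathcal L-\omega)|_{T_0}\sim_{\mathbb R}A_0+E_0$ (with $A_0$ $\pi$-ample and $E_0$ effective) with the generality of $D$ to arrange $T\not\subset\Supp E_0$, thereby transferring bigness to $T$. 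With the hypothesis of Theorem \ref{a-thm7.1} thus verified for $[X,\omega']$, adjunction (Theorem \ref{a-thm4.4}) yields a quasi-log structure on $\Nqklt(X,\omega')$ of dimension strictly smaller than that of $X\setminus X_{-\infty}$ (using that $X$ is irreducible by Step \ref{a-7.1-step1} and every qlc center of $[X,\omega']$ is a proper subvariety of $X$); the induction hypothesis on $\dim(X\setminus X_{-\infty})$ then gives $\pi$-generation of $\mathcal L^{\otimes m}|_{\Nqklt(X,\omega')}$ for every $m\gg 0$. The vanishing theorems (Theorems \ref{a-thm4.7} and \ref{a-thm4.8}) applied to $\mathcal I_{\Nqklt(X,\omega')}\otimes\mathcal L^{\otimes m}$ lift this to $\pi$-generation of $\mathcal L^{\otimes m}$ on an open neighborhood of $\Nqklt(X,\omega')$, hence around $C\cap\Bs_\pi|\mathcal L^{\otimes p^l}|$. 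Taking $m=p^k$ for a sufficiently large $k>l$ then yields $\Bs_\pi|\mathcal L^{\otimes p^k}|\subsetneq\Bs_\pi|\mathcal L^{\otimes p^l}|$ after shrinking $S$ around $P$.

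The step I expect to be the main obstacle is the verification in the second paragraph that the newly created qlc centers of $[X,\omega']$ support the required log-bigness of $q\mathcal L-\omega$. The Kodaira-plus-genericity argument outlined above needs a workable relative Kodaira's lemma in the complex analytic setting together with a Bertini-type genericity strong enough to simultaneously avoid, for each of the finitely many new $T\subset T_0$, the corresponding $\Supp E_0$. If this route proves too delicate, a safer alternative is to first perturb $\omega$ by a small multiple of an effective $\mathbb R$-Cartier divisor produced by Kodaira (in the spirit of Step \ref{a-7.1-step2}) so that $q\mathcal L-\omega$ becomes $\pi$-ample on the perturbed structure, thereby reducing the argument to a direct application of the basepoint-free theorem (Theorem \ref{a-thm6.1}).
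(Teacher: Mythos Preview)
Your primary approach (Route A) has a genuine gap at the log-bigness step. The new qlc centers of $[X,\omega']$ that matter lie in $\Bs_\pi|\mathcal L^{\otimes p^l}|$, and this locus is fixed independently of the general choice of $D\in|\mathcal L^{\otimes p^l}|$; varying $D$ does not move these centers. Your Kodaira-plus-genericity argument asks that each new $T$ avoid $\Supp E_0$, but there is no mechanism forcing this: if a component $B$ of the base locus lies in the stable base locus of $(q\mathcal L-\omega)|_{T_0}$, then every decomposition $(q\mathcal L-\omega)|_{T_0}\sim_{\mathbb R}A_0+E_0$ has $B\subset\Supp E_0$, and your transfer of bigness fails. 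Bigness on $T_0$ simply need not descend to an arbitrary subvariety $T\subset T_0$, and nothing in your construction prevents the worst case.

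Your Route B is the direction the paper actually takes, but your one-line sketch glosses over exactly the difficulties that the paper has to work around. First, since $X_{-\infty}$ may be nonempty, $X$ need not be irreducible and Kodaira's lemma is not available on $X$; the paper therefore passes to $X^\dag=\overline{X\setminus X_{-\infty}}$ via Lemma \ref{a-lem5.3} before writing $q\mathcal L|_{X^\dag}-\omega^\dag\sim_{\mathbb R}A+E$. Second, after perturbing to $\widetilde\omega=\omega^\dag+\varepsilon E$ via Lemma \ref{a-lem5.4}, the non-qlc locus grows to $(\bigcup_i C_i)\cup\Nqlc(X^\dag,\omega^\dag)$, so one cannot simply ``apply Theorem \ref{a-thm6.1}'' and be done: even once sections on $X^\dag$ are produced, they do not automatically lift to $X$. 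The paper handles this by further creating a qlc center $C_0\subset\Bs_\pi|\mathcal L^{\otimes p^l}|$ disjoint from $\Nqlc(X^\dag,\overline\omega)$, applying Theorem \ref{a-thm6.1} only to $[C_0,\overline\omega|_{C_0}]$ (where the non-qlc locus is empty), and then using the vanishing-induced surjection \eqref{a-eq7.2} to manufacture a section of $\mathcal L^{\otimes p^k}$ on $X^\dag$ that vanishes on $\Nqlc(X^\dag,\overline\omega)$. That vanishing, combined with the ideal-sheaf identity $\mathcal I_{\Nqlc(X^\dag,\omega^\dag)}=\mathcal I_{\Nqlc(X,\omega)}$ from Lemma \ref{a-lem5.3}, is precisely what lets the section be viewed as an element of $\pi_*\mathcal L^{\otimes p^k}$ on $X$. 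None of this lifting mechanism is visible in your sketch, and it is the heart of the argument.
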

\begin{proof}[Proof of Claim] 
Note that the inclusion $\Bs_{\pi}|p^k \mathcal 
L|\subseteq \Bs_{\pi}|p^l\mathcal L|$ obviously holds 
for every 
positive integer $k>l$. 
Let us consider $[X^\dag, \omega^\dag]$ 
as in Lemma \ref{a-lem5.3}.
Since $(q\mathcal L-\omega)|_{X^\dag}$ is nef 
and big over $S$, after replacing $S$ with 
any relatively compact open neighborhood of $P$, we can 
write 
\begin{equation*}
q\mathcal L|_{X^\dag}-\omega^\dag
\sim_{\mathbb R}A+E
\end{equation*} 
on $X^\dag$ by Kodaira's lemma, 
where $A$ is a $\pi$-ample 
$\mathbb Q$-divisor on $X^\dag$ and 
$E$ is an effective $\mathbb R$-Cartier 
divisor 
on $X^\dag$. 
By Lemma \ref{a-lem5.5}, after 
replacing $S$ with any relatively compact Stein open neighborhood of 
$P$, we have a new 
quasi-log structure on $[X^\dag, \widetilde \omega]$, 
where $\widetilde \omega=\omega^\dag+\varepsilon E$ 
with $0<\varepsilon \ll 1$, such that 
\begin{equation}\label{a-eq7.1}
\Nqlc(X^\dag, \widetilde {\omega})=\left(\bigcup_{i\in I} C_i\right)\cup 
\Nqlc(X^\dag, \omega^\dag), 
\end{equation} 
where $\{C_i\}_{i\in I}$ is the set of qlc centers of $[X^\dag, 
\omega^\dag]$ contained 
in $\Supp E$. We put $n:=\dim X^\dag$. Let 
$D_1, \ldots, D_{n+1}$ be general members of 
$|p^l\mathcal L|$. 
Let $f\colon (Y, B_Y)\to X^\dag$ be a quasi-log 
resolution of $[X^\dag, \widetilde
\omega]$. 
We consider 
\begin{equation*}
f\colon \left(Y, B_Y+\sum _{i=1}^{n+1}f^*D_i\right)\to X^\dag. 
\end{equation*} 
Without loss of generality, we may assume that 
\begin{equation*}
\left(Y, \sum _{i=1}^{n+1}f^*D_i+\Supp B_Y\right)
\end{equation*} 
is an analytic globally embedded simple normal crossing pair 
by taking a suitable projective bimeromorphic 
modification of the ambient space of $(Y, B_Y)$ 
(see \cite{bierstone-milman2} and 
\cite[Proposition 6.3.1]{fujino-foundations}). 
Then, after shrinking $S$ around $P$ suitably, we 
can take $0<c<1$ 
such that 
\begin{equation*}
f\colon \left(Y, B_Y+c\sum _{i=1}^{n+1}f^*D_i\right)
\to X^\dag
\end{equation*} 
gives a quasi-log 
structure on $\left[X^\dag, \widetilde {\omega}+
c\sum _{i=1}^{n+1}D_i\right]$ 
such that 
$\left[X^\dag, \widetilde {\omega}+c\sum _{i=1}^{n+1}D_i\right]$ 
has only quasi-log canonical singularities on $X^\dag\setminus 
\Nqlc(X^\dag, \widetilde \omega)$ and that 
there exists a qlc center $C_0$ of $\left[X^\dag, 
\widetilde \omega+c
\sum _{i=1}^{n+1}D_i\right]$ contained in $\Bs_{\pi}|p^l\mathcal 
L|$ 
with $C_0\cap \pi^{-1}(P)\ne \emptyset$ 
(see Lemmas \ref{a-lem5.1} and \ref{a-lem5.2}). 
We put $\widetilde \omega+c
\sum _{i=1}^{n+1}D_i=\overline \omega$. 
Then, by construction, 
\begin{equation*}
C_0\cap \Nqlc(X^\dag, \overline \omega)=\emptyset 
\end{equation*} 
holds 
because 
\begin{equation*}
\Bs_{\pi}|p^l\mathcal L|\cap \Nqklt (X, \omega)=\emptyset. 
\end{equation*}
Note that $\Nqlc(X^\dag, \overline \omega)=\Nqlc(X^\dag, \widetilde \omega)$ 
by construction. 
We also note that 
\begin{equation*}
(q+c(n+1)p^l)\mathcal 
L|_{X^\dag}-\overline \omega\sim _{\mathbb R}(1-\varepsilon)
(q\mathcal L|_{X^\dag}-\omega^\dag)+\varepsilon A
\end{equation*} 
is ample over $S$. 
Therefore, 
\begin{equation}\label{a-eq7.2}
\pi_*\left(\mathcal L^{\otimes m}|_{X^\dag}\right)\to \pi_*\left(
\mathcal L^{\otimes m}|_{C_0}\right)
\oplus \pi_*\left(\mathcal L^{\otimes m}|_{\Nqlc(X^\dag, \overline 
\omega)}\right)
\end{equation}
is surjective for every $m\geq q+c(n+1)p^l$ 
by Theorem \ref{a-thm4.8}. 
Moreover, $\mathcal L^{\otimes m}|_{C_0}$ is $\pi$-generated 
for every $m\gg 0$ by the basepoint-free theorem 
(see Theorem \ref{a-thm6.1}). 
Note that $\left[C_0, {\overline \omega}|_{C_0}\right]$ is 
a quasi-log complex analytic space with only quasi-log canonical singularities 
(see \cite[Lemma 6.3.12]{fujino-foundations}). 
Therefore, we can construct a section $s$ 
of $\mathcal L^{\otimes p^k}|_{X^\dag}$ 
for some positive integer $k>l$ such that 
$s|_{C_0}$ is not zero and $s$ is 
zero on $\Nqlc(X^\dag, \overline{\omega})$ 
by \eqref{a-eq7.2}. 
Thus $s$ is zero on 
\begin{equation*} 
\Nqlc(X^\dag, \overline{\omega})=\Nqlc 
(X^\dag, \widetilde{\omega})=\left(\bigcup_{i\in I} C_i\right)
\cup \Nqlc (X^\dag, 
\omega^\dag)
\end{equation*} 
by \eqref{a-eq7.1}. 
In particular, $s$ is zero on $\Nqlc(X^\dag, \omega^\dag)$. 
Hence, $s$ can be seen as a section of $\mathcal L^{\otimes p^k}$ 
because 
$\mathcal I_{\Nqlc(X^\dag, \omega^\dag)}=\mathcal I_{\Nqlc(X, \omega)}$ 
by construction (see Lemma \ref{a-lem5.3}). 
More precisely, 
we can see 
\begin{equation*}
s\in \pi_*\left(\mathcal I_{\Nqlc(X^\dag, \overline {\omega})}
\otimes \mathcal L^{\otimes p^k}\right)
\end{equation*} 
by 
construction. 
Since 
\begin{equation*}
\mathcal I_{\Nqlc(X^\dag, \overline {\omega})}
\subset \mathcal I_{\Nqlc(X^\dag, \omega^\dag)}
=\mathcal I_{\Nqlc(X, \omega)}, 
\end{equation*} 
we have 
\begin{equation*}
s\in \pi_*\left(\mathcal I_{\Nqlc(X^\dag, \overline {\omega})}
\otimes \mathcal L^{\otimes p^k}\right)
\subset 
\pi_*\left(\mathcal I_{\Nqlc(X, \omega)}
\otimes \mathcal L^{\otimes p^k}\right)
\subset \pi_*\left(\mathcal L^{\otimes p^k}\right). 
\end{equation*} 
Therefore, $\Bs_{\pi}|p^k\mathcal L|$ is strictly smaller 
than $\Bs_{\pi}|p^l\mathcal L|$ over $P$. 
We complete the proof of Claim. 
\end{proof}
\end{step}
\begin{step}\label{a-7.1-step4}
By the noetherian induction, after shrinking $S$ 
around $P$ suitably, $p^l\mathcal L$ and 
$p'^{l'}\mathcal L$ are both $\pi$-generated for large 
positive integers 
$l$ and $l'$, where $p$ and $p'$ are distinct prime numbers. 
Hence there exists a positive integer $m_0$ such that 
$\mathcal L^{\otimes m}$ is $\pi$-generated for every $m\geq m_0$ 
(see Lemma \ref{a-lem8.6} below). 
\end{step}
We finish the proof. 
\end{proof}

As we saw above, since $\pi\colon X\to S$ is projective in 
Theorem \ref{a-thm7.1}, the proof of 
\cite[Theorem 1.1]{fujino-reid-fukuda} works even 
when $\pi\colon X\to S$ is not algebraic. When $\pi\colon X\to S$ 
is algebraic but is only {\em{proper}}, the proof 
of Theorem \ref{a-thm7.1} is unexpectedly 
difficult. For the details, see the proof of 
\cite[Theorem 1.1]{fujino-quasi}. 

\section{Effective freeness}\label{a-sec8}

In this section, we will prove the following effective freeness 
and effective very ampleness. 
This type of effective freeness was originally due to Koll\'ar (see 
\cite{kollar}). Note that his method was already generalized for 
quasi-log schemes in \cite{fujino-effective}. 
Here, we give a slightly simpler proof for quasi-log complex 
analytic spaces. 

\begin{thm}[Effective freeness for quasi-log complex 
analytic spaces]\label{a-thm8.1}
Let $[X, \omega]$ be a quasi-log complex analytic space 
with $X_{-\infty}=\emptyset$ and let 
$\pi\colon X\to S$ be a projective morphism 
between complex analytic spaces. 
Let $\mathcal L$ be a $\pi$-nef line bundle on $X$ such that 
$a\mathcal L-\omega$ is $\pi$-ample over $S$ 
for some non-negative integer $a$. Then there 
exists a positive integer $m=m(\dim X, a)$, which 
only depends on $\dim X$ and $a$, such that 
$\mathcal L^{\otimes m}$ is $\pi$-generated. 
Moreover, there exists a positive integer $m_0=m_0(\dim X, a)$ 
depending only 
on $\dim X$ and $a$ such that 
$\mathcal L^{\otimes l}$ is $\pi$-generated for every $l\geq m_0$. 
\end{thm}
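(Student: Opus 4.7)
The plan is to proceed by induction on $n=\dim X$, with the base case $n=0$ being trivial. Fix a point $P\in S$; by compactness it suffices to prove generation over a relatively compact Stein neighborhood of $P$, and we shrink $S$ around $P$ freely throughout. Using Lemma~\ref{a-lem5.3} and the adjunction formula (Theorem~\ref{a-thm4.4}), reduce to the case that $X$ is irreducible. If every qlc stratum of $[X,\omega]$ equals $X$, then $X$ is normal by Lemma~\ref{a-lem4.9}; otherwise $\Nqklt(X,\omega)$ is a proper closed analytic subspace which by Corollary~\ref{a-cor4.6} carries its own quasi-log canonical structure of strictly smaller dimension.

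The main strategy is Koll\'ar's method as adapted to the quasi-log setting in \cite{fujino-effective}. For a target point $x\in \pi^{-1}(P)$, we want to produce a global section of $\mathcal L^{\otimes m}$ not vanishing at $x$. Let $C_x$ denote the unique minimal qlc stratum through $x$ (Theorem~\ref{a-thm4.10}). If $\dim C_x<n$, the inductive hypothesis applied to $[\Nqklt(X,\omega),\omega|_{\Nqklt(X,\omega)}]$ yields generation of $\mathcal L^{\otimes m}|_{\Nqklt(X,\omega)}$ for all $m\geq m_0(n-1,a)$, and Theorem~\ref{a-thm4.8} makes the restriction map
\begin{equation*}
\pi_*\mathcal L^{\otimes m}\longrightarrow \pi_*\bigl(\mathcal L^{\otimes m}|_{\Nqklt(X,\omega)}\bigr)
\end{equation*}
surjective for $m\geq a$, producing the desired section. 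In the remaining case $C_x=X$, we create a proper qlc center through $x$ artificially: for an integer $k$ to be bounded effectively in terms of $n$ and $a$, take general members $D_1,\dots,D_{n+1}\in|\mathcal L^{\otimes k}|$ passing through $x$; by Lemmas~\ref{a-lem5.2} and \ref{a-lem5.1}, $[X,\omega+c\sum_i D_i]$ inherits a quasi-log structure for every $c\geq 0$, and choosing $c\in(0,1]$ maximal for which this pair remains quasi-log canonical at $x$ yields a proper qlc center $C'\ni x$. The new $\omega$-coefficient is then bounded by $a+c(n+1)k$, so we are reduced to the previous case.

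The principal obstacle is the effective bound on $k$: one must know that $\mathcal L^{\otimes k}$ admits enough sections vanishing appropriately at $x$ for some $k=k(n,a)$ depending only on $n$ and $a$. This follows from combining Theorem~\ref{a-thm4.8}, which gives $R^i\pi_*\mathcal L^{\otimes k}=0$ for $i>0$ and $k\geq a$, with the ampleness of $a\mathcal L-\omega$, forcing polynomial growth of $\pi_*\mathcal L^{\otimes k}$ of degree $n$ and producing the required $k$ via a standard dimension count as in \cite{fujino-effective,kollar}. Compared with the original proof in the algebraic setting, this argument is slightly simpler because Theorems~\ref{a-thm4.4} and \ref{a-thm4.10} together with Lemma~\ref{a-lem5.1} allow direct induction on qlc strata without separately analyzing log canonical centers.

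Finally, to upgrade freeness at a single $m$ to freeness for all $l\geq m_0$, apply the above argument with two distinct prime exponents $p\neq p'$ to obtain integers $k,k'$ (depending only on $n$ and $a$) with both $\mathcal L^{\otimes p^k}$ and $\mathcal L^{\otimes{p'}^{k'}}$ being $\pi$-generated, then invoke Lemma~\ref{a-lem8.6} below to combine these into generation of $\mathcal L^{\otimes l}$ for every $l\geq m_0=m_0(n,a)$.
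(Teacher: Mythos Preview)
Your outline diverges from the paper's argument in a way that introduces a genuine gap. The paper does \emph{not} induct on $\dim X$; instead it runs a noetherian induction on the dimension of the relative base locus. After reducing to $X$ irreducible, it first shows (Claim~1, via an Euler-characteristic polynomial count and Lemma~\ref{a-lem8.4}) that for every $m_1\ge 2(a+\dim X)$ the system $|\mathcal L^{\otimes m_1}|$ has members missing all qlc strata. Then it passes to the contraction $p\colon X\to Z$ associated to $\mathcal L$ and proves (Claim~2) that for any such $m_1$ one can find an effectively bounded multiple $m_2$ with $\dim\Bs_q|\mathcal L_Z^{\otimes m_2}|<\dim\Bs_q|\mathcal L_Z^{\otimes m_1}|$, by cutting with $n+1$ general members of $|\mathcal L^{\otimes m_1}|$ to create a qlc center $V$ dominating a chosen component $\mathcal B$ of the base locus, and then producing a section in $\pi_*\bigl(\mathcal I_{\Nqlc(X,\omega+cD)}\otimes\mathcal L^{\otimes m_2}\bigr)$ that is nonzero along $V$. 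The point is that $p(\Nqlc(X,\omega+cD))$ does not contain $\mathcal B$, so the short exact sequence \eqref{a-eq4.3} together with vanishing lets one separate $V$ from the newly created non-qlc locus; iterating gives the effective bound. Finally the passage from a single $m$ to all $l\ge m_0$ uses Lemma~\ref{a-lem8.5} to manufacture a coprime second multiple, not a ``two primes'' argument.

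Your induction on $\dim X$ breaks precisely at the step you flag as the ``principal obstacle'', but for a different reason than you identify. When $C_x=X$ and you cut with $D_1,\dots,D_{n+1}\in|\mathcal L^{\otimes k}|$ through $x$, the resulting pair $[X,\omega+c\sum D_i]$ will in general have $\Nqlc(X,\omega+c\sum D_i)\ne\emptyset$ along $\Bs_\pi|\mathcal L^{\otimes k}|$, since every $D_i$ contains that locus. Hence the induced quasi-log structure on the center $C'$ (or on $\Nqklt$ of the new pair) has $X'_{-\infty}\ne\emptyset$, and you cannot invoke the inductive hypothesis, which is exactly Theorem~\ref{a-thm8.1} with $X_{-\infty}=\emptyset$. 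There is no effective control available on $\mathcal L|_{X'_{-\infty}}$ to fall back on. This is why the paper works with base-locus components rather than individual points: by taking the $D_i$ fully general (not constrained to pass through $x$) and analyzing a component $\mathcal B$ of the base locus, one arranges that the new non-qlc locus avoids the generic point of $\mathcal B$, and the ideal-sheaf exact sequence does the rest. Your final paragraph also conflates the prime-power descent of Theorems~\ref{a-thm6.1} and \ref{a-thm7.1} with the coprime-multiple trick of Lemma~\ref{a-lem8.5} actually used here.
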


If $\omega$ is a $\mathbb Q$-line bundle in Theorem 
\ref{a-thm8.1}, then we have: 

\begin{thm}\label{a-thm8.2} 
In Theorem \ref{a-thm8.1}, 
if $\omega$ is a $\mathbb Q$-line bundle {\em{(}}or a 
globally $\mathbb Q$-Cartier divisor{\em{)}}, then 
we may replace the assumption that 
$a\mathcal L-\omega$ is $\pi$-ample 
over $S$ with a weaker one that $a\mathcal L-\omega$ is nef and 
log big over $S$ with respect to $[X, \omega]$. 
\end{thm}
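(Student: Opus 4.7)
The plan is to reduce Theorem \ref{a-thm8.2} to the $\pi$-ample case already handled by Theorem \ref{a-thm8.1}, by perturbing $\omega$ with an effective divisor supplied by Kodaira's lemma and concentrating the failure of ampleness in a closed subspace of strictly smaller dimension, which is then treated by induction. Throughout we use freely that we may shrink $S$ to a relatively compact open subset, and that by Lemma \ref{a-lem5.7} we may work with a genuine $\mathbb{Q}$-structure.

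First, I would set up the induction on $\dim X$. Since every qlc center of $[X,\omega]$ has strictly smaller dimension than any irreducible component of $X$, Corollary \ref{a-cor4.6} endows $\Nqklt(X,\omega)$ with a natural qlc structure of dimension at most $\dim X-1$, and the restriction of $a\mathcal{L}-\omega$ to this locus is still nef and log big with respect to the induced quasi-log structure. The induction hypothesis then produces an integer $m_1=m_1(\dim X-1,a)$ such that $\mathcal{L}^{\otimes m}|_{\Nqklt(X,\omega)}$ is $\pi$-generated for every $m\geq m_1$. By Theorem \ref{a-thm4.8} applied with $X'=\Nqklt(X,\omega)$, the sheaf $m\mathcal{L}-\omega$ is nef over $S$ and log big along each qlc stratum not contained in $X'$ (namely the irreducible components of $X$, where bigness is exactly the log-big hypothesis), so $R^1\pi_*(\mathcal{I}_{\Nqklt(X,\omega)}\otimes\mathcal{L}^{\otimes m})=0$ for every $m\geq a$. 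Consequently the restriction map $\pi_*\mathcal{L}^{\otimes m}\to\pi_*(\mathcal{L}^{\otimes m}|_{\Nqklt(X,\omega)})$ is surjective, and $\mathcal{L}^{\otimes m}$ is $\pi$-generated on an open neighborhood of $\Nqklt(X,\omega)$ as soon as $m\geq\max(m_1,a)$.

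Next, to handle the complementary region, I would apply Kodaira's lemma to the nef and big $\mathbb{Q}$-line bundle $a\mathcal{L}-\omega$ (this is where the $\mathbb{Q}$-structure hypothesis is used) to write $a\mathcal{L}-\omega\sim_{\mathbb{Q}}A+E$ with $A$ a $\pi$-ample $\mathbb{Q}$-divisor and $E$ an effective $\mathbb{Q}$-divisor on $X$, after shrinking $S$. Setting $\widetilde{\omega}:=\omega+\varepsilon E$ with $0<\varepsilon\ll 1$ rational, Lemma \ref{a-lem5.4} gives a quasi-log structure on $[X,\widetilde{\omega}]$ whose non-qlc locus $Z$ is the union of the qlc centers of $[X,\omega]$ contained in $\Supp E$, hence $Z\subseteq\Nqklt(X,\omega)$; in particular $\mathcal{L}^{\otimes m}|_Z$ is $\pi$-generated for $m\geq m_1$ by what precedes. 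The crucial identity
\[
a\mathcal{L}-\widetilde{\omega}=(1-\varepsilon)(a\mathcal{L}-\omega)+\varepsilon A
\]
shows that $a\mathcal{L}-\widetilde{\omega}$ is $\pi$-ample, so $[X,\widetilde{\omega}]$ falls within the scope of the proof technique of Theorem \ref{a-thm8.1}, except that its non-qlc locus $Z$ is non-empty.

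Finally I would run the effective-freeness argument of Theorem \ref{a-thm8.1} verbatim on $[X,\widetilde{\omega}]$, with two modifications: the role of the hypothesis ``$X_{-\infty}=\emptyset$'' is replaced by the already-established $\pi$-generation of $\mathcal{L}^{\otimes m}|_Z$ for $m\geq m_1$ supplied by induction, and the lifting of sections from $Z$ to $X$ is ensured by Theorem \ref{a-thm4.8}, which gives $R^1\pi_*(\mathcal{I}_Z\otimes\mathcal{L}^{\otimes m})=0$ for $m\geq a$ thanks to the ampleness of $a\mathcal{L}-\widetilde{\omega}$. The resulting bound $m_0$ depends only on the bound $m_0(\dim X,a)$ from Theorem \ref{a-thm8.1}, on $m_1(\dim X-1,a)$, and on $a$ itself, so it depends only on $\dim X$ and $a$ as required. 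The main obstacle I anticipate is verifying that the proof of Theorem \ref{a-thm8.1} adapts cleanly to the setting of non-empty non-qlc locus $Z$ without introducing any dependence on the chosen perturbation $(E,\varepsilon)$; this should be handled by Lemmas \ref{a-lem5.3} and \ref{a-lem5.4} together with the fact that the numerical data $a$ controlling ampleness is preserved under the perturbation.
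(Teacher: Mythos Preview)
Your route differs from the paper's, and while it is plausible it leaves the hardest step as an unverified claim.

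The paper does not perturb $\omega$ at all. Instead it picks an integer $b\geq a$ with $b\omega$ a line bundle, sets $\mathcal M:=a(b+1)\mathcal L-b\omega$, and observes that $\mathcal M-\omega=(b+1)(a\mathcal L-\omega)$ is nef and log big. The Reid--Fukuda basepoint-freeness (Theorem \ref{a-thm7.1}) applied to $\mathcal M$ then furnishes a contraction $\varphi\colon X\to X'$ over $S$, and both $\mathcal L$ and $\omega$ descend: $\mathcal L\simeq\varphi^*\mathcal L'$ and $\omega\sim_{\mathbb Q}\varphi^*\omega'$. Pushing the quasi-log resolution forward gives $[X',\omega']$ with $X'_{-\infty}=\emptyset$, and since $\mathcal M'$ is $\pi'$-ample a short computation shows that $(a+1)\mathcal L'-\omega'$ is $\pi'$-ample. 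Theorem \ref{a-thm8.1} now applies directly on $X'$ with parameter $a+1$, and the bound $m_0(\dim X,a+1)$ pulls back to $X$.

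Your plan instead uses Kodaira's lemma to make $a\mathcal L-\widetilde\omega$ ample at the price of a nonempty non-qlc locus $Z$, and then asks that the argument of Theorem \ref{a-thm8.1} run on $[X,\widetilde\omega]$. This is not quite ``verbatim'': every invocation of vanishing (Theorems \ref{a-thm4.7} and \ref{a-thm4.8}) and of adjunction in that proof must now carry $Z$ along, and sections of $\mathcal L^{\otimes m}$ must be assembled from sections of $\mathcal I_Z\otimes\mathcal L^{\otimes m}$ together with lifts from $Z$. In particular the Euler-characteristic count in Claim~\ref{a-8.1-claim1} and the short exact sequence in Claim~\ref{a-8.1-claim2} have to be redone with $C\cup Z$ (respectively $V'\cup Z$) in place of $C$ and $V'$, and one must verify that the resulting bounds still depend only on $(\dim X,a)$ and not on the particular choice of $(E,\varepsilon)$. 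You flag exactly this as the anticipated obstacle; it is genuine, and the paper's contraction trick eliminates it entirely. Note also that Kodaira's lemma requires $X$ irreducible, so the reduction of Step~\ref{a-8.1-step1} in the proof of Theorem \ref{a-thm8.1} must precede your perturbation.
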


If $\mathcal L$ is $\pi$-ample in Theorem \ref{a-thm8.1}, 
then we have the following effective very ampleness. 

\begin{thm}[Effective very ampleness for 
quasi-log complex analytic spaces]\label{a-thm8.3} 
Let $[X, \omega]$ be a quasi-log complex analytic space 
with $X_{-\infty}=\emptyset$ and let 
$\pi\colon X\to S$ be a projective morphism 
between complex analytic spaces. 
Let $\mathcal L$ be a $\pi$-ample line bundle on $X$ such that 
$a\mathcal L-\omega$ is $\pi$-nef over $S$ 
for some non-negative integer $a$. 
Then there exists a positive integer $m'=m'(\dim X, a)$ depending only on 
$\dim X$ and $a$ such that 
$\mathcal L^{\otimes m'}$ is $\pi$-very ample. 
Moreover, there exists a positive integer $m'_0=m'_0(\dim X, a)$ depending 
only on $\dim X$ and $a$ such that 
$\mathcal L^{\otimes l'}$ is $\pi$-very ample for every $l'\geq m'_0$. 
\end{thm}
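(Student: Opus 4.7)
The plan is to reduce Theorem \ref{a-thm8.3} to the effective freeness of Theorem \ref{a-thm8.1} via a standard perturbation-of-quasi-log-structure argument. Since $\mathcal{L}$ is $\pi$-ample and $a\mathcal{L}-\omega$ is $\pi$-nef, the line bundle $(a+1)\mathcal{L}-\omega=(a\mathcal{L}-\omega)+\mathcal{L}$ is $\pi$-ample, so Theorem \ref{a-thm8.1} provides a positive integer $m_0=m_0(\dim X,a+1)$, depending only on $\dim X$ and $a$, such that $\mathcal{L}^{\otimes l}$ is $\pi$-generated for every $l\geq m_0$. To upgrade freeness to very ampleness, I would show that for each point $s\in S$ there exist a relatively compact open neighborhood $U$ of $s$ and a positive integer $m'=m'(\dim X,a)$ such that $\mathcal{L}^{\otimes m'}$ separates distinct pairs of points and separates tangent directions along each fiber of $\pi$ over $U$.

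Fix either a pair of distinct points $\{P_1,P_2\}\subset \pi^{-1}(s)$ or a single point $P\in\pi^{-1}(s)$, put $n:=\dim X$, and shrink $S$ around $s$. Using the $\pi$-generation of $\mathcal{L}^{\otimes m_0}$ together with the vanishing Theorem \ref{a-thm4.7}, one finds $n+1$ general members $D_1,\ldots,D_{n+1}\in|\mathcal{L}^{\otimes m_0}|$ each containing the prescribed subscheme (with multiplicity at least $2$ at $P$ in the tangent-separation case). By Lemma \ref{a-lem5.2}, $\omega+t\sum_{i=1}^{n+1}D_i$ carries a quasi-log structure for every $t\geq 0$, and by Lemma \ref{a-lem5.1}, increasing $t$ from $0$ forces the non-qlc locus to become nonempty at some threshold $c\leq 1$, with the prescribed subscheme entering $\Nqlc$ at that threshold by our choice of the $D_i$. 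Call the resulting quasi-log complex analytic space $[X,\omega']$ with $\omega':=\omega+c\sum_{i=1}^{n+1}D_i$. Since $\omega'\sim_{\mathbb{R}}\omega+c(n+1)m_0\mathcal{L}$ in $\Pic(X)\otimes_{\mathbb{Z}}\mathbb{R}$, the difference $m'\mathcal{L}-\omega'\sim_{\mathbb{R}}(m'-c(n+1)m_0)\mathcal{L}-\omega$ is $\pi$-ample whenever $m'\geq c(n+1)m_0+(a+1)$. Setting $m':=(n+1)m_0+a+1$, Theorem \ref{a-thm4.8} yields $R^1\pi_*\bigl(\mathcal{I}_{\Nqlc(X,\omega')}\otimes \mathcal{L}^{\otimes m'}\bigr)=0$, whence $\pi_*\mathcal{L}^{\otimes m'}\twoheadrightarrow \pi_*\bigl(\mathcal{L}^{\otimes m'}|_{\Nqlc(X,\omega')}\bigr)$ over a neighborhood of $s$; lifting any basis of the right-hand side that realizes separation of the prescribed points or tangent vectors then yields the desired separation by $\mathcal{L}^{\otimes m'}$. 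Finally, the upper-threshold statement that $\mathcal{L}^{\otimes l'}$ is $\pi$-very ample for every $l'\geq m_0':=m'+m_0$ follows by writing $l'=m'+k$ with $k\geq m_0$ and using that the tensor product of a $\pi$-very ample and a $\pi$-generated line bundle is $\pi$-very ample.

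The main obstacle is arranging the perturbation so that $\Nqlc(X,\omega')$ agrees with (or at least contains a clopen component isomorphic to) the prescribed length-$2$ subscheme near the chosen points, so that sections on $\Nqlc(X,\omega')$ really do encode separation of points and tangent directions. This is delicate when $X$ is singular at the chosen point, and especially so in the tangent-separation case where $\Nqlc(X,\omega')$ must have the right infinitesimal structure; here one needs a careful choice of the $D_i$ together with a log-canonical-threshold-type computation, just as in the quasi-log scheme case treated in \cite{fujino-effective}. A further subtlety is the bookkeeping: one must verify throughout that the shrinkings of $S$ and the numerical constants $c$, $m_0$, $n$ only contribute to bounds depending solely on $\dim X$ and $a$, which they do since $c\leq 1$, $m_0=m_0(\dim X,a+1)$, and $n=\dim X$.
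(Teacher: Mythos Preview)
Your approach differs substantially from the paper's, and the obstacle you flag is real and not resolved by what you wrote. The paper avoids the entire point-separation and tangent-separation machinery: since $(a+1)\mathcal L-\omega$ is $\pi$-ample, Theorem \ref{a-thm8.1} supplies $m=m(\dim X,a+1)$ with $\mathcal L^{\otimes m}$ already $\pi$-generated and $\pi$-ample, and the paper then simply invokes the elementary lemma (cited as \cite[Lemma 4.1]{fujino-effective} and \cite[Lemma 7.1]{fujino-effective-slc}) that for a $\pi$-ample and $\pi$-generated line bundle $\mathcal M$ on a space of dimension $n$, the power $\mathcal M^{\otimes(n+1)}$ is $\pi$-very ample. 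This gives $m'=(\dim X+1)\cdot m(\dim X,a+1)$ at once, and $m'_0=m_0(\dim X,a+1)+m'$ exactly as in your last line.

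Your strategy of forcing the prescribed length-two subscheme into $\Nqlc(X,\omega')$ and then lifting sections has a genuine gap. The threshold $c$ at which $\Nqlc$ first becomes nonempty need not occur at your chosen points: the general $D_i$ may well be more singular elsewhere along their common base locus, so $\Nqlc(X,\omega')$ can appear far from $P_1,P_2$ (or $P$) and your surjection then says nothing about separation there. Even when the chosen points do lie in $\Nqlc(X,\omega')$, the analytic structure on $\Nqlc(X,\omega')$ is dictated by the ideal $f_*\mathcal O_Y(\lceil-(B_Y^{<1})\rceil-\lfloor B_Y^{>1}\rfloor)$, over which you have no direct control; there is no mechanism ensuring it cuts out the reduced pair $\{P_1,P_2\}$ or the first infinitesimal neighborhood of $P$, so sections on $\Nqlc(X,\omega')$ need not realize the required separation. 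Finally, \cite{fujino-effective} does not carry out this construction for very ampleness either---it uses the same $(n+1)$-fold trick the paper uses here. The paper's route is both shorter and sidesteps an obstacle your route leaves open.
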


We will use the following easy lemmas in the 
proof of Theorem \ref{a-thm8.1}. 

\begin{lem}\label{a-lem8.4} 
Let $P(x)$ be a polynomial and let $a$ and $n$ be positive 
integers. 
Assume that, with at most $n$ exceptions, 
$P(a+j)\ne 0$ holds for 
every non-negative integer $j$. 
Then, for every positive integer $m\geq 2(a+n)$, 
there exists a non-negative 
integer $j_0$ with $0\leq j_0\leq n$ such that 
$P(a+j_0)\ne 0$ and $P(m-a-j_0)\ne 0$. 
\end{lem}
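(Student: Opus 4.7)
The plan is to prove the lemma by a direct counting argument. Let $T=\{j\in\mathbb Z_{\ge 0}:P(a+j)=0\}$, so that $|T|\le n$ by hypothesis. Let $B\subseteq\{0,1,\dots,n\}$ denote the set of ``bad'' indices, namely those $j_0$ for which $P(a+j_0)=0$ or $P(m-a-j_0)=0$. Since $m-a-j_0=a+(m-2a-j_0)$, the bad condition reads $j_0\in T$ or $m-2a-j_0\in T$. The assumption $m\ge 2(a+n)$ and $j_0\le n$ imply $m-2a-j_0\ge n\ge 0$, so both expressions are non-negative integers to which the hypothesis on $T$ applies. The conclusion will follow if I can show $|B|\le n$, since $|\{0,\dots,n\}|=n+1$.

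The key step is to define a map $\phi\colon B\to T$ by setting $\phi(j_0)=j_0$ when $j_0\in T$, and $\phi(j_0)=m-2a-j_0$ otherwise (at least one of these values lies in $T$ by definition of $B$). I will then show that $\phi$ is injective. Suppose $\phi(j_0)=\phi(j_0')=x$ with $j_0\ne j_0'$. The pure cases (both images defined by the first rule, or both by the second) immediately give $j_0=j_0'$. The remaining possibility, up to symmetry, is $\phi(j_0)=j_0$ and $\phi(j_0')=m-2a-j_0'$, which would force $j_0+j_0'=m-2a$.

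The main (and essentially only) obstacle is this mixed case, and it is handled by the hypothesis $m\ge 2(a+n)$: since $j_0,j_0'\le n$, we have $j_0+j_0'\le 2n$, so $m-2a\le 2n$, forcing equality $m=2(a+n)$; then $j_0+j_0'=2n$ with both summands $\le n$ forces $j_0=j_0'=n$, a contradiction. Hence $\phi$ is injective, so $|B|\le|T|\le n<n+1$, and one finds $j_0\in\{0,\dots,n\}\setminus B$ with the required property. The edge case $m=2(a+n)$ is the only delicate point; everything else is formal bookkeeping.
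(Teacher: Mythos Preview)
Your proof is correct and is essentially a careful fleshing-out of the paper's one-line argument: the paper simply notes $m-a-j=a+(m-2a-j)$ and $m-2a\ge 2n$, then asserts the conclusion, whereas you make the implicit pigeonhole step explicit via the injection $\phi\colon B\to T$. The key observation and the structure of the argument are the same.
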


\begin{proof}
We note that $m-a-j=a+(m-2a)-j$ and $m-2a\geq 2n$. 
Therefore, we can easily find some non-negative integer $j_0$ 
with $0\leq j_0\leq n$ such that 
$P(a+j_0)\ne 0$ and $P(m-a-j_0)\ne 0$. 
\end{proof}

\begin{lem}\label{a-lem8.5} 
Let $n_0$ and $n_1$ be positive integers such that $\gcd(n_0, n_1)=1$. 
We put 
$n_2:=(kn_0+1)n_1$, where $k$ is any positive integer. 
Then $\gcd(n_0, n_2)=1$. 
\end{lem}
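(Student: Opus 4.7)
The plan is to reduce the claim to two elementary coprimality facts. First, I would observe that $\gcd(n_0, kn_0+1) = 1$: indeed, any common divisor of $n_0$ and $kn_0+1$ also divides $(kn_0+1) - k\cdot n_0 = 1$. (Equivalently, $kn_0+1 \equiv 1 \pmod{n_0}$.) Second, the hypothesis gives $\gcd(n_0, n_1) = 1$ directly.

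Next, I would combine these using the standard multiplicativity property of coprimality, namely that $\gcd(a, bc) = 1$ whenever $\gcd(a, b) = \gcd(a, c) = 1$. Applying this with $a = n_0$, $b = kn_0+1$, and $c = n_1$ yields
\begin{equation*}
\gcd(n_0, n_2) = \gcd\bigl(n_0,\, (kn_0+1)n_1\bigr) = 1,
\end{equation*}
as desired.

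Alternatively, one can argue directly via primes: if a prime $p$ divided both $n_0$ and $n_2 = (kn_0+1)n_1$, then $p \mid n_0$ forces $kn_0+1 \equiv 1 \pmod{p}$, so $p \nmid (kn_0+1)$; hence $p \mid n_1$, contradicting $\gcd(n_0, n_1)=1$. There is no real obstacle here — the statement is a one-line exercise, and the only thing to be careful about is invoking the multiplicativity of coprimality (or equivalently Euclid's lemma) rather than attempting any computation involving $k$.
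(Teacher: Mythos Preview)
Your proof is correct; the paper simply declares the lemma ``obvious'' without giving any argument, so your elementary verification via $\gcd(n_0,kn_0+1)=1$ combined with the multiplicativity of coprimality is exactly what one would supply to fill in that word.
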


\begin{proof}
It is obvious. 
\end{proof}

\begin{lem}\label{a-lem8.6}
Let $a$ and $b$ be positive integers with 
$1<a<b$ such that $\gcd(a, b)=1$. 
Then, for any positive integer $l$ with 
$l\geq a\left(b-\lceil \frac{b}{a}\rceil\right)$, 
there exist non-negative integers $u$ 
and $v$ such that $l=ua+vb$. 
\end{lem}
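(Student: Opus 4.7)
My plan is to reduce the lemma to the classical Sylvester--Frobenius fact that every integer $\geq (a-1)(b-1)$ admits a non-negative representation of the form $ua + vb$. The numerical hypothesis in the lemma will turn out to be a (slightly) stronger bound than the Frobenius one, so no sharp analysis is required.

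First, I will perform the Euclidean division $b = qa + r$ with $0 \leq r < a$. Since $\gcd(a,b) = 1$ and $a > 1$, the remainder satisfies $r \geq 1$, hence $\lceil b/a \rceil = q+1$, and
$$a(b - \lceil b/a \rceil) \;=\; ab - a(q+1) \;=\; ab - (b-r) - a \;=\; ab - a - b + r.$$
Because $r \geq 1$, this is at least $ab - a - b + 1 = (a-1)(b-1)$, so it suffices to establish the Sylvester--Frobenius bound.

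Next, I fix $l \geq (a-1)(b-1)$. Coprimality of $a$ and $b$ furnishes integers $u_0, v_0$ with $l = u_0 a + v_0 b$, and every integer solution has the form $(u_0 + kb,\, v_0 - ka)$ for $k \in \mathbb Z$. I will select the unique representative with $u \in \{0, 1, \dots, b-1\}$ and set $v := (l - ua)/b \in \mathbb Z$. The one thing to verify is that $v \geq 0$: if instead $v \leq -1$, then
$$l \;=\; ua + vb \;\leq\; (b-1)a - b \;=\; ab - a - b,$$
contradicting $l \geq (a-1)(b-1) = ab - a - b + 1$. Hence $v \geq 0$, and $(u,v)$ gives the required representation.

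No step presents a genuine obstacle; the lemma is a standard number-theoretic fact packaged with the precise explicit bound needed in the earlier applications (such as the endings of the proofs of Theorems \ref{a-thm6.1} and \ref{a-thm7.1}).
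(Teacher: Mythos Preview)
Your proof is correct. The paper does not give a detailed argument for this lemma (it merely calls it an easy exercise and refers to \cite[Lemma 4.3]{fujino-quasi}), and your reduction to the Sylvester--Frobenius bound $(a-1)(b-1)$ via the computation $a\bigl(b-\lceil b/a\rceil\bigr)=ab-a-b+r\geq (a-1)(b-1)$ is the natural and expected route.
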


\begin{proof} 
It is an easy exercise. 
For the details, see, for example, the proof of 
\cite[Lemma 4.3]{fujino-quasi}. 
\end{proof}

Let us prove Theorem \ref{a-thm8.1}. 
The proof is new and is slightly simpler than the one given in 
\cite{fujino-effective} for quasi-log schemes. 

\begin{proof}[Proof of Theorem \ref{a-thm8.1}]
We take an arbitrary point $s\in S$. 
It is sufficient to prove the existence of $m(\dim X, a)$ and 
$m_0(\dim X, a)$ 
over some open neighborhood of $s$. 
Therefore, we replace $S$ with a relatively compact Stein open 
neighborhood of $s$. 
\setcounter{step}{0}
\begin{step}\label{a-8.1-step1}
Let $X'$ be an irreducible component of $X$. 
Then $X'$ is a qlc stratum of $[X, \omega]$. 
Hence, by adjunction (see Theorem \ref{a-thm4.4}), we see that 
$[X', \omega':=\omega|_{X'}]$ is a 
quasi-log complex analytic space with $X'_{-\infty}=\emptyset$. 
By the vanishing theorem (see Theorem \ref{a-thm4.8}), we have 
\begin{equation*}
R^1\pi_*(\mathcal I_{X'}\otimes 
\mathcal L^{\otimes j})=0
\end{equation*} 
for every $j\geq a$. 
Thus the natural restriction map 
\begin{equation*}
\pi_*\mathcal L^{\otimes j}\to 
\pi_*(\mathcal L^{\otimes j}|_{X'})
\end{equation*} 
is surjective for 
every $j\geq a$. 
Therefore, we replace $X$ with $X'$ and may assume 
that $X$ is irreducible. 
\end{step}
\begin{step}\label{a-8.1-step2}
In this step, we will prove the following claims. 
\setcounter{cla}{0}
\begin{cla}[{see \cite[Lemma 3.3]{fujino-effective}}]\label{a-8.1-claim1}
For every positive integer $m_1\geq 2(a+\dim X)$, 
there exists an effective Cartier divisor $D_1$ on $X$ such that 
$D_1\in |\mathcal L^{\otimes m_1}|$ and that $\Supp D_1$ contains 
no qlc 
strata of $[X, \omega]$. 
\end{cla}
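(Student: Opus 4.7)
Set $n := \dim X$. I would argue by induction on $n$. The base case $n = 0$ is immediate: then $X$ is a reduced point, every line bundle on it is trivial, and $D_1 = 0$ lies in $|\mathcal L^{\otimes m_1}|$ with empty support, which contains no qlc stratum.

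For the inductive step, let $X' \subset X$ denote the union of all qlc centers of $[X,\omega]$, endowed with the reduced structure. By iterated application of the adjunction formula (Theorem \ref{a-thm4.4}), after shrinking $S$ further if necessary, $[X', \omega|_{X'}]$ is a quasi-log complex analytic space with $X'_{-\infty} = \emptyset$, whose qlc strata are precisely the qlc strata of $[X,\omega]$ contained in $X'$; moreover $\dim X' \leq n - 1$, the restriction $\mathcal L|_{X'}$ remains $\pi$-nef, and $a\mathcal L|_{X'} - \omega|_{X'}$ remains $\pi$-ample. Since $m_1 \geq 2(a+n) > a$, the $\mathbb R$-line bundle $m_1 \mathcal L - \omega = (m_1 - a)\mathcal L + (a\mathcal L - \omega)$ is $\pi$-ample on $X$, so Theorem \ref{a-thm4.8} yields $R^1\pi_*(\mathcal I_{X'}\otimes \mathcal L^{\otimes m_1}) = 0$ and hence a surjection
\[
\pi_* \mathcal L^{\otimes m_1} \twoheadrightarrow \pi_*\bigl(\mathcal L^{\otimes m_1}|_{X'}\bigr).
\]

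If $X' \neq \emptyset$, I would apply the inductive hypothesis to $[X', \omega|_{X'}]$ with the same $a$; since $m_1 \geq 2(a+n) \geq 2(a+\dim X')$, this produces an effective Cartier divisor $D'_1$ in the complete linear system of $\mathcal L^{\otimes m_1}|_{X'}$ whose support contains no qlc stratum of $[X', \omega|_{X'}]$, equivalently no qlc center of $[X,\omega]$. Lifting $D'_1$ via the surjection above yields $D_1 \in |\mathcal L^{\otimes m_1}|$; since $X$ is irreducible (by Step \ref{a-8.1-step1}), $D_1$ cannot contain $X$ in its support, while by construction $\Supp D_1 \cap X' = \Supp D'_1$ contains no qlc center, so $\Supp D_1$ avoids every qlc stratum of $[X,\omega]$.

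If $X' = \emptyset$, then $X$ is itself the unique qlc stratum, so $X$ is normal by Lemma \ref{a-lem4.9}, and it suffices to produce a nonzero global section of $\mathcal L^{\otimes m_1}$. This is the main obstacle of the claim. The plan is to pass to an analytically sufficiently general fiber $X_s := \pi^{-1}(s)$, exactly as in Step \ref{a-6.1-step2}, so that $[X_s, \omega|_{X_s}]$ is a projective quasi-log canonical pair on which $\mathcal L|_{X_s}$ is nef and $a\mathcal L|_{X_s} - \omega|_{X_s}$ is ample, and then to invoke a Koll\'ar-type effective non-vanishing statement for projective quasi-log canonical pairs (in the spirit of \cite[Lemma 3.3]{fujino-effective}, which combines Riemann--Roch with the vanishing theorems for qlc pairs) to obtain a nonzero section of $\mathcal L^{\otimes m_1}|_{X_s}$ as soon as $m_1 \geq 2(a+n)$. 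Combined with $R^i\pi_* \mathcal L^{\otimes m_1} = 0$ for $i > 0$ (Theorem \ref{a-thm4.8} with $X' = \emptyset$), this upgrades fiberwise non-vanishing to $\pi_* \mathcal L^{\otimes m_1} \neq 0$, and any nonzero section cuts out the desired $D_1$, whose support is a proper subset of the irreducible $X$ and thus avoids the only qlc stratum.
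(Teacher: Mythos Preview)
Your induction is not well-formed. You invoke the irreducibility of $X$ (inherited from Step~\ref{a-8.1-step1}) to conclude $\Supp D_1\subsetneq X$, so your inductive step only establishes the claim for \emph{irreducible} quasi-log canonical pairs. But you then apply the inductive hypothesis to $X'=\Nqklt(X,\omega)$, which is in general reducible. For the induction to close, the claim would have to be proved at each dimension for possibly reducible $X$, and your argument does not do this. If you try to repair it by passing to the irreducible components $C$ of $X'$ (each a qlc center of $[X,\omega]$, hence irreducible) and applying the inductive hypothesis there, you cannot glue the resulting divisors on the various $C$ to a single $D'_1$ on $X'$; what survives is only the weaker conclusion $\pi_*(\mathcal L^{\otimes m_1}|_C)\ne 0$ for each $C$, and hence $C\not\subset\Bs_\pi|\mathcal L^{\otimes m_1}|$ via the vanishing surjection. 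That is already the paper's argument, and the inductive wrapper contributes nothing.

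The paper proceeds directly, with no induction: for every qlc stratum $C$ (irreducible by definition), vanishing gives $R^i\pi_*(\mathcal L^{\otimes j}|_C)=0$ for $i>0$ and $j\ge a$, so on an analytically general fiber $C_s$ one has $\chi(C_s,\mathcal L^{\otimes j}|_{C_s})=h^0(C_s,\mathcal L^{\otimes j}|_{C_s})$; this is a nonzero polynomial in $j$ of degree at most $\dim C_s\le\dim X$, so it vanishes for at most $\dim X$ values $j\ge a$, and Lemma~\ref{a-lem8.4} (splitting $m_1$ as a sum of two such $j$'s and multiplying sections) forces $h^0(C_s,\mathcal L^{\otimes m_1}|_{C_s})\ne 0$ for every $m_1\ge 2(a+\dim X)$. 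Combined with the surjection $\pi_*\mathcal L^{\otimes m_1}\twoheadrightarrow\pi_*(\mathcal L^{\otimes m_1}|_C)$ this gives $C\not\subset\Bs_\pi|\mathcal L^{\otimes m_1}|$, and a general $D_1\in|\mathcal L^{\otimes m_1}|$ then avoids all qlc strata. In other words, the Riemann--Roch non-vanishing you isolate as the ``main obstacle'' in the special case $X'=\emptyset$ is in fact the entire proof, run uniformly over every qlc stratum rather than only over $X$ itself.
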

\begin{proof}[Proof of Claim \ref{a-8.1-claim1}]
Let $C$ be any qlc stratum of $[X, \omega]$. We consider the following 
short exact sequence: 
\begin{equation*}
0\to \mathcal I_C\otimes \mathcal L^{\otimes j} 
\to \mathcal L^{\otimes j} \to \mathcal L^{\otimes j}|_C\to 0,  
\end{equation*} 
where $\mathcal I_C$ is the defining ideal sheaf of $C$ on $X$. 
By the vanishing theorem (see Theorem \ref{a-thm4.8}), 
\begin{equation*}
R^i\pi_*\left(\mathcal I_C\otimes \mathcal L^{\otimes j}\right)
=R^i\pi_*\mathcal L^{\otimes j} =R^i\pi_*\left(\mathcal L^{\otimes j}|_C\right)=0
\end{equation*} 
for every $i\geq 1$ and $j\geq a$. 
Therefore, 
\begin{equation*}
\chi (C_s, \mathcal L^{\otimes j}|_{C_s})=
\dim H^0(C_s, \mathcal L^{\otimes j}|_{C_s})
\end{equation*} 
holds for $j\geq a$, where $C_s$ is an analytically 
sufficiently general fiber of $C\to \pi(C)$. 
Note that $\chi (C_s, \mathcal L^{\otimes j}|_{C_s})$ is 
a non-zero polynomial in $j$ since $\mathcal L^{\otimes m}$ is $\pi$-generated 
for every $m\gg 0$ by the basepoint-free theorem 
(see Theorem \ref{a-thm6.1}). 
We also note that the restriction map 
\begin{equation}\label{a-eq8.1}
\pi_*\mathcal L^{\otimes j}\to \pi_*\left(\mathcal L^{\otimes j}|_C\right)
\end{equation} 
is surjective for every $j\geq a$. 
Thus, with at most $\dim C_s$ exceptions, 
we have 
\begin{equation*}
\dim H^0(C_s, \mathcal L^{\otimes (a+j)}|_{C_s})\ne 
0
\end{equation*} for $j\geq 0$.   
By Lemma \ref{a-lem8.4}, we see that 
\begin{equation*}
\dim H^0(C_s, \mathcal L^{\otimes m_1}|_{C_s})\ne 0 
\end{equation*} holds 
for $m_1\geq 2(a+\dim X)$. Therefore, we have 
$C\not\subset \Bs_{\pi}|\mathcal L^{\otimes m_1}|$ for 
$m_1\geq 2(a+\dim X)$ by \eqref{a-eq8.1}. 
By this observation, 
we can take a desired effective Cartier divisor $D_1\in |\mathcal L^{\otimes m_1}|$ for 
every $m_1\geq 2(a+\dim X)$. 
\end{proof}

By the basepoint-free theorem 
(see Theorem \ref{a-thm6.1}), 
we have the commutative diagram: 
\begin{equation*}
\xymatrix{
X \ar[dr]_-\pi\ar[rr]^-p& & Z\ar[dl]^-q \\ 
&S&
}
\end{equation*}
such that $\mathcal L\simeq p^*\mathcal L_Z$ for some $q$-ample 
line bundle $\mathcal L_Z$ on $Z$ with $p_*\mathcal O_X\simeq \mathcal O_Z$. 
\begin{cla}\label{a-8.1-claim2}
If $\Bs_{\pi}|\mathcal L^{\otimes m_1}|$ contains no 
qlc strata of $[X, \omega]$, then 
\begin{equation*}
\dim \Bs_q|\mathcal L_Z^{\otimes m_2}|<
\dim \Bs_q|\mathcal L_Z^{\otimes m_1}|
\end{equation*}
holds for every positive integer 
$m_2\geq 2\left(a+(\dim X+1)m_1+\dim X\right)$ with 
$m_1|m_2$. 
\end{cla}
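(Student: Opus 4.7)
The plan is to adapt the Angehrn--Siu--Koll\'ar construction of new qlc centers to the quasi-log complex analytic setting and then lift sections across the new center using the vanishing theorem of Section \ref{a-sec4}. Because $p_*\mathcal O_X=\mathcal O_Z$ gives $\pi_*\mathcal L^{\otimes m}=q_*\mathcal L_Z^{\otimes m}$ for every $m\ge 0$, the base loci above and below correspond under $p$, so it suffices to show that every irreducible component $V$ of $\Bs_q|\mathcal L_Z^{\otimes m_1}|$ of maximal dimension $d$ satisfies $V\not\subset\Bs_q|\mathcal L_Z^{\otimes m_2}|$, equivalently that $V':=p^{-1}(V)\not\subset\Bs_\pi|\mathcal L^{\otimes m_2}|$. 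Write $n:=\dim X$ throughout.

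First I would pick a general point $P$ of $V'$. Using that $\Bs_\pi|\mathcal L^{\otimes m_1}|$ contains no qlc stratum of $[X,\omega]$ and that the collection of qlc strata is finite, choose, after shrinking $S$ around $\pi(P)$, general members $D_1,\dots,D_{n+1}\in|\mathcal L^{\otimes m_1}|$ none of whose supports contains a qlc stratum; all of them pass through $P$ since $P\in\Bs_\pi|\mathcal L^{\otimes m_1}|$. Lemma \ref{a-lem5.1} then forces $P\in\Nqlc(X,\omega+\sum D_i)$, because $n+1$ exceeds $\dim_P C_P$ for the minimal qlc stratum $C_P$ of $[X,\omega]$ through $P$. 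Let $c_0\in(0,1]$ be the infimum of $c>0$ for which $P\in\Nqlc(X,\omega+c\sum D_i)$, and set $\omega':=\omega+c_0\sum_{i=1}^{n+1}D_i$. By Lemma \ref{a-lem5.2}, $[X,\omega']$ is a quasi-log complex analytic space, and at the critical value $c_0$ a new qlc center $C_0\ni P$ appears; a standard tie-breaking perturbation by an arbitrarily small multiple of a $\pi$-ample Cartier divisor allows me to assume that $C_0$ is the unique minimal qlc center through $P$ and is disjoint from $\Nqlc(X,\omega')$ near $P$.

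The remainder is bookkeeping with the vanishing theorem. As $\mathbb R$-line bundles, $\omega'=\omega+c_0(n+1)m_1\mathcal L$, so
\[
m_2\mathcal L-\omega'\sim_{\mathbb R}\bigl(m_2-c_0(n+1)m_1\bigr)\mathcal L-\omega,
\]
which is $\pi$-ample over a neighborhood of $\pi(P)$ because $c_0\le 1$ and $m_2\ge a+(n+1)m_1$. Setting $X^\star:=C_0\cup\Nqlc(X,\omega')$, adjunction (Theorem \ref{a-thm4.4}) endows $[X^\star,\omega'|_{X^\star}]$ with a quasi-log structure, and Theorem \ref{a-thm4.8} yields $R^1\pi_*(\mathcal I_{X^\star}\otimes\mathcal L^{\otimes m_2})=0$, so the restriction map to $X^\star$ is surjective on global sections. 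To produce a section of $\mathcal L^{\otimes m_2}|_{C_0}$ nonvanishing at $P$, I would rerun the argument of Claim \ref{a-8.1-claim1} on the quasi-log space $[C_0,\omega'|_{C_0}]$: its ampleness constant is $a'\le a+(n+1)m_1$ and $\dim C_0\le n$, so the Hilbert polynomial of $\mathcal L|_{C_{0,s}}$ on a generic fiber is a nonzero polynomial in $m$ and Lemma \ref{a-lem8.4} furnishes the required section for every $m_2\ge 2(a'+n)$, which is exactly the bound $2(a+(n+1)m_1+n)$ in the hypothesis. Lifting and evaluating at the general $P\in V'$ finishes the argument.

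The step I expect to be the main obstacle is the clean creation of the qlc center $C_0$ at the threshold $c_0$: one needs to guarantee that a genuine new qlc center appears at $c_0$ rather than simply pushing $P$ into $\Nqlc$, and that this center can be isolated from $\Nqlc(X,\omega')$ near $P$. In the analytic setting this is handled by combining careful shrinking of $S$ with the standard tie-breaking perturbation, routed through the operations developed in Section \ref{a-sec5}. The divisibility condition $m_1\mid m_2$ does not seem essential to any individual step above and is presumably included to streamline the iteration by which the effective bound is ultimately reached from smaller multiples.
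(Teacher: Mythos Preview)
Your strategy is in the right spirit, but two steps do not go through as written. First, rerunning the Hilbert-polynomial argument of Claim~\ref{a-8.1-claim1} on $[C_0,\omega'|_{C_0}]$ only yields a \emph{nonzero} section of $\mathcal L^{\otimes m_2}|_{C_0}$, not one that is nonvanishing at your chosen point $P$; the phrase ``evaluating at the general $P\in V'$'' is not justified by that argument. What you actually need is $C_0\subset V'$, so that a lifted section nonzero somewhere on $C_0$ already witnesses $V'\not\subset\Bs_\pi|\mathcal L^{\otimes m_2}|$; this does hold if $P$ is chosen off the other components of the base locus, but you never make this reduction. Second, the tie-breaking perturbation you invoke to separate $C_0$ from $\Nqlc(X,\omega')$ is not among the operations justified in Section~\ref{a-sec5}, and at the threshold $c_0$ it is not automatic that a qlc center through $P$ appears rather than $P$ landing directly in $\Nqlc$. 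The paper avoids both issues: it works component by component on $Z$, chooses $0<c<1$ so that some qlc center $V$ of $[X,\omega+cD]$ satisfies $p(V)=\mathcal B$ while $p(\Nqlc(X,\omega+cD))\not\supset\mathcal B$, and then, instead of isolating $V$, uses the short exact sequence of ideal sheaves from adjunction together with a Stein-factorization argument to produce a section in $\pi_*\bigl(\mathcal I_{\Nqlc}\otimes\mathcal L^{\otimes m_2}\bigr)$ that is nonzero on $V$. No tie-breaking is needed.

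Finally, your closing remark that $m_1\mid m_2$ is inessential is wrong: this divisibility is exactly what guarantees $\Bs_q|\mathcal L_Z^{\otimes m_2}|\subset\Bs_q|\mathcal L_Z^{\otimes m_1}|$, without which excluding each old component from the new base locus would not force a drop in dimension.
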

\begin{proof}[Proof of Claim \ref{a-8.1-claim2}]
We take general members $D_1, \ldots, D_{\dim X+1}\in 
|\mathcal L^{\otimes m_1}|=|p^*\mathcal L_Z^{\otimes m_1}|$. 
Let $\mathcal B$ be any irreducible component of 
$\Bs_q|\mathcal L_Z^{\otimes m_1}|$. 
Then, by Lemmas \ref{a-lem5.1} 
and \ref{a-lem5.2}, we can take $0<c<1$ such that 
$[X, \omega+cD]$, where $D:=\sum _{i=1}^{\dim X+1}D_i$, 
has a natural quasi-log structure with the following properties: 
\begin{itemize}
\item there exists a qlc center $V$ of $[X, \omega+cD]$ such that 
$p(V)=\mathcal B$, and 
\item $p(\Nqlc(X, \omega+cD))$ does not contain $\mathcal B$. 
\end{itemize} 
We put $V':=V\cup \Nqlc(X, \omega+cD)$. 
Then, by the proof of adjunction (see \eqref{a-eq4.3} in 
Theorem \ref{a-thm4.4}), 
we have the following short exact sequence: 
\begin{equation*}
0\to \mathcal I_{V'}\to \mathcal I_{\Nqlc(X, \omega+cD)}
\to \mathcal I_{\Nqlc(V', (\omega+cD)|_{V'})}\to 0. 
\end{equation*}
We note that $\mathcal L^{\otimes j}-(\omega+cD)$ is $\pi$-ample 
for every $j\geq a+(\dim X+1)m_1$. 
Therefore, 
\begin{equation*}
R^i\pi_*\left(\mathcal I_{V'}\otimes \mathcal L^{\otimes j}\right)
=R^i\pi_*\left(\mathcal I_{\Nqlc(X, \omega+cD)}
\otimes \mathcal L^{\otimes j}\right)
=R^i\pi_*\left(\mathcal I_{\Nqlc(V', (\omega+cD)|_{V'})}
\otimes \mathcal L^{\otimes j}\right)
=0
\end{equation*}
for every $i>0$ and $j\geq a+(\dim X+1)m_1$. 
In particular, we have the following short exact sequence: 
\begin{equation}\label{a-eq8.2}
\begin{split}
0\to \pi_*\left(\mathcal I_{V'}\otimes \mathcal L^{\otimes j}\right)
&\to \pi_*\left(\mathcal I_{\Nqlc(X, \omega+cD)}
\otimes \mathcal L^{\otimes j}\right) 
\\ &\to \pi_*\left(\mathcal I_{\Nqlc(V', (\omega+cD)|_{V'})}
\otimes \mathcal L^{\otimes j}\right)
\to 0
\end{split}
\end{equation}
for $j\geq a+(\dim X+1)m_1$. 
Let $V'\overset{p'}{\longrightarrow} V''\to p(V')$ be 
the Stein factorization of $V'\to p(V')$. 
By construction, 
we see that 
$p'_*\left(\mathcal I_{\Nqlc(V', (\omega+cD)|_{V'})}\right)$ is a non-zero 
ideal sheaf on $V''$. 
Therefore, 
\begin{equation*}
\pi_*\left(\mathcal I_{\Nqlc(V', (\omega+cD)|_{V'})}\otimes 
\mathcal L^{\otimes k}\right)\ne 0
\end{equation*} 
for every $k\gg 0$ since $\mathcal L\simeq p^*\mathcal L_Z$ and 
$\mathcal L_Z$ is $q$-ample. 
Let $s$ be an analytically sufficiently general point of $\pi(V)$. 
We put $V'_s:=V'|_{\pi^{-1}(s)}$. 
Thus, with at most $\dim V'_s$ exceptions, 
\begin{equation*}
\pi_*\left(\mathcal I_{\Nqlc(V', (\omega+cD)|_{V'})}\otimes 
\mathcal L^{\otimes (a+(\dim X+1)m_1+j)}\right)\ne 0
\end{equation*}
for $j\geq 0$. 
Hence, 
\begin{equation*}
\pi_*\left(\mathcal I_{\Nqlc(V', (\omega+cD)|_{V'})}\otimes 
\mathcal L^{\otimes m_2}\right)\ne 0
\end{equation*} 
for $m_2\geq 2\left(a+(\dim X+1)m_1+\dim X\right)$ by 
Lemma \ref{a-lem8.4}. 
Thus, by \eqref{a-eq8.2}, we obtain that 
$V\not \subset \Bs_{\pi}|\mathcal L^{\otimes m_2}|$ for 
every $m_2\geq 2\left(a+(\dim X+1)m_1+\dim X\right)$. 
This implies that $\mathcal B=p(V)\not \subset 
\Bs_q|\mathcal L_Z^{\otimes m_2}|$ for every 
$m_2\geq 2\left(a+(\dim X+1)m_1+\dim X\right)$. 
Therefore,  
\begin{equation*}
\dim \Bs_q|\mathcal L_Z^{\otimes m_2}|<
\dim \Bs_q|\mathcal L_Z^{\otimes m_1}|
\end{equation*} 
holds for 
$m_2\geq 2\left(a+(\dim X+1)m_1+\dim X\right)$ 
with $m_1|m_2$. 
This is what we wanted. 
We note that $\mathcal B$ is any irreducible component of $\Bs_q|\mathcal 
L_Z^{\otimes m_1}|$. 
\end{proof}
\end{step}
\begin{step}\label{a-8.1-step3} 
In this step, we will complete the proof. 

By Claim \ref{a-8.1-claim1}, we see that 
$\Bs_{\pi}|\mathcal L^{\otimes (2(a+\dim X))}|\subsetneq X$ and 
$\Bs_{\pi}|\mathcal L^{\otimes (2(a+\dim X))}|$ contains 
no qlc strata of $[X, \omega]$. 
Then we use Claim \ref{a-8.1-claim2} finitely many times. 
We finally obtain $m=m(\dim X, a)$, which 
only depends on $\dim X$ and $a$, 
such that $\mathcal L^{\otimes m(\dim X, a)}$ is $\pi$-generated. 
By Claims \ref{a-8.1-claim1}, \ref{a-8.1-claim2}, and Lemma \ref{a-lem8.5}, 
we can also take $m^\dag$, which only depends on 
$\dim X$ and $a$, such that 
$\mathcal L^{\otimes m^\dag}$ is $\pi$-generated 
with $\gcd(m(\dim X, a), m^\dag)=1$. 
Therefore, by Lemma \ref{a-lem8.6}, 
we can find a positive integer $m_0=m_0(\dim X, a)$ 
depending only on $\dim X$ and $a$ such that 
$\mathcal L^{\otimes l}$ is $\pi$-generated for every $l\geq m_0(\dim X, a)$. 
\end{step} 
We finish the proof. 
\end{proof}

\begin{proof}[Proof of Theorem \ref{a-thm8.2}]We take 
a positive integer $b$ with $b\geq a$ such that 
$b\omega$ is a line bundle on $X$. 
We put $\mathcal M:=a(b+1)\mathcal L-b\omega$. 
Then $\mathcal M$ is a $\pi$-nef line bundle 
such that $\mathcal M$ and $\mathcal M-\omega=
(b+1)(a\mathcal L-\omega)$ 
are nef and log big over $S$ with respect to $[X, \omega]$. 
After replacing $S$ with any relatively compact open subset of $S$, 
we have a contraction morphism $\varphi\colon X\to X'$ over 
$S$ associated to $\mathcal M$ and a contraction 
morphism $p\colon X\to Z$ over $S$ associated to $\mathcal L$ 
by the basepoint-free theorem of Reid--Fukuda type 
(see Theorem \ref{a-thm7.1}). 
Then we have the following commutative diagram: 
\begin{equation*}
\xymatrix{
& \ar[dl]_-\varphi X\ar[dr]^-p& \\ 
X' \ar[dr]_-{\pi'}\ar[rr]&& \ar[dl]^-q Z \\ 
&S&
}
\end{equation*} 
such that $\mathcal L\simeq \varphi^*\mathcal L'$ for some line bundle 
$\mathcal L'$ on $X'$ and $\omega\sim_{\mathbb Q} \varphi^*\omega'$ 
for some $\mathbb Q$-line bundle $\omega'$ on $X'$. 
Let $f\colon (Y, B_Y)\to X$ be a quasi-log resolution of $[X, \omega]$. 
Then 
\begin{equation*}
\left(X', \omega', \varphi\circ f\colon (Y, B_Y)\to X'\right)
\end{equation*} 
naturally becomes a quasi-log complex analytic space with 
$X'_{-\infty}=\emptyset$. 
By construction, $\mathcal L'$ is $\pi'$-nef 
over $S$ and $(a+1)\mathcal L'-\omega'$ is $\pi'$-ample 
over $S$ since $a+1\geq \frac{a(b+1)}{b}$. 
We note that 
\begin{equation*}
(a+1)\mathcal L'-\omega'\sim _{\mathbb Q} \frac{1}{b} \mathcal M'
+\left((a+1)-\frac{a(b+1)}{b}\right)\mathcal L', 
\end{equation*} 
where $\mathcal M\simeq \varphi^*\mathcal M'$. 
Thus, by Theorem \ref{a-thm8.1}, 
we obtain that $\mathcal L'^{\otimes m(\dim X, a+1)}$ is 
$\pi'$-generated and 
$\mathcal L'^{\otimes l}$ is $\pi'$-generated 
for every $l\geq m_0(\dim X, a+1)$. 
This implies the desired effective freeness for $\mathcal L\simeq 
\varphi^*\mathcal L'$. 
\end{proof}

Let us prove Theorem \ref{a-thm8.3}. 

\begin{proof}[Sketch of Proof of Theorem \ref{a-thm8.3}]
By assumption, $(a+1)\mathcal L-\omega$ is $\pi$-ample 
over $S$. 
We put 
\begin{equation*}
m'=m'(\dim X, a):=(\dim X+1)\times 
m(\dim X, a+1), 
\end{equation*} 
where $m(\dim X, a+1)$ is a positive integer obtained in 
Theorem \ref{a-thm8.1}. 
Then, we can check that 
$\mathcal L^{\otimes m'}$ is $\pi$-very ample 
(see, for example, \cite[Lemma 4.1]{fujino-effective} and 
\cite[Lemma 7.1]{fujino-effective-slc}). 
We put 
\begin{equation*}
m'_0=m'_0(\dim X, a):=m_0(\dim X, a+1)+m'(\dim X, a), 
\end{equation*} 
where $m_0(\dim X, a+1)$ is a positive integer obtained in 
Theorem \ref{a-thm8.1}. 
Then, it is easy to see that $\mathcal L^{\otimes l'}$ is $\pi$-very 
ample for every $l'\geq m'_0$. 
\end{proof}

Finally, we prove Theorem \ref{a-thm1.6}. 

\begin{proof}[Proof of Theorem \ref{a-thm1.6}]
We replace $S$ with any relatively compact open subset 
of $S$. 
Then $[X, K_X+\Delta]$ naturally becomes 
a quasi-log complex analytic space with 
$\Nqlc(X, K_X+\Delta)
=\emptyset$. By Theorems \ref{a-thm8.1} and 
\ref{a-thm8.2}, 
there exists a positive integer $m_0$ depending only on 
$\dim X$ and $a$ such that 
$\mathcal L^{\otimes m}$ is $\pi$-generated 
for every $m\geq m_0$. 
We note that $m_0$ is independent of 
$S$. 
Hence we see that $\mathcal L^{\otimes m}$ is 
$\pi$-generated for every $m\geq m_0$ without 
replacing $S$ with a relatively compact open subset 
of $S$. 
This is what we wanted. 
\end{proof}

\section{Cone theorem}\label{a-sec9}
In this section, we will briefly see that the 
cone and contraction theorem holds 
for quasi-log complex analytic spaces. 
We note that we have already established it 
for normal pairs in \cite[Theorem 1.1.6]{fujino-cone-contraction}. 
Let us start with the rationality theorem for 
quasi-log complex analytic spaces. 

\begin{thm}[Rationality theorem for quasi-log complex analytic 
spaces]\label{a-thm9.1}
Let $\pi\colon X\to S$ be a projective morphism 
of complex analytic spaces 
and let $W$ be a compact subset of $S$. 
Let $[X, \omega]$ be a quasi-log complex analytic space 
such that $\omega$ is a $\mathbb Q$-line bundle. 
Let $H$ be a $\pi$-ample 
line bundle on $X$. 
Assume that 
$\omega$ is not $\pi$-nef over $W$ and that 
$r$ is a positive real number such that 
\begin{itemize}
\item[(i)] $H+r\omega$ is $\pi$-nef 
over $W$ but is not $\pi$-ample over $W$, and 
\item[(ii)] $\left(H+r\omega\right)|_{\Nqlc(X, \omega)}$ 
is $\pi|_{\Nqlc(X, \omega)}$-ample over $W$. 
\end{itemize} 
Then $r$ is a rational number, and in reduced form, 
it has denominator at most $a(d+1)$, where 
$d:=\max_{w\in W}\dim \pi^{-1}(w)$ and 
$a$ is a positive integer such that 
$a\omega$ is a line bundle in a neighborhood of $\pi^{-1}(W)$. 
\end{thm}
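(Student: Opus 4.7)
The plan is to adapt the Kawamata--Shokurov rationality argument, developed for quasi-log schemes in \cite[Section 6.7]{fujino-foundations}, to the complex analytic setting by using Theorem \ref{a-thm6.1} (basepoint-freeness) and Theorems \ref{a-thm4.7}--\ref{a-thm4.8} (vanishing) in place of their projective counterparts.

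First come standard reductions. Compactness of $W$ lets us assume $W=\{w\}$ is a single point and $S$ is a relatively compact Stein open neighborhood of $w$; after further shrinking, $a\omega$ is a genuine line bundle on $\pi^{-1}(S)$. For each pair of positive integers $(p,n)$ we form
\begin{equation*}
L(p,n) := pH + n\,(a\omega),
\end{equation*}
and Kleiman's ampleness criterion in the analytic setting (see \cite[Theorem 11.5]{fujino-cone-contraction}, together with the finiteness of $\dim N^1(X/S;W)$ from Remark \ref{a-rem2.7}) shows that $L(p,n)$ is $\pi$-nef over $W$ iff $na\leq pr$ and $\pi$-ample over $W$ iff the inequality is strict.

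Next, for every $(p,n)$ in the nef region with $p\geq 1$, the shift $L(p,n)-\omega = pH+(na-1)\omega$ satisfies $na-1<pr$ strictly and is therefore $\pi$-ample over $W$, verifying hypothesis (i) of Theorem \ref{a-thm6.1}. Hypothesis (ii) of the present theorem, namely ampleness of $(H+r\omega)|_{\Nqlc(X,\omega)}$, combined with the decomposition
\begin{equation*}
L(p,n)|_{\Nqlc(X,\omega)} = p\,(H+r\omega)|_{\Nqlc(X,\omega)} + (na-pr)\,\omega|_{\Nqlc(X,\omega)},
\end{equation*}
makes $L(p,n)|_{\Nqlc(X,\omega)}$ itself $\pi$-ample whenever $p$ is sufficiently large compared with $|na-pr|$, yielding hypothesis (ii) of Theorem \ref{a-thm6.1}. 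That theorem then supplies an open neighborhood of $w$ over which $L(p,n)^{\otimes m}$ is $\pi$-generated for all $m\gg 0$. Theorems \ref{a-thm4.7} and \ref{a-thm4.8} further imply $R^i\pi_*L(p,n)^{\otimes m}=0$ for $i>0$ and $m\gg 0$, whence
\begin{equation*}
P(p,n,m) := \dim_{\mathbb C}\bigl(\pi_*L(p,n)^{\otimes m}\bigr)_w = \chi\!\left(X_w,\,L(p,n)^{\otimes m}|_{X_w}\right)
\end{equation*}
is a numerical polynomial in $(p,n,m)$ of total degree at most $d$, where $X_w$ denotes an analytically general fiber.

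The Diophantine core proceeds as in the algebraic case. Suppose for contradiction that $ar$ is either irrational or rational with denominator strictly greater than $d+1$ in reduced form. By Dirichlet's approximation theorem we can produce $d+2$ distinct lattice points $(p_i,n_i)$ with $0<n_i\leq d+1$ and $p_ir-n_ia>0$ arbitrarily small, so each $L(p_i,n_i)$ is $\pi$-ample and $P(p_i,n_i,m)>0$ for $m\gg 0$. Following the polynomial manipulation of \cite[Proof of Theorem 6.7.2]{fujino-foundations}, these positivity constraints on $P$, which has degree $\leq d$ in $n$, combined with the forced drop in degree of $P$ along the critical slope $n=pr/a$ at lattice points where $L(p,n)$ fails to be ample, force $P\equiv 0$ in the relevant region, contradicting basepoint-freeness. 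Undoing the rescaling $\omega\rightsquigarrow a\omega$ converts the bound $d+1$ on the denominator of $ar$ into the claimed bound $a(d+1)$ on the denominator of $r$ itself. The principal obstacle is this last polynomial-vanishing step, which is the Diophantine heart of the proof in any setting; in the complex analytic context the only additional subtlety is that the neighborhoods of $w$ produced by Theorem \ref{a-thm6.1} depend on $(p,n)$, but only finitely many pairs enter the final argument, so their intersection remains an open neighborhood of $w$.
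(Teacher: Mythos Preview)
Your approach matches the paper's: the paper gives no proof beyond the remark that \cite[Theorem 10.1]{fujino-cone-contraction} adapts without difficulty, and that adaptation is precisely the Kawamata--Shokurov argument you outline, with Theorems \ref{a-thm6.1}, \ref{a-thm4.7}, and \ref{a-thm4.8} playing the quasi-log roles.

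Two inaccuracies in your sketch deserve correction. First, the Diophantine statement is garbled: the constraint $0<n_i\le d+1$ is not what the argument needs, and Dirichlet does not produce it; the correct input (cf.\ \cite[\S6.6]{fujino-foundations} or the argument in \cite[\S10]{fujino-cone-contraction}) is that if $ar$ is irrational or has reduced denominator exceeding $d+1$, then for arbitrarily large $p_0$ one finds $d+2$ consecutive integers $p$ each admitting a lattice point $(p,n)$ with $0<na-pr<1$, and a polynomial of degree $\le d$ cannot vanish at all of these without vanishing identically along the strip---contradicting the positivity from basepoint-freeness on the ample side. Second, the auxiliary parameter $m$ in $P(p,n,m)$ is superfluous; one works directly with $P(p,n)=\chi\bigl(\pi^{-1}(w),\,L(p,n)|_{\pi^{-1}(w)}\bigr)$ on the actual fiber over the fixed point $w$, not on an ``analytically general'' fiber.
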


There are no difficulties to adapt the proof of 
\cite[Theorem 4.3.1]{fujino-cone-contraction} for 
Theorem \ref{a-thm9.1}. 
So we omit the proof of Theorem \ref{a-thm9.1} here. 
By using the basepoint-free theorem (see Theorem 
\ref{a-thm6.1}) and 
the rationality theorem (see Theorem \ref{a-thm9.1}), 
we can establish the 
cone and contraction theorem for quasi-log complex analytic 
spaces. 

\begin{thm}[Cone and contraction theorem 
for quasi-log complex analytic spaces]\label{a-thm9.2}
Let $[X, \omega]$ be a quasi-log complex analytic space and 
let $\pi \colon X\to S$ be a projective 
morphism of complex 
analytic spaces. 
Let $W$ be a compact subset of $S$. 
We assume that the dimension of $N^1(X/S; W)$ is finite. 
Then we have 
\begin{equation*}
\NE(X/S; W)=\NE(X/S; W)_{\omega\geq 0} 
+\NE(X/S; W)_{\Nqlc (X, \omega)}+\sum_j R_j
\end{equation*} 
with the following properties.  
\begin{itemize}
\item[(1)] $\Nqlc (X, \omega)$ is the non-qlc locus 
of $[X, \omega]$ and 
$\NE(X/S; W)_{\Nqlc(X, \omega)}$ is the subcone 
of $\NE(X/S; W)$ which is 
the closure of the convex cone spanned by 
the projective integral curves $C$ on $\Nqlc(X, \omega)$ 
such that $\pi(C)$ is a point of $W$. 
\item[(2)] 
$R_j$ is an $\omega$-negative 
extremal ray of $\NE(X/S; W)$ which satisfies  
\begin{equation*}
R_j\cap \NE(X/S; W)_{\Nqlc (X, \omega)}=\{0\}
\end{equation*} for 
every $j$. 
\item[(3)] Let $\mathcal A$ be a $\pi$-ample 
$\mathbb R$-line bundle 
on $X$. 
Then there are only finitely many $R_j$'s included in 
$\NE(X/S; W)_{(\omega+\mathcal A)<0}$. In particular, 
the $R_j$'s are discrete in the half-space 
$\NE(X/S; W)_{\omega<0}$. 
\item[(4)] 
Let $F$ be any face of $\NE(X/S; W)$ such 
that 
\begin{equation*} 
F\cap \left(\NE(X/S; W)_{\omega\geq 0}+
\NE(X/S; W)_{\Nqlc (X, \omega)}\right)=\{0\}. 
\end{equation*}
Then, after shrinking $S$ around 
$W$ suitably, there exists a contraction morphism 
$\varphi_F\colon X\to Z$ 
over $S$ satisfying the following properties. 
\begin{itemize}
\item[(i)] 
Let $C$ be a projective integral curve on $X$ such that 
$\pi(C)$ is a point of $W$. 
Then $\varphi_F(C)$ is a point if and 
only if the numerical equivalence class 
$[C]$ of $C$ is in $F$. 
\item[(ii)] The natural map 
$\mathcal O_Z\to (\varphi_F)_*\mathcal O_X$ is an 
isomorphism.  
\item[(iii)] Let $\mathcal L$ be a line bundle on $X$ such 
that $\mathcal L\cdot C=0$ for 
every curve $C$ with $[C]\in F$. 
Then, after shrinking $S$ around 
$W$ suitably again, 
there exists a line bundle $\mathcal L_Z$ on $Z$ such 
that $\mathcal L\simeq \varphi^*_F\mathcal L_Z$ holds. 
\end{itemize}
\end{itemize}
\end{thm}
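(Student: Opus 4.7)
The plan is to adapt the Kawamata-style argument used in \cite[Theorem 6.7.3]{fujino-foundations} for quasi-log schemes and in \cite[Section 11]{fujino-cone-contraction} for normal analytic pairs, with the basepoint-free theorem (Theorem \ref{a-thm6.1}) and the rationality theorem (Theorem \ref{a-thm9.1}) as the two main inputs. Since $\dim N^1(X/S;W)<\infty$, Kleiman's ampleness criterion (\cite[Theorem 11.5]{fujino-cone-contraction}) furnishes, after shrinking $S$ around $W$, a $\pi$-ample line bundle $\mathcal{A}$ on $X$, so we work in the finite-dimensional dual space throughout. By Lemma \ref{a-lem5.7}, after replacing $S$ by a relatively compact open subset we may, in all perturbation arguments, reduce to the case where $\omega$ is a $\mathbb{Q}$-line bundle so that Theorem \ref{a-thm9.1} applies.

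For the cone decomposition (1)--(3), set $\overline{C}:=\NE(X/S;W)_{\omega\geq 0}+\NE(X/S;W)_{\Nqlc(X,\omega)}$, which is a closed subcone. Given a $\pi$-ample $\mathbb{Q}$-line bundle $H$, Theorem \ref{a-thm9.1} produces a rational $r=r(H)$ with denominator at most $a(d+1)$ such that $H+r\omega$ is $\pi$-nef but not $\pi$-ample, cutting out a face $F_H$ of $\NE(X/S;W)$ that meets $\overline{C}$ only at $0$ thanks to hypothesis (ii) of Theorem \ref{a-thm9.1}. Running $H$ through a dense countable family of $\pi$-ample rational line bundles exhausts every extremal ray outside $\overline{C}$. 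The uniform denominator bound together with compactness of a suitable cross-section of $\NE(X/S;W)_{(\omega+\mathcal{A})<0}$ forces discreteness, proving (3), and assertions (1)--(2) follow by the standard Kleiman-type closed-cone argument (compare \cite[Theorem 4.6.1]{fujino-foundations}).

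For the contraction part (4), given a face $F$ with $F\cap\overline{C}=\{0\}$, I construct a $\pi$-nef $\mathbb{Q}$-line bundle $\mathcal{L}$ on $X$ with $F=\mathcal{L}^\perp\cap\NE(X/S;W)$ and $q\mathcal{L}-\omega$ $\pi$-ample over $W$ for some rational $q>0$; since $F\cap\NE(X/S;W)_{\Nqlc}=\{0\}$, the restriction $\mathcal{L}|_{\Nqlc(X,\omega)}$ is $\pi|_{\Nqlc(X,\omega)}$-ample near $W$, so both hypotheses of Theorem \ref{a-thm6.1} hold (with hypothesis (ii) verified by induction on $\dim X$). Thus $\mathcal{L}^{\otimes m}$ is $\pi$-generated for every $m\gg 0$ after shrinking $S$ to a relatively compact open neighborhood $U$ of $W$, and the Stein factorization of the induced morphism over $S$ yields $\varphi_F\colon X\to Z$ satisfying (i) and (ii) by construction. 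For (iii), given a line bundle $\mathcal{L}'$ with $\mathcal{L}'\cdot C=0$ for every $[C]\in F$, one applies Theorem \ref{a-thm6.1} to both $m\mathcal{L}+\mathcal{L}'$ and $m\mathcal{L}-\mathcal{L}'$ for $m\gg 0$: both are $\pi$-nef and contract $F$, so both become $\pi$-generated and descend to line bundles on $Z$, forcing $\mathcal{L}'$ itself to descend to some $\mathcal{L}_Z$ with $\mathcal{L}\simeq\varphi_F^*\mathcal{L}_Z$.

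The principal obstacle is the analytic bookkeeping associated with successive shrinkings of $S$ around $W$: each invocation of Theorem \ref{a-thm6.1} produces an \emph{a priori} distinct relatively compact open neighborhood of $W$, and in (4) one must select a single neighborhood on which every line bundle used in verifying (i)--(iii) is simultaneously $\pi$-generated. Compactness of $W$ and the finiteness of the bundles involved make this possible. A secondary technical point is the verification that hypothesis (ii) of Theorem \ref{a-thm6.1} propagates to $\Nqlc(X,\omega)$ at each inductive step, which is handled by the same dimension induction used in the proof of Theorem \ref{a-thm6.1} itself.
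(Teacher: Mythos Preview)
Your proposal is correct and follows essentially the same approach as the paper: the paper's proof is a one-paragraph sketch pointing to \cite[Section 12]{fujino-cone-contraction}, with Theorems \ref{a-thm6.1} and \ref{a-thm9.1} as the two inputs and Lemma \ref{a-lem5.7} used to reduce to the $\mathbb Q$-line bundle case, exactly as you outline. One small correction: the relevant reference for the normal-pair argument is \cite[Section 12]{fujino-cone-contraction} rather than Section 11, and hypothesis (ii) of Theorem \ref{a-thm6.1} is verified on $X_{-\infty}=\Nqlc(X,\omega)$ directly from the $\pi|_{X_{-\infty}}$-ampleness of $\mathcal L|_{X_{-\infty}}$ (no induction is needed there).
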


\begin{proof}[Sketch of Proof of Theorem \ref{a-thm9.2}]
The proof of the cone theorem for normal pairs 
in the complex analytic setting, which is described 
in \cite[Section 4.6]{fujino-cone-contraction}, 
works with only some minor modifications since 
we have already established the basepoint-free theorem 
(see Theorem \ref{a-thm6.1}) and 
the rationality theorem (see Theorem \ref{a-thm9.1}) 
for quasi-log complex analytic spaces. 
Note that we can use Lemma \ref{a-lem5.8} to 
reduce problems to the case where $\omega$ is 
a $\mathbb Q$-line bundle. 
\end{proof}

As an immediate application of Theorem 
\ref{a-thm9.2}, we have: 

\begin{thm}[Basepoint-freeness for $\mathbb R$-line bundles]\label{a-thm9.3}
Let $\pi\colon X\to S$ be a projective morphism 
between complex analytic spaces and let $W$ be a compact 
subset of $S$ such that the dimension of $N^1(X/S; W)$ is finite. 
Let $[X, \omega]$ be a quasi-log complex analytic space with 
$X_{-\infty}=\emptyset$ and let $\mathcal L$ be an $\mathbb R$-line 
bundle defined on some open neighborhood of $\pi^{-1}(W)$ 
such that $\mathcal L$ is $\pi$-nef over $W$. 
We assume that $a\mathcal L-\omega$ is $\pi$-ample over $W$ for 
some positive real number $a$. 
Then there exists an open neighborhood $U$ of 
$W$ such that $\mathcal L$ is $\pi$-semi-ample over $U$. 
\end{thm}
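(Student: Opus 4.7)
The strategy is to contract the face of $\NE(X/S;W)$ on which $\mathcal L$ is trivial via Theorem \ref{a-thm9.2}~(4), then descend $\mathcal L$ through the contraction to an ample $\mathbb R$-line bundle on the target, which is automatically semi-ample.

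First, set $F:=\{z\in \NE(X/S;W):\mathcal L\cdot z=0\}$; since $\mathcal L$ is $\pi$-nef over $W$, this is a face of $\NE(X/S;W)$. By Kleiman's criterion (Theorem 11.5 of \cite{fujino-cone-contraction}) applied to the $\pi$-ample class $a\mathcal L-\omega$, we have $(a\mathcal L-\omega)\cdot z>0$ on $\NE(X/S;W)\setminus\{0\}$; hence on $F\setminus\{0\}$ the equality $\mathcal L\cdot z=0$ forces $\omega\cdot z=-(a\mathcal L-\omega)\cdot z<0$, so $F\cap \NE(X/S;W)_{\omega\ge 0}=\{0\}$. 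The assumption $X_{-\infty}=\emptyset$ gives $\NE(X/S;W)_{\Nqlc(X,\omega)}=\{0\}$, so its intersection with $F$ is trivial. Applying Theorem \ref{a-thm9.2}~(4) to $F$ (after shrinking $S$ around $W$) yields a projective contraction $\varphi_F\colon X\to Z$ over $S$ with $(\varphi_F)_*\mathcal O_X=\mathcal O_Z$ such that $\varphi_F(C)$ is a point iff $[C]\in F$ iff $\mathcal L\cdot C=0$.

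Next, I would descend $\mathcal L$ through $\varphi_F$. By Theorem \ref{a-thm9.2}~(1),(3), the face $F$ is spanned by the finitely many extremal rays $R_j$ it contains (using $X_{-\infty}=\emptyset$ and $F\subset \NE(X/S;W)_{\omega<0}\cup\{0\}$), so $F$ is rational polyhedral. Writing $\mathcal L=\sum v_j\mathcal M_j$ as a finite $\mathbb R$-combination of line bundles and working inside the finite-dimensional subspace $V\subset \Pic(\pi^{-1}(U))\otimes \mathbb R$ spanned by the $\mathcal M_j$ together with a $\pi$-ample line bundle, the subspace $V^F\subset V$ cut out by the rational linear conditions $\mathcal M\cdot R_j=0$ is $\mathbb Q$-rational; pick a $\mathbb Z$-basis $\mathcal N_1,\ldots,\mathcal N_p$ of the lattice $V^F\cap \Pic(\pi^{-1}(U))$. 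Applying Theorem \ref{a-thm9.2}~(4)(iii) to each $\mathcal N_k$ yields, after a further shrinking of $S$, line bundles $\mathcal N_{k,Z}$ on $Z$ with $\varphi_F^*\mathcal N_{k,Z}\simeq\mathcal N_k$. Since $\mathcal L\in V^F$, one can write $\mathcal L=\sum s_k\mathcal N_k$ with $s_k\in\mathbb R$, and $\mathcal L_Z:=\sum s_k\mathcal N_{k,Z}$ is an $\mathbb R$-line bundle on $Z$ with $\varphi_F^*\mathcal L_Z=\mathcal L$.

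Finally, I check $\mathcal L_Z$ is $\pi_Z$-ample over $W$, where $\pi_Z\colon Z\to S$. Nefness of $\mathcal L_Z$ over $W$ follows from that of $\mathcal L$. For any projective integral curve $C'\subset Z$ with $\pi_Z(C')\in W$, any curve $C$ on $X$ with $\varphi_F(C)=C'$ is not contracted, so $[C]\notin F$, whence $\mathcal L\cdot C>0$ and by the projection formula $\mathcal L_Z\cdot C'>0$; Kleiman's criterion for $\mathbb R$-line bundles in the relative analytic setting then gives $\pi_Z$-ampleness of $\mathcal L_Z$ over $W$. Writing $\mathcal L_Z$ as a positive $\mathbb R$-combination of $\pi_Z$-ample (hence $\pi_Z$-semi-ample) line bundles near $W$ and pulling back through $\varphi_F$ displays $\mathcal L$ as a positive $\mathbb R$-combination of $\pi$-semi-ample line bundles on some open $U\supset W$, so $\mathcal L$ is $\pi$-semi-ample over $U$. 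The main obstacle is the descent step: lifting the numerical descent of classes in $V^F$ to an honest descent of $\mathcal L$ as an $\mathbb R$-line bundle, which requires the rational polyhedrality of $F$, the selection of a $\mathbb Z$-basis of $V^F\cap \Pic(\pi^{-1}(U))$, and careful relative application of Theorem \ref{a-thm9.2}~(4)(iii).
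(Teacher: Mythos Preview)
Your approach is correct in outline but takes a genuinely different route from the paper. The paper's proof (following \cite[Theorem 15.1]{fujino-cone-contraction}) perturbs $\mathcal L$ directly: after normalizing to $a=1$, it uses the cone theorem to write $\mathcal L=\sum_{i=1}^m r_i\mathcal L_i$ as a convex combination of $\mathbb Q$-line bundles with each $\mathcal L_i-\omega$ still $\pi$-ample over $W$, then applies the ordinary basepoint-free theorem (Theorem \ref{a-thm6.1}) to each $\mathcal L_i$. Your approach instead contracts the null face $F=\mathcal L^{\perp}\cap\NE(X/S;W)$ via Theorem \ref{a-thm9.2}~(4), descends $\mathcal L$ to $\mathcal L_Z$, and argues ampleness on $Z$. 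The paper's method is shorter and avoids any descent; your method is more geometric and produces the semi-ample fibration explicitly, at the cost of the descent bookkeeping.

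There is one genuine gap in your final step. You verify $\mathcal L_Z\cdot C'>0$ for every projective integral curve $C'$ on $Z$ over $W$ and then invoke Kleiman's criterion, but Kleiman requires strict positivity on all of $\NE(Z/S;W)\setminus\{0\}$, not merely on curve classes; limit classes in the closure are not a priori handled. The fix is to argue on cones: since every line bundle trivial on $F$ descends (Theorem \ref{a-thm9.2}~(4)(iii)), $\varphi_F^*N^1(Z/S;W)=F^{\perp}$ and hence $\ker\varphi_{F*}=\mathrm{span}\,F$; because $\mathrm{span}\,F\cap\NE(X/S;W)=F$ is a face of a closed cone containing no lines, $\varphi_{F*}\NE(X/S;W)$ is closed and therefore equals $\NE(Z/S;W)$. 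Then any $0\neq\zeta\in\NE(Z/S;W)$ is $\varphi_{F*}z$ for some $z\in\NE(X/S;W)\setminus F$, whence $\mathcal L_Z\cdot\zeta=\mathcal L\cdot z>0$, and Kleiman applies. (You should also note that $N^1(Z/S;W)$ is finite-dimensional, which follows from the injection $\varphi_F^*\colon N^1(Z/S;W)\hookrightarrow N^1(X/S;W)$.) A minor further point: in the descent step, work with the lattice generated by your chosen $\mathcal M_j$ rather than all of $\Pic(\pi^{-1}(U))$, to ensure $V^F$ meets it in a full-rank lattice.
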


\begin{proof}[Sketch of Proof of Theorem \ref{a-thm9.3}]
Without loss of generality, we may assume that $a=1$ by replacing 
$\mathcal L$ with $a\mathcal L$. 
As in the proof of \cite[Theorem 5.3.1]{fujino-cone-contraction}, 
we can write $\mathcal L=\sum _{i=1}^m r_i \mathcal L_i$ such that 
\begin{itemize}
\item $\mathcal L_i$ is a $\mathbb Q$-line bundle for 
every $i$, 
\item $r_i$ is a positive real number for every $i$ with 
$\sum _{i=1}^m r_i=1$, and 
\item $\mathcal L_i-\omega$ is $\pi$-ample over $W$ for 
every $i$, 
\end{itemize} 
with the aid of the cone theorem (see Theorem \ref{a-thm9.2}). 
By the usual basepoint-free theorem 
(see Theorem \ref{a-thm6.1}), 
$\mathcal L_i$ is $\pi$-semi-ample over some open neighborhood 
of $W$ for every $i$. 
This implies that $\mathcal L$ is a finite positive 
$\mathbb R$-linear combination of $\pi$-semi-ample 
line bundles over some open neighborhood $U$ of $W$. 
This is what we wanted. 
\end{proof}

We have a supplementary result, which is 
a generalization of \cite[Theorem 1.1.8]{fujino-cone-contraction}. 

\begin{thm}\label{a-thm9.4} 
Let $[X, \omega]$ be a quasi-log complex analytic space with 
$X_{-\infty}=\emptyset$, that is, $[X, \omega]$ is a quasi-log 
canonical pair. 
Let $\pi\colon X\to S$ be a projective morphism 
of complex analytic spaces and let $W$ be a compact 
subset of $S$ such that the dimension of $N^1(X/S; W)$ is 
finite. 
Suppose that $\pi\colon X\to S$ is decomposed as 
\begin{equation*}
\xymatrix{
\pi\colon X\ar[r]^-f& S^\flat \ar[r]^-g& S
}
\end{equation*}
such that $S^\flat$ is projective over $S$. 
Let $\mathcal A_{S^\flat}$ be a $g$-ample line bundle on $S^\flat$. 
Let $R$ be an $\left(\omega+(\dim X+1)
f^*\mathcal A_{S^\flat}\right)$-negative 
extremal ray of $\NE(X/S; W)$. 
Then $R$ is 
an $\omega$-negative extremal ray of $\NE\left(X/{S^\flat}; 
g^{-1}(W)\right)$, that is, $R\cdot f^*\mathcal A_{S^\flat}=0$. 
\end{thm}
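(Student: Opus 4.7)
The plan is to reduce the statement to the existence of a rational curve spanning $R$ with a sharp length bound, which is provided by the results of Subsection~\ref{a-subsec9.1} (in particular by the existence theorem for $\omega$-negative extremal rational curves, which relies on Lemma~\ref{a-lem5.5}). The whole proof is then a short numerical calculation combined with the observation that the subcone of curves contracted by $f$ is a face of $\NE(X/S;W)$.

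First I would apply the existence of extremal rational curves to the $\omega$-negative extremal ray $R$ of $\NE(X/S;W)$, which produces a rational curve $C\subset X$ with $[C]\in R$, with $\pi(C)$ a point of $W$, and satisfying the length bound $0<-\omega\cdot C\leq \dim X+1$. Because $R$ is by hypothesis $(\omega+(\dim X+1)f^*\mathcal A_{S^\flat})$-negative, we have
\begin{equation*}
(\dim X+1)\,f^*\mathcal A_{S^\flat}\cdot C \;<\; -\omega\cdot C \;\leq\; \dim X+1,
\end{equation*}
so $f^*\mathcal A_{S^\flat}\cdot C<1$. Since $\mathcal A_{S^\flat}$ is an honest ($g$-ample) line bundle, $f^*\mathcal A_{S^\flat}\cdot C$ is a non-negative integer, hence equals $0$. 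Equivalently $f(C)$ is a point, which moreover lies in $g^{-1}(W)$ because $\pi(C)=g(f(C))\in W$. Thus $[C]\in \NE(X/S^\flat;g^{-1}(W))$, and since $R$ is spanned by $[C]$ we obtain the desired identity $R\cdot f^*\mathcal A_{S^\flat}=0$.

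It remains to verify that $R$ is actually an extremal ray of $\NE(X/S^\flat;g^{-1}(W))$, not merely contained in it. For this I would consider the face
\begin{equation*}
F:=\{z\in \NE(X/S;W)\mid f^*\mathcal A_{S^\flat}\cdot z=0\}.
\end{equation*}
Since $f^*\mathcal A_{S^\flat}$ is $\pi$-nef (being pulled back from a $g$-ample bundle on a projective $S^\flat$-scheme), $F$ is a genuine extremal face of $\NE(X/S;W)$. The natural inclusion $\NE(X/S^\flat;g^{-1}(W))\hookrightarrow \NE(X/S;W)$ has image contained in $F$, and conversely every class in $F$ is represented by curves that are contracted by $f$ and hence lie in $\NE(X/S^\flat;g^{-1}(W))$; thus $F=\NE(X/S^\flat;g^{-1}(W))$. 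Because $R\subset F$ and $R$ is extremal in $\NE(X/S;W)$, it remains extremal in the face $F$, i.e.\ in $\NE(X/S^\flat;g^{-1}(W))$.

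The main obstacle is to justify the length bound $-\omega\cdot C\leq \dim X+1$ for a spanning rational curve of $R$ in the complex analytic qlc setting; this is exactly the content of the existence of extremal rational curves to be established in Subsection~\ref{a-subsec9.1}, whose proof uses Lemma~\ref{a-lem5.5} to produce a qlc center over a point of $S$ together with a bend-and-break argument. Granted that input, everything else in this theorem is formal.
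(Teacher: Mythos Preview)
Your argument hinges on the length bound $-\omega\cdot C\leq \dim X+1$ for a rational curve spanning $R$, but this is \emph{not} what Subsection~\ref{a-subsec9.1} delivers. Theorem~\ref{a-thm9.7} (and Theorem~\ref{a-thm9.6} on which it rests) only gives $-\omega\cdot C\leq 2\dim X$. With that bound your inequality becomes
\[
(\dim X+1)\,f^*\mathcal A_{S^\flat}\cdot C \;<\; -\omega\cdot C \;\leq\; 2\dim X,
\]
so $f^*\mathcal A_{S^\flat}\cdot C<\frac{2\dim X}{\dim X+1}<2$, which only forces $f^*\mathcal A_{S^\flat}\cdot C\in\{0,1\}$ and does not rule out the value $1$. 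Thus the numerical step genuinely fails. Moreover, the sharper statement that $\omega+(\dim X+1)\mathcal A$ is nef for any relatively ample $\mathcal A$ is precisely Corollary~\ref{a-cor9.5}, whose proof \emph{uses} Theorem~\ref{a-thm9.4}; invoking it here would be circular.

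The paper's proof takes a different route that avoids any length bound for extremal rays. It argues by contradiction: assuming $f^*\mathcal A_{S^\flat}\cdot R>0$, it takes the extremal contraction $\varphi_R\colon X\to Z$, passes to an irreducible component, and then uses Lemma~\ref{a-lem5.6} (not Lemma~\ref{a-lem5.5}) to manufacture a positive-dimensional qlc center $X''$ lying in a fiber of $\varphi_R$ with $-\omega|_{X''}$ ample and $\dim X''_{-\infty}\leq 0$. On this projective quasi-log scheme one has $\omega|_{X''}+r\,f^*\mathcal A_{S^\flat}|_{X''}\equiv 0$ for some $r>\dim X+1$, and \cite[Lemma~12.3]{fujino-cone-contraction} then gives the contradiction directly. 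In other words, the constant $\dim X+1$ comes from that projective lemma, not from bend-and-break on $X$ itself. Your face argument identifying $\NE(X/S^\flat;g^{-1}(W))$ with the face cut out by $f^*\mathcal A_{S^\flat}$ is fine, but it is not needed once $f^*\mathcal A_{S^\flat}\cdot R=0$ is established.
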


\begin{proof}[Sketch of Proof of Theorem \ref{a-thm9.4}]
We put $\mathcal L:=f^*\mathcal A_{S^\flat}$. 
We may assume that $\dim X\geq 1$. 
Since $\mathcal L$ is $\pi$-nef over $W$, 
$R$ is an $\omega$-negative extremal ray of $\NE(X/S; W)$. 
After shrinking $S$ around $W$ suitably, 
we obtain a contraction morphism 
$\varphi_R\colon X\to Z$ over $S$ associated to 
$R$ by the cone and contraction theorem 
(see Theorem \ref{a-thm9.2}). 
It is sufficient to prove that $\mathcal L\cdot R=0$ holds. 
We will get a contradiction by supposing that 
$\mathcal L\cdot R>0$ holds. 
If there exists a positive-dimensional 
irreducible component $X'$ of $X$ such that 
$\varphi_R(X')$ is a point. 
Then, by applying the argument in Step 1 in the 
proof of \cite[Theorem 1.1.8]{fujino-cone-contraction} to 
$[X', \omega|_{X'}]$, we get a contradiction. 
Hence, we may assume that 
$\dim \varphi_R(X')\geq 1$ holds for 
every positive-dimensional irreducible component $X'$ of $X$. 
By adjunction (see Theorem \ref{a-thm4.4}), 
$[X', \omega|_{X'}]$ is a quasi-log canonical 
pair. 
By considering $[X', \omega|_{X'}]$, we may assume that 
$X$ is irreducible. 
We take a point $P\in Z$ with $\varphi^{-1}_R(P)\geq 1$. 
By Lemma \ref{a-lem5.7}, 
after shrinking $Z$ around $P$ suitably, 
we can take an effective $\mathbb R$-Cartier divisor 
$G'$ on $Z$ such that 
$[X, \omega+\varphi^*_RG']$ naturally becomes 
a quasi-log complex analytic space, 
there exists a positive-dimensional 
qlc center $C$ of $[X, \omega+\varphi^*_RG']$ with 
$\varphi(C)=P$, 
$\dim \Nqlc(X, \omega+\varphi^*_RG')\leq 0$, 
and $\Nqlc(X, \omega
+\varphi^*_RG')=\emptyset$ outside $\varphi^{-1}_R(P)$. 
Then, by adjunction (see Theorem 
\ref{a-thm4.4}), we can construct a projective 
irreducible quasi-log scheme 
$[X'', \omega'':=\omega|_{X''}]$ such that 
$\varphi_R(X'')=P$, 
$-\omega''$ is ample, $\dim X''\geq 1$, and $\dim X''_{-\infty}\leq 0$. 
By construction, $\mathcal L|_{X''}$ is 
ample and $\omega''+r\mathcal L|_{X''}$ is 
numerically trivial for some $r>\dim X+1$. 
This is a contradiction by \cite[Lemma 4.7.1]{fujino-cone-contraction}. 
Anyway, we obtain that $\mathcal L\cdot R=0$. 
This is what we wanted.  
\end{proof}

As an obvious corollary of Theorem \ref{a-thm9.4}, 
we have: 

\begin{cor}\label{a-cor9.5}
Let $[X, \omega]$ be a quasi-log complex analytic space with 
$X_{-\infty}=\emptyset$, that is, $[X, \omega]$ is a quasi-log 
canonical pair. 
Let $\pi\colon X\to S$ be a projective morphism 
of complex analytic spaces 
and let $\mathcal A$ be any $\pi$-ample line bundle 
on $X$. Then $\omega+(\dim X+1)\mathcal A$ is 
always nef over $S$. 
\end{cor}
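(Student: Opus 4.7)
The plan is to deduce Corollary \ref{a-cor9.5} by applying Theorem \ref{a-thm9.4} to the trivial factorization of $\pi$. Specifically, I would take $S^\flat := X$, $f := \mathrm{id}_X\colon X\to X$, and $g := \pi\colon X\to S$. Then $S^\flat$ is projective over $S$ (as $\pi$ is projective by hypothesis), and the line bundle $\mathcal A_{S^\flat} := \mathcal A$ is $g$-ample, i.e., $\pi$-ample, with $f^*\mathcal A_{S^\flat}=\mathcal A$. Under this setup, the statement to be established is precisely that $\omega + (\dim X+1)f^*\mathcal A_{S^\flat}$ is $\pi$-nef over $S$.

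I would argue by contradiction. Suppose $\omega + (\dim X+1)\mathcal A$ is not nef over $S$. Then it fails to be nef on some fiber, so there exists a point $s\in S$ such that the restriction is not nef on $\pi^{-1}(s)$. Set $W := \{s\}$. By Nakayama's finiteness (Remark \ref{a-rem2.7}), the dimension of $N^1(X/S; W)$ is finite, since a singleton is a Stein compact subset with noetherian ring of germs. Hence the cone theorem (Theorem \ref{a-thm9.2}) applies, and using $X_{-\infty}=\emptyset$ to kill the $\Nqlc$-contribution, we obtain
\begin{equation*}
\NE(X/S; W) = \NE(X/S; W)_{\omega\geq 0} + \sum_j R_j.
\end{equation*}
On $\NE(X/S; W)_{\omega\geq 0}$, the $\mathbb R$-line bundle $\omega + (\dim X+1)\mathcal A$ is obviously nonnegative because $\mathcal A$ is $\pi$-ample (hence $\pi$-nef). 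Therefore the assumed failure of nefness forces the existence of an extremal ray $R = R_j$ with $\bigl(\omega + (\dim X+1)\mathcal A\bigr)\cdot R < 0$.

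Now I would invoke Theorem \ref{a-thm9.4} with the factorization above: since $R$ is an $\bigl(\omega + (\dim X+1)f^*\mathcal A_{S^\flat}\bigr)$-negative extremal ray of $\NE(X/S; W)$, we conclude $R\cdot f^*\mathcal A_{S^\flat} = R\cdot \mathcal A = 0$. But $\mathcal A$ is $\pi$-ample, so $\mathcal A\cdot C > 0$ for every projective integral curve $C$ on $X$ with $\pi(C)=s$, and therefore $\mathcal A\cdot R > 0$ for every nonzero class $R\in\NE(X/S; W)$. This is the desired contradiction, and I expect no real obstacle: the whole argument is simply the observation that the trivial factorization $S^\flat = X$ legalizes the use of Theorem \ref{a-thm9.4}, combined with the elementary fact that a $\pi$-ample line bundle pairs strictly positively with any nonzero class in the relative Kleiman--Mori cone over a point.
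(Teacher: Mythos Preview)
Your proof is correct and follows essentially the same approach as the paper: reduce to a single point $W=\{P\}$, assume an $(\omega+(\dim X+1)\mathcal A)$-negative extremal ray exists, and apply Theorem~\ref{a-thm9.4} to force $R\cdot\mathcal A=0$, contradicting $\pi$-ampleness of $\mathcal A$. Your choice of factorization $S^\flat:=X$, $f:=\mathrm{id}_X$, $g:=\pi$ is the right one; the paper's printed choice $S^\flat:=S$ is evidently a slip, since with that choice $f^*\mathcal A_{S^\flat}$ would be a pullback from $S$ and hence $\pi$-numerically trivial, yielding no information about $\mathcal A$.
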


Corollary \ref{a-cor9.5} is a generalization 
of \cite[Corollary 1.1.9]{fujino-cone-contraction} 
(see also \cite[Corollaries 1.2 and 1.8]{fujino-relative-span}). 

\begin{proof}[Proof of Corollary \ref{a-cor9.5}]
Let $P\in S$ be any point. We put $W:=\{P\}$. 
Then we can check that the dimension of $N^1(X/S; W)$ is finite 
(see Remark \ref{a-rem2.7}). 
Assume that there 
exists an $\left(\omega+(\dim X+1)\mathcal A\right)$-negative 
extremal ray $R$ of $\NE(X/S; W)$. 
We apply Theorem \ref{a-thm9.4} by putting 
$S^\flat:=S$. Then we obtain $R\cdot \mathcal A=0$. 
This is a contradiction since $\mathcal A$ is 
ample. Therefore, there are no 
$\left(\omega+(\dim X+1)\mathcal A\right)$-negative 
extremal rays. This implies that $\omega+(\dim X+1)\mathcal A$ 
is $\pi$-nef over $W$. 
Since $P$ is any point of $S$, we obtain that 
$\omega+(\dim X+1)\mathcal A$ is nef over $S$. 
We finish the proof. 
\end{proof}

\subsection{On $\omega$-negative 
extremal rational curves}\label{a-subsec9.1}

The following results easily 
follow from \cite{fujino-cone}. 
They generalize \cite{kawamata}. 
We explicitly state them here for the reader's convenience. 
We think that \cite{fujino-cone} shows that 
the framework of quasi-log structures is useful. 
We note that the results in this subsection depend on 
Mori's bend and break method. 

\begin{thm}[{see \cite[Theorem 5.1.1]{fujino-cone-contraction}}]
\label{a-thm9.6}
Let $\varphi 
\colon X\to Z$ be a projective morphism of complex analytic spaces 
such that $[X, \omega]$ is a quasi-log complex analytic space. 
Assume that 
$-\omega$ is $\varphi$-ample. 
Let $P$ be an arbitrary point of $Z$. 
Let $E$ be any positive-dimensional irreducible component 
of $\varphi^{-1}(P)$ such that $E\not\subset \Nqlc(X, \omega)$. 
Then $E$ is covered by possibly singular rational curves $\ell$ 
with 
\begin{equation*}
0<-\omega\cdot \ell\leq 2 \dim E. 
\end{equation*} 
In particular, $E$ is uniruled. 
\end{thm}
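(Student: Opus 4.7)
The plan is to reduce the statement to the known bend-and-break theorem for projective quasi-log schemes established in \cite{fujino-cone}, using adjunction to pass from $[X,\omega]$ to a projective quasi-log structure on the fiber component $E$ itself. After shrinking $Z$ around $P$ suitably, I would first apply Lemma \ref{a-lem5.5} to produce an effective $\mathbb R$-Cartier divisor $G$ on $Z$ such that $[X,\omega+\varphi^*G]$ is a quasi-log complex analytic space in which $E$ appears as a qlc stratum. Since $\varphi$ sends $E$ to the single point $P$, the pullback $\varphi^*G$ restricts to the trivial $\mathbb R$-line bundle on $E$; hence in $\Pic(E)\otimes_{\mathbb Z}\mathbb R$ the restricted log class coincides with $\omega|_E$.

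Next I would invoke the qlc adjunction theorem (Theorem \ref{a-thm4.4}) applied to the qlc stratum $E$ of $[X,\omega+\varphi^*G]$. This endows $E$ with the structure of a projective quasi-log complex analytic space $[E,\omega|_E]$ whose qlc strata are exactly those qlc strata of $[X,\omega+\varphi^*G]$ contained in $E$, and whose non-qlc locus $\Nqlc(E,\omega|_E)$ is a proper closed analytic subset of $E$ (since $E$ itself is a qlc stratum, not contained in the non-qlc locus). Because $-\omega$ is $\varphi$-ample and $E$ is contained in the fiber $\varphi^{-1}(P)$, the restriction $-\omega|_E$ is an ample $\mathbb R$-line bundle on the compact complex variety $E$. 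By Serre's GAGA, $[E,\omega|_E]$ is then a projective quasi-log scheme in the algebraic sense.

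At this point the statement to prove is exactly the bend-and-break theorem for projective quasi-log schemes with ample anti-log-canonical class, which is the main geometric result of \cite{fujino-cone} generalizing \cite{kawamata}. It produces, through every point of $E\setminus \Nqlc(E,\omega|_E)$, a rational curve $\ell\subset E$ satisfying $0<-\omega\cdot\ell\leq 2\dim E$. The uniform degree bound $-\omega\cdot\ell\leq 2\dim E$ confines these curves to a bounded component of the relevant Hilbert scheme, so the evaluation morphism from the universal family is proper; its image is therefore a closed analytic subset of $E$ containing the dense open set $E\setminus \Nqlc(E,\omega|_E)$, hence equals the whole of $E$. This yields the covering statement, and uniruledness of $E$ is immediate.

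The main obstacle is the careful verification that the adjunction step genuinely produces the log class $\omega|_E$ rather than $\omega|_E$ together with an extra contribution from $\varphi^*G$, so that the algebraic bend-and-break estimate transfers without modification, and the bookkeeping of the non-qlc locus of the new structure $[X,\omega+\varphi^*G]$ so as to ensure that $E$ remains outside it after applying Lemma \ref{a-lem5.5}. The bend-and-break ingredient itself (passage to positive characteristic via reduction modulo $p$) takes place entirely in the projective algebraic category once GAGA is invoked, and no additional analytic input is needed beyond the structural results of Sections \ref{a-sec4} and \ref{a-sec5}.
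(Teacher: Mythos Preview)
Your approach is essentially the paper's, but you skip two technical reductions that the paper makes explicit.

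First, Lemma \ref{a-lem5.5} has the hypothesis that $X$ is irreducible (and $\varphi$ surjective). You invoke it directly on $[X,\omega]$ without arranging this. The paper handles it as follows: if $E$ happens to be an irreducible component of $X$, then adjunction (Theorem \ref{a-thm4.4}) together with Lemma \ref{a-lem5.3} already give $[E,\omega|_E]$ the structure of a projective quasi-log scheme, and one goes straight to \cite[Theorem 1.12]{fujino-cone}. Otherwise one picks an irreducible component $X'$ of $X$ containing $E$, uses adjunction and Lemma \ref{a-lem5.3} to replace $[X,\omega]$ by $[X',\omega|_{X'}]$, and only then applies Lemma \ref{a-lem5.5}.

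Second, Theorem \ref{a-thm4.4} does not put a quasi-log structure on $E$ alone: it puts one on $E\cup\Nqlc(X,\omega+\varphi^*G)$, with that non-qlc locus carried along as the new $X'_{-\infty}$. To strip this off and obtain a quasi-log structure on $E$ itself one needs Lemma \ref{a-lem5.3}, which replaces a quasi-log space by the closure of the complement of its non-qlc locus while preserving the ideal sheaf of the non-qlc locus. The paper cites Lemma \ref{a-lem5.3} at exactly this point. Your final paragraph alludes to ``bookkeeping of the non-qlc locus'' but does not name the lemma that accomplishes it.

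With these two reductions inserted, your outline coincides with the paper's proof: $[E,\omega|_E]$ is a projective quasi-log scheme with $-\omega|_E$ ample, and \cite[Theorem 1.12]{fujino-cone} finishes.
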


\begin{proof}[Sketch of Proof of Theorem \ref{a-thm9.6}]
If $E$ is an irreducible component of 
$X$, then $[E, \omega|_E]$ is a projective 
quasi-log scheme by adjunction (see Theorem 
\ref{a-thm4.4}) and Lemma \ref{a-lem5.3} since 
$\varphi(E)=P$. 
Hence, the statement follows from 
\cite[Theorem 1.12]{fujino-cone}. 
From now on, we assume that 
$E$ is not an irreducible component of $X$. 
We take an irreducible component $X'$ of $X$ such that 
$E\subset X'$. 
By adjunction and Lemma \ref{a-lem5.3} again, 
$[X', \omega|_{X'}]$ is a quasi-log 
complex analytic space. 
By replacing $X$ with $X'$, we may assume that 
$X$ is irreducible. 
By Lemma \ref{a-lem5.6}, 
after shrinking $Z$ around $P$ suitably, 
we can take an effective $\mathbb R$-Cartier 
divisor $G$ on $Z$ such that $[X, \omega+\varphi^*G]$ 
naturally becomes a quasi-log complex analytic space and 
that $E$ is a qlc center of $[X, \omega+\varphi^*G]$. 
By adjunction (see Theorem 
\ref{a-thm4.4}) and Lemma \ref{a-lem5.3} 
(see also the proof of \cite[Theorem 5.1.1]{fujino-cone-contraction}), 
$[E, \omega|_E]$ is a projective 
quasi-log scheme such that $-\omega|_E$ is ample since 
$\varphi(E)=P$. 
Thus, by \cite[Theorem 1.12]{fujino-cone}, we have the desired 
properties. 
\end{proof}

Hence, we have: 

\begin{thm}[{Lengths of $\omega$-negative extremal rational curves, 
see \cite[Theorem 5.1.3]{fujino-cone-contraction}}]\label{a-thm9.7}
Let $\pi\colon X\to S$ be a projective morphism of 
complex analytic spaces such that 
$[X, \omega]$ is a quasi-log complex analytic space and 
let $W$ be a compact subset of $S$ 
such that the dimension of $N^1(X/S; W)$ is finite. 
If $R$ is an $\omega$-negative extremal ray of $\NE(X/S; W)$ 
which is relatively ample at $\Nqlc(X, \omega)$, 
that is, $R\cap \NE(X/S; W)_{\Nqlc(X, \omega)}=\{0\}$, 
then there exists a possibly singular 
rational curve $\ell$ spanning $R$ with 
\begin{equation*}
0<-\omega\cdot \ell\leq 2\dim X. 
\end{equation*}
\end{thm}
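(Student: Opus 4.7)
The plan is to apply the cone and contraction theorem (Theorem \ref{a-thm9.2}) to the face $F = R$ in order to produce an extremal contraction $\varphi_R\colon X\to Z$, and then to extract the required rational curve from a positive-dimensional fiber of $\varphi_R$ via Theorem \ref{a-thm9.6}. Since $R$ is $\omega$-negative and satisfies $R\cap \NE(X/S;W)_{\Nqlc(X,\omega)}=\{0\}$, the one-dimensional face $R$ clearly satisfies
\begin{equation*}
R\cap \bigl(\NE(X/S;W)_{\omega\geq 0}+\NE(X/S;W)_{\Nqlc(X,\omega)}\bigr)=\{0\},
\end{equation*}
so Theorem \ref{a-thm9.2} (4) applies. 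After shrinking $S$ around $W$, I obtain a projective contraction $\varphi_R\colon X\to Z$ over $S$ with $(\varphi_R)_*\mathcal O_X\simeq \mathcal O_Z$, characterized by contracting precisely those projective integral curves lying over $W$ whose class belongs to $R$.

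Next I verify that $-\omega$ is $\varphi_R$-ample over a neighborhood of $W$. The contraction $\varphi_R$ comes from a $\pi$-nef line bundle $\mathcal L$ with $\mathcal L\cdot R=0$; since $R$ is an $\omega$-negative extremal ray, for a sufficiently small $\varepsilon>0$ the class $\mathcal L-\varepsilon\omega$ is still $\pi$-nef, and in fact $\pi$-ample on each fiber of $\varphi_R$ by Kleiman's criterion. Restricting to any fiber of $\varphi_R$ (where $\mathcal L$ is numerically trivial) then gives $-\varepsilon\omega$ ample on the fiber, hence $-\omega$ is $\varphi_R$-ample.

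I then choose a fiber $\varphi_R^{-1}(P)$ through which there passes a curve whose class is nonzero in $R$. Such a fiber is automatically positive-dimensional, and I claim it has an irreducible component $E$ with $E\not\subset \Nqlc(X,\omega)$. Indeed, if the component of $\varphi_R^{-1}(P)$ containing a curve $C$ with $0\neq [C]\in R$ were entirely contained in $\Nqlc(X,\omega)$, then $[C]$ would lie in $R\cap \NE(X/S;W)_{\Nqlc(X,\omega)}=\{0\}$, a contradiction. I now apply Theorem \ref{a-thm9.6} to $\varphi_R\colon X\to Z$, the point $P$, and the component $E$: this yields a possibly singular rational curve $\ell\subset E$ with
\begin{equation*}
0<-\omega\cdot \ell\leq 2\dim E\leq 2\dim X.
\end{equation*}
Since $\varphi_R(\ell)=P$ is a point, $[\ell]\in R$, and because $R$ is a one-dimensional face, $\ell$ spans $R$.

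The main obstacle is really just checking the two auxiliary points above, namely that $-\omega$ is $\varphi_R$-ample (so that Theorem \ref{a-thm9.6} applies) and that some fiber over $W$ has a component not lying in $\Nqlc(X,\omega)$. Both follow formally from the extremal nature of $R$ and from the defining hypothesis $R\cap \NE(X/S;W)_{\Nqlc(X,\omega)}=\{0\}$, so no genuinely new input is required beyond Theorems \ref{a-thm9.2} and \ref{a-thm9.6}.
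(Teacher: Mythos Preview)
Your proof is correct and follows essentially the same route as the paper: construct the contraction $\varphi_R$ via Theorem \ref{a-thm9.2}, observe that $-\omega$ is $\varphi_R$-ample, locate a positive-dimensional fiber component $E\not\subset\Nqlc(X,\omega)$, and apply Theorem \ref{a-thm9.6}. The paper phrases the third step slightly differently, noting instead that $\varphi_R|_{\Nqlc(X,\omega)}$ is finite (so every positive-dimensional fiber component avoids $\Nqlc$), but this is equivalent to your argument; your justification of $\varphi_R$-ampleness of $-\omega$ is a bit roundabout, and the cleanest way is to write $-\omega=(q\mathcal L-\omega)-q\varphi_R^*\mathcal L_Z$ with $q\mathcal L-\omega$ $\pi$-ample (hence $\varphi_R$-ample) and $\varphi_R^*\mathcal L_Z$ $\varphi_R$-trivial.
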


\begin{proof}[Sketch of Proof of Theorem \ref{a-thm9.7}]
By the cone and contraction theorem (see Theorem \ref{a-thm9.2}), 
we have a contraction morphism $\varphi_R\colon X\to Z$ over $S$ 
associated to $R$ after shrinking $S$ around $W$ suitably. 
By construction, $-\omega$ is $\varphi_R$-ample and 
$\varphi_R\colon \Nqlc(X, \omega)\to \varphi_R(\Nqlc(X, \omega))$ is finite. 
Thus, by Theorem \ref{a-thm9.6}, 
we can find a rational curve $\ell$ in a fiber of 
$\varphi_R$ with $0<-\omega\cdot \ell \leq 2\dim X$. 
This $\ell$ is a desired rational curve spanning $R$. 
\end{proof}

Theorem \ref{a-thm9.7} will play a crucial 
role in Subsection \ref{a-subsec10.2}. We close 
this subsection with a complex analytic 
generalization of \cite[Theorem 1.14]{fujino-cone}. 

\begin{thm}[{Rationally chain connectedness, 
see \cite[Theorem 1.14]{fujino-cone}}]\label{a-thm9.8} 
Let $\pi\colon X\to S$ be a projective morphism 
of complex analytic spaces with $\pi_*\mathcal O_X\simeq 
\mathcal O_S$ and let $[X, \omega]$ 
be a quasi-log complex analytic space. 
Assume that $-\omega$ is $\pi$-ample. 
Then $\pi^{-1}(P)$ is rationally chain connected modulo 
$\pi^{-1}(P)\cap X_{-\infty}$ for 
every point $P\in S$. 
In particular, if further $\pi^{-1}(P)\cap X_{-\infty}=\emptyset$ 
holds, that is, $[X, \omega]$ is quasi-log canonical in 
a neighborhood of $\pi^{-1}(P)$, then 
$\pi^{-1}(P)$ is rationally chain connected. 
\end{thm}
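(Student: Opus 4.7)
The plan is to follow the strategy of \cite[Theorem 1.14]{fujino-cone}, which establishes the analogous statement for quasi-log schemes. All the ingredients used in that proof are now available in the analytic setting: Theorem \ref{a-thm9.6} gives rational curves on positive-dimensional components of fibers, Theorem \ref{a-thm4.4} provides adjunction for qlc strata, and Lemmas \ref{a-lem5.3}, \ref{a-lem5.5}, and \ref{a-lem5.6} handle the necessary bookkeeping on quasi-log structures. The transfer to the analytic setting should therefore be essentially bookkeeping, the main recurring adjustment being to shrink $S$ to a relatively compact Stein open neighborhood of the chosen point $P$ whenever the ingredients above require this.

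Fix $P \in S$ and set $F := \pi^{-1}(P)$. Since $\pi$ is projective and $\pi_*\mathcal O_X \simeq \mathcal O_S$, the fiber $F$ is connected. After shrinking $S$ around $P$ and applying Lemma \ref{a-lem5.3}, I may assume that no irreducible component of $X$ is contained in $X_{-\infty}$. The first step is then to reduce to the case where $X$ itself is irreducible. If $X=X_1\cup\cdots\cup X_k$, adjunction (Theorem \ref{a-thm4.4}) endows each $[X_i,\omega|_{X_i}]$ with a quasi-log complex analytic structure with $-\omega|_{X_i}$ still $\pi|_{X_i}$-ample, and the pairwise intersections $X_i\cap X_j$ are unions of qlc strata of $[X,\omega]$ of strictly smaller dimension; connectedness of $F$ together with induction on $\dim X$ lets me patch chains obtained on each component along these common strata inside $F$.

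Assuming now $X$ is irreducible, I proceed by induction on $\dim X$, the case $\dim X=0$ being trivial. Theorem \ref{a-thm9.6} applied to $\pi\colon X\to S$ shows that every positive-dimensional irreducible component of $F$ not contained in $X_{-\infty}$ is covered by rational curves $\ell$ with $0<-\omega\cdot\ell\leq 2\dim F$; in particular, every point of $F\setminus X_{-\infty}$ lies on a rational curve inside $F$, so each rational-chain class modulo $F\cap X_{-\infty}$ is positive-dimensional. To upgrade this to full rational chain connectedness, I consider the associated quotient $\Phi\colon F\dashrightarrow B$ by the rational-chain equivalence relation modulo $F\cap X_{-\infty}$, and suppose for contradiction that $\dim B\geq 1$. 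Applying Lemma \ref{a-lem5.6} to $\pi\colon X\to S$ with $Z=S$ and the chosen $P$, I obtain (after further shrinking $S$) an effective $\mathbb R$-Cartier divisor $G'$ on $S$ together with a positive-dimensional qlc center $C\subset F$ of $[X,\omega+\pi^*G']$ whose non-qlc locus has dimension at most zero and is concentrated in $F$. Since $C\subset F$, we have $\pi^*G'|_C=0$, so adjunction gives $[C,\omega|_C]$ the structure of a projective quasi-log complex analytic space with $-\omega|_C$ ample and $\dim C<\dim X$, whence by the inductive hypothesis $C$ is rationally chain connected modulo $C\cap X_{-\infty}$.

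The main obstacle will be the final step: one must choose the general divisors in the proof of Lemma \ref{a-lem5.6} so that the resulting qlc center $C$ meets a generic rational-chain class $\Phi^{-1}(b)$ of $F$ but is not contained in it, which is the analytic counterpart of the general-position argument in \cite{fujino-cone}. Once this is arranged, the rational chain connectedness of $C$ modulo $C\cap X_{-\infty}$ forces $\Phi|_C$ to identify points of distinct rational-chain classes of $F$, contradicting the assumption $\dim B\geq 1$. Hence $\dim B=0$ and $F$ is rationally chain connected modulo $F\cap X_{-\infty}$, yielding the stated conclusion; the final \emph{In particular} clause then follows immediately from the definition of $\Nqlc$.
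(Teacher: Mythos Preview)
Your overall plan---defer to \cite[Section 13]{fujino-cone} and observe that every ingredient used there has its analytic analogue in this paper---is exactly what the paper does; its proof is the one-line remark that the arguments of \cite[Section 13]{fujino-cone} adapt without difficulty.

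That said, your more detailed sketch has a slip. You invoke Lemma \ref{a-lem5.6} to produce the qlc center $C\subset F$, but Lemma \ref{a-lem5.6} explicitly assumes $\Nqlc(X,\omega)=\emptyset$, which is \emph{not} a hypothesis of Theorem \ref{a-thm9.8}; the whole point is to prove rational chain connectedness \emph{modulo} $F\cap X_{-\infty}$. The correct tool here is Lemma \ref{a-lem5.5} (which allows $\Nqlc(X,\omega)\ne\emptyset$ and lets you target a chosen positive-dimensional component $E\not\subset\Nqlc(X,\omega)$), combined with adjunction and Lemma \ref{a-lem5.3} as in the proof of Theorem \ref{a-thm9.6}. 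This is precisely how \cite[Section 13]{fujino-cone} proceeds. A second, smaller point: Lemma \ref{a-lem5.6} also requires $\dim Z>0$, so applying it with $Z=S$ would fail when $S$ is a point---but in that case the statement is already the projective quasi-log scheme result of \cite{fujino-cone} via GAGA, so this is harmless once noted.
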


\begin{proof}
There are no difficulties to adapt 
the arguments in \cite[Section 13]{fujino-cone} 
to our complex analytic setting here. 
For the details, see \cite[Section 13]{fujino-cone}. 
\end{proof}

\section{On analytic semi-log canonical pairs}\label{a-sec10}

In this section, we will explain how to use the framework 
of quasi-log complex analytic spaces for the study of 
semi-log canonical pairs. 
In \cite{fujino-fundamental-slc}, the author proved that 
any quasi-projective semi-log canonical pair naturally has 
a quasi-log structure. The following theorem is a 
complex analytic generalization. 

\begin{thm}\label{a-thm10.1}
Let $(X, \Delta)$ be a semi-log canonical pair and 
let $\pi\colon X\to S$ be a projective morphism 
of complex analytic spaces. 
Then, after replacing $S$ with any relatively compact 
open subset of $S$, 
$[X, K_X+\Delta]$ naturally becomes a quasi-log complex 
analytic space such that 
$\Nqlc(X, K_X+\Delta)=\emptyset$ and 
that $C^\dag$ is a qlc center of $[X, K_X+\Delta]$ if and 
only if $C^\dag$ is a semi-log canonical center of $(X, \Delta)$. 

More precisely, after replacing $S$ with any relatively compact 
open subset of $S$, we can construct a projective surjective morphism 
$f\colon (Z, \Delta_Z)\to X$ from an analytic globally embedded 
simple normal crossing pair $(Z, \Delta_Z)$ such that 
the natural map 
\begin{equation*}
\mathcal O_X\to f_*\mathcal O_Z(\lceil 
-(\Delta^{<1}_Z)\rceil)
\end{equation*} 
is an isomorphism 
and that $C^\dag$ is 
the $f$-image of some stratum of $(Z, \Delta_Z)$ 
if and only if 
$C^\dag$ is a semi-log canonical 
center of $(X, \Delta)$ 
or an irreducible component of $X$. 
Moreover, if every irreducible component of $X$ has 
no self-intersection in codimension one, then we can 
make $f\colon Z\to X$ bimeromorphic. 
\end{thm}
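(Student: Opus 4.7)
The plan is to adapt the algebraic proof of \cite[Theorem 1.2]{fujino-fundamental-slc} to the complex analytic category, combining the analytic embedded resolution of \cite{bierstone-milman2} with Koll\'ar's gluing description of semi-log canonical pairs. First, I would replace $S$ with a relatively compact Stein open subset, so that $X$ has only finitely many irreducible components, $\Delta$ has finite support, and $K_X+\Delta$ is globally $\mathbb R$-Cartier. Let $\nu\colon X^\nu\to X$ denote the normalization with conductor divisor $D$ on $X^\nu$, and set $K_{X^\nu}+\Theta:=\nu^*(K_X+\Delta)$. By Definition \ref{a-def2.5}, $(X^\nu,\Theta)$ is log canonical with $D\subset \Theta^{=1}$.

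Next, I would apply analytic embedded resolution to produce a projective bimeromorphic morphism $g\colon W\to X^\nu$ with $(W,B_W)$ an analytic globally embedded simple normal crossing pair such that $K_W+B_W=g^*(K_{X^\nu}+\Theta)$, and so that the strict transform $D_W$ of $D$ is a smooth union of components of $B_W^{=1}$ meeting the rest transversally. At this stage, $h:=\nu\circ g\colon W\to X$ satisfies $h^*(K_X+\Delta)\sim_{\mathbb R}K_W+B_W$, but
\begin{equation*}
h_*\mathcal O_W(\lceil -B_W^{<1}\rceil)\simeq \nu_*\mathcal O_{X^\nu},
\end{equation*}
which strictly contains $\mathcal O_X$ along the non-normal locus of $X$. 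To correct this, I would use the fact that the semi-log canonical structure on $X$ is equivalent to the log canonical pair $(X^\nu,\Theta)$ together with a generically fixed-point-free involution $\tau$ on the normalization $D^\nu$ of the conductor. After possibly further blowing up the ambient space of $W$ so that $g|_{D_W}$ is compatible with $\tau$, identify the irreducible components of $D_W$ in pairs via $\tau$. This produces an analytic simple normal crossing pair $(Z,\Delta_Z)$ with a natural projective surjective morphism $f\colon Z\to X$, and the identifications precisely kill the quotient $\nu_*\mathcal O_{X^\nu}/\mathcal O_X$, so Koll\'ar's gluing lemma yields
\begin{equation*}
\mathcal O_X\overset{\sim}{\longrightarrow} f_*\mathcal O_Z(\lceil -\Delta_Z^{<1}\rceil).
\end{equation*}
By construction, the strata of $(Z,\Delta_Z)$ correspond bijectively to the semi-log canonical centers of $(X,\Delta)$ together with the irreducible components of $X$, and $\Nqlc(X,K_X+\Delta)=\emptyset$. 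When every irreducible component of $X$ has no self-intersection in codimension one, $\tau$ is trivial in codimension one, so no identification is needed and one can take $Z=W$, making $f=\nu\circ g$ bimeromorphic.

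The main obstacle, already flagged in Remark \ref{a-rem1.8}, is the rigorous analytic realization of the gluing step: one must construct $(Z,\Delta_Z)$ as a bona fide complex analytic simple normal crossing pair (not merely set-theoretically) and verify the push-forward isomorphism globally along the non-normal locus. Projectivity of $\pi$ is essential here, since it guarantees that the resolutions and finite identifications used in the construction can all be performed with compact fibers, so that Koll\'ar's finite-gluing theorems transfer to the analytic setting after shrinking $S$. The care needed in handling these subtleties — overlooked in the original algebraic treatment — is precisely what Theorem \ref{a-thm10.4} and Remark \ref{a-rem10.5} below are designed to address.
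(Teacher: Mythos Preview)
Your proposal has a genuine gap at the gluing step. After normalizing and resolving to obtain the smooth pair $(W,B_W)$ over $X^\nu$, you propose to ``identify the irreducible components of $D_W$ in pairs via $\tau$'' and assert that the result $(Z,\Delta_Z)$ is an analytic simple normal crossing pair. This is exactly what fails. Gluing two smooth divisorial components of a smooth variety by an involution reproduces, along the glued locus, precisely the kind of singularities you started with on $X$: double normal crossing points within a single component and, where $\tau$ has fixed points, pinch points. The quotient $Z$ is therefore not simple normal crossing (and in general not even embeddable as a reduced divisor in a smooth ambient space), so it cannot serve as the source of a quasi-log resolution in the sense of Definition~\ref{a-def4.1}. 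Koll\'ar's gluing formalism tells you how to recover $X$ from $(X^\nu,\Theta,\tau)$; it does not hand you a simple normal crossing model of $X$.

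The paper avoids normalization altogether. It works directly on the non-normal $X$, first applying the Bierstone--Milman resolution \emph{except for minimal singularities} \cite{bierstone-milman3} to arrange $X_1=X_1^{\ncp}$ (only smooth, double normal crossing, and pinch points remain), then embedding $X_1$ in a smooth ambient $Y$ and resolving again. The residual non-simple-normal-crossing locus of the resulting $X_3$ consists of smooth codimension-one centers $C$ (pinch point loci and double-point loci of a single component); blowing these up in the ambient space converts them to honest simple normal crossings while preserving $f_*\mathcal O\simeq\mathcal O_X$ (this is the content of \cite[Lemma~4.4]{fujino-fundamental-slc} and Step~\ref{a-10.1-step6}). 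No gluing is ever performed. Your final paragraph also misreads Remark~\ref{a-rem1.8}: the issue flagged there concerns the torsion-freeness statement \cite[Theorem~1.12]{fujino-fundamental-slc} (see Theorem~\ref{a-thm10.4} and Remark~\ref{a-rem10.5}), not the construction of the quasi-log structure. And your treatment of the bimeromorphic case is off: even when no component of $X$ self-intersects, the involution $\tau$ still swaps conductor divisors lying on \emph{different} components, so it is not ``trivial in codimension one''; what actually happens is that $X^{\ncp}=X^{\snct}$, making the pinch-point blow-ups of Step~\ref{a-10.1-step6} unnecessary, and the whole tower $Z\to X$ is then bimeromorphic by construction.
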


\begin{proof}[Proof of Theorem \ref{a-thm10.1}]
We take an arbitrary relatively compact open subset $U$ 
of $S$. We will construct $f\colon (Z, \Delta_Z)\to X$ over $U$. 
In this proof, $X^{\ncp}$ denotes the largest open subset of $X$ consisting 
of smooth points, double normal crossing points, and 
pinch points. 
Similarly, $X^{\snc}$ denotes the largest open subset of $X$ consisting 
of smooth points and simple normal crossing points. 
Moreover, $X^{\snct}$ is the largest open subset of $X$ which 
has only smooth points and simple 
normal crossing points of multiplicity $\leq 2$. 
We note that $\Sing X$ denotes the singular locus of $X$. 
From Step \ref{a-10.1-step1} to Step \ref{a-10.1-step8}, 
we will explain how to construct a projective 
surjective morphism $f\colon (Z, \Delta_Z)\to X$ from 
an analytic globally embedded simple normal crossing pair 
$(Z, \Delta_Z)$ over $U$.  

\setcounter{step}{0}
\begin{step}\label{a-10.1-step1}
By \cite[Remark 1.6 and Theorem 1.18]{bierstone-milman3}, 
after replacing $S$ with any relatively compact open 
subset containing $\overline U$, 
we can take a morphism $f_1\colon X_1\to X$, which 
is a finite composite of admissible blow-ups, such that 
\begin{itemize}
\item[(i)] $X_1=X^{\ncp}_1$, 
\item[(ii)] $f_1$ is an isomorphism 
over $X^{\ncp}$, and 
\item[(iii)] $\Sing X_1$ maps bimeromorphically 
onto the closure of $\Sing X^{\ncp}$. 
\end{itemize}
We note that $X^{\ncp}=X^{\snct}$ and 
$X_1=X_1^{\ncp}=X_1^{\snct}$ hold 
in the above construction when every irreducible component 
of $X$ has no self-intersection in codimension one. 
\end{step}
\begin{step}\label{a-10.1-step2}
By replacing $S$ with any relatively compact open subset 
containing $\overline U$ again, 
$X_1$ is projective over $S$ by construction. 
Hence we can embed $X_1$ into $S\times \mathbb P^N$ 
over $S$ for some $\mathbb P^N$. 
We pick a finite set $\mathcal P\subset X_1$ such that 
each irreducible component of $\Sing X_1$ contains 
a point of $\mathcal P$. 
Moreover, we may assume that each irreducible component 
of $\Sing (\pi\circ f_1)^{-1}(U)$ contains 
a point of $\mathcal P$. 
After shrinking $S$ around 
$\overline U$, we take a sufficiently large positive integer $d$ such that 
$I_{X_1}\otimes \mathcal O(d)$ is globally generated, 
where $I_{X_1}$ is the defining ideal sheaf of $X_1$ on $S\times 
\mathbb P^N$ and 
$\mathcal O(d):=p^*\mathcal O_{\mathbb P^N}(d)$ with 
the second projection $p\colon S\times \mathbb P^N\to \mathbb P^N$. 
We take a complete intersection of 
$(\dim S+N-\dim X-1)$ general members of 
$|I_{X_1}\otimes \mathcal O(d)|$. 
Then we have $X_1\subset Y$ such that $Y$ is smooth at every point of 
$\mathcal P$. 
Note that here we used the fact that $X_1$ has only 
hypersurface singularities near $\mathcal P$. 
\end{step}
\begin{step}\label{a-10.1-step3}
After replacing $S$ with a relatively compact open subset 
containing $\overline U$ suitably, 
we take a resolution $g\colon Y_2\to Y$, which is a finite 
composite of admissible blow-ups and is an isomorphism 
over the largest Zariski open subset of $Y$ on which $Y$ is 
smooth (see \cite[Theorem 13.3]{bierstone-milman2}). Let $X_2$ be the 
strict transform of $X_1$ on $Y_2$. We note 
that $f_2:=g|_{X_2}\colon X_2\to X_1$ is an isomorphism 
over general points of any irreducible component of 
$\Sing X_1$ because $Y$ is smooth at every point of $\mathcal P$ 
by construction. 
\end{step}
\begin{step}\label{a-10.1-step4}
By applying \cite[Remark 1.6 and Theorem 1.18]{bierstone-milman3} 
to $X_2\subset Y_2$, after replacing $S$ with any relatively compact 
open subset containing $\overline U$, 
we have a finite composite of admissible blow-ups $g_3\colon Y_3\to Y_2$ 
from a smooth variety $Y_3$ such that 
$X_3=X^{\ncp}_3$ holds, where $X_3$ is the strict transform 
of $X_2$ on $Y_3$, and that 
$\Sing X_3$ maps bimeromorphically onto $\Sing X_1$ by $f_2\circ f_3$, 
where $f_3:=g_3|_{X_3}\colon X_3\to X_2$. 
We note that we can make $X_3=X_3^{\ncp} 
=X_3^{\snct}$ hold by 
\cite[Theorems 13.3 and 12.4]{bierstone-milman2} when 
$X_1=X_1^{\ncp} 
=X_1^{\snct}$ hold. 
\end{step}
\begin{step}\label{a-10.1-step5}
We put 
\begin{equation*}
K_{X_1}+\Delta_1=f^*_1(K_X+\Delta)
\end{equation*} 
and 
\begin{equation*}
K_{X_3}+\Delta_3=(f_1\circ f_2 \circ f_3)^*(K_X+\Delta). 
\end{equation*} 
Since $X_1$ and $X_3$ have only Gorenstein singularities, 
$\Delta_1$ and $\Delta_3$ are well-defined $\mathbb R$-Cartier 
$\mathbb R$-divisors on $X_1$ and $X_3$, respectively. 
By construction, the singular 
locus of $X_1$ (resp.~$X_3$) does not 
contain any irreducible components of 
$\Supp \Delta_1$ (resp.~$\Supp \Delta_3$). 
\end{step}

\begin{step}\label{a-10.1-step6}
Let $C$ be an irreducible component of $X_3\setminus X^{\snc}_3$. 
Then $C$ is smooth and $\dim C=\dim X_3-1$ since 
$X_3=X^{\ncp}_3$ holds. 
Let $\alpha\colon W\to Y_3$ be the blow-up along $C$ and 
let $V$ be $\alpha^{-1}(X_3)$ with 
the reduced structure. 
Then we can check that $\beta_*\mathcal O_V\simeq \mathcal O_{X_3}$, 
where $\beta:=\alpha|_V$. 
We put 
\begin{equation*} 
K_V+\Delta_V=\beta^*(K_{X_3}+\Delta_3). 
\end{equation*} 
We can easily check that $K_V=\beta^*K_{X_3}$ and 
$\Delta_V=\beta^*\Delta_3$. 
When $\alpha$ is the blow-up along a pinch points locus $C$, 
see \cite[Lemma 4.4]{fujino-fundamental-slc} for the 
local description of $\alpha\colon W\to Y_3$.  
When $\alpha$ is the blow-up along a double normal crossing 
points locus $C$, 
it is easy to understand $\alpha\colon W\to Y_3$. 
By repeating this process finitely many times 
and replacing $S$ with any relatively compact open 
subset containing $\overline U$, we obtain 
a projective bimeromorphic morphism 
$g_4\colon Y_4\to Y_3$ from a smooth variety $Y_4$ and a simple 
normal crossing divisor $X_4$ on $Y_4$ such that 
$f_{4*}\mathcal O_{X_4}\simeq \mathcal O_{X_3}$, where 
$f_4:=g_4|_{X_4}$. 
We put 
\begin{equation*}
K_{X_4}+\Delta_4:=f^*_4(K_{X_3}+\Delta_3). 
\end{equation*}
If $X_3=X_3^{\snct}$ holds, then 
$X_3\setminus X_3^{\snc}$ is empty. 
In this case, we have $Y_4=Y_3$ and $X_4=X_3$. 
\end{step}

\begin{step}\label{a-10.1-step7}
We consider the following closed subset $\Sigma:=\Supp 
(f_1\circ f_2\circ f_3\circ f_4)^*\Delta$ of $X_4$. 
By \cite[Theorems 13.3 and 12.4]{bierstone-milman2}, 
after replacing $S$ with any relatively compact open subset 
containing $\overline U$, 
we can construct a projective bimeromorphic 
morphism $g_5\colon Y_5\to Y_4$ with the following 
properties. 
\begin{itemize}
\item[(i)] Let $X_5$ be the strict transform of $X_4$ on $Y_5$. Then 
$f_5:=g_5|_{X_5}\colon X_5\to X_4$ is an isomorphism 
outside $\Sigma$ with $f_{5*}\mathcal O_{X_5}
\simeq \mathcal O_{X_4}$. 
\item[(ii)] $(X_5, \Sigma')$ is an analytic 
globally embedded simple normal crossing pair 
such that $\Sigma'$ is reduced and contains 
$\Supp f^{-1}_{5*}\Delta_4$ and $\Exc(f_5)$, where 
$\Exc(f_5)$ is the exceptional locus of $f_5$. 
\end{itemize}
\end{step}

\begin{step}\label{a-10.1-step8}
We replace $S$ with $U$ and 
put $M:=Y_5$, $Z:=X_5$, and $f:=f_1\circ f_2\circ f_3\circ f_4
\circ f_5\colon Z=X_5\to X$. 
We define $\Delta_Z$ by 
\begin{equation*}
K_Z+\Delta_Z:=f^*(K_X+\Delta). 
\end{equation*}
By construction, $f$ is a projective 
surjective morphism. 
Moreover, we see that $f$ is a projective bimeromorphic 
morphism when every irreducible component of $X$ has no 
self-intersection in codimension one. 
\end{step}

\begin{step}\label{a-10.1-step9} 
In this step, we will prove that $f_*\mathcal O_Z(\lceil -(\Delta_Z^{<1})\rceil)
\simeq \mathcal O_X$ holds. 

We first note that $X$ satisfies Serre's $S_2$ condition and 
$\codim _X(X\setminus X^{\ncp})\geq 2$ holds by assumption. 
Thus, we have $f_{1*}\mathcal O_{X_1}\simeq 
\mathcal O_X$. Since $\Delta$ is effective, 
$\lceil -(\Delta_1^{<1})\rceil$ is effective and $f_1$-exceptional, 
we see that 
$f_{1*}\mathcal O_{X_1}(\lceil -(\Delta_1^{<1})\rceil)\simeq 
\mathcal O_X$ holds. 
By construction, we can easily check that 
the following inclusions 
\begin{equation*}
\mathcal O_{X_1}\subset 
(f_2\circ f_3)_*\mathcal O_{X_3} (\lceil 
-(\Delta_3^{<1})\rceil)\subset 
\mathcal O_{X_1}(\lceil -(\Delta_1^{<1})\rceil)
\end{equation*} 
hold. 
Therefore, we obtain 
\begin{equation*}
(f_1\circ f_2\circ f_3)_* \mathcal O_{X_3}(\lceil 
-(\Delta_3^{<1})\rceil)\simeq \mathcal O_X. 
\end{equation*}
Let $\alpha\colon W\to Y_3$ be the blow-up 
in Step \ref{a-10.1-step6}. 
When $\alpha\colon W\to Y_3$ is the blow-up along a pinch points 
locus, see \cite[Lemma 4.4]{fujino-fundamental-slc} 
for the local description of $\alpha$. 
When $\alpha\colon W\to Y_3$ is the blow-up along a double 
normal crossing points locus, it is easy to describe $\alpha 
\colon W\to Y_3$. 
Then we have 
\begin{equation*}
0\leq \lceil -(\Delta_V^{<1})\rceil 
\leq \beta^*\left(\lceil -(\Delta_3^{<1})\rceil 
\right) 
\end{equation*} 
by $\Delta_V=\beta^*\Delta_3$. 
This implies 
\begin{equation*}
\mathcal O_{X_3}\subset \beta_*\mathcal O_V(\lceil -(\Delta_V^{<1})
\rceil )\subset \mathcal O_{X_3}(\lceil -(\Delta_3^{<1})\rceil) 
\end{equation*} 
by $\beta_*\mathcal O_V\simeq \mathcal O_{X_3}$. 
By using it finitely many times, we get 
\begin{equation*} 
\mathcal O_{X_3}\subset f_{4*}\mathcal O_{X_4} 
(\lceil -(\Delta_4^{<1})\rceil )\subset 
\mathcal O_{X_3}(\lceil -(\Delta_3^{<1})\rceil). 
\end{equation*} 
This implies 
\begin{equation*}
(f_1\circ f_2\circ f_3\circ f_4)_*\mathcal O_{X_4}
(\lceil -(\Delta_4^{<1})\rceil)\simeq 
\mathcal O_X. 
\end{equation*} 
It is easy to see that 
\begin{equation*}
\mathcal O_{X_4}\subset f_{5*}\mathcal O_{X_5} 
(\lceil -(\Delta_5^{<1})\rceil) \subset 
\mathcal O_{X_4}(\lceil -(\Delta_4^{<1})\rceil). 
\end{equation*} 
Thus, 
\begin{equation*}
(f_1\circ f_2\circ f_3\circ f_4\circ f_5)_*\mathcal 
O_{X_5}(\lceil -(\Delta_5^{<1})\rceil )\simeq 
\mathcal O_X. 
\end{equation*} 
This means that 
\begin{equation*}
f_*\mathcal O_Z(\lceil -(\Delta_Z^{<1})\rceil)\simeq 
\mathcal O_X. 
\end{equation*} 
This is what we wanted. 
\end{step}
\begin{step}\label{a-10.1-step10}
In this final step, we will see that $C^\dag$ is a semi-log canonical 
center if and only if $C^\dag$ is a qlc center of $[X, K_X+\Delta]$. 

When $f$ is bimeromorphic, the desired statement is almost obvious. 
Hence, we may assume that $f$ is not bimeromorphic. 
In this case, we can directly check the above statement 
with the aid of \cite[Lemma 4.4]{fujino-fundamental-slc}. 
\end{step}

We finish the proof. 
\end{proof}

By Theorem \ref{a-thm10.1}, 
we can apply the results for quasi-log complex 
analytic spaces to semi-log canonical 
pairs. 
We see that the basepoint-free theorem 
and its variants (see Theorems \ref{a-thm6.1}, 
\ref{a-thm7.1}, \ref{a-thm8.1}, \ref{a-thm8.2}, 
and \ref{a-thm8.3}) 
hold true for semi-log canonical pairs. 
All the results established in Section \ref{a-sec9} 
hold for semi-log canonical 
pairs. In particular, the cone and contraction theorem 
(see Theorem \ref{a-thm9.2}) holds for 
semi-log canonical pairs in the complex analytic setting. 

\subsection{Vanishing theorems and torsion-freeness for 
semi-log canonical pairs}\label{a-subsec10.1}
In this subsection, we will explicitly state the vanishing theorems 
and torsion-freeness for semi-log canonical pairs in the 
complex analytic setting for the reader's convenience. 

\begin{thm}[{\cite[Theorems 1.7 and 1.10]{fujino-fundamental-slc}}]
\label{a-thm10.2}
Let $(X, \Delta)$ be a semi-log canonical pair and 
let $\pi\colon X\to S$ be a projective morphism 
of complex analytic spaces. 
Let $D$ be a Cartier divisor on $X$, or a $\mathbb Q$-Cartier integral Weil 
divisor on $X$ such that no irreducible component of 
$\Supp D$ is contained in the singular locus of $X$. 
Assume that $D-(K_X+\Delta)$ is nef and log big over 
$S$ with respect to $(X, \Delta)$. 
This means that $D-(K_X+\Delta)$ is nef over 
$S$ and that $\left( D-(K_X+\Delta)\right)|_C$ is big over 
$\pi(C)$ for every slc stratum $C$ of $(X, \Delta)$. 
Then $R^i\pi_*\mathcal O_X(D)=0$ for every $i>0$. 
\end{thm}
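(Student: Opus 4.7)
The plan is to reduce the statement to the vanishing theorems already available for quasi-log complex analytic spaces, using the identification of $[X, K_X+\Delta]$ as a quasi-log complex analytic space furnished by Theorem~\ref{a-thm10.1}. Since $\pi$ is projective, the desired vanishing $R^i\pi_*\mathcal O_X(D)=0$ may be checked after replacing $S$ by any relatively compact open subset. After doing so, Theorem~\ref{a-thm10.1} equips $[X, K_X+\Delta]$ with a quasi-log structure satisfying $\Nqlc(X, K_X+\Delta)=\emptyset$, and whose qlc strata coincide with the slc strata of $(X,\Delta)$. This correspondence of strata is precisely the bridge needed to turn the log-bigness hypothesis on $D-(K_X+\Delta)$ into the hypothesis of a quasi-log vanishing theorem.

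In the case where $D$ is Cartier, $\mathcal L:=\mathcal O_X(D)$ is an honest line bundle, and setting $\omega:=K_X+\Delta$ the assumption that $D-(K_X+\Delta)$ is nef and log big over $S$ with respect to $(X,\Delta)$ becomes, via the stratum identification, exactly the hypothesis of Theorem~\ref{a-thm4.7} for $(\mathcal L,\omega)$. Since $X_{-\infty}=\emptyset$ we have $\mathcal I_{X_{-\infty}}=\mathcal O_X$, so Theorem~\ref{a-thm4.7} yields $R^i\pi_*\mathcal O_X(D)=0$ for every $i>0$ at once.

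When $D$ is only a $\mathbb Q$-Cartier integral Weil divisor whose support avoids $\Sing X$, the sheaf $\mathcal O_X(D)$ is merely reflexive of rank one and Theorem~\ref{a-thm4.7} does not apply directly. Here the plan is to descend to the projective surjective morphism $f\colon (Z,\Delta_Z)\to X$ produced by Theorem~\ref{a-thm10.1}, refined by further admissible blow-ups so that $f$ is an isomorphism over a dense open subset meeting every generic point of $\Supp D$; then $f^*D$ is a well-defined integral Cartier divisor on $Z$. Set $B_Z:=\{\Delta_Z\}+\Delta_Z^{=1}$, a boundary on the analytic snc pair $(Z,B_Z)$, and
$$\mathcal M:=\mathcal O_Z\bigl(f^*D+\lceil -(\Delta_Z^{<1})\rceil\bigr).$$
Using the identity $\lceil -(\Delta_Z^{<1})\rceil-\{\Delta_Z^{<1}\}=-\Delta_Z^{<1}$ together with $K_Z+\Delta_Z=f^*(K_X+\Delta)$, one obtains
$$\mathcal M-(K_Z+B_Z)\sim_{\mathbb R}f^*\bigl(D-(K_X+\Delta)\bigr).$$
Since every stratum of $(Z,B_Z)$ is mapped by $f$ onto an slc stratum of $(X,\Delta)$ by Theorem~\ref{a-thm10.1}, the hypothesis on $D-(K_X+\Delta)$ makes this $\mathbb R$-line bundle nef and log big over $S$ with respect to $f\colon(Z,B_Z)\to X$ in the sense of Definition~\ref{a-def3.7}. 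Theorem~\ref{a-thm3.8} then yields $R^p\pi_*R^qf_*\mathcal M=0$ for every $p>0$ and every $q$, and specializing to $q=0$ together with the identity $f_*\mathcal M\simeq\mathcal O_X(D)$ gives the desired conclusion $R^p\pi_*\mathcal O_X(D)=0$ for $p>0$.

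The main obstacle will be verifying the last reflexive identity $f_*\mathcal M\simeq\mathcal O_X(D)$ globally on $X$: the two sheaves agree on the open locus where $f$ is an isomorphism and where $D$ is Cartier, and since the exceptional divisor $\lceil -(\Delta_Z^{<1})\rceil$ is supported in the exceptional locus of $f$, the agreement extends outside a codimension-two set; one then concludes globally by invoking the $S_2$-property of $X$ together with the structural isomorphism $f_*\mathcal O_Z(\lceil -(\Delta_Z^{<1})\rceil)\simeq\mathcal O_X$ from Theorem~\ref{a-thm10.1}. A secondary subtlety will be to confirm that the refinement of the quasi-log resolution can be carried out without disturbing either that structural isomorphism or the stratum correspondence.
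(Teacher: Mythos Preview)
Your treatment of the Cartier case via Theorem~\ref{a-thm4.7} is correct. The problem is in the $\mathbb Q$-Cartier Weil case: the assertion that $f^*D$ is an integral Cartier divisor on $Z$ is false in general, and no amount of further blowing up repairs it. For a concrete example, take $X=\{uv=w^2\}\subset\mathbb A^3$ (an $A_1$ singularity, hence canonical, hence slc with $\Delta=0$), let $D=\{u=w=0\}$ (a ruling; $2D$ is Cartier but $D$ is not), and let $f\colon Z\to X$ be the minimal resolution with exceptional curve $E$. Then $f^*D=\tilde D+\tfrac12 E$ is not integral, and since $\Delta_Z=0$ here your $\mathcal M=\mathcal O_Z(f^*D)$ is not a line bundle, so Theorem~\ref{a-thm3.8} does not apply as written.

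The paper's route is different in two respects. First, it passes to a double cover $p\colon\widetilde X\to X$ (see \cite[5.23]{kollar-book}) to kill self-intersections in codimension one; this is precisely what allows Theorem~\ref{a-thm10.1} to produce a \emph{bimeromorphic} quasi-log resolution $f\colon(Z,\Delta_Z)\to\widetilde X$. Bimeromorphicity is what makes the pushforward identification tractable; without it $Z$ can acquire extra irreducible components over the double locus (as in Example~\ref{a-ex1.6}), and then your codimension-two argument for $f_*\mathcal M\simeq\mathcal O_X(D)$ breaks down as well, since the locus where $f$ fails to be an isomorphism is not of codimension~$\geq 2$ in $Z$. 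Second, even after reducing to the bimeromorphic case, the non-integrality of $f^*D$ along exceptional divisors must be handled by a rounding that absorbs the fractional part into a modified boundary compatible with $\Delta_Z$; this is the argument in \cite[Theorems~1.7 and 1.10]{fujino-fundamental-slc} to which the paper defers after arranging $\Supp f^*D\cup\Supp\Delta_Z\subset\Sigma$. Your proposal omits both the double cover and this rounding step.
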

\begin{proof}[Sketch of Proof of Theorem \ref{a-thm10.2}]
We take an arbitrary point $s\in S$. 
It is sufficient to prove $R^i\pi_*\mathcal O_X(D)=0$ for 
every $i>0$ on some open neighborhood of $s$. 
Hence, we can freely shrink $S$ around $s$ suitably. 
By \cite[5.23]{kollar-book}, we can take a double cover 
$p\colon \widetilde X\to X$ and reduce the problem to 
the case where every irreducible component of 
$X$ has no self-intersection in codimension one. 
Thus, we can construct a projective bimeromorphic 
morphism $f\colon (Z, \Delta_Z)\to X$ as in 
Theorem \ref{a-thm10.1}. 
By \cite[Theorems 13.3 and 12.4]{bierstone-milman2}, 
we may further assume that 
$(Z, \Sigma)$ is an analytic 
globally embedded simple normal crossing pair 
with $\Supp f^*D\cup \Supp \Delta_Z\subset \Sigma$. 
Thus, by Theorem \ref{a-thm3.5} (ii), 
we can apply the argument in the proof of 
\cite[Theorem 1.7 and Theorem 1.10]{fujino-fundamental-slc}. 
\end{proof}

\begin{thm}[{\cite[Theorem 1.11 and Remark 5.2]{fujino-fundamental-slc}}]
\label{a-thm10.3}
Let $(X, \Delta)$ be a semi-log canonical pair and 
let $\pi\colon X\to S$ be a projective morphism 
of complex analytic spaces. 
Let $\mathcal L$ be a line bundle on $X$. 
Let $X'$ be a union of some slc strata of $(X, \Delta)$ 
with the reduced structure and let $\mathcal I_{X'}$ be 
the defining ideal sheaf of $X'$ on $X$. 
Assume that $\mathcal L-(K_X+\Delta)$ is nef over 
$S$ and $\left(\mathcal L-(K_X+\Delta)\right)|_C$ is 
big over $\pi(C)$ for every slc stratum $C$ of $(X, \Delta)$ which 
is not contained in $X'$. 
Then $R^i\pi_*\left(\mathcal I_{X'}\otimes \mathcal L\right)=0$ holds 
for every $i>0$. 
In particular, $R^i\pi_*\mathcal L=0$ holds for 
every $i>0$ when $X'=\emptyset$. 
\end{thm}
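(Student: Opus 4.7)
The plan is to derive this vanishing directly from the Vanishing Theorem II for quasi-log complex analytic spaces (Theorem \ref{a-thm4.8}) by putting a quasi-log structure on $[X, K_X+\Delta]$ via Theorem \ref{a-thm10.1}.

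Since the sheaves $R^i\pi_*(\mathcal I_{X'}\otimes \mathcal L)$ can be verified to vanish locally on $S$, it suffices to fix an arbitrary point $s\in S$ and establish the vanishing after replacing $S$ with a relatively compact open neighborhood of $s$. On such a neighborhood, Theorem \ref{a-thm10.1} endows $[X, K_X+\Delta]$ with a quasi-log complex analytic space structure for which $\Nqlc(X, K_X+\Delta)=\emptyset$ and the qlc centers are exactly the slc centers of $(X,\Delta)$. Consequently the qlc strata of $[X, K_X+\Delta]$ coincide with the slc strata of $(X, \Delta)$, so the subspace $X'$ is a union of some qlc strata.

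Under this identification, $X'$ is precisely of the form "$\Nqlc(X, K_X+\Delta)$ together with a union of some qlc strata" to which Theorem \ref{a-thm4.4} applies; the resulting quasi-log adjunction ideal is well-defined by Theorem \ref{a-thm4.5}. The hypotheses on $\mathcal L-(K_X+\Delta)$ in Theorem \ref{a-thm10.3} translate verbatim into those of Theorem \ref{a-thm4.8} with $\omega = K_X+\Delta$: namely, $\mathcal L-\omega$ is $\pi$-nef over $S$, and $(\mathcal L-\omega)|_C$ is big over $\pi(C)$ for every qlc stratum $C$ not contained in $X'$. Applying Theorem \ref{a-thm4.8} yields $R^i\pi_*(\mathcal I_{X'}\otimes \mathcal L) = 0$ for every $i>0$ on the chosen neighborhood of $s$, and since $s$ was arbitrary, the vanishing holds globally over $S$. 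The "in particular" statement is immediate from taking $X'=\emptyset$, in which case $\mathcal I_{X'} = \mathcal O_X$.

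The only point requiring care is the identification of the quasi-log adjunction ideal with the reduced defining ideal sheaf $\mathcal I_{X'}$ appearing in the statement. In the fully qlc situation here, where $\mathcal I_{X_{-\infty}} = \mathcal O_X$, the short exact sequence \eqref{a-eq4.3} from the proof of Theorem \ref{a-thm4.4} collapses to an isomorphism, and the ideal $f_*\mathcal O_{Y''}(A - Y')$ constructed there is the radical ideal of the reduced union of $f$-images of the strata not mapping to $X'$. Thus no non-reduced structure is imposed by the quasi-log formalism, and the two notions of $\mathcal I_{X'}$ coincide; this is the only minor obstacle, and it is essentially bookkeeping once Theorem \ref{a-thm10.1} is in hand.
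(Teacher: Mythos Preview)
Your overall strategy is exactly the paper's: localize on $S$, invoke Theorem \ref{a-thm10.1} to equip $[X,K_X+\Delta]$ with a quasi-log canonical structure whose qlc strata are the slc strata, and then apply Theorem \ref{a-thm4.8}. The paper's sketch says nothing more than this.

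Your final paragraph flags a genuine point that the paper glosses over, but your justification of it is garbled. The short exact sequence \eqref{a-eq4.3} does not ``collapse to an isomorphism'' when $X_{-\infty}=\emptyset$; it becomes the ordinary sequence $0\to\mathcal I_{X'}\to\mathcal O_X\to\mathcal O_{X'}\to 0$, which is no help by itself. And the ideal $f_*\mathcal O_{Y''}(A-Y')$ cuts out $X'$, not ``the union of $f$-images of strata not mapping to $X'$''. The clean argument is this: by Theorem \ref{a-thm4.4} the induced quasi-log space $[X',\omega']$ has $X'_{-\infty}=X_{-\infty}=\emptyset$, so $[X',\omega']$ is itself quasi-log canonical; then, exactly as in the proof of Lemma \ref{a-lem4.9}, the isomorphism $\mathcal O_{X'}\overset{\simeq}{\to} f'_*\mathcal O_{Y'}(\lceil -(B_{Y'}^{<1})\rceil)$ forces $\mathcal O_{X'}\simeq f'_*\mathcal O_{Y'}$, and since $Y'$ is reduced, $\mathcal O_{X'}$ has no nilpotents. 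Hence the quasi-log adjunction structure on $X'$ is the reduced one, and the two candidate ideal sheaves $\mathcal I_{X'}$ agree. With this correction your proof is complete and matches the paper's.
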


\begin{proof}[Sketch of Proof of Theorem \ref{a-thm10.3}]
We take an arbitrary point $s\in S$. 
It is sufficient to prove that $R^i\pi_*\left(\mathcal I_{X'}\otimes 
\mathcal L\right)=0$ holds 
for every $i>0$ on 
some open neighborhood of $s$. 
By Theorem \ref{a-thm10.1}, 
after shrinking $S$ around $s$ suitably, 
we may assume that $[X, K_X+\Delta]$ is a quasi-log 
complex 
analytic space such that $C$ is a qlc stratum of 
$[X, K_X+\Delta]$ if and only if $C$ is an slc stratum of $(X, \Delta)$. 
Hence, we obtain $R^i\pi_*\left(\mathcal I_{X'}\otimes 
\mathcal L\right)=0$ for every $i>0$ 
by Theorem \ref{a-thm4.8}. 
\end{proof}

We close this subsection with the torsion-freeness, 
that is, the strict support condition,  
for semi-log canonical pairs. 

\begin{thm}[{\cite[Theorem 1.12]{fujino-fundamental-slc}}]\label{a-thm10.4}
Let $(X, \Delta)$ be a semi-log canonical pair and let $\pi\colon X\to S$ be 
a projective morphism of complex analytic spaces. 
Let $D$ be a Cartier divisor on $X$, or a $\mathbb Q$-Cartier 
integral Weil divisor on $X$ such that 
no irreducible component of $\Supp D$ is contained 
in the singular locus of $X$. 
Assume that $D-(K_X+\Delta)$ is $\pi$-semi-ample. 
Then every associated subvariety of $R^i\pi_*\mathcal O_X(D)$ 
is the $\pi$-image of some slc stratum of $(X, \Delta)$ for 
$i=0$ and $1$. 
\end{thm}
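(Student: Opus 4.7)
The plan is to transport the strict support condition of Theorem~\ref{a-thm3.5}(i) across the quasi-log resolution supplied by Theorem~\ref{a-thm10.1}. Since the assertion is local on $S$, I would fix an arbitrary point $s\in S$ and shrink $S$ to a relatively compact open neighborhood of $s$. Theorem~\ref{a-thm10.1} then yields a projective surjective morphism $f\colon (Z,\Delta_Z)\to X$ from an analytic globally embedded simple normal crossing pair such that $K_Z+\Delta_Z=f^*(K_X+\Delta)$, the natural map $\mathcal O_X\to f_*\mathcal O_Z(A)$ with $A:=\lceil -(\Delta_Z^{<1})\rceil$ is an isomorphism, and the $f$-images of strata of $(Z,\Delta_Z)$ are exactly the slc strata of $(X,\Delta)$. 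Because $(X,\Delta)$ is semi-log canonical, $\Delta_Z$ is a subboundary, so setting $B:=\{\Delta_Z^{<1}\}+\Delta_Z^{=1}$ one has the identity of divisors $A+\Delta_Z=B$ on $Z$, and $(Z,B)$ is an analytic simple normal crossing pair whose strata coincide with those of $(Z,\Delta_Z)$.

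I would then introduce the line bundle $\mathcal L:=\mathcal O_Z(A)\otimes f^*\mathcal O_X(D)$; when $D$ is only $\mathbb Q$-Cartier integral Weil, $f^*\mathcal O_X(D)$ is defined as the reflexive pullback of the line bundle obtained from $\mathcal O_X(D)$ on the complement of the singular locus, which is legitimate by the hypothesis that no component of $\Supp D$ is contained in the singular locus of $X$. A short direct calculation gives
\begin{equation*}
\mathcal L-(K_Z+B)\sim_{\mathbb R} f^*\!\left(D-(K_X+\Delta)\right),
\end{equation*}
which is $(\pi\circ f)$-semi-ample (and in particular $f$-semi-ample) by hypothesis. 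The projection formula, combined with $f_*\mathcal O_Z(A)\simeq\mathcal O_X$, yields $f_*\mathcal L\simeq\mathcal O_X(D)$.

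Applying Theorem~\ref{a-thm3.5}(i) to $\mathcal L$ with respect to the composite morphism $\pi\circ f\colon Z\to S$ shows that every associated subvariety of $R^q(\pi\circ f)_*\mathcal L$ is the $(\pi\circ f)$-image of some stratum of $(Z,B)$, hence the $\pi$-image of some slc stratum of $(X,\Delta)$ by the description of $f$-images of strata recalled above. I would then conclude via the Leray spectral sequence $E_2^{p,q}=R^p\pi_*R^qf_*\mathcal L\Rightarrow R^{p+q}(\pi\circ f)_*\mathcal L$. For $i=0$ the conclusion is immediate, since $\pi_*\mathcal O_X(D)=\pi_*f_*\mathcal L=(\pi\circ f)_*\mathcal L$. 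For $i=1$ the five-term exact sequence
\begin{equation*}
0\to R^1\pi_*f_*\mathcal L\to R^1(\pi\circ f)_*\mathcal L\to \pi_*R^1f_*\mathcal L
\end{equation*}
exhibits $R^1\pi_*\mathcal O_X(D)=R^1\pi_*f_*\mathcal L$ as a subsheaf of $R^1(\pi\circ f)_*\mathcal L$, and the associated subvarieties of a subsheaf are contained in those of the ambient sheaf.

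The main obstacle I anticipate is the careful handling of $\mathcal O_X(D)$ in the $\mathbb Q$-Cartier integral Weil setting: along the codimension-one singular stratum of $X$, $\mathcal O_X(D)$ fails to be locally free, so both the definition of $f^*\mathcal O_X(D)$ and the identity $f_*\mathcal L\simeq\mathcal O_X(D)$ must be verified via reflexivity using the assumption that $\Supp D$ avoids the singular locus in codimension one. The restriction to $i=0,1$ in the statement comes precisely from the shape of the five-term sequence; for $i\geq 2$, contributions from $R^p\pi_*R^qf_*\mathcal L$ with $q\geq 1$ are no longer controlled by a subsheaf argument and the same conclusion cannot be extracted by this method.
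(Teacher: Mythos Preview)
Your overall strategy---transport Theorem~\ref{a-thm3.5}(i) through the quasi-log resolution of Theorem~\ref{a-thm10.1} and then use the five-term exact sequence of the Leray spectral sequence---is exactly the paper's approach (the paper's proof simply points to the proof of Theorem~\ref{a-thm10.2} and to the former half of \cite[Theorem~1.12]{fujino-fundamental-slc}). For Cartier $D$ your argument is complete as written.

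The one substantive difference, and it is precisely the obstacle you flag, is in the $\mathbb Q$-Cartier integral Weil case. The paper does not try to make sense of a ``reflexive pullback'' $f^*\mathcal O_X(D)$ along the possibly non-bimeromorphic $f$ of Theorem~\ref{a-thm10.1}. Instead, as in the proof of Theorem~\ref{a-thm10.2}, it first passes to the double cover $p\colon\widetilde X\to X$ of \cite[5.23]{kollar-book} so that every irreducible component of $\widetilde X$ has no self-intersection in codimension one; then Theorem~\ref{a-thm10.1} produces a \emph{bimeromorphic} $f\colon Z\to\widetilde X$, one can arrange by further blow-ups that $\Supp f^*D\cup\Supp\Delta_Z$ sits in a simple normal crossing divisor, and the line bundle $\mathcal L$ and the identity $f_*\mathcal L\simeq\mathcal O_{\widetilde X}(p^*D)$ are then unproblematic. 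Without that reduction your projection-formula step $f_*(\mathcal O_Z(A)\otimes f^*\mathcal O_X(D))\simeq\mathcal O_X(D)$ is not justified, since $\mathcal O_X(D)$ is not locally free and $f$ need not be bimeromorphic; and it is not clear that the reflexive pullback you propose is a line bundle on $Z$ over the strata mapping to the conductor. So the gap you anticipate is real, and the paper's remedy is the double-cover step rather than a reflexivity argument.
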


We make a very important remark on 
\cite[Theorem 1.12]{fujino-fundamental-slc}. 

\begin{rem}
[{Correction of \cite[Theorem 1.12]{fujino-fundamental-slc}}]\label{a-rem10.5}
In \cite[Theorem 1.12]{fujino-fundamental-slc}, 
we claim that every associated prime of $R^i\pi_*\mathcal O_X(D)$ 
is the generic point of the $\pi$-image of some slc stratum of 
$(X, \Delta)$ for {\em{every $i$}}. 
Unfortunately, however, the latter half of the proof of 
\cite[Theorem 1.12]{fujino-fundamental-slc} is insufficient. 
In the proof of \cite[Theorem 1.12]{fujino-fundamental-slc}, 
the $\pi|_A$-image of some slc stratum of $(A, \Delta|_A)$ 
is not necessarily the $\pi$-image of some slc stratum of $(X, \Delta)$. 
Hence, the correct statement of \cite[Theorem 1.12]{fujino-fundamental-slc} 
is that every associated prime of $R^i\pi_*\mathcal O_X(D)$ is 
the generic point of some slc stratum of $(X, \Delta)$ for $i=0$ and 
$1$. 
\end{rem}

\begin{proof}[Sketch of Proof of Theorem \ref{a-thm10.4}]
As in the proof of 
Theorem \ref{a-thm10.2}, 
the former half of the proof of \cite[Theorem 1.12]{fujino-fundamental-slc} 
works with some minor modifications. 
Hence we omit the details here. 
\end{proof}

\subsection{On Shokurov's polytopes for semi-log canonical pairs}
\label{a-subsec10.2}
In this final subsection, we will briefly explain Shokurov's 
polytopes for semi-log canonical pairs in the complex analytic setting. 
Here, we need the results in Subsection \ref{a-subsec9.1}. 
Therefore, the results in this subsection also depend on 
Mori's bend and break. 

Let $\pi\colon X\to S$ be a projective morphism 
between complex analytic spaces such that 
$X$ is equidimensional and let $W$ be 
a compact subset of $S$. 
Let $V$ be a finite-dimensional affine subspace of 
$\WDiv_{\mathbb R}(X)$, which is defined over the rationals. 
We put 
\begin{equation*} 
\mathcal L(V; \pi^{-1}(W)): 
=\{D\in V\, |\, (X, D) \ \text{is semi-log canonical at $\pi^{-1}(W)$}\}.  
\end{equation*}  
Then, as we saw in \ref{a-say2.4}, 
it is known that $\mathcal L(V; \pi^{-1}(W))$ is a 
rational polytope in $V$ defined over the rationals. 

\begin{defn}[Extremal curves]\label{a-def10.6} 
Let $\pi\colon X\to S$ 
be a projective morphism of complex analytic spaces and 
let $W$ be a compact 
subset of $S$ such that the dimension of $N^1(X/S; W)$ is finite. 
A curve $\Gamma$ on $X$ is called {\em{extremal over $W$}} 
if the following properties hold. 
\begin{itemize}
\item[(i)] $\Gamma$ generates an extremal ray $R$ of $\NE(X/S; W)$. 
\item[(ii)] There exists a $\pi$-ample 
line bundle $\mathcal H$ over some open neighborhood 
of $W$ such that 
\begin{equation*}
\mathcal H\cdot \Gamma=\min_{\ell} \{\mathcal H\cdot \ell\}, 
\end{equation*} 
where $\ell$ ranges over curves generating $R$. 
\end{itemize}
\end{defn}

By Theorem \ref{a-thm10.1} with Theorem \ref{a-thm9.7}, 
we have: 

\begin{lem}\label{a-lem10.7}
Let $\pi\colon X\to S$ be a projective morphism of complex 
analytic spaces and let $(X, \Delta)$ be a semi-log canonical pair. 
Let $W$ be a compact subset of $S$ such that 
the dimension of $N^1(X/S; W)$ is finite. 
Let $R$ be a $(K_X+\Delta)$-negative extremal ray 
of $\NE(X/S; W)$. 
If $\Gamma$ is an extremal curve over $W$ generating $R$, then 
\begin{equation*}
0<-(K_X+\Delta)\cdot \Gamma \leq 2\dim X
\end{equation*} 
holds. 
\end{lem}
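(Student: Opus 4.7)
The plan is to reduce to Theorem \ref{a-thm9.7} via the quasi-log structure on $[X, K_X+\Delta]$ furnished by Theorem \ref{a-thm10.1}, and then to exploit the extremality of $\Gamma$ via the standard ratio argument against a witnessing $\pi$-ample line bundle.

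First, because $W$ is compact and all relevant data ($N^1(X/S;W)$, $\NE(X/S;W)$, and the extremal curve $\Gamma$, whose image under $\pi$ is a single point of $W$) depend only on an open neighborhood of $W$ in $S$, I would replace $S$ with a relatively compact open neighborhood of $W$ at the outset. By Theorem \ref{a-thm10.1}, after this shrinking $[X, K_X+\Delta]$ carries a natural quasi-log complex analytic space structure with $\Nqlc(X, K_X+\Delta)=\emptyset$ and whose qlc strata are precisely the slc strata of $(X, \Delta)$. In particular, the $(K_X+\Delta)$-negative extremal ray $R$ trivially satisfies
\[
R\cap \NE(X/S;W)_{\Nqlc(X, K_X+\Delta)}=\{0\},
\]
so Theorem \ref{a-thm9.7} applies and produces a (possibly singular) rational curve $\ell$ spanning $R$ with
\[
0<-(K_X+\Delta)\cdot \ell\leq 2\dim X.
\]

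Next, I would compare $\Gamma$ with $\ell$ using the minimality built into Definition \ref{a-def10.6}. Let $\mathcal H$ be a $\pi$-ample line bundle witnessing that $\mathcal H\cdot \Gamma=\min_{\ell'}\{\mathcal H\cdot \ell'\}$, where $\ell'$ ranges over curves generating $R$. Since $[\ell]$ and $[\Gamma]$ both lie in the ray $R\subset N_1(X/S;W)$, there is a positive real number $c$ with $[\ell]=c\,[\Gamma]$ in $N_1(X/S;W)$. Intersecting with $\mathcal H$ gives
\[
c\,(\mathcal H\cdot \Gamma)=\mathcal H\cdot \ell\geq \mathcal H\cdot \Gamma,
\]
and since $\mathcal H\cdot \Gamma>0$ we conclude $c\geq 1$. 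Intersecting the same relation with $K_X+\Delta$ and using Theorem \ref{a-thm9.7} yields
\[
-(K_X+\Delta)\cdot \Gamma = c^{-1}\bigl(-(K_X+\Delta)\cdot \ell\bigr)\leq -(K_X+\Delta)\cdot \ell\leq 2\dim X.
\]
The lower bound $-(K_X+\Delta)\cdot \Gamma>0$ is immediate from the assumption that $R$ is $(K_X+\Delta)$-negative and that $\Gamma$ generates $R$.

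I do not anticipate a genuine obstacle: the real content is already contained in Theorems \ref{a-thm10.1} and \ref{a-thm9.7}, and the comparison between the abstract rational curve $\ell$ produced by bend-and-break and the chosen extremal curve $\Gamma$ is purely formal once the minimality property in Definition \ref{a-def10.6} is invoked. The only mild subtlety is the initial shrinking of $S$, but this is harmless since the statement is local around $W$ and the quasi-log structure of Theorem \ref{a-thm10.1} is only claimed after such a shrinking.
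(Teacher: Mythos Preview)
Your proof is correct and follows essentially the same approach as the paper: shrink $S$ around $W$ to apply Theorem~\ref{a-thm10.1}, invoke Theorem~\ref{a-thm9.7} to obtain the rational curve $\ell$ with $-(K_X+\Delta)\cdot\ell\leq 2\dim X$, and then use the minimality of $\mathcal H\cdot\Gamma$ to transfer the bound to $\Gamma$ via the proportionality $[\ell]=c[\Gamma]$. The paper writes the final comparison as $-(K_X+\Delta)\cdot\Gamma=\bigl(-(K_X+\Delta)\cdot\ell\bigr)\cdot\dfrac{\mathcal H\cdot\Gamma}{\mathcal H\cdot\ell}$, which is exactly your $c^{-1}$ factor.
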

\begin{proof}[Sketch of Proof of Lemma \ref{a-lem10.7}]
By Theorem \ref{a-thm10.1}, we may assume that 
$[X, K_X+\Delta]$ is a 
quasi-log canonical pair by shrinking $S$ around $W$ suitably. 
Then, by Theorem \ref{a-thm9.7}, we see that 
there exists a rational curve $\ell$ spanning $R$ such that 
\begin{equation*}
0<-(K_X+\Delta)\cdot \ell \leq 2\dim X. 
\end{equation*} 
Therefore, we obtain 
\begin{equation*}
0<-(K_X+\Delta)\cdot \Gamma 
=\left(-(K_X+\Delta)\cdot \ell\right)\cdot 
\frac{\mathcal H\cdot \Gamma}{\mathcal H\cdot \ell} 
\leq 2\dim X. 
\end{equation*} 
This is what we wanted. 
\end{proof}

We have already established the 
following theorems 
for log canonical pairs 
in \cite[Section 5.2]{fujino-cone-contraction}, 
which may be useful for the minimal model program 
with scaling. 

\begin{thm}\label{a-thm10.8}
Let $\pi\colon X\to S$ 
be a projective morphism of complex 
analytic spaces such that $X$ is equidimensional 
and let $W$ be a compact 
subset of $S$ 
such that the dimension of $N^1(X/S; W)$ is finite. 
Let $V$ be a finite-dimensional 
affine subspace of $\WDiv_{\mathbb R}(X)$, which 
is defined over the rationals. 
We fix an $\mathbb R$-divisor $\Delta\in \mathcal L(V; \pi^{-1}(W))$, 
that is, $\Delta\in V$ and $(X, \Delta)$ is semi-log 
canonical at $\pi^{-1}(W)$. 
Then we can find positive real numbers $\alpha$ and $\delta$, which 
depend on $(X, \Delta)$ and $V$, with the following properties. 
\begin{itemize}
\item[(1)] If $\Gamma$ is any extremal curve over $W$ and 
$(K_X+\Delta)\cdot \Gamma>0$, 
then $(K_X+\Delta)\cdot \Gamma >\alpha$. 
\item[(2)] If $D\in \mathcal L(V; \pi^{-1}(W))$, 
$|\!| D-\Delta|\!|<\delta$, and $(K_X+D)\cdot \Gamma \leq 0$ for 
an extremal curve $\Gamma$ over $W$, 
then $(K_X+\Delta)\cdot \Gamma\leq 0$. 
\item[(3)] Let $\{R_t\}_{t\in T}$ be any set of extremal rays of $\NE(X/S; W)$. 
Then 
\begin{equation*}
\mathcal N_T:=\{D\in \mathcal L(V; \pi^{-1}(W))\, |\, 
{\text{$(K_X+D)\cdot R_t\geq 0$ for every $t\in T$}}\} 
\end{equation*} 
is a rational polytope in $V$. 
In particular, 
\begin{equation*}
\mathcal N^\sharp _{\pi} (V; W):=\{\Delta\in 
\mathcal L (V; \pi^{-1}(W)) \, |\, 
{\text{$K_X+\Delta$ is nef over $W$}}\}
\end{equation*}
is a rational polytope. 
\end{itemize}
\end{thm}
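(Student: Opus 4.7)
The plan is to adapt the proof of the analogous theorem for log canonical pairs given in \cite[Section 14]{fujino-cone-contraction}. Via Theorem \ref{a-thm10.1}, after shrinking $S$ around $W$ if necessary, the pair $(X, \Delta)$ acquires a quasi-log canonical structure $[X, K_X + \Delta]$, and Lemma \ref{a-lem10.7} supplies the uniform length bound $0 < -(K_X + \Delta')\cdot \Gamma \leq 2 \dim X$ for every extremal curve $\Gamma$ over $W$ which is $(K_X + \Delta')$-negative, for any $\Delta' \in \mathcal{L}(V; \pi^{-1}(W))$. These are the two semi-log canonical inputs that replace their log canonical counterparts in \cite{fujino-cone-contraction}.

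The main technical device is a rational approximation of $\Delta$ inside the rational polytope $\mathcal{L}(V; \pi^{-1}(W))$. I would choose finitely many rational points $\Delta_1, \ldots, \Delta_m \in \mathcal{L}(V; \pi^{-1}(W))$ with $\Delta = \sum_{i=1}^m \lambda_i \Delta_i$, $\lambda_i > 0$, $\sum \lambda_i = 1$, and a positive integer $a$ so that each $a(K_X + \Delta_i)$ is Cartier in a neighborhood of $\pi^{-1}(W)$. Then for every extremal curve $\Gamma$ over $W$ the intersection number $(K_X + \Delta_i) \cdot \Gamma$ lies in $\tfrac{1}{a}\mathbb{Z}$, and whenever it is negative, Lemma \ref{a-lem10.7} forces it into the finite set $\tfrac{1}{a}\mathbb{Z} \cap [-2\dim X, 0)$. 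Thus the vector $\bigl((K_X + \Delta_1)\cdot \Gamma, \ldots, (K_X + \Delta_m)\cdot \Gamma\bigr)$ has only finitely many possible \emph{sign-plus-negative-value} patterns, while its non-negative entries take values in $\tfrac{1}{a}\mathbb{Z}_{\geq 0}$.

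From this, assertion (1) follows by decomposing $(K_X + \Delta)\cdot \Gamma = \sum \lambda_i (K_X + \Delta_i)\cdot \Gamma$: the finitely many possible negative contributions are cancelled by at least one of the non-negative contributions being $\geq 1/a$, so a positive lower bound $\alpha$ depending only on the $\lambda_i$ and on $a$ can be extracted. For (2), I take $\delta$ small relative to $\alpha$ and the coordinates on $V$; if $(K_X + D)\cdot \Gamma \leq 0$ for an extremal $\Gamma$, then by Lemma \ref{a-lem10.7} the quantity $H\cdot \Gamma$ (for $H$ the $\pi$-ample line bundle associated to $\Gamma$ in Definition \ref{a-def10.6}) is bounded in terms of $(K_X + D)\cdot \Gamma$, which in turn bounds $(D - \Delta)\cdot \Gamma = O(\delta)$ and forces $(K_X + \Delta)\cdot \Gamma \leq 0$ by (1). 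For (3), one writes $\mathcal{N}_T$ as the intersection of $\mathcal{L}(V; \pi^{-1}(W))$ with the hyperplanes $\{(K_X + D)\cdot R_t \geq 0\}$; the rationality follows because only finitely many of these hyperplanes actually cut into $\mathcal{L}(V; \pi^{-1}(W))$ (by the finiteness of sign patterns described above) and each corresponds to a rational linear functional in $V$. The particular case $\mathcal{N}^\sharp_\pi(V; W)$ is the instance where $\{R_t\}$ ranges over all extremal rays.

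The main obstacle is the book-keeping required to turn the pointwise length bound of Lemma \ref{a-lem10.7} into a genuinely uniform control valid along the whole polytope: one needs to organize the negative-contribution estimates across all extremal curves simultaneously, and verify that the comparison between $(K_X + \Delta)\cdot \Gamma$ and $(K_X + D)\cdot \Gamma$ (measured in the ambient norm $|\!|\cdot|\!|$) can be renormalized against $H\cdot \Gamma$ uniformly. Once the finite-sign-pattern observation above is in place, this amounts to the same combinatorial argument as in \cite[Section 14]{fujino-cone-contraction}, and transfers without essential change to the semi-log canonical, complex analytic setting thanks to the assumption $\dim N^1(X/S; W) < \infty$, which guarantees that the Kleiman--Mori cone is defined in a finite-dimensional space and that the notion of extremal curve from Definition \ref{a-def10.6} is well-posed.
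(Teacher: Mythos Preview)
Your proposal follows essentially the same route as the paper's own proof: invoke Theorem \ref{a-thm10.1} to equip $(X,\Delta)$ with a quasi-log canonical structure, use Lemma \ref{a-lem10.7} for the length bound on extremal curves, and then transplant the argument of \cite[Theorem 14.3]{fujino-cone-contraction} verbatim. The paper's sketch records exactly these three ingredients and nothing more.

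One caution on your heuristic for (3): the claim that ``only finitely many of these hyperplanes actually cut into $\mathcal L(V;\pi^{-1}(W))$'' is not literally true---there can be infinitely many extremal rays $R_t$, and the linear functionals $D\mapsto (K_X+D)\cdot R_t$ need not be rational individually. The actual argument in \cite[Section 14]{fujino-cone-contraction} (and in the standard Shokurov--Birkar treatment) does not proceed by counting hyperplanes; rather, one shows that $\mathcal N_T$ is compact and convex, and then uses (1) and (2) together with induction on $\dim \mathcal L(V;\pi^{-1}(W))$ to prove that every extreme point of $\mathcal N_T$ is rational. Since you ultimately defer to the reference for the details, this does not constitute a gap in your reduction, but the intermediate description you give of that argument is misleading and would not work if taken at face value.
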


\begin{proof}[Sketch of Proof of Theorem \ref{a-thm10.8}]
By Theorem \ref{a-thm10.1}, 
we may assume that $[X, K_X+\Delta]$ is a quasi-log 
canonical pair. 
Hence, we can use the cone and contraction 
theorem 
(see Theorem \ref{a-thm9.2}). 
Thus, we can apply the proof of \cite[Theorem 5.2.3]
{fujino-cone-contraction} with some minor modifications. 
We note that we need Lemma \ref{a-lem10.7} 
for the proof of (2) and (3). 
\end{proof}

We close this subsection with the following theorem, 
which is well known when $\pi\colon X\to S$ is 
algebraic and $X$ is a normal variety. 

\begin{thm}\label{a-thm10.9} 
Let $\pi\colon X\to S$ 
be a projective morphism of complex analytic spaces such that 
$X$ is equidimensional and let $W$ be a compact 
subset of $S$ such that the dimension of $N^1(X/S; W)$ is finite. 
Let $(X, \Delta)$ be a semi-log canonical pair and let $H$ be an effective 
$\mathbb R$-Cartier $\mathbb R$-divisor on $X$ such that 
$(X, \Delta+H)$ is semi-log canonical and that 
$K_X+\Delta+H$ is nef over 
$W$. 
Then, either $K_X+\Delta$ is nef over $W$ or there 
is a $(K_X+\Delta)$-negative extremal ray $R$ of $\NE(X/S; W)$ such that 
$(K_X+\Delta+\lambda H)\cdot R=0$, 
where 
\begin{equation*}
\lambda:=\inf \{t\geq 0
\, |\, {\text{$K_X+\Delta+tH$ is nef over $W$}}\}. 
\end{equation*} 
Of course, $K_X+\Delta+\lambda H$ is nef over $W$. 
\end{thm}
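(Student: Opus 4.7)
The plan is to combine the closedness of the nef condition with the rational polytope structure furnished by Theorem~\ref{a-thm10.8}(3). Assume $K_X+\Delta$ is not nef over $W$, so $\lambda\in(0,1]$. First I would check that $K_X+\Delta+\lambda H$ is nef over $W$: for every projective integral curve $\Gamma$ on $X$ with $\pi(\Gamma)\in W$, the function $t\mapsto(K_X+\Delta+tH)\cdot\Gamma$ is linear in $t$ and nonnegative for all $t>\lambda$, hence also at $t=\lambda$.

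Next I would choose a finite-dimensional $\mathbb Q$-rational affine subspace $V\subset\WDiv_{\mathbb R}(X)$, namely the real linear span of the prime components of $\Supp\Delta\cup\Supp H$, which is defined over the rationals since it has a basis of prime divisors. Because both $\Delta$ and $\Delta+H$ lie in $\mathcal L(V;\pi^{-1}(W))$, which is a rational (hence convex) polytope, the whole segment $\{\Delta+tH:t\in[0,1]\}$ is contained in $\mathcal L(V;\pi^{-1}(W))$. By Theorem~\ref{a-thm10.8}(3), $\mathcal N^{\sharp}_\pi(V;W)$ is a rational polytope in $V$, and as such is cut out by \emph{finitely many} of the defining halfspace inequalities $(K_X+D)\cdot R_j\geq 0$ coming from extremal rays $R_j$ of $\NE(X/S;W)$.

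The core step is then to restrict these finitely many inequalities to the segment. Writing $a_j:=(K_X+\Delta)\cdot R_j$ and $b_j:=H\cdot R_j$, the constraint $(K_X+\Delta+tH)\cdot R_j\geq 0$ is vacuous on $[0,1]$ except in the case $a_j<0$; in that case it forces $b_j>0$ (since the inequality must hold at $t=1$ because $K_X+\Delta+H$ is nef over $W$) and reads $t\geq-a_j/b_j\in(0,1]$. Hence $\lambda=\max_j(-a_j/b_j)$ over this finite index set, and taking $R:=R_{j^\star}$ realizing the maximum yields an extremal ray of $\NE(X/S;W)$ with $(K_X+\Delta+\lambda H)\cdot R=0$ and $(K_X+\Delta)\cdot R=-\lambda(H\cdot R)<0$. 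The main subtlety that will need attention is justifying that each facet of the rational polytope $\mathcal N^{\sharp}_\pi(V;W)$ is actually supported by some extremal ray of $\NE(X/S;W)$, rather than by some merely rational functional; this follows by observing that $\mathcal N^{\sharp}_\pi(V;W)=\bigcap_{R}H_R$ over all extremal rays $R$ and that any polytope arising as such an intersection has each of its facets supported by one of the halfspaces $H_R$ in the defining family.
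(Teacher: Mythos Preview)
Your overall strategy—restrict to the segment $\{\Delta+tH:t\in[0,1]\}$ inside $\mathcal L(V;\pi^{-1}(W))$ and use the rational polytope $\mathcal N^\sharp_\pi(V;W)$ from Theorem~\ref{a-thm10.8}(3)—is reasonable and close in spirit to the argument the paper invokes. The verification that $K_X+\Delta+\lambda H$ is nef over $W$ and that $\mathcal N^\sharp_\pi(V;W)=\mathcal L(V;\pi^{-1}(W))\cap\bigcap_R H_R$ (with $R$ ranging over extremal rays, using the cone theorem for slc pairs) is fine.

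The gap is exactly where you flag it, and your proposed resolution is incorrect. It is \emph{not} true that a polytope written as an intersection $\bigcap_\alpha H_\alpha$ of possibly infinitely many closed halfspaces must have each of its facets supported by one of the $H_\alpha$. A trivial counterexample: inside $\mathcal L=[0,2]^2$ take $H_\alpha=\{y\le 1+\alpha^2\}$ for $\alpha\in(0,1]$; then $\mathcal L\cap\bigcap_\alpha H_\alpha=[0,2]\times[0,1]$, whose facet $\{y=1\}$ is not the boundary of any $H_\alpha$. So from Theorem~\ref{a-thm10.8}(3) alone you cannot conclude that $\mathcal N^\sharp_\pi(V;W)$ is cut out by \emph{finitely many extremal-ray} inequalities; the facet through $\Delta+\lambda H$ might a priori be a limit of extremal-ray hyperplanes without being one itself. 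Consequently your computation $\lambda=\max_j(-a_j/b_j)$ over a finite set is not yet justified.

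What rescues the argument is the bounded-length input encoded in Theorem~\ref{a-thm10.8}(2) (ultimately Lemma~\ref{a-lem10.7}), and this is essentially what the paper's reference to \cite[Theorem~14.4]{fujino-cone-contraction} uses. Concretely: pick any $t<\lambda$ with $\lambda-t$ small. Since $K_X+\Delta+tH$ is not nef over $W$, the cone theorem for the slc pair $(X,\Delta+tH)$ produces an extremal ray $R$ of $\NE(X/S;W)$ with $(K_X+\Delta+tH)\cdot R<0$. Applying Theorem~\ref{a-thm10.8}(2) at the base point $\Delta+\lambda H$ (with $D=\Delta+tH$ and $|\!|D-(\Delta+\lambda H)|\!|$ small) gives $(K_X+\Delta+\lambda H)\cdot R\le 0$, hence $=0$ by nefness. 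Since $(K_X+\Delta+H)\cdot R\ge 0$ and $(K_X+\Delta+tH)\cdot R<0$, we get $H\cdot R>0$ and therefore $(K_X+\Delta)\cdot R=-\lambda\,H\cdot R<0$. This supplies exactly the missing step and replaces your unjustified finiteness claim.
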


\begin{proof}[Sketch of Proof of Theorem \ref{a-thm10.9}]
As for Theorem \ref{a-thm10.8}, 
we can use the proof of 
\cite[Theorem 5.2.4]{fujino-cone-contraction} 
with only some minor modifications. So we omit the details 
here. 
\end{proof}


\begin{thebibliography}{BCHM} 

\bibitem[A]{ambro} 
F.~Ambro, 
Quasi-log varieties, 
Tr. Mat. Inst. Steklova \textbf{240} (2003), 
Biratsion. Geom. Line\u{\i}n. Sist. Konechno 
Porozhdennye Algebry, 220--239; reprinted in
Proc. Steklov Inst. Math. 2003, no. 1(240), 214--233. 

\bibitem[BS]{banica} 
C.~B\u anic\u a, O.~St\u an\u a\c sil\u a, 
{\em{Algebraic methods in the global theory of complex 
spaces}}, Translated from the Romanian. Editura 
Academiei, Bucharest; John Wiley \& Sons, 
London--New York--Sydney, 1976.

\bibitem[BM1]{bierstone-milman2} 
E.~Bierstone, P.~D.~Milman, 
Canonical desingularization in characteristic zero 
by blowing up the maximum strata of a local 
invariant, 
Invent. Math. \textbf{128} (1997), no. 2, 207--302.

\bibitem[BM2]{bierstone-milman3}
E.~Bierstone, P.~D.~Milman, 
Resolution except for minimal singularities I, 
Adv. Math. \textbf{231} (2012), no. 5, 3022--3053. 

\bibitem[BCHM]{bchm}
C.~Birkar, P.~Cascini, C.~D.~Hacon, J.~M\textsuperscript{c}Kernan, 
Existence of minimal models for varieties of log general type, 
J. Amer. Math. Soc. \textbf{23} (2010), no. 2, 405--468.

\bibitem[DHP]{dhp} 
O.~Das, C.~Hacon, M.~P\u aun, On the $4$-dimensional minimal model program 
for K\"ahler varieties, 
Adv. Math. \textbf{443} (2024), Paper No. 109615, 68 pp.

\bibitem[EH1]{enokizono-hashizume1} 
M.~Enokizono, K.~Hashizume, 
Semistable reduction for complex analytic spaces, to appear in 
Trans. Amer. Math. Soc.

\bibitem[EH2]{enokizono-hashizume2} 
M.~Enokizono, K.~Hashizume,
Minimal model program for log canonical pairs on complex analytic spaces, 
preprint (2025). arXiv:2404.05126 [math.AG]

\bibitem[EH3]{enokizono-hashizume3} 
M.~Enokizono, K.~Hashizume,
On termination of minimal model program for log canonical 
pairs on complex analytic spaces, 
preprint (2025). arXiv:2501.03531 [math.AG]

\bibitem[Fi]{fischer} 
G.~Fischer, {\em{Complex analytic geometry}}, 
Lecture Notes in Mathematics, 
Vol. \textbf{538}. Springer-Verlag, Berlin--New York, 1976.

\bibitem[Fu1]{fujino-master} 
O.~Fujino, Abundance 
theorem for semi log canonical threefolds, 
Duke Math. J. \textbf{102} (2000), no. 3, 513--532. 

\bibitem[Fu2]{fujino-effective-tohoku}
O.~Fujino, Effective base point free theorem for 
log canonical pairs---Koll\'ar type theorem, 
Tohoku Math. J. (2) \textbf{61} (2009), no. 4, 475--481.
 
\bibitem[Fu3]{fujino-introduction} 
O.~Fujino, Introduction to the theory of quasi-log varieties, 
{\em{Classification of algebraic varieties}}, 289--303, 
EMS Ser. Congr. Rep., Eur. Math. Soc., Z\"urich, 2011. 

\bibitem[Fu4]{fujino-fundamental}
O.~Fujino, Fundamental theorems for the log minimal model 
program, 
Publ. Res. Inst. Math. Sci. \textbf{47} (2011), no. 3, 727--789.

\bibitem[Fu5]{fujino-fundamental-slc} 
O.~Fujino, 
Fundamental theorems for semi log canonical pairs, 
Algebr. Geom. \textbf{1} (2014), no. 2, 194--228.

\bibitem[Fu6]{fujino-reid-fukuda} 
O.~Fujino, Basepoint-free theorem of Reid--Fukuda type for 
quasi-log schemes, 
Publ. Res. Inst. Math. Sci. \textbf{52} (2016), no. 1, 63--81. 

\bibitem[Fu7]{fujino-effective} 
O.~Fujino, Koll\'ar--type effective freeness for quasi-log canonical pairs, 
Internat. J. Math. \textbf{27} (2016), no. 14, 1650114, 15 pp. 

\bibitem[Fu8]{fujino-foundations}
O.~Fujino, 
{\em{Foundations of the minimal model program}}, 
MSJ Memoirs, \textbf{35}. Mathematical Society of Japan, Tokyo, 2017.

\bibitem[Fu9]{fujino-effective-slc}
O.~Fuijno, Effective basepoint-free theorem for semi-log 
canonical surfaces, 
Publ. Res. Inst. Math. Sci. \textbf{53} (2017), no. 3, 349--370.

\bibitem[Fu10]{fujino-cone} 
O.~Fujino, 
Cone theorem and Mori hyperbolicity, 
to appear in J. Differential Geom.

\bibitem[Fu11]{fujino-relative-span} 
O.~Fujino, A relative spannedness for log canonical 
pairs and quasi-log canonical pairs, 
Ann. Sc. Norm. Super. Pisa Cl. Sci. (5) \textbf{23} (2022), no. 1, 265--292.

\bibitem[Fu12]{fujino-quasi} 
O.~Fujino, On quasi-log schemes, 
J. Math. Soc. Japan \textbf{75} (2023), no. 3, 829--856.
 
\bibitem[Fu13]{fujino-minimal}
O.~Fujino, 
Minimal model program for projective morphisms between complex 
analytic spaces, preprint (2022). 
arXiv:2201.11315 [math.AG]

\bibitem[Fu14]{fujino-analytic-vanishing} 
O.~Fujino, Vanishing theorems for projective 
morphisms between complex 
analytic spaces, 
to appear in Math. Res. Lett. 

\bibitem[Fu15]{fujino-acc} 
O.~Fujino, ACC for log canonical thresholds for complex analytic spaces, 
{\em{Higher Dimensional Algebraic Geometry. A Volume in Honor of V.~V.~Shokurov}}, 
11--18, London Mathematical Society Lecture 
Note Series, \textbf{489}. Cambridge 
University Press, Cambridge, 2025. 

\bibitem[Fu16]{fujino-cone-contraction} 
O.~Fujino, {\em{Cone and contraction theorem for 
projective morphisms between complex analytic spaces}}, 
MSJ Memoirs, \textbf{42}. Mathematical Society of Japan, Tokyo, 2024. 

\bibitem[Fu17]{fujino-inversion} 
O.~Fujino, Log canonical inversion of adjunction, 
Proc. Japan Acad. Ser. A Math. Sci. \textbf{100} (2024), no. 2, 7--11.

\bibitem[Fu18]{fujino-vanishing-pja} 
O.~Fujino, 
On vanishing theorems for analytic spaces, 
Proc. Japan Acad. Ser. A Math. Sci. \textbf{100} (2024), no. 4, 21--25. 

\bibitem[Fu19]{fujino-abundance} 
O.~Fujino, On finiteness of relative log pluricanonical 
representations, preprint (2024). 

\bibitem[FF]{fujino-fujisawa} 
O.~Fujino, T.~Fujisawa, 
Variation of mixed Hodge structure and its applications, 
preprint (2023). arXiv:2304.00672 [math.AG]

\bibitem[FFS]{fujino-fujisawa-saito} 
O.~Fujino, T.~Fujisawa, M.~Saito, 
Some remarks on the semipositivity theorems, 
Publ. Res. Inst. Math. Sci. \textbf{50} (2014), no. 1, 85--112. 

\bibitem[FG]{fujino-gongyo} 
O.~Fujino, Y.~Gongyo, 
Log pluricanonical representations and the abundance 
conjecture, Compos. Math. \textbf{150} (2014), no. 4, 593--620. 

\bibitem[HM]{hacon-mckernan} 
C.~D.~Hacon, J.~M\textsuperscript{c}Kernan, 
Existence of minimal models for varieties of log general type. II, 
J. Amer. Math. Soc. \textbf{23} (2010), no. 2, 469--490. 

\bibitem[HMX]{hacon-mckernan-xu}
C.~D.~Hacon, J.~M\textsuperscript{c}Kernan, C.~Xu, 
ACC for log canonical thresholds, 
Ann. of Math. (2) \textbf{180} (2014), no. 2, 523--571.

\bibitem[H]{hashizume} 
K.~Hashizume, 
Minimal model program for normal pairs along log canonical locus in 
complex analytic setting, preprint (2025). arXiv:2501.04057 [math.AG]

\bibitem[Ka]{kawamata} 
Y.~Kawamata, On the length of an extremal rational curve, 
Invent. Math. \textbf{105} (1991), no. 3, 609--611.

\bibitem[Ko1]{kollar}
J.~Koll\'ar, 
Effective base point freeness, 
Math. Ann. \textbf{296} (1993), no. 4, 595--605.

\bibitem[Ko2]{kollar-book} 
J.~Koll\'ar, {\em{Singularities of the minimal model program}}, With 
a collaboration of S\'andor Kov\'acs. Cambridge 
Tracts in Mathematics, {\textbf{200}}. Cambridge 
University Press, Cambridge, 2013.

\bibitem[LM]{lyu-murayama} 
S.~Lyu, T.~Murayama, 
The relative minimal model program for excellent algebraic spaces and analytic 
spaces in equal characteristic zero, 
preprint (2022). arXiv:2209.08732 [math.AG]

\bibitem[M]{murayama} 
T.~Murayama, 
Injectivity theorems and cubical descent for schemes, 
stacks, and analytic spaces, 
preprint (2024). arXiv:2406.10800 [math.AG]

\bibitem[Na1]{nakayama1}
N.~Nakayama, 
The lower semicontinuity of the plurigenera 
of complex varieties, 
{\em{Algebraic geometry, Sendai, 1985}}, 551--590,
Adv. Stud. Pure Math., \textbf{10}, North-Holland, Amsterdam, 1987. 

\bibitem[Na2]{nakayama2}
N.~Nakayama, {\em{Zariski-decomposition and abundance}}, 
MSJ Memoirs, \textbf{14}. Mathematical 
Society of Japan, Tokyo, 2004. 

\bibitem[Sa1]{saito1} 
M.~Saito, 
Modules de Hodge polarisables, 
Publ. Res. Inst. Math. Sci. \textbf{24} (1988), no. 6, 849--995. 

\bibitem[Sa2]{saito2}
M.~Saito, 
Mixed Hodge modules, 
Publ. Res. Inst. Math. Sci. \textbf{26} (1990), no. 2, 221--333. 

\bibitem[Sa3]{saito3}
M.~Saito, 
Decomposition theorem for proper K\"ahler morphisms, 
Tohoku Math. J. (2) \textbf{42} (1990), no. 2, 127--147.

\bibitem[Sa4]{saito4} 
M.~Saito, Some remarks on decomposition theorem 
for proper K\"ahler morphisms, preprint (2022). 
arXiv:2204.09026 [math.AG]

\bibitem[Se]{serre} 
J.-P.~Serre, G\'eom\'etrie alg\'ebrique et g\'eom\'etrie analytique, 
Ann. Inst. Fourier (Grenoble) \textbf{6} (1955/56), 1--42.

\bibitem[Si]{siu} 
Y.-T.~Siu, Noether--Lasker decomposition of coherent analytic 
subsheaves, 
Trans. Amer. Math. Soc. \textbf{135} (1969), 375--385. 

\bibitem[Ta]{takegoshi} 
K.~Takegoshi, 
Higher direct images of canonical sheaves tensorized with semi-positive 
vector bundles by proper K\"ahler morphisms, 
Math. Ann. \textbf{303} (1995), no. 3, 389--416.

\end{thebibliography}
\end{document}